\documentclass{amsart}
\usepackage{amsmath}
\usepackage{amssymb}
\usepackage{amsbsy, amsthm,amstext, amsopn}
\usepackage[all]{xy}
\usepackage{amsfonts}
\usepackage{amscd}
\usepackage{stmaryrd}
\hyphenation{para-met-riz-ed para-met-rize}

\usepackage{color}
\usepackage[numbers, square, sort, comma]{natbib}

\usepackage{graphicx}


\textwidth=14.5cm \oddsidemargin=1cm
\evensidemargin=1cm

\newtheorem{thm}{Theorem} [section]
\newtheorem{lemma}[thm]{Lemma}

\newtheorem{corollary}[thm]{Corollary}
\newtheorem{prop}[thm]{Proposition}

\theoremstyle{definition}

\newtheorem{defn}[thm]{Definition}

\newtheorem{example}[thm]{Example}

\newtheorem{claim}[thm]{Claim}

\newtheorem{remark}[thm]{Remark}

\newtheorem{assumption}[thm]{Assumption}

\begin{document}

\numberwithin{equation}{section}

\newcommand{\hs}{\mbox{\hspace{.4em}}}
\newcommand{\ds}{\displaystyle}
\newcommand{\bd}{\begin{displaymath}}
\newcommand{\ed}{\end{displaymath}}
\newcommand{\bcd}{\begin{CD}}
\newcommand{\ecd}{\end{CD}}

\newcommand{\on}{\operatorname}
\newcommand{\proj}{\operatorname{Proj}}
\newcommand{\bproj}{\underline{\operatorname{Proj}}}

\newcommand{\spec}{\operatorname{Spec}}
\newcommand{\Spec}{\operatorname{Spec}}
\newcommand{\bspec}{\underline{\operatorname{Spec}}}
\newcommand{\pline}{{\mathbf P} ^1}
\newcommand{\aline}{{\mathbf A} ^1}
\newcommand{\pplane}{{\mathbf P}^2}
\newcommand{\aplane}{{\mathbf A}^2}
\newcommand{\coker}{{\operatorname{coker}}}
\newcommand{\ldb}{[[}
\newcommand{\rdb}{]]}

\newcommand{\Sym}{\operatorname{Sym}^{\bullet}}
\newcommand{\Symp}{\operatorname{Sym}}
\newcommand{\Pic}{\bf{Pic}}
\newcommand{\Aut}{\operatorname{Aut}}
\newcommand{\PAut}{\operatorname{PAut}}

\newcommand{\too}{\twoheadrightarrow}
\newcommand{\C}{{\mathbf C}}
\newcommand{\Z}{{\mathbf Z}}
\newcommand{\Q}{{\mathbf Q}}
\newcommand{\Cx}{{\mathbf C}^{\times}}
\newcommand{\Cbar}{\overline{\C}}
\newcommand{\Cxbar}{\overline{\Cx}}
\newcommand{\cA}{{\mathcal A}}
\newcommand{\cS}{{\mathcal S}}
\newcommand{\cV}{{\mathcal V}}
\newcommand{\cM}{{\mathcal M}}
\newcommand{\bA}{{\mathbf A}}
\newcommand{\cB}{{\mathcal B}}
\newcommand{\cC}{{\mathcal C}}
\newcommand{\cD}{{\mathcal D}}
\newcommand{\D}{{\mathcal D}}
\newcommand{\cs}{{\mathbf C} ^*}
\newcommand{\boldc}{{\mathbf C}}
\newcommand{\cE}{{\mathcal E}}
\newcommand{\cF}{{\mathcal F}}
\newcommand{\bF}{{\mathbf F}}
\newcommand{\cG}{{\mathcal G}}
\newcommand{\G}{{\mathbb G}}
\newcommand{\cH}{{\mathcal H}}
\newcommand{\CI}{{\mathcal I}}
\newcommand{\cJ}{{\mathcal J}}
\newcommand{\cK}{{\mathcal K}}
\newcommand{\cL}{{\mathcal L}}
\newcommand{\baL}{{\overline{\mathcal L}}}

\newcommand{\fF}{{\mathfrak F}}
\newcommand{\Mf}{{\mathfrak M}}
\newcommand{\bM}{{\mathbf M}}
\newcommand{\bm}{{\mathbf m}}
\newcommand{\cN}{{\mathcal N}}
\newcommand{\theo}{\mathcal{O}}
\newcommand{\cP}{{\mathcal P}}
\newcommand{\cR}{{\mathcal R}}
\newcommand{\Pp}{{\mathbb P}}
\newcommand{\boldp}{{\mathbf P}}
\newcommand{\boldq}{{\mathbf Q}}
\newcommand{\bbL}{{\mathbf L}}
\newcommand{\cQ}{{\mathcal Q}}
\newcommand{\cO}{{\mathcal O}}
\newcommand{\Oo}{{\mathcal O}}
\newcommand{\cY}{{\mathcal Y}}
\newcommand{\OX}{{\Oo_X}}
\newcommand{\OY}{{\Oo_Y}}
\newcommand{\otY}{{\underset{\OY}{\ot}}}
\newcommand{\otX}{{\underset{\OX}{\ot}}}
\newcommand{\cU}{{\mathcal U}}\newcommand{\cX}{{\mathcal X}}
\newcommand{\cW}{{\mathcal W}}
\newcommand{\boldz}{{\mathbf Z}}
\newcommand{\qgr}{\operatorname{q-gr}}
\newcommand{\gr}{\operatorname{gr}}
\newcommand{\rk}{\operatorname{rk}}
\newcommand{\SH}{{\underline{\operatorname{Sh}}}}
\newcommand{\End}{\operatorname{End}}
\newcommand{\uEnd}{\underline{\operatorname{End}}}
\newcommand{\Hom}{\operatorname{Hom}}
\newcommand{\uHom}{\underline{\operatorname{Hom}}}
\newcommand{\uHomY}{\uHom_{\OY}}
\newcommand{\uHomX}{\uHom_{\OX}}
\newcommand{\Ext}{\operatorname{Ext}}
\newcommand{\bExt}{\operatorname{\bf{Ext}}}
\newcommand{\Tor}{\operatorname{Tor}}

\newcommand{\inv}{^{-1}}
\newcommand{\airtilde}{\widetilde{\hspace{.5em}}}
\newcommand{\airhat}{\widehat{\hspace{.5em}}}
\newcommand{\nt}{^{\circ}}
\newcommand{\del}{\partial}

\newcommand{\supp}{\operatorname{supp}}
\newcommand{\GK}{\operatorname{GK-dim}}
\newcommand{\hd}{\operatorname{hd}}
\newcommand{\id}{\operatorname{id}}
\newcommand{\res}{\operatorname{res}}
\newcommand{\lrar}{\leadsto}
\newcommand{\im}{\operatorname{Im}}
\newcommand{\HH}{\operatorname{H}}
\newcommand{\TF}{\operatorname{TF}}
\newcommand{\Bun}{\operatorname{Bun}}

\newcommand{\F}{\mathcal{F}}
\newcommand{\Ff}{\mathbb{F}}
\newcommand{\nthord}{^{(n)}}
\newcommand{\Gr}{{\mathfrak{Gr}}}

\newcommand{\Fr}{\operatorname{Fr}}
\newcommand{\GL}{\operatorname{GL}}
\newcommand{\gl}{\mathfrak{gl}}
\newcommand{\SL}{\operatorname{SL}}
\newcommand{\ff}{\footnote}
\newcommand{\ot}{\otimes}
\def\Ext{\operatorname {Ext}}
\def\Hom{\operatorname {Hom}}
\def\Ind{\operatorname {Ind}}
\def\bbZ{{\mathbb Z}}

\newcommand{\nc}{\newcommand}
\nc{\ol}{\overline} \nc{\cont}{\on{cont}} \nc{\rmod}{\on{mod}}
\nc{\Mtil}{\widetilde{M}} \nc{\wb}{\overline} 
\nc{\wh}{\widehat}  \nc{\mc}{\mathcal}
\nc{\mbb}{\mathbb}  \nc{\K}{{\mc K}} \nc{\Kx}{{\mc K}^{\times}}
\nc{\Ox}{{\mc O}^{\times}} \nc{\unit}{{\bf \on{unit}}}
\nc{\boxt}{\boxtimes} \nc{\xarr}{\stackrel{\rightarrow}{x}}

\nc{\Ga}{\G_a}
 \nc{\PGL}{{\on{PGL}}}
 \nc{\PU}{{\on{PU}}}

\nc{\h}{{\mathfrak h}} \nc{\kk}{{\mathfrak k}}
 \nc{\Gm}{\G_m}
\nc{\Gabar}{\wb{\G}_a} \nc{\Gmbar}{\wb{\G}_m} \nc{\Gv}{G^\vee}
\nc{\Tv}{T^\vee} \nc{\Bv}{B^\vee} \nc{\g}{{\mathfrak g}}
\nc{\gv}{{\mathfrak g}^\vee} \nc{\BRGv}{\on{Rep}\Gv}
\nc{\BRTv}{\on{Rep}T^\vee}
 \nc{\Flv}{{\mathcal B}^\vee}
 \nc{\TFlv}{T^*\Flv}
 \nc{\Fl}{{\mathfrak Fl}}
\nc{\BRR}{{\mathcal R}} \nc{\Nv}{{\mathcal{N}}^\vee}
\nc{\St}{{\mathcal St}} \nc{\ST}{{\underline{\mathcal St}}}
\nc{\Hec}{{\bf{\mathcal H}}} \nc{\Hecblock}{{\bf{\mathcal
H_{\alpha,\beta}}}} \nc{\dualHec}{{\bf{\mathcal H^\vee}}}
\nc{\dualHecblock}{{\bf{\mathcal H^\vee_{\alpha,\beta}}}}
\newcommand{\ramBun}{{\bf{Bun}}}
\newcommand{\ramBuno}{\ramBun^{\circ}}

\nc{\Buntheta}{{\bf Bun}_{\theta}} \nc{\Bunthetao}{{\bf
Bun}_{\theta}^{\circ}} \nc{\BunGR}{{\bf Bun}_{G_\BR}}
\nc{\BunGRo}{{\bf Bun}_{G_\BR}^{\circ}}
\nc{\HC}{{\mathcal{HC}}}
\nc{\risom}{\stackrel{\sim}{\to}} \nc{\Hv}{{H^\vee}}
\nc{\bS}{{\mathbf S}}
\def\BRep{\operatorname {Rep}}
\def\Conn{\operatorname {Conn}}

\nc{\Vect}{{\operatorname{Vect}}}
\nc{\Hecke}{{\operatorname{Hecke}}}

\newcommand{\ZZ}{{Z_{\bullet}}}
\nc{\HZ}{{\mc H}\ZZ} \nc{\eps}{\epsilon}

\nc{\CN}{\mathcal N} \nc{\BA}{\mathbb A}

 \nc{\BB}{\mathbb B}

\nc{\ul}{\underline}

\nc{\bn}{\mathbf n} \nc{\Sets}{{\on{Sets}}} \nc{\Top}{{\on{Top}}}
\nc{\IntHom}{{\mathcal Hom}}

\nc{\Simp}{{\mathbf \Delta}} \nc{\Simpop}{{\mathbf\Delta^\circ}}

\nc{\Cyc}{{\mathbf \Lambda}} \nc{\Cycop}{{\mathbf\Lambda^\circ}}

\nc{\Mon}{{\mathbf \Lambda^{mon}}}
\nc{\Monop}{{(\mathbf\Lambda^{mon})\circ}}

\nc{\Aff}{{\on{Aff}}} \nc{\Sch}{{\on{Sch}}}

\nc{\bul}{\bullet}
\nc{\module}{{\operatorname{-mod}}}

\nc{\dstack}{{\mathcal D}}

\nc{\BL}{{\mathbb L}}

\nc{\BD}{{\mathbb D}}

\nc{\BR}{{\mathbb R}}

\nc{\BT}{{\mathbb T}}

\nc{\SCA}{{\mc{SCA}}}
\nc{\DGA}{{\mc DGA}}

\nc{\DSt}{{DSt}}

\nc{\lotimes}{{\otimes}^{\mathbf L}}

\nc{\bs}{\backslash}

\nc{\Lhat}{\widehat{\mc L}}

\newcommand{\Coh}{\on{Coh}}

\nc{\QCoh}{QC}
\nc{\QC}{QC}
\nc{\Perf}{\on{Perf}}
\nc{\Cat}{{\on{Cat}}}
\nc{\dgCat}{{\on{dgCat}}}
\nc{\bLa}{{\mathbf \Lambda}}

\nc{\BRHom}{\mathbf{R}\hspace{-0.15em}\on{Hom}}
\nc{\BREnd}{\mathbf{R}\hspace{-0.15em}\on{End}}
\nc{\colim}{\on{colim}}
\nc{\oo}{\infty}
\nc{\Mod}{\on{Mod} }

\nc\fh{\mathfrak h}
\nc\al{\alpha}
\nc\la{\alpha}
\nc\BGB{B\bs G/B}
\nc\QCb{QC^\flat}
\nc\qc{\on{QC}}

\def\w{\wedge}
\nc{\vareps}{\varepsilon}

\nc{\fg}{\mathfrak g}

\nc{\Map}{\on{Map}} \nc{\fX}{\mathfrak X}

\nc{\ch}{\check}
\nc{\fb}{\mathfrak b} \nc{\fu}{\mathfrak u} \nc{\st}{{st}}
\nc{\fU}{\mathfrak U}
\nc{\fZ}{\mathfrak Z}
\nc{\fB}{\mathfrak B}

 \nc\fc{\mathfrak c}
 \nc\fs{\mathfrak s}

\nc\fk{\mathfrak k} \nc\fp{\mathfrak p}
\nc\fq{\mathfrak q}

\nc{\BRP}{\mathbf{RP}} \nc{\rigid}{\text{rigid}}
\nc{\glob}{\text{glob}}

\nc{\cI}{\mathcal I}

\nc{\La}{\mathcal L}

\nc{\quot}{/\hspace{-.25em}/}

\nc\aff{\it{aff}}
\nc\BS{\mathbb S}

\nc\Loc{{\mc Loc}}
\nc\Tr{{\on{Tr}}}
\nc\Ch{{\mc Ch}}

\nc\ftr{{\mathfrak {tr}}}
\nc\fM{\mathfrak M}

\nc\Id{\operatorname{Id}}

\nc\bimod{\on{-bimod}}

\nc\ev{\operatorname{ev}}
\nc\coev{\operatorname{coev}}

\nc\pair{\operatorname{pair}}
\nc\kernel{\operatorname{kernel}}

\nc\Alg{\operatorname{Alg}}

\nc\init{\emptyset_{\text{\em init}}}
\nc\term{\emptyset_{\text{\em term}}}

\nc\Ev{\on{Ev}}
\nc\Coev{\on{Coev}}

\nc\es{\emptyset}
\nc\m{\text{\it min}}
\nc\M{\text{\it max}}
\nc\cross{\text{\it cr}}
\nc\tr{\on{tr}}

\nc\perf{\on{-perf}}
\nc\inthom{\mathcal Hom}
\nc\intend{\mathcal End}

\newcommand{\Sh}{\mathit{Sh}}

\nc{\Comod}{\on{Comod}}
\nc{\cZ}{\mathcal Z}

\def\interiorsymbol {\on{int}}

\nc\frakf{\mathfrak f}
\nc\fraki{\mathfrak i}
\nc\frakj{\mathfrak j}
\nc\BP{\mathbb P}
\nc\stab{st}
\nc\Stab{St}

\nc\fN{\mathfrak N}
\nc\fT{\mathfrak T}
\nc\fV{\mathfrak V}

\nc\Ob{\on{Ob}}

\nc\fC{\mathfrak C}
\nc\Fun{\on{Fun}}

\nc\Null{\on{Null}}

\nc\BC{\mathbb C}

\nc\loc{\on{Loc}}

\nc\hra{\hookrightarrow}
\nc\fL{\mathfrak L}
\nc\R{\mathbb R}
\nc\CE{\mathcal E}

\nc\sK{\mathsf K}
\nc\sL{\mathsf L}
\nc\sC{\mathsf C}

\nc\Cone{\mathit Cone}

\nc\fY{\mathfrak Y}
\nc\fe{\mathfrak e}
\nc\ft{\mathfrak t}

\nc\wt{\widetilde}
\nc\inj{\mathit{inj}}
\nc\surj{\mathit{surj}}

\nc\Path{\mathit{Path}}
\nc\Set{\mathit{Set}}
\nc\Fin{\mathit{Fin}}

\nc\cyc{\mathit{cyc}}

\nc\per{\mathit{per}}

\nc\sym{\mathit{symp}}
\nc\con{\mathit{cont}}
\nc\gen{\mathit{gen}}
\nc\str{\mathit{str}}
\nc\rsdl{\mathit{res}}
\nc\impr{\mathit{impr}}
\nc\rel{\mathit{rel}}
\nc\pt{\mathit{pt}}
\nc\naive{\mathit{nv}}
\nc\forget{\mathit{For}}

\nc\sH{\mathsf H}
\nc\sW{\mathsf W}
\nc\sE{\mathsf E}
\nc\sP{\mathsf P}
\nc\sB{\mathsf B}
\nc\sS{\mathsf S}
\nc\fH{\mathfrak H}
\nc\fP{\mathfrak P}
\nc\fW{\mathfrak W}
\nc\fE{\mathfrak E}
\nc\sx{\mathsf x}
\nc\sy{\mathsf y}

\nc\ord{\mathit{ord}}

\nc\sm{\mathit{sm}}

\nc\rhu{\rightharpoonup}
\nc\dirT{\mathcal T}
\nc\dirF{\mathcal F}
\nc\link{\mathit{link}}
\nc\cT{\mathcal T}

\newcommand{\ssupp}{\mathit{ss}}
\newcommand{\cyl}{\mathit{Cyl}}
\newcommand{\ball}{\mathit{B(x)}}

 \nc\ssf{\mathsf f}
 \nc\ssg{\mathsf g}
\nc\sq{\mathsf q}
 \nc\sQ{\mathsf Q}
 \nc\sR{\mathsf R}

\nc\fa{\mathfrak a}
\nc\fA{\mathfrak A}

\nc\trunc{\mathit{tr}}
\nc\pre{\mathit{pre}}
\nc\expand{\mathit{exp}}

\nc\Sol{\mathit{Sol}}
\nc\direct{\mathit{dir}}

\nc\out{\mathit{out}}
\nc\Morse{\mathit{Morse}}
\nc\arb{\mathit{arb}}
\nc\prearb{\mathit{pre}}


\title[Non-characteristic expansions of Legendrian singularities]{Non-characteristic expansions\\ of Legendrian singularities}

\author{David Nadler}
\address{Department of Mathematics\\University of California, Berkeley\\Berkeley, CA  94720-3840}
\email{nadler@math.berkeley.edu}

\begin{abstract}

This paper presents an algorithm to deform any Legendrian singularity to a nearby Legendrian subvariety with  singularities of a simple combinatorial nature. Furthermore, the category of microlocal sheaves on the original Legendrian singularity is  equivalent to that on the nearby Legendrian subvariety. 
This yields a concrete  combinatorial model for microlocal sheaves,
as well as an elementary method for calculating them.
\end{abstract}

\maketitle


\tableofcontents


\section{Introduction}

This paper presents an algorithm to deform any Legendrian singularity to a nearby Legendrian subvariety with  singularities of a simple combinatorial nature. Furthermore, the category of microlocal sheaves, as developed by Kashiwara-Schapira~\cite{KS}, on the original Legendrian singularity is  equivalent to that on the nearby Legendrian subvariety. 
This yields a concrete  combinatorial  model for microlocal sheaves, in terms of modules over quivers, as well as an elementary method for calculating them.

Among other applications, it allows one to establish new homological mirror symmetry equivalences for Landau-Ginzburg models with singular thimbles~\cite{Nlg3d}.
It also provides a key tool  in the foundations of microlocal sheaves, in particular in the recent development of its wrapped variant~\cite{Nwms}. Furtheremore, it underlies ongoing work on
 the structure of Weinstein manifolds~\cite{ENS} and their Fukaya categories~\cite{GPS}.

In the rest of the introduction, we first state the main results of the paper, then sketch some of the arguments involved in their proof. Finally, we elaborate further on their place in  the subject, in particular their original motivation in conjectures of Kontsevich~\cite{kont} and a program of MacPherson and collaborators (see for example~\cite{BG, GMV}) devoted  to combinatorial models of respectively Fukaya categories and microlocal sheaves.


\subsection{Main results}

A natural setting for the paper is local contact geometry. (See Sects.~\ref{s prelim} and \ref{s dir} below for detailed geometric preliminaries.) 

Let $M$ be a smooth manifold with cotangent bundle  $T^*M$ with its canonical  exact symplectic structure. 
 Introduce the  cosphere bundle
$$
\xymatrix{
\pi:S^*M = (T^*M \setminus M)/\BR_{>0} \ar[r] & M
}
$$
with its canonical cooriented contact structure. 
By the contact Darboux theorem, any contact manifold is locally equivalent to $S^*M$. 

By a Legendrian subvariety $\Lambda \subset S^*M$, we will mean a closed Whitney stratified subspace of pure dimension 
$\dim\Lambda = \dim M - 1$ whose strata are isotropic for the contact structure.
By a Legendrian singularity  $\Lambda \subset S^*M$ , we will mean the germ of a Legendrian subvariety at a point $ z\in\Lambda$  which we refer to as its center.

We will assume that any Legendrian singularity $\Lambda \subset S^*M$ we encounter can be placed in generic position in the sense that the front projection $H = \pi(\Lambda) \subset M$ is the germ at the image of the center $x = \pi(z)$ of a  Whitney stratified hypersurface, and the restriction 
$$
\xymatrix{
\pi|_{\Lambda}:\Lambda\ar[r] &  H
}
$$ 
is a finite map.
(To simplify the exposition, we will also assume the Whitney stratification of $H$ satisfies some modest local connectivity which can always be arranged by refining the stratification, see Sect.~\ref{ss: strat} for details).

Fix a field $k$, and following Kashiwara-Schapira~\cite{KS} (and reviewed in Sect.~\ref{s inv} below) given a Legendrian subvariety $\Lambda\subset S^*M$,  introduce the dg category $\mu\Sh_\Lambda(S^*M)$ of constructible microlocal complexes of $k$-modules on $S^*M$  supported along $\Lambda$. (The results and arguments of the paper work equally well without the constructible hypothesis, but we include it to keep within a traditional setting.)
In particular, for a Legendrian singularity $\Lambda\subset S^*M$ in generic position, and a small open ball $B \subset M$ around the image of the center $x = \pi(z)$, 
there is a canonical quotient presentation 
$$
\xymatrix{
\mu\Sh_{\Lambda}(S^*M) & \ar[l]_-\sim \Sh_\Lambda(B)/\Loc(B)
}
$$ in terms of the more concrete dg category $\Sh_\Lambda(B)$ of constructible  complexes on  $B$ with singular support in $\Lambda$, and its full dg subcategory $\Loc(B)$ of finite-rank derived local systems.

\medskip

 In the paper~\cite{Narb}, we introduced a natural class of Legendrian singularities $\Lambda_\cT\subset S^* \BR^{|\cT|}$, called arboreal singularities, indexed by rooted trees $\cT$ (finite connected acyclic graphs with a choice of root vertex), where $|\cT|$ denotes the number of vertices of the underlying tree.
 Each is naturally stratified by strata $\Lambda_\cT(\fp) \subset \Lambda_\cT$  
 indexed by correspondences of trees
$$
\xymatrix{
\fp = (\cR &  \ar@{->>}[l]_-q  \cS \ar@{^(->}[r]^-i & \cT)
}
$$
with $i$ an inclusion of a full subtree, and $q$ a quotient by collapsing edges. 
Moreover, the normal geometry to the stratum $\Lambda_\cT(\fp) \subset \Lambda_\cT$  is equivalent via Hamiltonian reduction to the arboreal singularity $\Lambda_\cR\subset S^* \BR^{|\cR|}$.

Passing to microlocal sheaves, we constructed in~\cite{Narb}  a canonical equivalence
$$
\xymatrix{
\mu\Sh_{\Lambda_{\cT}}(S^* \BR^{|\cT|}) \simeq \Perf(\cT)
}
$$
where  $ \Perf(\cT)$ denotes the dg category of perfect complexes over the quiver associated to the rooted tree $\cT$ where all edges are directed away from the root vertex.
We also showed that for each stratum $\Lambda_\cT(\fp) \subset \Lambda_\cT$, the corresponding   microlocal restriction functor
$$
\xymatrix{
\mu\Sh_{\Lambda_{\cT}}(S^* \BR^{|\cT|}) \ar[r] & \mu\Sh_{\Lambda_{\cR}}(S^* \BR^{|\cR|})
}
$$
 is given by the integral transform
$$
\xymatrix{
\Perf(\dirT) \ar@{->>}[r]^-{i^*} & \Perf(\mathcal S) \ar@{->>}[r]^-{q_!} &  \Perf(\mathcal R)
}$$
where $i^*$ kills the projective object $P_\alpha\in\Perf(\dirT)$  attached to  $\alpha \in \cT \setminus \cS$,
and
and $q_!$ identifies  the projective objects $P_\alpha, P_\beta\in\Perf(\cS)$  attached to $\alpha, \beta\in \cS$ such that
$q(\alpha) = q(\beta) \in \cR$.

 In Sect.~\ref{s arb} below, we review the basic properties of arboreal singularities, and introduce natural generalizations 
$\Lambda_{\cT^*}\subset S^* \BR^{|\cT^*|}$  indexed by leafy rooted trees  $\cT^*$ (finite connected acyclic graphs with a choice of root vertex
and a subset of leaf vertices), where $|\cT^*|$ denotes the sum of the number of vertices of the underlying tree  and the number of marked leaves. Similar statements to those recalled above hold for generalized arboreal singularities.

By an arboreal  Legendrian subvariety  $\Lambda \subset S^*M$, we will mean a Legendrian subvariety  such that  its normal geometry along each of its strata  is  equivalent via Hamiltonian reduction   to 
 an arboreal singularity $\Lambda_{\cT^*}\subset S^* \BR^{|\cT^*|}$.
 Thus for an arboreal Legendrian subvariety $\Lambda\subset S^*M$,  the dg category  
of microlocal sheaves $\mu\Sh_\Lambda(S^*M)$   admits an elementary description: it is the global sections of a sheaf of dg categories over $\Lambda \subset S^* M$  that assigns perfect modules over trees to small open sets,     integral transforms 
for correspondences of trees to inclusions of small open sets, along with natural higher coherences.

\medskip

Now to state  the main result of this paper, we need one further key concept.

By a deformation 
of a Legendrian singularity $\Lambda\subset S^*M$ to a Legendrian subvariety $\Lambda' \subset S^*M$, we will mean a Legendrian subvariety $\tilde \Lambda\subset S^*(M\times \BR)$ supported over the parameters $\BR_{\geq 0}$ with special Hamiltonian reduction $\tilde \Lambda_0 = \Lambda \subset S^*M$ the original Legendrian singularity, and generic Hamiltonian reductions $\tilde \Lambda_t\simeq \Lambda' \subset S^*M$ all equivalent for $t> 0$.

It is not true in general that microlocal sheaves will be constant under such deformations, 
even if we insist on strong topological properties. (See Example~\ref{ex non-char} below.)
By a non-characteristic  deformation, we will mean one for which Hamiltonian reduction at any of the parameters $\BR_{\geq 0}$ induces an equivalence on microlocal sheaves.

\begin{thm}\label{thm: intro} 
Let $\Lambda\subset S^*M$ be a Legendrian singularity. 

The expansion algorithm  of Sect.~\ref{s exp} provides a non-characteristic deformation of $\Lambda\subset S^*M$ to an arboreal Legendrian subvariety $\Lambda_\arb \subset S^*M$. \end{thm}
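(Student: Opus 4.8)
The plan is to run the expansion algorithm of Sect.~\ref{s exp} by induction on $n = \dim M$, with an inner induction on a lexicographic complexity measure attached to the stratification of $\Lambda$: first the dimension of the deepest stratum along which $\Lambda$ fails to be arboreal, then the order of tangency (branching of the sheets of $\pi|_\Lambda$) there. The base case $n = 1$ is trivial, since a Legendrian point in $S^*\BR$ is already the arboreal singularity attached to the one-vertex tree, so I assume the statement in all ambient dimensions $< n$.

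First I would reduce the problem to the deepest stratum. The assignment $\Lambda \mapsto \mu\Sh_\Lambda$ is the global sections of a sheaf of dg categories along $\Lambda$, and non-characteristicity of a deformation $\tilde\Lambda\subset S^*(M\times\BR)$ — being a statement about the Hamiltonian reductions $\tilde\Lambda_t$, $t\in\BR_{\geq 0}$ — can likewise be tested locally along $\tilde\Lambda$. Hence it suffices to build, stratum by stratum, local non-characteristic deformations that make $\Lambda$ arboreal, and then patch them over the poset of strata. Along any non-open stratum $\Lambda_\alpha\subset\Lambda$ other than the center, generic position together with the local triviality of the Whitney stratification puts $\Lambda$, near a point of $\Lambda_\alpha$, into the product of a contact ball with a lower-dimensional Legendrian singularity $\Lambda^\alpha\subset S^*\BR^{c_\alpha}$ — its normal geometry along $\Lambda_\alpha$, obtained by Hamiltonian reduction — with $c_\alpha<n$. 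By the inductive hypothesis $\Lambda^\alpha$ admits a non-characteristic deformation to an arboreal singularity $\Lambda_{\cT^*}$; spreading this deformation over a neighbourhood of $\Lambda_\alpha$ and gluing, I obtain a non-characteristic deformation of $\Lambda$ to a Legendrian subvariety $\Lambda'$ that is arboreal away from the center $z$.

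It remains to expand at the center. There $\Lambda'$ is the germ of a cone whose link $L$ is an arboreal Legendrian, and $\Lambda'$ is itself arboreal precisely when the combinatorial "direction" data of Sect.~\ref{s dir} — which records how the tree types of the strata of $L$ assemble at the cone point — is tree-like. The elementary expansion to apply pushes the cone point off $z$ in a generic cooriented direction: concretely one replaces the cone front $H' = \pi(\Lambda')$ near $x$ by its front-spreading along that direction, trading the single cone point for a family of strictly shallower singularities together with some newly created arboreal pieces, and lifts this to a Legendrian deformation $\tilde\Lambda$ supported over $\BR_{\geq 0}$. I would then check: (i) this move strictly decreases the lexicographic complexity, so finitely many iterations terminate; (ii) any direction can, after finitely many such refinements, be made tree-like, which is exactly the assertion that the resulting $\Lambda_\arb$ is arboreal; and (iii) each reduction $\mu\Sh_{\tilde\Lambda_0}\to\mu\Sh_{\tilde\Lambda_t}$ is an equivalence.

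The hard part is (iii): showing that each elementary expansion — especially the one at the center — is non-characteristic. One cannot argue by a generic perturbation, which is typically characteristic (Example~\ref{ex non-char}); the entire design of the front-spreading move is dictated by the requirement that no microlocal information be lost. I would verify this on the local models first — a high-order tangency of sheets spreading into a star/tree, a degenerate cusp spreading into a chain of $A_2$ singularities, and so on — by identifying, via the presentation $\mu\Sh_\Lambda(S^*M)\simeq\Sh_\Lambda(B)/\Loc(B)$, both microlocal sheaf categories with the same $\Perf(\cT^*)$ and both Hamiltonian reductions with the same iterated $i^*$, $q_!$ integral transforms recalled from~\cite{Narb}. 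Globalizing this local invariance through the sheaf-of-categories structure then upgrades it to the equivalence $\mu\Sh_{\tilde\Lambda_0}\simeq\mu\Sh_{\tilde\Lambda_t}$ for every $t\in\BR_{\geq 0}$, and closes the induction.
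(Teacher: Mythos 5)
Your route is genuinely different from the paper's, and before comparing merits it is worth recording just how different: the paper performs no induction on $\dim M$ and no iterative stratum-by-stratum moves. It builds the arboreal replacement in a single global step, simultaneously replacing each stratum $H_{\ul i}$ of the front by a truncated sphere bundle $C_i = \{\rho_{\ul i} = r_i\}$ in its tubular neighborhood, cutting these down by the conormal-section inequalities $\ssg_i \geq 0$ to form the expanded strata $E_i$, and then smoothing; the nesting of these pieces over the poset of strata, governed by a hierarchy of sequentially small constants $d_i, r_i, s_i$, is what produces the (leafy rooted forest) combinatorics at every point at once. The deformation and its non-characteristicity are then witnessed by one explicit functor: pushforward along Goresky's almost retraction $r\colon M \to M$, shown to be an equivalence $\Sh_{\Lambda_\sE}(M) \simeq \Sh_{\Lambda}(M)$ by induction on the number of strata (Theorem~\ref{thm main result}), from which the microlocal statement follows by the quotient presentation.

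The decisive gap in your proposal is step (iii). You propose to verify non-characteristicity of each elementary move by identifying both $\mu\Sh_{\tilde\Lambda_0}$ and $\mu\Sh_{\tilde\Lambda_t}$ with the same $\Perf(\cT^*)$. For the un-expanded end $\tilde\Lambda_0$ --- a high-order tangency of sheets, a degenerate cusp, an arbitrary Legendrian singularity --- no such identification is available: computing $\Sh_\Lambda(B)/\Loc(B)$ for a general singularity is precisely what the theorem, via Corollary~\ref{intro: cor}, is meant to make possible, so this step presupposes the conclusion. The paper avoids this by never computing either category: it constructs $r_*$ directly and proves it is an equivalence using only non-characteristic isotopies of balls and half-balls (the orthogonality of distinct codirections, Lemma~\ref{lem orthog of codirs}; the four-step isotopy in Theorem~\ref{thm constructible}) together with an explicitly constructed inverse $s$ built from $j[>r_0]_*$ and the $(\,\cdot\,)^0_!$ truncation (Proposition~\ref{prop single codir}); the arboreal combinatorics never enter the invariance argument. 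Secondarily, your gluing of locally-spread deformations across incident strata, and the termination of the lexicographic induction after each re-stratification, are asserted rather than argued; in the paper these are exactly the places where the control data and the ordering of the constants carry the weight.
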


Our use of the term  expansion reflects the idea that we perform a kind of ``spherical real blowup" to expand complicated singularities into irreducible components which then interact in a combinatorial way. One could  compare this with resolutions of singularities in algebraic geometry where complicated singularities become divisors with normal crossings. 
Proving the expansion algorithm is non-characteristic occupies Sect.~\ref{s inv} below
and is the most technically involved part of the paper. 

\begin{remark}
One can   easily refine the expansion algorithm  so that $\Lambda_\arb \subset S^*M$ is as $C^0$-close to $\Lambda\subset S^*M$
as one wishes.
\end{remark}

\begin{corollary}\label{intro: cor}
Let $\Lambda\subset S^*M$ be a Legendrian singularity.

There is a canonical equivalence between the dg category 
$\mu\Sh_{\Lambda}(S^* M)$ of microlocal sheaves  and  the global sections of a sheaf of dg categories over $\Lambda_\arb\subset S^*M$ 
that assigns perfect modules over trees to small open sets,     integral transforms 
for correspondences of trees to inclusions of small open sets, along with natural higher coherences.

\end{corollary}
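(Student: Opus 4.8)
The plan is to deduce Corollary~\ref{intro: cor} from Theorem~\ref{thm: intro} by two successive invariance statements. First, the deformation $\tilde\Lambda \subset S^*(M\times\BR)$ furnished by the expansion algorithm is non-characteristic by the theorem, so Hamiltonian reduction at every parameter $t\in\BR_{\geq 0}$ induces an equivalence on microlocal sheaves; in particular, restricting from the special fiber $t=0$ to a generic fiber $t>0$ yields a canonical equivalence
$$
\xymatrix{
\mu\Sh_{\Lambda}(S^* M) \ar[r]^-\sim & \mu\Sh_{\Lambda_\arb}(S^* M).
}
$$
One should check that this equivalence is canonical, i.e.\ independent of the auxiliary choices in the expansion algorithm and of the particular generic $t>0$; the latter follows because all generic Hamiltonian reductions $\tilde\Lambda_t$ are equivalent for $t>0$, and the former should be arranged by making the expansion algorithm functorial (or at least showing any two runs are connected by a further non-characteristic deformation). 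Second, since $\Lambda_\arb$ is an arboreal Legendrian subvariety, the discussion recalled just before the statement of Theorem~\ref{thm: intro} identifies $\mu\Sh_{\Lambda_\arb}(S^*M)$ with the global sections of the sheaf of dg categories over $\Lambda_\arb$ built from $\Perf(\cT^*)$, the integral transforms $q_! \circ i^*$ for correspondences of trees, and their higher coherences.

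In more detail, the second step is where the real content sits, though it is essentially imported: one must know that microlocal sheaves form a sheaf of dg categories on $S^*M$ (a foundational fact from Kashiwara--Schapira, reviewed in Sect.~\ref{s inv}), so that $\mu\Sh_{\Lambda_\arb}(S^*M)$ is computed by a Čech-type descent along any open cover of $\Lambda_\arb$. Choosing a cover by small open sets adapted to the stratification of $\Lambda_\arb$, each piece is, by definition of ``arboreal'', identified via Hamiltonian reduction with a model arboreal singularity $\Lambda_{\cT^*}\subset S^*\BR^{|\cT^*|}$, whose microlocal sheaf category is $\Perf(\cT^*)$ by the equivalence recalled from~\cite{Narb}; the restriction maps for inclusions of small opens correspond to microlocal restriction functors along strata, which are the integral transforms $\Perf(\dirT)\xrightarrow{i^*}\Perf(\cS)\xrightarrow{q_!}\Perf(\cR)$ recalled above; and the gluing data (the higher coherences) are precisely the homotopies recorded by the sheaf structure. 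Packaging this is a matter of bookkeeping once the local identifications and the sheaf property are in hand.

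The main obstacle is not in the corollary but in its input, Theorem~\ref{thm: intro}, together with the canonicity claim: one must show the expansion algorithm terminates in an arboreal Legendrian subvariety and, crucially, that it is non-characteristic, which the excerpt flags as the most technically involved part of the paper (Sect.~\ref{s inv}). For the corollary itself the only genuine subtlety is ensuring that the composite equivalence is \emph{canonical}: the expansion algorithm presumably depends on choices (orderings of strata, auxiliary functions defining the blowups), and one must verify the resulting equivalence on $\mu\Sh$ does not. I expect this to be handled by showing that different choices are related by non-characteristic deformations — equivalently, that the space of valid expansions is connected through non-characteristic moves — so that all the resulting equivalences agree; alternatively, one identifies a preferred target (e.g.\ the combinatorial colimit over the poset of strata of $\Lambda_\arb$) and checks independence there.

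Finally, one should remark that the statement is genuinely a corollary in the strong sense: combining the two equivalences gives a canonical equivalence $\mu\Sh_\Lambda(S^*M)\simeq \Gamma(\Lambda_\arb, \mu\Sh)$, and unwinding the right-hand side via the sheaf-of-dg-categories description yields exactly the asserted presentation in terms of perfect modules over trees, integral transforms for correspondences of trees, and higher coherences. No new geometric input beyond Theorem~\ref{thm: intro} and the recalled properties of (generalized) arboreal singularities is needed.
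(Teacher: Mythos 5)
Your proposal is correct and follows essentially the same route as the paper: the corollary is obtained by composing the equivalence $\mu\Sh_\Lambda(S^*M)\simeq\mu\Sh_{\Lambda_\arb}(S^*M)$ coming from the non-characteristic deformation of Theorem~\ref{thm: intro} (realized concretely in Sect.~\ref{s inv} as pushforward along the almost retraction) with the combinatorial description of microlocal sheaves on arboreal Legendrian subvarieties recalled from~\cite{Narb}. Your extra discussion of canonicity and independence of choices is reasonable elaboration but not a different argument.
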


One can view  the theorem and corollary as providing analogies with basic results of Morse theory. 

On the one hand, any germ of a smooth function $f:M\to \R$  can be deformed to a nearby function 
with Morse singularities. Moreover, Morse singularities are of a simple combinatorial form 
enumerated by their Morse index 
$0 \leq k \leq \dim M$.
What results is  a  combinatorial description of the cohomology of $M$  in terms of the  the Morse-Witten complex.

On the other hand, any Legendrian singularity  $\Lambda\subset S^*M $ can be deformed to a  Legendrian subvariety with arboreal singularities. Moreover, arboreal singularities are of a simple combinatorial form 
enumerated by leafy rooted trees.
What results is  a  combinatorial description of microlocal sheaves along $\Lambda\subset S^*M $ in terms of a diagram of perfect modules over trees.

We have not attempted to formulate here the sense in which  arboreal singularities are the ``stable" Legendrian singularities, though
we expect the analogy to extend in this direction.

\begin{remark}
We have used the term non-characteristic  to mean that the dg category  of microlocal sheaves  is invariant under the deformation. We expect this to be a consequence of the following more basic geometric notion. 

For any choice of Reeb vector field and resulting Reeb flow $\varphi_t$,
a reasonable Legendrian singularity $\Lambda \subset S^*M$ will admit a small $\epsilon>0$ so that the Reeb flow displaces
   $$
   \xymatrix{
   \Lambda\cap \varphi_t(\Lambda) = \emptyset, & \mbox{ for all } 0<t<\epsilon.
   }
   $$
In other words, there will be no positive  Reeb trajectories from $\Lambda$ to itself of length less than~$\epsilon$.

We expect a deformation $\tilde \Lambda \subset S^*(M\times \BR)$ will be non-characteristic if there is a small $\epsilon>0$ so 
 that the Reeb flow displaces  
 $$
 \xymatrix{
 \tilde \Lambda_s\cap \varphi_t(\tilde \Lambda_s) = \emptyset, & 
\mbox{ for all } 0<t<\epsilon, \mbox{ uniformly in } s.
 }
 $$
In other words,  there will be no positive Reeb trajectories from $\tilde \Lambda_s$ to itself of length less than~$\epsilon$
for all parameters $s\in \BR_{\geq 0}$.
\end{remark}

\begin{example}\label{ex non-char}
Take $M=\R^2$ with coordinates $x, y$. 
 Introduce the hypersurface 
 $$
 \xymatrix{
 H = \{ y(y-x^2)(y+x^2) = 0\} \subset \R^2
} $$ 
as pictured in Fig.~\ref{fig: init curve}.
It is the homeomorphic wavefront projection of a Legendrian subvariety $\Lambda \subset S^*\R^2$ given by  the closure of the restriction of the $dy$ codirection
to the conormal of the smooth locus of $H$. As a topological space, the curve $H$, and hence 
  the Legendrian subvariety $\Lambda$, is the union of three real lines all glued to each other at  zero to form a six-valent node.

  \begin{figure}[h]
  \begin{center}  
\includegraphics[trim = 0mm 70mm 0mm 70mm, clip, scale = .25]{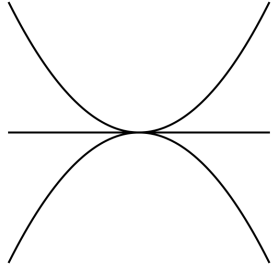}
   \caption{Front projection of initial Legendrian $\Lambda\subset S^*\BR^2$.}
   \label{fig: init curve}
\end{center}
\end{figure}

 We will describe two deformations of $\Lambda \subset S^*M$ to nearby Legendrian subvarieties with simpler singularities, but only the first will be a  non-characteristic deformation.
 
 \medskip
 1) For $s\geq 0$, consider the family of hypersurfaces
 $$
 \xymatrix{
 H_s = \{ y= 0\} \cup  \{x\geq s,  (y-(x-s)^2)(y+(x-s)^2) = 0\} \cup  \{x\leq 0,  (y-x^2)(y+x^2) = 0\}   \subset \R^2
 }
 $$ 
 pictured in Fig.~\ref{fig: nonchar def}.
  It is the homeomorphic wavefront projection of a  non-characteristic family of Legendrian subvarieties $ \Lambda_s \subset S^*\R^2$ given  by  the closure of the restriction of the $dy$ codirection
to the conormal of the smooth locus of $H_s$. When $s>0$,  as a topological space,
 the curve $H_s$, and hence the Legendrian subvariety $ \Lambda_s$, has two singularities which are four-valent nodes.

  \begin{figure}[h]
  \begin{center}  
\includegraphics[trim = 0mm 70mm 0mm 70mm, clip, scale = .25]{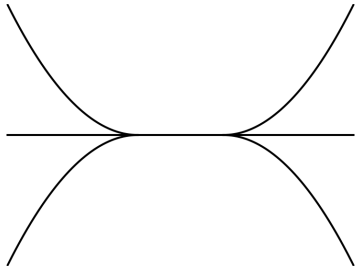}
   \caption{Front projection of non-characteristic deformation.}
   \label{fig: nonchar def}
\end{center}
\end{figure}

 \medskip
 2) For $s\geq 0$, consider the family of hypersurfaces
 $$
 \xymatrix{
 H_s = \{ y= 0\} \cup  \{(y-(x-s)^2)=0\} \cup  \{(y+(x+s)^2) = 0\}   \subset \R^2
 }
 $$ 
 pictured in Fig.~\ref{fig: char def}.
  It is the homeomorphic wavefront projection of a   family of Legendrian subvarieties $ \Lambda_s \subset S^*\R^2$
  given by  the closure of the restriction of the $dy$ codirection
to the conormal of the smooth locus of $H_s$.  When $s>0$,  as a topological space,
 the curve $H_s$, and hence the Legendrian subvariety $ \Lambda_s$, has two singularities which are four-valent nodes.
But the family is not non-characteristic: for any small $\epsilon>0$, there is  a small $s>0$ so that there is a geodesic in $\BR^2$, positive with respect to $dy$,  of length less than $\epsilon$, from a point of  $\{y+(x+s)^2=0\}$ to a point of $\{y-(x-s)^2 = 0\}$ and orthogonal to each.

  \begin{figure}[h]
  \begin{center}  
\includegraphics[trim = 0mm 70mm 0mm 70mm, clip, scale = .25]{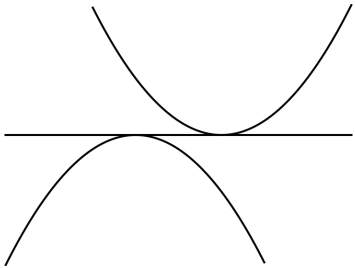}
   \caption{Front projection of characteristic deformation.}
   \label{fig: char def}
\end{center}
\end{figure}

  \end{example}

\subsection{Sketch of arguments}

We briefly sketch here the idea behind the expansion algorithm of Theorem~\ref{thm: intro} in the case of one-dimensional Legendrian singularities. As topological spaces, one-dimensional Legendrian subvarieties are nothing more than graphs, and it is not difficult to understand their deformations. But let us use this special case to give a hint about the proof of the general case which is a somewhat intricate inductive pattern built out of similar arguments. 

%
%
%

First, it suffices to take $M=\R^2$ with coordinates $x, y$. Any Legendrian singularity $\Lambda \subset S^* \BR^2$ in generic position has wavefront projection a singular plane curve $C = \pi(\Lambda) \subset \BR^2$
as pictured in Fig.~\ref{fig: init front proj}. We may assume $C$ passes through the origin $0 \in \BR^2$, is smooth away from $0$, so that $\Lambda$ defines a coorientation of $C\setminus \{0\} \subset \BR^2$,
and the fiber at the origin $\Lambda|_0 \subset S^*_0\BR^2$ is the single codirection~$dy$.  With this setup, the wavefront projection from the Legendrian $\Lambda\subset S^*\BR^2$ to the curve $C \subset \BR^2$ is a homeomorphism. (A significant complication in higher dimensions is the fact that it is only possible  to arrange for the wavefront projection to be a finite map.)

  \begin{figure}[h]
  \begin{center}  
\includegraphics[trim = 0mm 0mm 40mm 0mm, clip, scale = .25]{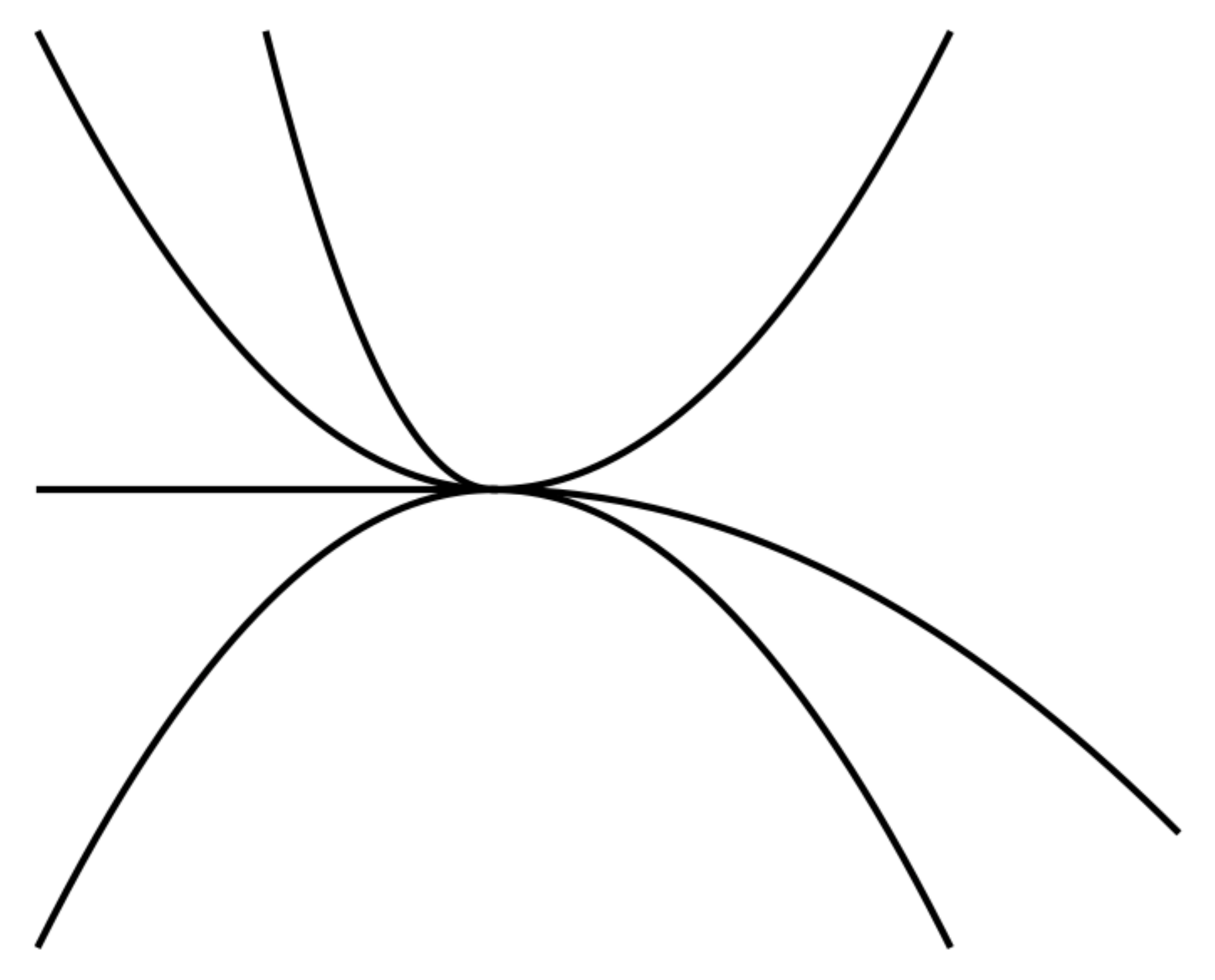}
   \caption{Initial front projection $C\subset \BR^2$.}
   \label{fig: init front proj}
\end{center}
\end{figure}

Next, consider the circle $S(r)\subset \BR^2$ of a small radius $r>0$ around the origin $0\in \BR^2$. Let us assume  $S(r')$ is transverse to $C$, for all radii $0<r'\leq r$. For a very small constant $d>0$, 
introduce the closed subarc of the circle
$$
\xymatrix{
E=S(r) \setminus  \{ (x, y) \in S(r) \, |\,  y<0, |x| <d\} \subset   \BR^2
}
$$
Consider the closed ball $B(r)\subset \BR^2$ of radius $r$ around the origin $0\in \BR^2$, 
and form a new curve given by the union
$$
\xymatrix{
C_{\prearb}= (C \setminus (C\cap B(r)) \cup E \subset \BR^2
}
$$
as pictured in Fig.~\ref{fig: new curve}.

  \begin{figure}[h]
  \begin{center}  
\includegraphics[trim = 0mm 0mm 40mm 0mm, clip, scale = .25]{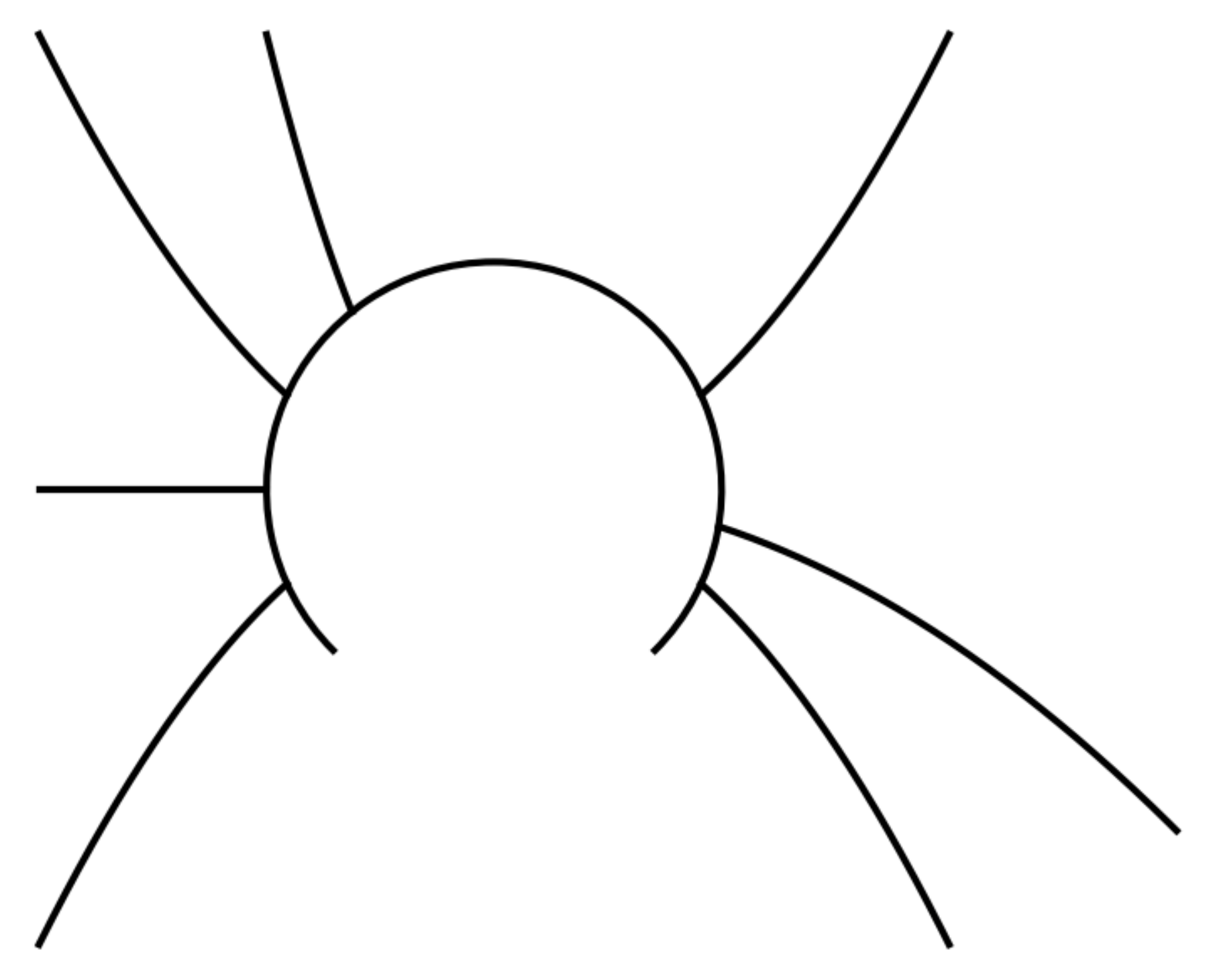}
   \caption{Intermediate curve $C_{\prearb}\subset \BR^2$.}
   \label{fig: new curve}
\end{center}
\end{figure}

Observe that $C_\prearb$ is smooth away from the finitely many points of the  intersection $C\cap S(r)$.
Moreover, away from these points, it has a canonical coorientation given by $\Lambda$ along $C \setminus (C\cap B(r))$, 
and by the outward radial differential $dr$ along $E \setminus (C\cap S(r))$.
Working locally near each of the points of $C\cap S(r)$, we can smooth $C_\prearb \subset \BR^2$ to  a new homeomorphic  curve $C_{\arb}\subset \BR^2$, as
 pictured in Fig.~\ref{fig: smoothed new curve},
 that has an unambiguous  coorientation defined everywhere. Thus there is a corresponding Legendrian $\Lambda_{\arb}\subset S^*\BR$ with homeomorphic wavefront projection to $C_\arb \subset \BR^2$.

  \begin{figure}[h]
  \begin{center}  
\includegraphics[trim = 0mm 0mm 40mm 0mm, clip, scale = .25]{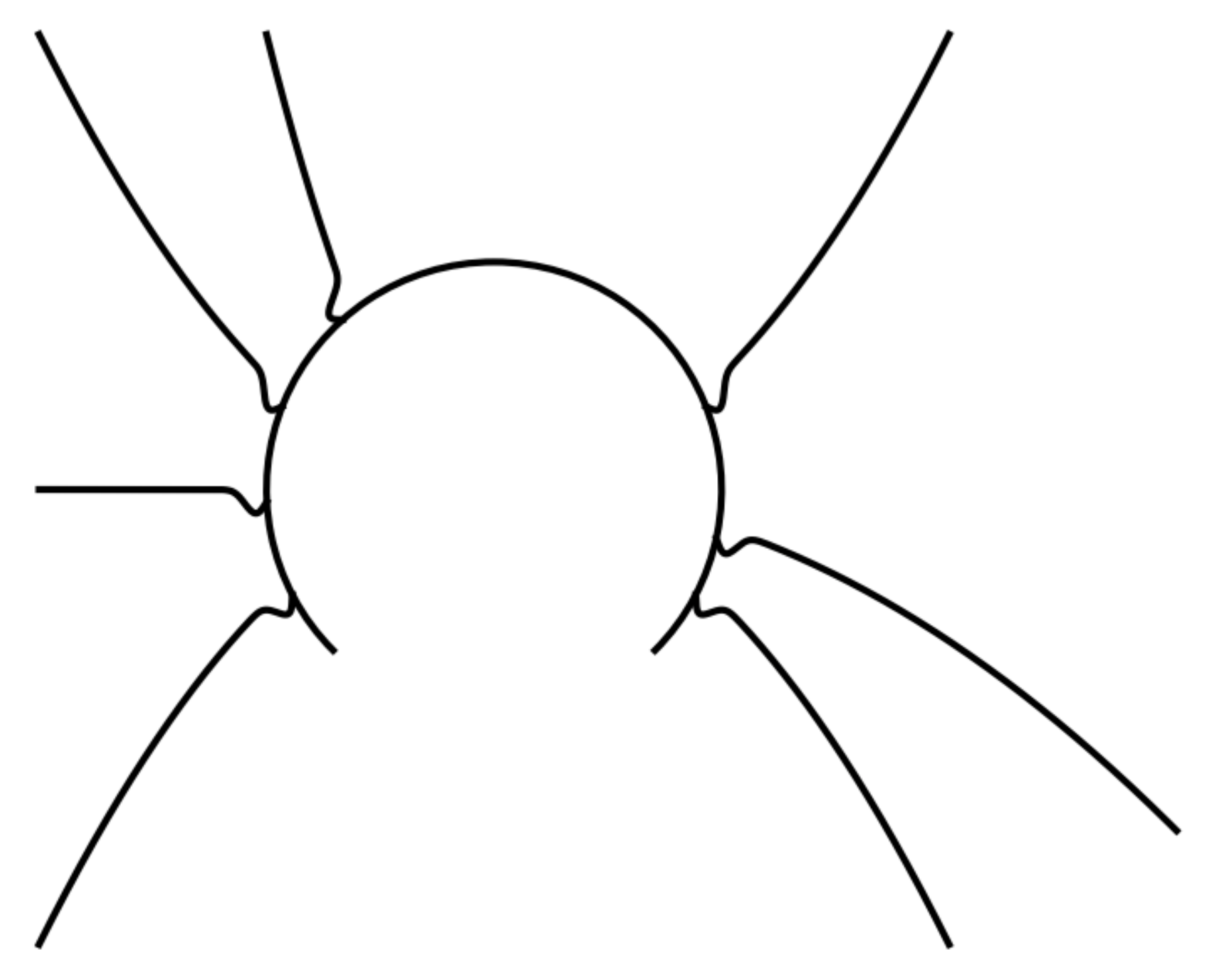}
   \caption{Final curve $C_{\arb}\subset \BR^2$.}
   \label{fig: smoothed new curve}
\end{center}
\end{figure}

Finally, the singularities of $C_\arb$, and hence  those of $\Lambda_{\arb}$, are of two combinatorial types. First, in a neighborhood of the points of the intersection  $C\cap S(r)$, the singularities are trivalent nodes. With the exception of smooth points, these are the simplest example of arboreal singularities. In a neighborhood of the boundary ends of the subarc
$$
\xymatrix{
\partial E=S(r) \setminus  \{ (x, y) \in S(r) \, |\,  y<0, |x| =d\} \subset   C_\arb
}
$$
we find univalent nodes. These are the simplest example of the generalized arboreal singularities discussed in Sect.~\ref{s arb} below.
One might hope to  find only trivalent nodes and not univalent nodes, but if the original  Legendrian $\Lambda\subset S^*\BR^2$, and hence curve $C\subset \BR^2$,  itself had a univalent node, it would be awkward to try to deform away a univalent node  rather than accept it as a reasonable singularity.
%
%
%


\subsection{Motivations}
We summarize here some of the prior and ongoing work motivating the results this paper. 

To start, 
there is a well established expectation that the Fukaya category of an exact symplectic manifold,
specifically a Weinstein manifold, admits a description in terms of microlocal sheaves supported on a Lagrangian skeleton.
(Alternatively, to more closely approach the setting of this paper, one can formulate both theories on the contactification of the exact symplectic manifold along with the Legendrian lift of the skeleton.)
There are two primary variants of this expectation: the infinitesimal Fukaya category~\cite{FOOO, Seidel} of compact branes running along the skeleton corresponds to constructible microlocal sheaves, and the wrapped Fukaya category~\cite{AS} of non-compact branes transverse to the skeleton corresponds to wrapped microlocal sheaves (see~\cite{Nwms} for further discussion). 
It is also possible to formulate a unified version involving non-compact skeleta and the partially wrapped Fukaya category~\cite{aurouxpw}.
The  expectation can be realized in many settings, notably for cotangent bundles~\cite{NZ, Nequiv}.

The Fukaya category and microlocal sheaves each have their advantages and challenges.
On the one hand, the Fukaya category is evidently a symplectic invariant, and in particular independent of the choice of a skeleton. Its objects have clear geometric appeal, though its morphisms involves challenging global analysis.
On the other hand, microlocal sheaves form  a sheaf along the skeleton, and enjoy powerful functoriality
developed by Kashiwara-Schapira~\cite{KS}.   Its objects are often quite abstract, and in particular involve choices of local polarizations. 

Without a comprehensive theory that the Fukaya category and microlocal sheaves coincide,
one is naturally led to the following questions. 

First, given a skeleton, is there a sheaf of dg categories whose global sections is the Fukaya category? This was conjectured by  Kontsevich~\cite{kont} and establishing instances of the resulting local-to-global gluing is an active theme in the subject. 

Second, is there an elementary way to define and calculate microlocal sheaves along a skeleton?
 This has been the focus of a longstanding program  pioneered  by MacPherson (notably for microlocal perverse sheaves in the holomorphic symplectic setting), and  advanced
for example by Gelfand-MacPherson-Vilonen~\cite{GMV}. 

The current paper provides an answer to this second question in terms of the linear algebra of modules over quivers. It can be applied to reduce any calculation about microlocal sheaves to a finite amount of linear algebra. This was implemented in~\cite{Nlg3d} to establish a central instance of mirror symmetry where the skeleton is a three-dimensional singular thimble. Namely, a simple application of the expansion algorithm provides the main step in the proof of the following.

\begin{thm}[\cite{Nlg3d}]
Set $M= \BC^3$ with its conic exact symplectic structure, and $\Lambda \subset M$  the Lagrangian cone over the
standard Legendrian
torus $T^2  \subset S^5$.

There are are equivalences of dg categories 
$$
\xymatrix{
\mu\Sh_{\Lambda}(M) \simeq \Coh_{\mathit{tors}}(\BP^1 \setminus\{0, 1, \infty\})
&
\mu\Sh^w_{\Lambda}(M) \simeq \Coh(\BP^1 \setminus\{0, 1, \infty\})
}
$$
where $\mu\Sh_{\Lambda}(M)$ and $\mu\Sh^w_{\Lambda}(M)$ denote respectively traditional and wrapped microlocal sheaves on $M$ supported along $\Lambda$, and $\Coh(\BP^1 \setminus\{0, 1, \infty\})$ and $\Coh_{\mathit{tors}}(\BP^1 \setminus\{0, 1, \infty\})$ denote the dg category of respectively coherent complexes and coherent complexes with proper support on $\BP^1 \setminus\{0, 1, \infty\}$.
 \end{thm}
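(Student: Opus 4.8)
The plan is to deduce the theorem from the expansion algorithm of Theorem~\ref{thm: intro} together with the combinatorial model of Corollary~\ref{intro: cor}, thereby reducing the claim to a finite computation in the linear algebra of quiver modules that is then matched against a descent presentation of $\Coh$ on the mirror. \textbf{Step 1 (setup).} The Lagrangian cone $\Lambda$ is genuinely singular at the cone vertex; its link there is the standard Legendrian torus $T^2 \subset S^5$, which is itself smooth, so $\Lambda$ is the germ of a Legendrian subvariety with a single singular stratum. The first task is to realize $\Lambda$ as a Legendrian singularity in generic position in the sense of Sect.~\ref{s prelim}: one exhibits its front as the germ of a conical Whitney stratified hypersurface $H$, whose transverse intersection with a small sphere is an explicit finite graph $\Gamma_0$. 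This places $\Lambda$ in the scope of Theorem~\ref{thm: intro}.

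\textbf{Step 2 (expansion).} Apply the expansion algorithm of Sect.~\ref{s exp} to $\Lambda$. By Theorem~\ref{thm: intro} it produces a non-characteristic deformation of $\Lambda$ to an arboreal Legendrian $\Lambda_\arb$, and hence a canonical equivalence $\mu\Sh_{\Lambda}(M) \simeq \mu\Sh_{\Lambda_\arb}(M)$, and likewise for its wrapped analogue in the sense of~\cite{Nwms}. Concretely the ``spherical real blowup'' at the vertex replaces the cone point by the cone over a graph $\Gamma$ obtained from $\Gamma_0$ by adjoining radially cooriented boundary arcs and smoothing; one computes $\Gamma$ explicitly. The resulting $\Lambda_\arb$ carries only the simplest arboreal and leafy-arboreal singularities --- trivalent-type arboreal nodes along the edges of $\Gamma$ and univalent (leafy) nodes at the new free ends --- glued in the cone pattern recorded by the stratification of $\Gamma$.

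\textbf{Step 3 (combinatorial model and identification).} By Corollary~\ref{intro: cor}, $\mu\Sh_{\Lambda_\arb}(M)$ is the global sections of the associated sheaf of dg categories on $\Lambda_\arb$, i.e.\ a finite homotopy limit over the stratification poset of $\Lambda_\arb$ of a diagram whose vertices are $\Perf$ of small quivers (rooted trees on a few vertices) and whose edges are the integral transforms $i^*$, $q_!$ recalled in Sect.~\ref{s arb}. It remains to identify this homotopy limit with $\Coh_{\mathit{tors}}(\BP^1 \setminus \{0,1,\infty\})$. On the mirror side one likewise presents this category as a homotopy limit: covering the pair of pants $\BP^1 \setminus \{0,1,\infty\}$ by a central piece and three legs running to the punctures, $\Coh_{\mathit{tors}}$ sheafifies into a diagram each of whose terms is again $\Perf$ of a small quiver, and the claim is that this descent diagram is equivalent --- term by term and compatibly with all structure maps --- to the arboreal diagram above, the threefold symmetry matching the cone structure of $\Gamma$. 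Although each term is ``small'', the homotopy limit is genuinely large, consistent with $\Coh_{\mathit{tors}}(\BP^1 \setminus \{0,1,\infty\})$ having infinitely many indecomposables; the torsion (proper-support) condition is precisely what passing to $\Perf$ over trees imposes. The wrapped statement follows by the parallel argument with all coherent modules over trees in place of perfect ones, giving $\Coh(\BP^1 \setminus \{0,1,\infty\})$.

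\textbf{Main obstacle.} The crux is this last identification, together with the explicit determination in Step 2 of $\Gamma$ and its stratification poset that feeds it: one must (i) pin down the front of the standard Legendrian torus precisely enough to read off, after expansion, the finite skeleton $\Lambda_\arb$ --- a delicate but bounded calculation --- and (ii) construct the equivalence between the arboreal diagram and the descent diagram for $\Coh$ \emph{compatibly with all higher coherences}. This coherence-compatible comparison is the genuine difficulty, cleanest to carry out by matching the two descent diagrams directly (term by term, then on morphisms, then on higher simplices) or via a universal property of the homotopy limit. Everything else reduces to machinery already in hand: non-characteristic invariance is supplied by Theorem~\ref{thm: intro}, so the only essentially new input is the bookkeeping that converts the arboreal skeleton attached to the standard Legendrian torus into the structure sheaf of $\BP^1 \setminus \{0,1,\infty\}$.
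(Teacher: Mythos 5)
This theorem is not proved in the present paper: it is quoted from~\cite{Nlg3d}, and the surrounding text says only that ``a simple application of the expansion algorithm provides the main step.'' Your strategy --- realize the cone singularity as a Legendrian singularity (via contactification and a front in generic position), expand it, invoke non-characteristic invariance and Corollary~\ref{intro: cor} to get a finite homotopy limit of categories $\Perf(\cT)$, and match that limit against a descent presentation of $\Coh_{\mathit{tors}}(\BP^1\setminus\{0,1,\infty\})$ --- is exactly the intended route. However, there are two problems with the proposal as written. The first is a dimensional error in Step 2: $\Lambda$ is a three-dimensional Lagrangian cone whose link at the vertex is the two-torus $T^2$, so the expansion replaces the cone point by the cone over a \emph{two-dimensional} complex obtained by modifying $T^2$, not by the cone over a graph $\Gamma$. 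Correspondingly, the arboreal singularities that occur are the two-dimensional ones (the singularities $\sL_T$ for trees $T$ with up to three or four vertices, and their leafy variants, appearing as normal slices to positive-dimensional strata), not trivalent and univalent nodes of a one-dimensional skeleton. Your description is modeled on the plane-curve sketch from the introduction and does not scale correctly to this example; the combinatorics of the stratification poset, which is the input to Step 3, would come out wrong.

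The second and more serious issue is that the proposal stops exactly where the proof begins. Steps 1 and 2 are machinery supplied by Theorem~\ref{thm: intro} and Corollary~\ref{intro: cor}; the actual content of the theorem is (i) the explicit determination of the expanded skeleton attached to the standard $T^2\subset S^5$ together with its stratification poset, and (ii) the term-by-term, coherence-compatible identification of the resulting diagram of quiver categories with a descent presentation of $\Coh_{\mathit{tors}}$ (respectively $\Coh$) on the pair of pants. You correctly isolate (ii) as ``the genuine difficulty'' and (i) as ``a delicate but bounded calculation,'' but you carry out neither, so what you have is a correct plan rather than a proof. To turn it into one you would need to exhibit the finite diagram explicitly and construct the comparison functor, which is precisely the work done in~\cite{Nlg3d}.
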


  In another direction, the results of this paper were invoked in~\cite{Nwms} to establish that traditional microlocal sheaves and wrapped microlocal sheaves are dual in a sense parallel to the duality of perfect complexes and coherent complexes~\cite{BNP}, or more basically, functions/cohomology and distributions/homology. More precisely, it provided the central tool in the proof of the following, reducing the assertion to a simple observation about modules over trees.

\begin{thm}[\cite{Nwms}]
Let $\Omega\subset T^*X$ be a conic open subset, and $\Lambda\subset T^*X$ a closed conic Lagrangian subvariety.

The natural hom-pairing provides an equivalence
$$
\xymatrix{
\mu\Sh_\Lambda(\Omega)  \ar[r]^-\sim & \Fun^{ex}(\mu\Sh_\Lambda^w(\Omega)^{op}, \Perf_k) 
}
$$
where $\mu\Sh_{\Lambda}(\Omega)$ and $\mu\Sh^w_{\Lambda}(\Omega)$ denote respectively traditional and wrapped microlocal sheaves on $\Omega$ supported along $\Lambda$,
 $ \Fun^{ex}$ denotes the dg category of exact functors, and $\Perf_k$ denote the dg category of perfect $k$-modules.
\end{thm}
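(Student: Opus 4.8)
\medskip

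\noindent The plan is to deduce the duality from the combinatorial model furnished by Theorem~\ref{thm: intro} and Corollary~\ref{intro: cor}, reducing it to the self-duality of perfect modules over trees together with the elementary fact that $\Fun^{ex}(-,\Perf_k)$ converts colimits of dg categories (along exact functors) into limits. The first step is cosmetic: replacing $T^*X$ by the cosphere bundle and $\Lambda$ by its Legendrian lift, the $\BR_{>0}$-equivariance identifies $\mu\Sh_\Lambda(\Omega)$ and $\mu\Sh^w_\Lambda(\Omega)$ with the microlocal, respectively wrapped microlocal, sheaf categories of a Legendrian, so we are in the setting of the present paper.

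Next, apply Theorem~\ref{thm: intro}: the expansion algorithm deforms $\Lambda$ non-characteristically to an arboreal Legendrian $\Lambda_\arb$, so $\mu\Sh_\Lambda(\Omega)\simeq\mu\Sh_{\Lambda_\arb}(\Omega)$. One also needs the same for the wrapped theory, i.e.\ that a non-characteristic deformation induces an equivalence on wrapped microlocal sheaves; since the wrapped cosheaf is assembled from the very sheaf of dg categories appearing in Corollary~\ref{intro: cor}, and the non-characteristic property is local, this should reduce to the invariance already established, ultimately resting on the geometric non-characteristic condition (no short positive Reeb trajectories from $\tilde\Lambda_s$ to itself, uniformly in $s$) that governs continuation maps on the wrapped side. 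Granting this, it suffices to treat $\Lambda_\arb$.

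For $\Lambda_\arb$ both sides are computed from one and the same diagram of trees. By Corollary~\ref{intro: cor}, $\mu\Sh_{\Lambda_\arb}(\Omega)$ is the limit, over the poset of strata $\Lambda_\arb(\fp)$, of the local categories $\Perf(\cT^*_\fp)$ of perfect modules over the leafy rooted trees, with structure functors the microlocal restrictions, i.e.\ the integral transforms $q_!\, i^*$ recalled in the Introduction. Since the wrapped theory satisfies a cosheaf gluing property, $\mu\Sh^w_{\Lambda_\arb}(\Omega)$ is the colimit of the same local categories $\Perf(\cT^*_\fp)$ along the left adjoints of those transforms. The ``simple observation about modules over trees'' is then twofold: (a) for a finite leafy rooted tree the path algebra is finite-dimensional of finite global dimension, so $\Perf(\cT^*)$ is a smooth and proper dg category, whence the hom-pairing exhibits a canonical self-duality $\Fun^{ex}(\Perf(\cT^*)^{op},\Perf_k)\simeq\Perf(\cT^*)$ (perfect modules over the quiver with its arrows reversed); and (b) under this self-duality the $\Perf_k$-linear dual of an exact functor is its adjoint, so the dual of the wrapped corestriction is the microlocal restriction $q_!\, i^*$ --- concretely, killing $P_\alpha$ and identifying $P_\alpha$ with $P_\beta$ when $q(\alpha)=q(\beta)$ are, after dualizing, exactly their adjoints. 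Both (a) and (b) are finite checks on quiver representations. Combining them, applying $\Fun^{ex}(-,\Perf_k)$ to the colimit diagram computing $\mu\Sh^w_{\Lambda_\arb}(\Omega)$ converts it, stratum by stratum and transition by transition, into the limit diagram computing $\mu\Sh_{\Lambda_\arb}(\Omega)$; passing to the limit gives the equivalence, and unwinding the identifications shows it is induced by the hom-pairing.

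The main obstacle is the homotopy-coherent bookkeeping, in two places. First, Corollary~\ref{intro: cor} packages $\mu\Sh$ and $\mu\Sh^w$ not merely as diagrams of categories and functors but with all higher coherences, so the term-by-term duality above must be promoted to an equivalence of the full $\infty$-categorical sheaf of dg categories: one must show the self-duality of $\Perf(\cT^*)$ is natural for correspondences of trees and compatible with their composition data, which forces one to use the smooth-and-proper formalism --- and the functoriality of Hamiltonian reduction recalled from \cite{Narb} --- structurally rather than object by object. Second, the wrapped counterpart of Theorem~\ref{thm: intro} is not literally proved here and must be extracted from the geometric non-characteristic condition; making this rigorous within the Kashiwara--Schapira framework for $\mu\Sh^w$ is the delicate point. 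Minor additional care is needed with $\Perf$ versus $\Ind$-completions and with the precise meaning of limits and colimits of dg categories.
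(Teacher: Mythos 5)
First, a point of logistics: this theorem is not proved in the present paper. It is quoted from \cite{Nwms} as an application of the main results; the paper only records that the expansion algorithm ``provided the central tool in the proof\ldots, reducing the assertion to a simple observation about modules over trees.'' So there is no in-paper proof to compare yours against step by step. Measured against that description, your outline is the intended one: deform $\Lambda$ non-characteristically to an arboreal $\Lambda_\arb$ via Theorem~\ref{thm: intro}, identify both sides locally with perfect modules over leafy rooted trees via Corollary~\ref{intro: cor}, use smoothness and properness of tree path algebras to get the local self-duality $\Fun^{ex}(\Perf(\cT^*)^{op},\Perf_k)\simeq\Perf(\cT^*)$, check that dualizing the wrapped corestrictions recovers the microlocal restrictions $q_!\,i^*$, and exchange the colimit for a limit.

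That said, as a proof your text is a plan with two holes that you name yourself, and they are not minor. (i) Invariance of the \emph{wrapped} category under the non-characteristic deformation is nowhere established in this paper: Theorem~\ref{thm: intro} and Sect.~\ref{s inv} concern only the traditional category, and your suggestion that the wrapped case ``should reduce to the invariance already established'' is precisely the point requiring an argument. (In \cite{Nwms} the wrapped category is constructed out of the traditional one, so its invariance is deduced from that construction rather than from a separate Reeb-flow estimate; without some such mechanism the reduction to $\Lambda_\arb$ is incomplete on the wrapped side.) (ii) The naturality of the self-duality of $\Perf(\cT^*)$ for correspondences of trees, together with all higher coherences of the (co)sheaf of dg categories, is the actual content of the ``simple observation'' and is asserted rather than checked. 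Neither gap invalidates the strategy, but as written this is an accurate reconstruction of the intended route rather than a complete proof.
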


At a more fundamental level, the current paper  also provides the basis for answering the first question, about sheafifying the Fukaya category along a skeleton, via showing the Fukaya category and microlocal sheaves coincide. 
Namely, in ongoing work with Eliashberg and Starkston~\cite{ENS}, its techniques are applied to show that
any Weinstein manifold admits an arboreal skeleton. From here, ongoing work of Ganatra-Pardon-Shende~\cite{GPS} shows that 
given an arboreal skeleton,  the Fukaya category is equivalent to microlocal sheaves along it. 
Going further, we expect that arboreal skeleta  will provide a higher dimensional notion of ribbon graphs from which one can recover the ambient symplectic manifold itself. These developments rest upon the  fact 
that the singularities of skeleta can be deformed  to arboreal singularities.


\subsection{Summary of sections}
Here is a brief summary of the specific contents of the sections of the paper.
Sect.~\ref{s prelim} summarizes standard material from singularity theory, in particular Whitney stratifications and their control data which provide the language for our geometric constructions. Sect.~\ref{s dir} summarizes the basic structure of wavefront projections in the form of directed hypersurfaces and positive coray bundles. Sect.~\ref{s arb} reviews the notion of arboreal singularities from~\cite{Narb},
then extends their  exposition to generalized arboreal singularities.
Sect.~\ref{s exp} contains  our main geometric constructions: it presents the expansion algorithm that takes a Legendrian singularity to an arboreal Legendrian subvariety. Sect.~\ref{s inv} contains our main technical arguments: it proves that the expansion algorithm is non-characteristic in the sense that the dg category of microlocal sheaves is invariant under it.
Finally, a brief appendix summarizes the data that goes into  the expansion algorithm, in particular the hierarchy of the chosen constants.


\subsection{Acknowledgements}
I thank  D. Auroux, D. Ben-Zvi, M. Goresky, J. Lurie, I. Mirkovi\'c, N. Rozenblyum, D. Treumann, H. Williams, L. Williams, and E. Zaslow for their interest, encouragement, and valuable comments.
I am additionally grateful to D. Treumann for generously creating the pictures appearing in the figures.
Finally, I am grateful to the NSF for the support of grant DMS-1502178.


\section{Preliminaries}\label{s prelim}

This section collects  standard material on stratification theory following Mather~\cite{Mather}.

We write $\R$ for the real numbers, $\R_{>0}$ for the positive real numbers, and $\R_{\geq 0}$ for the non-negative real numbers. All manifolds will be smooth and equidimensional and all maps will be smooth unless otherwise stated.


\subsection{Whitney stratifications}
%

Let $M$ be an manifold and $X\subset M$ a closed subspace.  A {\em Whitney stratification} of $X$ is
a disjoint decomposition 
$$
X = \cup_{\alpha \in A} X_\alpha
$$ 
into submanifolds $X_\alpha\subset M$ satisfying:

\medskip

{\em (Axiom of the frontier)}  If $X_\alpha \cap \ol X_\beta \not = \emptyset $, then $X_\alpha \subset \ol X_\beta$.

\medskip
 {\em (Local finiteness)}
Each point $x\in M$ has an open neighborhood $U\subset M$ such that $U \cap X_\alpha  = \emptyset$  for all but finitely many $\alpha\in A$.

\medskip
 {\em (Whitney's condition $B$)}  If sequences $a_k\in X_\alpha$ and $b_k \in X_\beta$ converge to some $a\in X_\alpha$, and the sequence of secant lines $ [a_k b_k] $ (with respect to a local coordinate system)
and tangent planes $T_{b_k} X_\beta$ both converge, then $\lim_k [a_k b_k]  \subset \lim_k T X_\beta$.

\medskip

The index set $A$ is naturally a poset with $\alpha<\beta$ when $X_\alpha \not = X_\beta$ and $X_\alpha \subset  \ol X_\beta$.

We will say that $X$ has {\em dimension} $k$ if it is the closure of its strata of dimension $k$.

\begin{remark}
Note that we can trivially extend any Whitney stratification of $X\subset M$ to a Whitney stratification of all of $M$
by including the open complement $M\setminus X$ itself as  a stratum.
\end{remark}

Given a Whitney stratification of $X\subset M$, by a {\em small open ball} $B\subset M$ around a point $x\in M$, we will always mean an open  ball of   radius $R>0$ with respect to the Euclidean metric of some local coordinate system. Moreover, we will assume  that for any  $0<r\leq R$,
 the sphere around $x$ of  radius   $r$  is transverse to the strata.  Whitney's condition $B$ guarantees
 this holds for any local coordinate system and small enough $R>0$.

%

%
%
%


\subsection{Control data}

Let $M$ be an manifold.

A {\em tubular neighborhood} of a submanifold $Y \subset M$ consists of an  inner product on the normal bundle 
$p:E\to Y$,
and a smooth embedding 
$$
\xymatrix{
\varphi: E[{<\epsilon}] = \{ v\in E \, |\, \langle v, v\rangle < \epsilon \} \ar@{^(->}[r] & M
}$$
of the open ball bundle determined by some $\epsilon>0$. The image $T =\varphi(E[{<\epsilon}])$ is required to be an open neighborhood of $Y \subset M$, and the restriction of $\varphi$ to the zero section $ Y \subset E$ is required to be the identity map  to $Y\subset M$. By rescaling the inner product, we can assume
that $\epsilon = 1$. 

By transport of structure, the neighborhood $T$ comes equipped with the tubular distance function
$\rho:T\to \R_{\geq 0}$
and tubular projection $\pi:T \to Y$ defined by
$$
\xymatrix{
 \rho(x) = \langle \varphi^{-1}(x),  \varphi^{-1}(x) \rangle
& 
 \pi(x) = p( \varphi^{-1}(x))
 }
 $$
We will  write  $(T, \rho, \pi)$ to   denote  the tubular neighborhood and  remember that $\pi: T\to Y$ is the open unit ball   bundle in 
a vector bundle with inner product  inducing $\rho:T\to \R_{\geq 0}$.

Given small $\epsilon\geq 0$, we have the inclusions
$$
\xymatrix{
j[\epsilon]:S[\epsilon] = \{x\in T \, |\, \rho(x) = \epsilon\}\ar@{^(->}[r] & T
&
j[<\epsilon]:T[<\epsilon] =  \{x\in T \, |\, \rho(x) < \epsilon\}\ar@{^(->}[r] & T
}
$$
 and similarly with $<$ replaced by 
 $\leq, >,$ or $\geq$.
Of course when $\epsilon = 0$, we have $T[<\epsilon] = \emptyset$, $T[\leq \epsilon] =  S[\epsilon] = Y$, 
$T[> \epsilon] = T\setminus Y$,
$T[\geq \epsilon] = T$.

\medskip

Any Whitney stratified subspace $X\subset M$ admits a compatible {\em system of control data} consisting of a  tubular neighborhood $(T_\alpha, \rho_\alpha, \pi_\alpha)$ of each stratum $X_\alpha \subset X$. Whenever $\alpha <\beta$, the tubular distance functions and tubular projections are required to satisfy
$$
\xymatrix{
\pi_\alpha(\pi_\beta(x)) = \pi_\beta(x)
&
\rho_\alpha(\pi_\beta(x)) = \rho_\alpha(x)
}
$$
on the common domain of points  $x\in T_\alpha\cap T_\beta$ such that $\pi_\beta(x) \in T_\alpha$.

A key property of a system of control data is the fact that 
the product  map 
$$
\xymatrix{
\rho_\alpha \times \pi_\alpha  : T_\alpha  \ar[r] &  \R_{>0} \times X_\alpha 
}
$$ 
has surjective differential when restricted to any stratum $X_\beta\subset X$ with $\beta> \alpha$.


\subsection{Almost retraction}\label{s almost ret}

Let $M$ be a manifold. 

Let $X \subset M$ be a closed subspace with Whitney stratification
 $\{X_\alpha\}_{\alpha \in A}$. 

Suppose given a compatible  system of control data $\{(T_\alpha, \rho_\alpha, \pi_\alpha)\}_{\alpha\in A}$.

%
%
Following Goresky~\cite{Goresky1, Goresky2}, we review some further fundamental constructions.

Fix once and for all a small $\epsilon>0$.

Choose a {\em family of lines} subordinate to the system of control data. This consists of a retraction 
$$
\xymatrix{
r_\alpha: T_\alpha[<2\epsilon] \setminus X_\alpha \ar[r] &  S_\alpha[2\epsilon]
}
$$ for each  $\alpha\in A$  satisfying the following. For $\alpha<\beta$, one requires  $r_{\alpha}|_{X_\beta}$ is smooth
and the compatibilities
$$
\xymatrix{
r_{\alpha} r_\beta = r_\beta r_\alpha &
\rho_{\alpha} r_\beta = \rho_\alpha
&
\rho_\beta r_\alpha = \rho_\beta
&
\pi_\alpha r_\alpha = \pi_\alpha
&
\pi_\alpha r_\beta = \pi_\alpha
}$$
on their natural domains.
The retractions provide homeomorphisms 
$$
\xymatrix{
h_\alpha:T_\alpha[<2\epsilon] \setminus X_\alpha \ar[r]^-\sim &  S_\alpha[2\epsilon] \times (0, 2\epsilon)
&
h_\alpha = r_\alpha \times \rho_\alpha
}$$
and for $B\subset A$, more general collaring homeomorphisms
$$
\xymatrix{
h_B: \bigcap_{\alpha\in B} (T_\alpha[<2\epsilon] \setminus X_\alpha) \ar[r]^-\sim &  
(\bigcap_{\alpha\in B} S_\alpha[2\epsilon] )\times \prod_{\alpha \in B} (0, 2\epsilon)
}
$$
$$
\xymatrix{
h_B = (r_{\alpha_1}r_{\alpha_2} \cdots r_{\alpha_k}) \times \prod_{\alpha\in B} \rho_\alpha
}$$
where $k=|B|$ and the indices $\alpha_i \in B$ can be arbitrarily ordered thanks to 
 $r_{\alpha_i} r_{\alpha_j} =r_{\alpha_j} r_{\alpha_i}$.

Fix a smooth nondecreasing function $q:\R\to \R$ such that $q(t) = 0$, for $t\leq \epsilon$, and $q(t) = t$, for $t\geq 2\epsilon$. For each stratum $X_\alpha \subset X$, introduce the mapping
$$
\Pi_\alpha:M\longrightarrow M
\hspace{3em}
\Pi_\alpha(x) = 
\left\{
\begin{array}{cl}
x & \mbox{ when } x\not\in T_\alpha[<2\epsilon]\\
h^{-1}_\alpha(r_\alpha(x), q(\rho_\alpha(x))) &  \mbox{ when }  x\in T_\alpha[<2\epsilon]
\end{array}
\right.
$$
It is continuous, homotopic to the identity,  and satisfies $\Pi_\alpha(x) =\pi_\alpha(x)$ when $x\in T_\alpha[\leq \epsilon]$.
Moreover, the mappings commute $\Pi_{\alpha} \Pi_{\beta} = \Pi_{\beta} \Pi_{\alpha}$, for  $\alpha, \beta \in A$. To confirm this, if $x\not \in T_\alpha[<2\epsilon]$, then $\Pi_\alpha(x) = x$, and $\Pi_\beta(x) \not \in T_\alpha[<2\epsilon]$
since $\rho_\alpha r_\beta(x) = \rho_\alpha(x)$, so $\Pi_{\alpha} \Pi_{\beta} (x) = \Pi_\beta(x) = \Pi_{\beta} \Pi_{\alpha}(x)$. 
If $x\in T_\alpha[<2\epsilon] \cap T_\beta[<2\epsilon]$, then 
$$
\xymatrix{
\Pi_\alpha \Pi_\beta (x) = h_{\{\alpha, \beta\}}^{-1}(r_\alpha r_\beta(x), q(\rho_\alpha(x)), q(\rho_\beta(x))) 
=\Pi_\beta \Pi_\alpha (x)
}
$$

Now introduce the composition
$$
\xymatrix{
r:M \ar[r] & M &
r = \Pi_{\alpha_0} \Pi_{\alpha_1} \cdots  \Pi_{\alpha_N}
}
$$
where $N+1=|A|$ and the indices $\alpha_i \in A$ can be arbitrarily ordered thanks to $\Pi_{\alpha_i} \Pi_{\alpha_j} =\Pi_{\alpha_j} \Pi_{\alpha_i}$.
 It is continuous, homotopic to the identity,
 and satisfies 
 $$
 \xymatrix{
r(x) =\pi_\alpha(x) &  \mbox{ when }  x\in T_\alpha[\leq \epsilon] \setminus  \bigcup_{\beta<\alpha}  (T_\alpha[\leq\epsilon] \cap T_\beta[< 2\epsilon])
}
$$

The restriction of $r$ to  the open subspace 
$$
U[< \epsilon] = \bigcup_{\alpha  \in A} T_{\alpha}[<\epsilon] \subset M
$$
 is almost a retraction of $U[< \epsilon]$ to $X$  in that it maps $U[< \epsilon]$ to $X$ and the restriction of $r$ to $X$ is homotopic to the identity.
In fact,  the restriction of $r$ to $X$ is the identity on any $x\in X$ whenever
$x \in X_\alpha \setminus \cup_{\beta<\alpha} (X_\alpha \cap T_\beta[<2\epsilon])$ for some $\alpha \in A$.

\begin{remark}\label{rem induction}
The above constructions are well suited to inductive arguments. Fix a closed stratum $X_{0}\subset X$,
and set $M' = M\setminus X_{0}$, $X'= X\setminus X_{0}$.
The system of control data and family of lines for $X\subset M$ immediately provide the same for
$X'\subset M'$ by deleting the data for $X_{0}\subset X$. 
The resulting almost retraction $r':M'\to M'$
satisfies the following evident compatibility with the almost retraction $r: M\to M$. Introduce the composition
$$
\xymatrix{
\hat r_{0}:M \ar[r] & M &
\hat r_{0} = \Pi_{\alpha_1} \cdots  \Pi_{\alpha_N}
}
$$
so that $r = \Pi_{0}  \hat r_{ 0}$. Then $\hat r_{0}|_{X_{0}} = \id_{X_{0}}$ and $ \hat r_{0} |_{M'} = r'$.
\end{remark}

\begin{remark}
By convention, when $X=\emptyset$,  we set $r = \id_M:M\to M$ to be the identity.  This would  naturally result from invoking the above constructions with the complement $M\setminus X \subset M$ itself as a stratum.
This is a trivial modification since the tubular neighborhood of such an open stratum is simply itself.
\end{remark}


\subsection{Multi-transversality}

Let $M$ be an manifold.

We say that a finite set $\fF = \{f_i\}_{i\in I}$ of functions $f_i: M \to \R$ is {\em multi-transverse} at a value $s_I = (s_i)\in  \R^I$ if for any subset $J\subset I$, the product map
$$
\xymatrix{
F_J  = \prod_{j\in J} f_j : M   \ar[r] &  \R^J
}
$$ 
is a submersion along $F_J^{-1}(s_J) \subset M$ where $s_J =(s_j)\in \R^J$ is the image of  $s_I= (s_i)\in  \R^I$ under the natural projection  $\pi_J:\R^I\to \R^J$. 

If $\fF = \{f_i\}_{i\in I}$ is multi-transverse
at $s_I= (s_i)\in \R^I$, then the level-sets 
$$
\xymatrix{
H_i(s_i) = f_i^{-1}(s_i)\subset M
}
$$ 
are smooth hypersurfaces (if non-empty) and multi-transverse in the following sense.
For any subset $J\subset I$, the intersection 
$$
\xymatrix{
H_J(s_J) = \bigcap_{j\in J} H_j(s_j) \subset M
}
$$ 
is a smooth submanifold
of codimension $|J|$ (if non-empty), and 
 transverse to $H_i(s_i) \subset M$, for each $i\in I\setminus J$.

\medskip

Suppose  a finite set 
 $\fF= \{f_i\}_{i\in I}$ of functions  $f_i: M \to \R$ is multi-transverse at a value  $s_I = (s_i)\in \R^I$. Then given 
 another function $f:M\to \R$ and a value $s\in \R$, we may find a nearby value $s'\in \R$ so that the extended set
  $ \{f\}\coprod  \{f_i\}_{i\in I}$ of functions  is multi-transverse at the extended value $ (s', s_i)\in \R\times \R^I$. 
  This follows from Sard's Theorem: there is a nearby regular value $s'\in \R$ for the function
  $$
  \xymatrix{
  \coprod_{J \subset I} f|_{H_J(s_J)}: \coprod_{J \subset I} H_J(s_J)\ar[r] & \R
  }
  $$
Thus  given any finite set  $\fF = \{f_i\}_{i\in I}$ of functions   $f_i: M \to \R$ and a value  $s= (s_i)\in \R^I$,
by induction on any order of $I$,
 there is a nearby value $s' = (s'_i)\in \R^I$ such that
$\fF = \{f_i\}_{i\in I}$ is multi-transverse at $s' = (s'_i)\in \R^I$.

\begin{example}
(1) Let $M= \R$ with coordinate $x$. Set $f_1 = f_2 = x$.
Then $\fF = \{f_1, f_2\}$ is multi-transverse at $(s_1, s_2)\in \R^2$ if and only if $s_1 \not =  s_2$.

(2)
Let $M= \R^2$ with coordinates $x_1, x_2$. Set $f_1 = x_1$, $f_2 = x_2$, and $f_3 = x_1 + x_2$. Then $\fF = \{f_1, f_2, f_3\}$ is  multi-transverse at  $(s_1, s_2, s_3)\in \R^3$ if and only if $s_3 \not = s_1 + s_2$. 
\end{example}

More generally, suppose given a set
$\fF = \{f_i\}_{i\in I}$ of functions $f_i: U_i \to \R$ defined on a locally finite set $\fU = \{U_i\}_{i\in U}$ of open subsets $U_i \subset M$.
We will say that  such a set 
$\fF = \{f_i\}_{i\in I}$ is  {\em multi-transverse} at a value $s_I = (s_i)\in  \R^I$ if for any finite subset $J\subset I$ the product map
$$
\xymatrix{
F_J  = \prod_{j\in J} f_i : \bigcap_{j\in J} U_j   \ar[r] &  \R^J
}
$$ 
is a submersion along $F_J^{-1}(s_J) \subset M$.
Note that if $I$ is finite, and $U_i = M$, for all $i\in I$, then we recover the previous notion.

For a key example of such a multi-transverse set of functions,
consider a closed subspace  $X\subset M$  with Whitney stratification $\{X_\alpha\}_{\alpha \in A}$. 
For any compatible system of control data $\{ (T_\alpha, \rho_\alpha, \pi_\alpha)\}_{\alpha\in A}$,
the tubular distance functions 
$\fF = \{ \rho_\alpha\}_{\alpha\in A}$  defined on the tubular neighborhoods $\fU= \{T_\alpha\}_{\alpha\in A}$ 
are  multi-transverse at any completely nonzero value $s_A = (s_\alpha)\in \R_{>0}^A$.
Moreover, for any stratum $X_\beta\subset X$, the restrictions 
$\fF_\beta = \{ \rho_\alpha|_{X_\beta}\}_{\alpha<\beta}$  defined on the intersections $\fU_\beta= \{T_\alpha\cap X_{\beta}\}_{\alpha<\beta}$ 
are  multi-transverse at any completely nonzero value. 


\section{Directed hypersurfaces}\label{s dir}

\subsection{Notation}

Let $M$ be a  manifold.

Let $T^*M$ denote its  cotangent bundle, and $\theta \in \Omega^1(T^*M)$  the canonical one-form.
We will identify $M$
with the zero-section of $T^*M$.

 Introduce  the spherical projectivization 
$$
\xymatrix{
S^*M = (T^*M\setminus M)/\R_{>0}
}$$ 

If we choose a Riemannian metric on $M$, we can canonically identify  $S^*M$ with the unit cosphere bundle 
$$
\xymatrix{
U^*M =\{v\in T^*M \, |\, \|v\| = 1\}
}$$
The canonical one-form $\theta \in \Omega^1(T^*M)$  restricts to equip $U^*M$ and hence $S^*M$ with a contact form $\alpha \in \Omega^1 (S^*M)$ (depending on the  metric) and a canonical contact structure $\xi = \ker (\alpha) \subset TS^*M$ (independent of the choice of metric).

Introduce  the projectivization 
$$
\xymatrix{
P^*M = (T^*M \setminus M)/\R^\times
}$$ 
 
 We have the natural two-fold cover $S^*M \to P^*M$ which in particular equips $P^*M$ with a
 compatible canonical contact structure (though not a contact form).

Given a submanifold $Y \subset M$, 
we have  its conormal bundle, its spherical projectivization, and its projectivization respectively  
$$
\xymatrix{
T^*_{Y}M \subset T^*M
&
S^*_{Y }M \subset S^*M
&
P^*_{Y }M \subset P^*M
}
$$
The first is a conical Lagrangian submanifold 
and the latter two are Legendrian submanifolds.

\subsection{Good position}

\begin{defn}
By a {\em hypersurface} $H\subset M$, we will mean a  subspace admitting a
Whitney stratification with $\dim H = \dim M -1$. 
\end{defn}

Given a hypersurface $H \subset M$, and any open, dense smooth locus $H^{sm} \subset H$, we have a natural diagram of maps
$$
\xymatrix{
S^*_{H^{sm}} M \ar[r] & P^*_{H^{sm}} M \ar[r] & H^{sm}
}
$$
where the first  is a two-fold cover and the second is a diffeomorphism.

\begin{defn}
A hypersurface $H\subset M$ is said to be in {\em good position} if 
for some (or equivalently any)  open, dense smooth locus $H^{sm}\subset H$,
the closure 
$$
\cL= \ol{P^*_{H^{\mathit{sm}}} M} \subset P^* M,
$$ 
is finite over $H$.
If this holds, we refer to $\cL$ as 
the {\em coline bundle} of $H$.

\end{defn}

\begin{remark}
Equivalently,  $H\subset M$ is in  good position if   the closure 
$$
\cR= \ol{S^*_{H^{\mathit{sm}}} M} \subset S^*M
$$ 
 is finite over $H$.
If so, we refer to $\cR$ as 
the {\em coray bundle} of $H$. 

\end{remark}

\begin{remark}
If $H\subset M$ is in good position, we have a natural diagram of finite maps  
$$
\xymatrix{
\cR \ar[r] &  \cL \ar[r] & H
}
$$
where the first is a two-fold cover and the second is a diffeomorphism over $H^{sm}\subset H$.

\end{remark}

\begin{example}
(1) All  real algebraic or subanalytic plane curves are in good position.

(2) Nondegenerate quadratic singularities (singular Morse level-sets) of dimension strictly greater than one are not in good position.
\end{example}

\begin{remark}
If $H\subset M$ is in good position, then Whitney's condition $B$ (in fact Whitney's condition $A$) implies  its coline bundle $\cL$ and coray bundle $\cR$ are conormal to  each stratum $H_\alpha\subset H$ in the sense  that
$$
\xymatrix{
\cL|_{H_\alpha} \subset P^*_{H_\alpha} M
&
\cR|_{H_\alpha} \subset S^*_{H_\alpha} M
}
$$
\end{remark}

\subsection{Coorientation}

\begin{defn}

By a {\em coorientation}  of   a hypersurface $H\subset M$  in good position,
we will  mean   a section 
\vspace{-0.5em}
$$
\xymatrix{
\cR  \ar[r] &  \cL \ar@/_0.5pc/[l]_-\sigma
}$$
 of the natural two-fold cover from the coray to coline bundle.
\end{defn}

\begin{defn}

(1)
By a  {\em directed hypersurface} inside of $M$, we will mean a hypersurface $H\subset M$ in good position equipped with a coorientation $\sigma$.

(2) 
By the {\em positive coray bundle} of a directed hypersurface,
we will mean the image of the coline bundle under the coorientation  
$$
\xymatrix{
\Lambda = \sigma(\cL) \subset S^*M
}$$
\end{defn}

\section{Arboreal singularities}\label{s arb}

We recall and expand upon the local notion of arboreal singularity from~\cite{Narb}.


\subsection{Terminology}

We gather here for easy reference some  language used below.

By a {\em graph} $G$, we will mean a set of {\em vertices} $V(G)$ and a set of {\em edges} $E(G)$ satisfying the simplest convention  that $E(G)$ is a
subset of the set of two-element subsets of  $V(G)$. Thus  $E(G)$  records whether pairs of distinct elements  of $V(G)$ are connected by an edge or not. 
We will write $\{\alpha, \beta\} \in E(G)$ and say that $\alpha, \beta \in V(T)$ are {\em adjacent}  if an edge connects them.

By a  {\em tree} $T$, we will mean a nonempty, finite, connected, acyclic graph. Thus for any pair of  vertices $\alpha, \beta\in V(T)$,
there is a unique   minimal path (nonrepeating sequence of  edges) connecting them. We call the number of edges in the sequence the {\em distance} between the
vertices.

Given a graph $G$, by a {\em subgraph} $S\subset G$, we will mean a full subgraph (or vertex-induced subgraph) in the sense that its vertices are a subset $V(S) \subset V(G)$ and
its edges are the subset $E(S) \subset E(G)$ such that $\{\alpha,\beta\} \in E(S)$ if and only if $\{\alpha,\beta\}\in E(G)$ and $\alpha, \beta\in V(S)$.
By the {\em complementary subgraph}
$G\setminus S \subset G$, we will mean the full subgraph on the complementary vertices $V(T\setminus S) = V(T) \setminus V(S)$.

Given a tree $T$, any subgraph $S\subset T$ is a disjoint union of trees. By a {\em subtree} $S \subset T$, we will mean a subgraph that is a tree.
The complementary subgraph
$T\setminus S \subset T$ is not necessarily a tree but in general a disjoint union of subtrees.

Given a tree $T$, by a {\em quotient tree} $T\twoheadrightarrow Q$, we will mean a tree $Q$ with a surjection $V(T)\twoheadrightarrow V(Q)$ such that each fiber comprises the vertices of a subtree of $T$. 
We will refer to such subtrees as the {\em fibers} of the quotient $T\twoheadrightarrow Q$.

By a {\em partition} of a tree $T$, we will mean a  collection of subtrees $T_i\in T$, for $i\in I$, that are disjoint $V(T_i) \cap V(T_j) = \emptyset$, for $i\not = j$, and cover $V(T) = \coprod_{i\in I} V(T_i)$.
Note that  the data of a quotient $T\twoheadrightarrow Q$ is equivalent to the partition of $T$ into  the fibers.

By a {\em rooted tree} $\dirT=(T, \rho)$, we will mean  a tree  $T$ equipped with a distinguished vertex $\rho\in V(T)$  called the {\em root vertex}.
The vertices $V(\dirT)$ of a rooted tree naturally form a poset with the root vertex $\rho\in V(\dirT)$ the  unique minimum
and $\alpha< \beta \in V(\dirT)$ if the former is nearer to $\rho$ than the latter. 
To each non-root vertex $\alpha \not = \rho \in V(\dirT)$ there is a unique {\em parent vertex} $\hat \alpha\in V(\dirT)$ such that $\hat\alpha<  \alpha$ and there are no  vertices strictly between them. The data of the root vertex
$\rho$ 
and parent vertex relation $\alpha\mapsto \hat \alpha$  recover the poset structure and in turn the rooted tree.

By a  {\em forest} $F$, we
will mean a
nonempty, finite, possibly disconnected graph with acyclic connected components. 
Thus 
$F = \coprod_i T_i$ is a nonempty disjoint union of finitely many trees.

By a  {\em rooted forest} $\dirF$, we will mean  a forest  $F$ equipped with a distinguished root vertex in each of its connected components.
Thus $\dirF =  \coprod_i \cT_i = \coprod_i (T_i, \rho_i) $ is a nonempty disjoint union of finitely many rooted trees.
The vertices $V(\dirF)$ of a rooted forest naturally form a poset with minima the root vertices and vertices in distinct connected components incomparable.


\subsection{Arboreal singularities}

To each tree $T$, there is associated a stratified space $\sL_T$  called an {\em arboreal singularity} (see \cite{Narb} and in particular the characterization recalled in Thm.~\ref{thm: arboreal poset} below). It is of pure dimension $|T| - 1$ where we write $|T|$ for  the number of vertices of $T$.
It comes equipped with a compatible   metric and    contracting $\R_{>0}$-action with a single fixed point.  We refer to the compact subspace $\sL_T^\link \subset \sL_T$ of points unit distance from the fixed point
as the {\em arboreal link}. The   $\R_{>0}$-action provides   a canonical identification
$$
\xymatrix{
\sL_T \simeq \Cone(\sL_T^\link)
}
$$
so that one can regard the arboreal singularity $\sL_T$ and arboreal link $\sL_T^\link$ as  respective local models for a normal slice and normal link to a stratum in a stratified space. It follows easily from the constructions that the arboreal link $\sL_T^\link$ is homotopy equivalent to a bouquet of $|T|$ spheres each of dimension $|T|-1$.

As a stratified space, the arboreal link $\sL_T^\link$, and hence the arboreal singularity $\sL_T$ as well, admits a simple  combinatorial description. 
 To each tree $T$, there is a natural finite poset $\fP_T$ whose elements are correspondences of trees
$$
\xymatrix{
\fp=(R &  \ar@{->>}[l]_-q  S \ar@{^(->}[r]^-i & T)
}
$$
where $i$ is the inclusion of  a subtree and $q$ is a quotient  of trees. Thus the tree $S$ is the full subgraph (or vertex-induced subgraph) on a subset of vertices of $T$; the tree $R$ results from contracting a subset of edges of $S$. 
Two such correspondences   
$$
\xymatrix{
\fp=(R &  \ar@{->>}[l]_-q  S \ar@{^(->}[r]^-i & T)
&
\fp'=(R' &  \ar@{->>}[l]_-{q'}  S' \ar@{^(->}[r]^-{i'} & T')
}
$$
satisfy $\fp\geq  \fp'$ 
if there is another  correspondence of the same form
$$
\xymatrix{
\fq=(R &  \ar@{->>}[l]  Q \ar@{^(->}[r] & R')
}
$$
 such that $\fp = \fq\circ\fp'$. In particular, the poset $\fP_T$ contains a unique minimum representing the identity correspondence
 $$
\xymatrix{
\fp_0=(T &  \ar@{->>}[l]_-=  T \ar@{^(->}[r]^-= & T)
}
$$

Recall that a {\em finite regular cell complex} is a Hausdorff space $X$
with a finite collection of closed cells $c_i \subset X$ whose interiors $c_i^\circ \subset c_i$ provide a partition of $X$ and boundaries $\partial c_i \subset X$
are unions of cells. A finite regular cell complex $X$ has the {\em intersection property} if the intersection of any two cells $c_i, c_j\subset X$ is 
either another cell or empty. The {\em face poset} of a finite regular cell complex $X$ is the poset with elements the cells of $X$ with
relation $c_i\leq c_j$ whenever $c_i \subset  c_j$. The {\em order complex} of a poset is the natural simplicial complex with simplices the finite totally-ordered chains of the poset. 
%

\begin{thm}[\cite{Narb}]\label{thm: arboreal poset}
Let $T$ be a tree.

The arboreal link $\sL_T^\link$ is a finite regular cell complex, with the intersection property, with face poset $
 \fP_T\setminus \{\fp_0\}$, and  thus  homeomorphic to the order complex of $\fP_T\setminus \{\fp_0\}$.
\end{thm}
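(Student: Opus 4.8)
The plan is to recall from~\cite{Narb} an explicit description of the arboreal link $\sL_T^\link$ as a subset of a sphere (or equivalently of real projective space), realized as a union of round cells, and then to match the combinatorics of this cell decomposition with the poset $\fP_T \setminus \{\fp_0\}$. First I would set up the explicit model: $\sL_T$ arises as the front-projection (or positive coray bundle) of a concrete directed hypersurface in $\R^{|T|}$ built from the coordinate hyperplanes indexed by the vertices, the identifications among them being dictated by the edge relations and the root. In this model a point of $\sL_T$ records, for a vertex $\alpha$, a value, together with coincidence data reflecting which coordinate hyperplanes it lies on; the $\R_{>0}$-action is the scaling, so $\sL_T^\link$ is the intersection with a unit sphere. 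The key is that the locus where a \emph{prescribed} set of coincidences holds is a convex spherical polytope — a round cell — and these cells tile $\sL_T^\link$.

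Next I would identify the indexing. A choice of which subset $\cS \subseteq T$ of coordinate-hyperplane-conditions are ``active'' at a point picks out a full subtree $\cS \hookrightarrow T$ (the inclusion $i$); a choice of which active conditions are \emph{forced equal} to one another, compatibly with the parent relation, is exactly a collapse of a set of edges of $\cS$, i.e.\ a quotient $q : \cS \twoheadrightarrow \cR$. Thus the strata of $\sL_T^\link$ are naturally indexed by correspondences $\fp = (\cR \twoheadleftarrow \cS \hookrightarrow T)$, with the open cone point $\fp_0$ removed since it is not on the link. I would then check that the closure relation on cells — the face relation — translates into precisely the relation $\fp \geq \fp'$ defined via factorization $\fp = \fq \circ \fp'$: enlarging $\cS$ or collapsing more edges of $\cS$ shrinks the cell, and conversely, and the composability condition $\fq = (\cR \twoheadleftarrow Q \hookrightarrow \cR')$ is exactly what records ``$\cS'$ sits inside $\cS$ and the quotient $\cS \to \cR$ refines, through $\cS' \to \cR'$, to give $\cS' \to Q \to \cR$'' at the level of cells. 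This verification is essentially bookkeeping once the model is in place, using that subtrees and edge-collapses compose in the obvious way.

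The remaining points are more formal. That $\sL_T^\link$ is a \emph{finite regular} cell complex follows because each cell, being a convex spherical polytope of the appropriate dimension with its boundary a union of lower cells, has the closed-cell-is-a-ball property and the attaching maps are embeddings; finiteness is immediate since $\fP_T$ is finite. The \emph{intersection property} follows from the model too: the intersection of the closures of two cells is the locus where the union of the two coincidence-patterns holds, which is again a cell (or empty) — concretely, the correspondence obtained by taking the ``join'' $\fp \vee \fp'$ in $\fP_T$ when it exists. Finally, a finite regular cell complex with the intersection property is homeomorphic to the order complex of its face poset — this is a standard fact (the barycentric subdivision), so the last clause is automatic once the face poset has been identified with $\fP_T \setminus \{\fp_0\}$.

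The main obstacle, I expect, is the first step: pinning down the explicit convex-geometric model of $\sL_T^\link$ precisely enough that ``active coincidence patterns'' are manifestly in bijection with correspondences of trees, and that cell-closure is manifestly the factorization order. The subtlety is the interaction of the \emph{root} (which orders the vertices and hence constrains which edge-collapses are allowed, via the parent relation) with the subtree inclusion, so that one really gets correspondences $(\cR \twoheadleftarrow \cS \hookrightarrow T)$ and not some larger or smaller indexing set. Once that dictionary is nailed down, regularity, the intersection property, and the order-complex identification are routine.
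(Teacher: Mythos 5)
The paper does not actually prove this theorem; it is quoted verbatim from~\cite{Narb}, and the present paper only recalls the supporting constructions (the rectilinear/smoothed arboreal hypersurface model in Sect.~\ref{s arb} and the identification of $\sL_T$ with the positive coray bundle $\Lambda_\cT$ in Thm.~\ref{thm: local smoothing}). Your plan is a faithful reconstruction of the strategy used in~\cite{Narb}: realize $\sL_T^\link$ via the front projection of the arboreal hypersurface, index cells by ``which hypersurfaces $\sH_\alpha$ are active'' (the subtree $S$) together with ``which coorientations coincide'' (the quotient $S\twoheadrightarrow R$), check that cell closure matches the factorization order on $\fP_T\setminus\{\fp_0\}$, and conclude via the standard order-complex fact; so the approach is essentially the same as the source.
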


%
%

\begin{remark}
It follows that the normal slice to the stratum  $\sL_T(\fp) \subset \sL_T$ indexed by a partition
$$
\xymatrix{
\fp=(R &  \ar@{->>}[l]_-q  S \ar@{^(->}[r]^-i & T) 
}
$$
is homeomorphic to the arboreal singularity $\sL_R$.
\end{remark}


%

\begin{example}
Let us highlight the simplest class of trees.

When $T$ consists of a single vertex, $\sL_T$ is a single point.

When $T$ consists of two vertices $v_1, v_2$ (necessarily connected by an edge), $\sL_T$ is the local trivalent graph given by the cone over the three distinct points $\sL_T^\link$ representing the three correspondences
 $$
\xymatrix{
(\{v_1\} &  \ar@{->>}[l]_-= \{v_1\} \ar@{^(->}[r] & T)
&
(\{v_2\} &  \ar@{->>}[l]_-= \{v_2\} \ar@{^(->}[r] & T)
&
(\{v\} &  \ar@{->>}[l] T \ar@{^(->}[r]^-= & T)
}
$$

More generally, consider  the class of  $A_n$-trees $T_n$ consisting of $n$ vertices 
connected by $n-1$ successive edges. 
The associated arboreal singularity $\sL_{T_n}$ 
admits an identification with the cone of the  $(n-2)$-skeleton of the $n$-simplex
$$
\xymatrix{
\sL^\link_{T_n} \simeq \Cone(sk_{n-2} \Delta^n)
}$$
or in a dual realization, the $(n-1)$-skeleton of the polar fan of the $n$-simplex.

\end{example}


\subsection{Arboreal hypersurfaces}

The basic notions and results about arboreal hypersurfaces from \cite{Narb} generalize immediately from trees to forests. 
We will review this material in this generality and only comment where there is any slight deviation from the presentation
of \cite{Narb}.

On the one hand, by convention, given a forest $F= \coprod_i T_i$, we set the corresponding arboreal space to be the disjoint union
of products of arboreal singularities  with Euclidean spaces
$$
\xymatrix{
\sL_F = \coprod_i (\sL_{T_i} \times \R^{F\setminus T_i})
}
$$
where $\R^{F\setminus T_i}$ denotes the Euclidean space of real tuples
$$
\xymatrix{
\{x_\gamma\},
\text{ with } \gamma\in V(F)\setminus V(T_i).
}
$$
or in other words, the  Euclidean space 
of functions
$$
\xymatrix{
\{x_\gamma\}: V(F)\setminus V(T_i) \ar[r] & \R 
}
$$

On the other hand, we can repeat  the constructions of \cite{Narb} for arboreal hypersurfaces starting from a rooted forest.
Throughout the brief summary that follows, fix once and for all a rooted forest $\dirF$ which we can express 
as a disjoint union of rooted trees $\cF = \coprod_i \cT_i = \coprod_i (T_i, \rho_i)$.


\subsubsection{Rectilinear version}

 Let us write $\R^\dirF$ for the Euclidean space 
of real tuples
$$
\xymatrix{
\{x_\gamma\},
\text{ with } \gamma\in V(\dirF)
}
$$
or in other words, the  Euclidean space 
of functions
$$
\xymatrix{
\{x_\gamma\}: V(\dirF) \ar[r] & \R 
}
$$
so that we have
the evident identity
$$
\R^\dirF = \prod_i \R^{\dirT_i}
$$

\begin{defn} Fix a  vertex $\alpha\in V(\dirF)$.

 (1) Define the quadrant $Q_\alpha \subset \R^\dirF$ to be the closed subspace
$$
Q_\alpha = \{ x_\beta \geq 0 \text{ for all } \beta \leq \alpha\}
$$

(2) Define the hypersurface $H_\alpha \subset \R^\dirF$ to be the boundary
$$
H_\alpha = \partial Q_\alpha = \{ x_\beta \geq 0 \text{ for all } \beta \leq \alpha, \text{ and } x_\gamma = 0  \text{ for some } \gamma \leq \alpha \}
$$
\end{defn}

\begin{remark}\label{rem euclidean space}
Note that the hypersurface $H_\alpha \subset \R^\dirF$ is homeomorphic (in a piecewise linear fashion) to a Euclidean space of dimension $|V(\dirF)| - 1$.
\end{remark}

\begin{defn}
The {\em rectilinear arboreal hypersurface} $H_\dirF$ associated to  a rooted forest $\dirF$ is the union of hypersurfaces
$$
H_\dirF =\bigcup_{\alpha\in V(\dirF)}   H_\alpha \subset \R^\dirF
$$
\end{defn}

The rectilinear arboreal hypersurface  admits the following less redundant presentations.
Introduce the subspaces
$$
\xymatrix{
P_\alpha = \{x_\alpha = 0,  x_\beta \geq  0 \text{ for all } \beta<  \alpha \} \subset \R^{\dirF}
&
P_\alpha^\circ = \{x_\alpha = 0,  x_\beta >  0 \text{ for all } \beta<  \alpha \}    \subset \R^{\dirF}
}
$$

\begin{lemma}\label{lemma: less redundant}
$$
\xymatrix{
H_\dirF =\bigcup_{\alpha\in V(\dirF)} P_\alpha  \subset \R^\dirF
&
H_\dirF =\bigcup_{\alpha\in V(\dirF)} P_\alpha^\circ  \subset \R^\dirF
}
$$
\end{lemma}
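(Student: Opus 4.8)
The statement asserts two equalities of subsets of $\R^\dirF$: that $H_\dirF$, defined as $\bigcup_\alpha H_\alpha$ where $H_\alpha = \partial Q_\alpha$, coincides with $\bigcup_\alpha P_\alpha$ and also with $\bigcup_\alpha P_\alpha^\circ$. Since $P_\alpha^\circ \subset P_\alpha$ and each $P_\alpha \subset H_\alpha \subset H_\dirF$ (the middle containment because a point of $P_\alpha$ has $x_\alpha = 0$ with $\alpha \leq \alpha$ and $x_\beta \geq 0$ for $\beta < \alpha$, hence for $\beta \leq \alpha$, so it lies in $\partial Q_\alpha$), the only real content is the inclusion $H_\dirF \subset \bigcup_\alpha P_\alpha^\circ$. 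So the plan is to prove this single inclusion; the other containments are the routine chain $\bigcup P_\alpha^\circ \subset \bigcup P_\alpha \subset \bigcup H_\alpha = H_\dirF \subset \bigcup P_\alpha^\circ$.

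To prove $H_\dirF \subset \bigcup_\alpha P_\alpha^\circ$, I would take a point $x = \{x_\gamma\} \in H_\alpha$ for some $\alpha$, and produce a vertex $\alpha'$ with $x \in P_{\alpha'}^\circ$. By definition of $H_\alpha$, we have $x_\beta \geq 0$ for all $\beta \leq \alpha$, and $x_\gamma = 0$ for some $\gamma \leq \alpha$. The natural candidate for $\alpha'$ is a \emph{minimal} vertex (with respect to the rooted-forest poset on $V(\dirF)$) among those $\gamma \leq \alpha$ with $x_\gamma = 0$; because the poset restricted to $\{\beta : \beta \leq \alpha\}$ is a chain from the relevant root down to $\alpha$ (each non-root vertex has a unique parent), ``minimal such $\gamma$'' is well-defined and unambiguous. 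Set $\alpha' = \gamma$. Then $x_{\alpha'} = 0$ by construction. For $\beta < \alpha'$ we have $\beta \leq \alpha$ (since $\beta < \alpha' \leq \alpha$), so $x_\beta \geq 0$; and $x_\beta \neq 0$ by minimality of $\alpha'$ among zeros below $\alpha$. Hence $x_\beta > 0$ for all $\beta < \alpha'$, which is exactly the condition $x \in P_{\alpha'}^\circ$.

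The one place to be slightly careful — and the closest thing to an obstacle, though it is minor — is the poset-theoretic fact that the down-set $\{\beta \in V(\dirF) : \beta \leq \alpha\}$ is totally ordered, so that picking a minimal zero among $\{\gamma \leq \alpha : x_\gamma = 0\}$ really does give something comparable to (below) every other such vertex. This is immediate from the rooted-forest structure recalled in the Terminology subsection: $\alpha$ lies in one rooted tree $\cT_i$, its ancestors form the unique path from $\rho_i$ to $\alpha$, and the parent map linearly orders that path. With this in hand the argument above is complete, and I would write it as: fix $x \in H_\dirF$, choose $\alpha$ with $x \in H_\alpha$, let $\alpha'$ be the minimal vertex $\leq \alpha$ with $x_{\alpha'} = 0$, and verify $x \in P_{\alpha'}^\circ$; then combine with the trivial reverse inclusions to conclude both displayed identities.
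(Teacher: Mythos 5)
Your proof is correct and follows essentially the same route as the paper's: the easy containment $P_\alpha^\circ\subset P_\alpha\subset H_\alpha$ plus a minimality argument that picks the smallest vertex $\gamma\leq\alpha$ with $x_\gamma=0$ in the totally ordered down-set. The only (cosmetic) difference is that you prove $H_\dirF\subset\bigcup_\alpha P_\alpha^\circ$ in one step and deduce both identities from the resulting chain of inclusions, whereas the paper proves the two identities separately.
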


\begin{proof}
For the first identity, if $p\in P_\alpha$,
then $x_\alpha(p) = 0$,  $x_\beta(p) \geq  0$ for all  $\beta<  \alpha$, and hence $p\in H_\alpha$.
Conversely, if $p\in H_\alpha$, then 
$x_\gamma(p) = 0$  for some  $\gamma \leq \alpha$, and $ x_\beta(p) \geq 0$  for all  $\beta \leq \alpha$,
in particular  $ x_\beta(p) \geq 0$  for all  $\beta\leq \gamma$, and hence $p\in P_\beta$.

To see the second identity, clearly $P^\circ_\alpha\subset P_\alpha$, and
observe that if $p\in P_\alpha\setminus P^\circ_\alpha$, then 
 $x_\beta(p) = 0$, for some $\beta<\alpha$, and if we take the minimum such $\beta$, then we have
$p\in P_\beta^\circ$.
\end{proof}

\begin{remark}
Introduce the inverse images under the natural projections
$$
\xymatrix{
H_{\dirF_i} = \pi_i^{-1}(H_{\dirT_i}) \subset \R^{\dirF}
&
\pi_i :\R^\dirF = \prod_i \R^{\dirT_i}\ar[r] & \R^{\dirT_i}
}
$$
Then we  have the evident identities
$$
\xymatrix{
H_{\dirF_i} \simeq H_{\dirT_i} \times \R^{\dirF\setminus \dirT_i}
&
H_\dirF = \bigcup_i H_{\dirF_i}
}
$$
Moreover, the inverse images  $H_{\dirF_i}$ are multi-transverse  hypersurfaces
 being the inverse images of complementary projections.
\end{remark}


\subsubsection{Smoothed version}\label{sect sm arb}

We recall here  the smoothed version of arboreal hypersurfaces. We  recall in the next section that the smoothed and rectilinear versions are homeomorphic as embedded hypersurfaces inside of Euclidean space.

Fix once and for all a small $\delta>0$. 

All of our constructions will depend on the choice of three functions denoted by 
$$
\xymatrix{
b:\R\ar[r] & \R &
f:\R^2\ar[r] & \R 
& c:\R\ar[r] & \R
}$$  the first two of which we will
select now.

Choose a continuous function $b:\R\to \R$, smooth away from $0\in \R$, with the  properties:
\begin{enumerate}
\item $|b(t)|<\delta/4$, for all $t\in \R$.
\item $b(t) = 0$ outside of the interval $0< t<   \delta/4$.
\item $\lim_{t\to 0^+} b'(t) = -\oo$. 
\end{enumerate}

Choose a continuously differentiable function $f:\R^2\to \R$ with 
the  properties:
\begin{enumerate}
\item $f$ is a submersion.
\item $\{f(x_1, x_2)=0\} = \{x_1 = 0, x_2\geq 0\} \cup \{x_1> 0 , x_2 = b(x_1)\}$.
\item $f(x_1, x_2) = x_2$ over  $\{x_1> 2\delta, |x_2| < \delta\}$.
\item $f(x_1, x_2) = x_1$  over  $\{|x_1|< \delta, x_2 > 2 \delta\}$.
\item $f(x_1, x_2)< \delta$ implies $x_1<\delta$ or $x_2<\delta$.
\end{enumerate}

\begin{remark}
If  preferred, one can fix some $N\geq 1$, and arrange that  $\lim_{t\to 0^+} b^{(k)}(t) = -\oo$, for all $1\leq k\leq N$. Then one can choose $f$ to be correspondingly
highly differentiable. One can also take $N=\infty$ and then choose $f$ to be smooth. 
\end{remark}

\begin{defn}
(1) For a root vertex $\rho\in V(\dirF)$,  set
$$
\xymatrix{
h_\rho = x_\rho:\R^{\dirF} \ar[r] & \R
}
$$

(2) For a non-root vertex $\alpha\in V(\dirF)$,  inductively define 
$$
\xymatrix{
h_\alpha :\R^{\dirF} \ar[r] & \R & h_\alpha = f(h_{\hat \alpha}, x_\alpha)
}
$$
where $\hat \alpha \in V(\dirF)$ is the  parent vertex of $\alpha$.
\end{defn}

\begin{remark}\label{rem coord depend} For all $\alpha\in V(\dirF)$, note that:

(1)  $h_\alpha$ is a submersion.

(2)    $h_\alpha$  depends only on the coordinates $x_\beta$, for  $\beta\leq \alpha$.

(3)   $h_\alpha  \geq 0$ implies   $ h_{\beta}  \geq 0$, for  $\beta\leq \alpha$. 

%

\end{remark}

\begin{defn}
Fix a vertex $\alpha\in V(\dirF)$.
 
(1) Define the halfspace $\sQ_\alpha \subset \R^\dirF$ to be the closed subspace
$$
\sQ_\alpha = \{ h_\alpha \geq  0\} 
$$

(2) Define the hypersurface $\sH_\alpha \subset \R^\dirF$ to be the zero-locus
$$
\sH_\alpha = \{ h_\alpha = 0\} 
$$

\end{defn}

\begin{defn}

The {\em smoothed arboreal hypersurface} $\sH_\dirF$ associated to  a rooted forest $\dirF$ is the union
of hypersurfaces
$$
\sH_\dirF =\bigcup_{\alpha\in V(\dirF)}   \sH_\alpha \subset \R^{\dirF}
$$

\end{defn}

\begin{remark}
Introduce the subspaces
$$
\xymatrix{
\sP_\alpha = \{h_\alpha = 0,  h_{\hat\alpha} \geq 0 \} \subset \R^{\dirF}
&
\sP_\alpha^\circ = \{h_\alpha = 0,   h_{\hat\alpha} > 0 \} \subset \R^{\dirF}
}
$$
where $\hat \alpha \in V(\dirF)$ is the  parent vertex of $\alpha$.
Then
the smoothed arboreal hypersurface  admits the less redundant presentations 
$$
\xymatrix{
\sH_\dirF =\bigcup_{\alpha\in V(\dirF)} \sP_\alpha   \subset \R^\dirF
&
\sH_\dirF =\bigcup_{\alpha\in V(\dirF)} \sP^\circ_\alpha   \subset \R^\dirF
}
$$
\end{remark}

\begin{remark}
Introduce the inverse images under the natural projections
$$
\xymatrix{
\sH_{\dirF_i} = \pi_i^{-1}(\sH_{\dirT_i}) \subset \R^{\dirF}
&
\pi_i :\R^\dirF = \prod_i \R^{\dirT_i}\ar[r] & \R^{\dirT_i}
}
$$
Then we  have the evident identities
$$
\xymatrix{
\sH_{\dirF_i} \simeq \sH_{\dirT_i} \times \R^{\dirF\setminus \dirT_i}
&
\sH_\dirF = \bigcup_i \sH_{\dirF_i}
}
$$
Moreover, the inverse images  $\sH_{\dirF_i}$ are multi-transverse hypersurfaces
 being the inverse images of complementary projections.
\end{remark}


\subsubsection{Comparison}
We  recall here that the rectilinear and smoothed arboreal hypersurfaces are homeomorphic as embedded hypersurfaces inside of Euclidean space.

Choose a smooth bump function $c:\R\to [0,1]$ with the properties:
\begin{enumerate}

\item $c(t) =0$ outside  the interval  $\{|t| \leq \delta\}$.

\item $c(t)  = 1$  inside the interval $\{|t|\leq \delta/2\}$.

\end{enumerate}

Using the functions $b, c:\R\to \R$, introduce
the vector field 
$$
\xymatrix{
v =  -b(x_1) c(x_2) \partial_{x_2} \in \Vect(\R^2)
}
$$
Observe that $v$ is smooth 
except  along the axis  $\{(0,x_2) \, |\, x_2\in \R\}\subset \R^2$  and satisfies:

\begin{enumerate}

\item $v =0$, outside  the rectangle  $\{0\leq x_1 \leq \delta/4, |x_2| \leq \delta\}$.

\item $v = -b(x_1)\partial_{x_2}$,  inside the domain $\{|x_2|\leq \delta/2\}$.

\end{enumerate}

Define
the homeomorphism
$
\Phi:\R^2\to \R^2
$ 
to be the unit-time flow  of the vector field $v$.
Observe that $\Phi$ is smooth 
except  along the axis  $\{(0,x_2) \, |\, x_2\in \R\}\subset \R^2$  and satisfies:

\begin{enumerate}

\item $\Phi(x_1, x_2) = (x_1, x_2)$, outside  the rectangle  $\{0\leq x_1 \leq \delta/4, |x_2| \leq \delta\}$.

\item $\Phi(x_1, x_2) = (x_1, x_2-b(x_1))$,  inside the domain $\{ |x_2|\leq \delta/4\}$.

\item For any fixed $a_1\in \R$, the restriction $\Phi|_{x_1 = a_1}:\R\to \R^2$ is smooth.

\end{enumerate}
The second property follows from the fact that $|b(x_1)|<\delta/4$, and $c(x_2) = 1$
when $|x_2|\leq \delta/2$, hence for less than or equal to unit-time, the flow of $v=  -b(x_1) c(x_2) \partial_{x_2} $ starting
from
 inside the domain $\{ |x_2|\leq \delta/4\}$  stays inside
   the domain $\{|x_2|\leq \delta/2\}$.

Introduce the continuous function $\varphi = x_2\circ \Phi:\R^2\to \R$ given by the second coordinate of $\Phi$.
Observe that $\varphi$ is smooth  except  along the axis  $\{(0,x_2) \, |\, x_2\in \R\}\subset \R^2$  and satisfies:

\begin{enumerate}

\item $\varphi(x_1, x_2) = x_2$, outside  the rectangle  $\{0\leq x_1 \leq \delta/4, |x_2| \leq \delta\}$.

\item $\varphi(x_1, x_2) = x_2-b(x_1)$, inside the domain $\{ |x_2|\leq \delta/4\}$.

\item For any fixed $a_1\in \R$, the restriction $\varphi|_{x_1 = a_1}:\R\to \R$ is a diffeomorphism. 

\end{enumerate}

\begin{defn}
(1) For a root vertex $\rho\in V(\dirF)$,  set
$$
\xymatrix{
F_\rho:\R^{\dirF} \ar[r] & \R &  F_\rho = x_\rho
}
$$

(2) For a non-root vertex $\alpha\in V(\dirF)$,  set
$$
\xymatrix{
F_\alpha :\R^{\dirF} \ar[r] & \R & F_\alpha = \varphi(h_{\hat \alpha}, x_\alpha)
}
$$
where $\hat \alpha \in V(\dirF)$ is the unique parent of $\alpha$.

(3)
Define the continuous map
$$
\xymatrix{
F_\dirF:\R^{\dirF} \ar[r] & \R^{\dirF} & F_\dirF = \{F_\alpha\}
}
$$
In other words, the coordinates of $F_\dirF$ are given by $x_\alpha\circ F_\dirF  = 
 F_\alpha$.
\end{defn}

\begin{remark}
Note that $F_\alpha$  depends only on the coordinates $x_\beta$, for  $\beta\leq \alpha$.
\end{remark}

The map $F_\dirF$ is evidently the product of maps
$$
\xymatrix{
F_\dirF = \prod_i F_{\dirT_i}: \prod_i \R^{\dirT_i}\ar[r] &  \prod_i \R^{\dirT_i}
}
$$

Consequently, the analogous result for trees from~\cite{Narb} immediately implies the following extension to forests.

\begin{thm}\label{thm compare}
The map $F_\dirF:R^\dirF\to \R^\dirF$ is a homeomorphism and satisfies
$
F_\dirF(\sH_\dirF) = H_\dirF,
$
and in fact $F_\dirF(\sQ_\alpha) = Q_\alpha$, $F_\dirF(\sH_\alpha) = H_\alpha$, for all $\alpha\in V(\dirF)$.
\end{thm}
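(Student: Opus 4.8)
The plan is to reduce the forest statement to the tree statement from \cite{Narb} via the product decomposition, exactly as the surrounding remarks already set up. Since $\dirF = \coprod_i \dirT_i$, we have the product identities $\R^\dirF = \prod_i \R^{\dirT_i}$ and $F_\dirF = \prod_i F_{\dirT_i}$, and each factor $F_{\dirT_i}:\R^{\dirT_i}\to\R^{\dirT_i}$ is, by the tree case of this theorem in \cite{Narb}, a homeomorphism carrying $\sH_{\dirT_i}$ to $H_{\dirT_i}$ and $\sQ_\alpha$ to $Q_\alpha$, $\sH_\alpha$ to $H_\alpha$ for each $\alpha\in V(\dirT_i)$. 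A finite product of homeomorphisms is a homeomorphism, so $F_\dirF$ is a homeomorphism immediately. The only real content is bookkeeping: translating statements about $\sH_\alpha, \sQ_\alpha$ for a vertex $\alpha$ of the forest into statements about the single tree $\dirT_i$ containing $\alpha$, and then reassembling the union over all $\alpha$.

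First I would fix a vertex $\alpha\in V(\dirF)$ and let $i$ be the index with $\alpha\in V(\dirT_i)$. By Remark~\ref{rem coord depend}(2), $h_\alpha$ depends only on the coordinates $x_\beta$ with $\beta\le\alpha$, all of which lie in $\R^{\dirT_i}$; hence $\sQ_\alpha = \sQ_\alpha^{(i)}\times \R^{\dirF\setminus\dirT_i}$ and $\sH_\alpha = \sH_\alpha^{(i)}\times\R^{\dirF\setminus\dirT_i}$, where the superscript $(i)$ denotes the analogous halfspace/hypersurface computed inside $\R^{\dirT_i}$. The same product decomposition holds on the rectilinear side, directly from the definition $Q_\alpha = \{x_\beta\ge 0\text{ for all }\beta\le\alpha\}$, since again only coordinates indexed by $V(\dirT_i)$ are constrained. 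Likewise $F_\alpha$ (defined via $\varphi(h_{\hat\alpha},x_\alpha)$, with $\hat\alpha\in V(\dirT_i)$ since $\alpha$ is non-root in its tree) depends only on $x_\beta$, $\beta\le\alpha$; so the $\alpha$-component of $F_\dirF$ agrees with the $\alpha$-component of $F_{\dirT_i}$ acting on the $\R^{\dirT_i}$-factor. Then the tree case gives $F_{\dirT_i}(\sQ_\alpha^{(i)}) = Q_\alpha^{(i)}$ and $F_{\dirT_i}(\sH_\alpha^{(i)}) = H_\alpha^{(i)}$, and taking products with $\id_{\R^{\dirF\setminus\dirT_i}}$ yields $F_\dirF(\sQ_\alpha) = Q_\alpha$ and $F_\dirF(\sH_\alpha)=H_\alpha$.

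Finally, since $F_\dirF$ is a bijection, it commutes with unions: $F_\dirF(\sH_\dirF) = F_\dirF\big(\bigcup_{\alpha\in V(\dirF)}\sH_\alpha\big) = \bigcup_{\alpha\in V(\dirF)} F_\dirF(\sH_\alpha) = \bigcup_{\alpha\in V(\dirF)} H_\alpha = H_\dirF$, using the preceding paragraph for each $\alpha$ and the defining presentations of $\sH_\dirF$ and $H_\dirF$. This completes the argument.

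There is essentially no hard step here; the work is entirely in verifying that the quantities attached to a forest vertex $\alpha$ — namely $h_\alpha$, $F_\alpha$, $\sQ_\alpha$, $\sH_\alpha$, $Q_\alpha$, $H_\alpha$ — are genuinely local to the tree component $\dirT_i$ containing $\alpha$, so that the product structure $F_\dirF = \prod_i F_{\dirT_i}$ is compatible with all of them. The mild subtlety to be careful about is the root/non-root distinction: a vertex that is a root of $\dirF$ is precisely the root of its own component $\dirT_i$, so the two clauses of the definitions of $h_\alpha$ and $F_\alpha$ match up component-by-component, and the inductive definitions never reach outside $V(\dirT_i)$. Once this is observed, the theorem follows formally from \cite{Narb}.
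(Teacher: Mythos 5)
Your proposal is correct and is exactly the paper's argument: the paper simply notes that $F_\dirF = \prod_i F_{\dirT_i}$ and that the result then follows immediately from the tree case in \cite{Narb}. Your additional bookkeeping (checking that $h_\alpha$, $F_\alpha$, $\sQ_\alpha$, $\sH_\alpha$, $Q_\alpha$, $H_\alpha$ only involve coordinates from the component containing $\alpha$) is precisely the verification the paper leaves implicit.
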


\begin{remark}
It follows that we  also have 
 $F_\dirF(\sP_\alpha) = P_\alpha$, $F_\dirF(\sP^\circ_\alpha) = P^\circ_\alpha$, for all $\alpha\in V(\dirF)$.
\end{remark}

\begin{remark}
For $\alpha\in V(\dirF)$,  introduce the continuous map
$$
\xymatrix{
\tilde F_\alpha :\R^{\dirF} \ar[r] & \R^{\dirF} 
}
$$
with coordinates given by
$$
x_\beta\circ \tilde F_\alpha  = 
\left\{ \begin{array}{ll}
 F_\alpha, &  \beta = \alpha\\
  x_\beta, & \beta \not = \alpha
\end{array}
\right.$$

Fix a total order on $V(\cF)$ compatible with its natural partial order.  Write $\alpha_1, \ldots, \alpha_{n+1} \in V(\cF)$ for the ordered vertices. 
Observe that $F_\cF$ factors as the composition
$$
\xymatrix{
F_\cF = \tilde F_{\alpha_1} \circ \cdots\circ \tilde F_{\alpha_{n+1}}  
}
$$
In particular,  since $F_\dirF$ is a homeomorphism, each $\tilde F_{\alpha}$
is itself a homeomorphism.

More precisely, observe that  
 for $\rho$ a root vertex, $\tilde F_\rho$ is the identity,
and for $\alpha$ not a root vertex,
each
$
\tilde F_{\alpha}
$
is the unit-time flow of the vector field 
$$
\xymatrix{
v_{\alpha} = -b(h_{\hat \alpha}) c(x_\alpha) \partial_{x_{\alpha}}\in \Vect(\R^\cF)
}$$
In particular, $\tilde F_\alpha$ is the identity when $h_{\hat \alpha}\leq 0$ and is smooth when  $h_{\hat \alpha}>0$.

\end{remark}

\begin{remark}
By scaling the original function $b$ by a positive constant, one  obtains a family of smoothed arboreal hypersurfaces
all compatibly homeomorphic. Moreover, their limit as the scaling constant goes to zero is  the  rectilinear arboreal hypersurface.
Thus one can view the smoothed arboreal hypersurface  as a topologically trivial deformation of the rectilinear arboreal hypersurface.
\end{remark}


\subsubsection{Microlocal geometry}
Finally, we recall the relation between arboreal singularities and smoothed arboreal hypersurfaces.

Recall that the smoothed arboreal hypersurface $\sH_\dirF$  is the  union
$$
\xymatrix{
\sH_\dirF =\bigcup_{\alpha\in V(\dirF)}   \sH_\alpha \subset \R^{\dirF}
}$$
of  hypersurfaces cut out by submersions 
$$
\xymatrix{
\sH_\alpha =\{h_\alpha=0\} \subset \R^\dirF
}$$
Thus
 $\sH_\dirF \subset \R^\dirF$ is in good position, and moreover,  each  hypersurface $\sH_\alpha  \subset \R^\dirF$
  comes equipped with a preferred coorientation $\sigma_\alpha$
given by the codirection pointing towards the halfspace 
  $$
  \xymatrix{
\sQ_\alpha =\{h_\alpha\geq 0\} \subset \R^\dirF
}  $$

Moreover, recall the inverse images under the natural projections
$$
\xymatrix{
\sH_{\dirF_i} = \pi_i^{-1}(\sH_{\dirT_i}) \subset \R^{\dirF}
&
\pi_i :\R^\dirF = \prod_i \R^{\dirT_i}\ar[r] & \R^{\dirT_i}
}
$$
and the evident identities
$$
\xymatrix{
\sH_{\dirF_i} \simeq \sH_{\dirT_i} \times \R^{\dirF\setminus \dirT_i}
&
\sH_\dirF = \bigcup_i \sH_{\dirF_i}
}
$$
Note that the inverse images are multi-transverse hypersurfaces being the inverse images of complementary projections.
By definition, we also have a parallel disjoint union identity
$$
\xymatrix{
\sL_F = \coprod_i (\sL_{T_i} \times \R^{F\setminus T_i})
}
$$

Thus the analogous result for trees from~\cite{Narb} immediately implies the following extension to forests.

\begin{thm}\label{thm: local smoothing}
Let $\dirF$ be a rooted forest
with  arboreal singularity $\sL_F$
and smoothed arboreal hypersurface $\sH_\dirF \subset \R^\dirF$,

(1) The smoothed arboreal hypersurface $\sH_\dirF \subset \R^\dirF$ is in good position with a natural coorientation $\sigma$ whose restriction to each $\sH_\alpha\subset \sH_\dirF$ is  the  coorientation $\sigma_\alpha$.

(2)  The positive coray  bundle $\Lambda_{\dirF}  \subset S^*\R^\dirF$ of the directed hypersurface $\sH_{\dirF}\subset \R^\dirF$ with coorientation $\sigma$ is  homeomorphic to  $\sL_F$.
\end{thm}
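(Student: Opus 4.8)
The plan is to reduce the entire statement to the case of a single rooted tree, which is the result of~\cite{Narb} that we are granted, by exploiting the two compatible decompositions recorded above,
$$
\R^\dirF = \prod_i \R^{\dirT_i},
\qquad
\sH_{\dirF_i} = \pi_i^{-1}(\sH_{\dirT_i}) \simeq \sH_{\dirT_i}\times\R^{\dirF\setminus\dirT_i},
\qquad
\sH_\dirF = \bigcup_i \sH_{\dirF_i},
$$
together with the fact that the $\sH_{\dirF_i}$ are multi-transverse, being pulled back along the mutually complementary projections $\pi_i$. Two elementary principles will be used. First, good position, a chosen coorientation, and the positive coray bundle are all compatible with taking the product of a hypersurface $Y\subset N$ with a Euclidean factor, since the conormal of $Y\times\R^k\subset N\times\R^k$ is the pullback of that of $Y\subset N$. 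Second, a finite union of multi-transverse hypersurfaces, each in good position, is again in good position, with coline bundle the union of the coline bundles of the pieces; when the pieces involve pairwise disjoint coordinates this union is disjoint, so coorientations of the pieces that agree on overlaps assemble to a coorientation of the union.

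For part~(1): applying the tree result of~\cite{Narb} to each $\dirT_i$ and taking the product with $\R^{\dirF\setminus\dirT_i}$ via the first principle, each $\sH_{\dirF_i}\subset\R^\dirF$ is in good position and carries a coorientation $\sigma_i$ whose restriction to $\sH_\alpha$, for $\alpha\in V(\dirT_i)$, is the codirection pointing into $\sQ_\alpha=\{h_\alpha\geq 0\}$, that is the $\sigma_\alpha$ of Section~\ref{sect sm arb}; here one uses that $h_\alpha$, hence $\sH_\alpha$ and $\sQ_\alpha$, depends only on the coordinates $x_\beta$ with $\beta\leq\alpha$, all of which lie in $\dirT_i$. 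Next, since the $\sH_{\dirF_i}$ are multi-transverse and involve pairwise disjoint coordinate sets, the second principle shows $\sH_\dirF=\bigcup_i\sH_{\dirF_i}$ is in good position with $\cL$ the disjoint union of the $\cL_{\sH_{\dirF_i}}$; the $\sigma_i$ trivially agree on overlaps, so they glue to a coorientation $\sigma$ of $\sH_\dirF$. Finally, for any $\alpha\in V(\dirF)$ we have $\sH_\alpha\subset\sH_{\dirF_{i(\alpha)}}$ where $T_{i(\alpha)}$ is the tree containing $\alpha$, whence $\sigma|_{\sH_\alpha}=\sigma_{i(\alpha)}|_{\sH_\alpha}=\sigma_\alpha$.

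For part~(2): from part~(1), $\Lambda_\dirF=\sigma(\cL)=\coprod_i\sigma_i(\cL_{\sH_{\dirF_i}})=\bigcup_i\Lambda_{\dirF_i}$ inside $S^*\R^\dirF$, and by the first principle applied to the tree result, $\Lambda_{\dirF_i}\simeq\Lambda_{\dirT_i}\times\R^{\dirF\setminus\dirT_i}\simeq\sL_{T_i}\times\R^{F\setminus T_i}$. This union is in fact disjoint in $S^*\R^\dirF$: every codirection occurring in $\Lambda_{\dirF_i}$ is a limit of multiples of the $dh_\alpha$ with $\alpha\in V(\dirT_i)$, each of which lies in the span of the $dx_\beta$ with $\beta\leq\alpha$, hence in the span of the coordinates of $\dirT_i$; since distinct trees of a forest have disjoint vertex sets, no nonzero codirection lies in two of these spans. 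Therefore $\Lambda_\dirF\simeq\coprod_i(\sL_{T_i}\times\R^{F\setminus T_i})=\sL_F$ by the definition of $\sL_F$.

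The one genuine input is the tree case of~\cite{Narb} — that the closure of the conormal to the singular union $\bigcup_{\alpha\in V(\dirT)}\sH_\alpha$ is finite over it and that the resulting Legendrian is $\sL_T$ — and with that in hand the forest extension presents no essential difficulty. The points requiring care are bookkeeping ones: verifying the second principle for unions of \emph{singular} (not merely smooth) good-position hypersurfaces, checking that the $\sigma_i$ really glue (which is where multi-transversality and disjointness of coordinate supports enter), and, if one wishes to match strata rather than merely produce a homeomorphism, tracking the identification $F_\dirF$ of Theorem~\ref{thm compare}. I expect the second principle to be the most delicate step, handled by noting that near a smooth point of the union exactly one piece is active, so that limits of conormal lines are controlled by the finite coline bundles of the individual pieces.
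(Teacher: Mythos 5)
Your proposal is correct and follows exactly the route the paper takes: the paper records the decompositions $\sH_\dirF=\bigcup_i\sH_{\dirF_i}$, $\sH_{\dirF_i}\simeq\sH_{\dirT_i}\times\R^{\dirF\setminus\dirT_i}$, their multi-transversality, and $\sL_F=\coprod_i(\sL_{T_i}\times\R^{F\setminus T_i})$, and then asserts that the tree case of~\cite{Narb} "immediately implies" the forest statement. You have simply spelled out the bookkeeping (product-compatibility of good position, gluing of coorientations, and disjointness of the coray bundles via the disjoint coordinate supports) that the paper leaves implicit, and those verifications are sound.
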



\subsection{Generalized arboreal singularities}

We introduce here a modest generalization of arboreal singularities akin to the generalization from manifolds to manifolds with boundary.

Let $\dirF = \coprod \dirT_i = \coprod_i  (T_i, \rho_i)$ be a rooted forest. 

By the {\em leaf vertices} $\cL = \coprod_i \cL_i \subset V(\dirF) =  \coprod_i V(\cT_i)$, we will mean the set of 
vertices that are maxima 
with respect to the natural partial order. (A root vertex is  a maximum only if it is the sole vertex in its connected component; by the above definition such a vertex is 
also a leaf vertex.) 


By a  {\em leafy rooted forest} $\dirF^* = (\dirF, \ell)= \coprod (\dirT_i, \ell_i) = \coprod_i  (T_i, \rho_i, \ell_i)$, we will mean  a rooted forest $\cF =   \coprod \dirT_i = \coprod_i  (T_i, \rho_i)$ together with a subset $\ell = \coprod_i \ell_i\subset \cL= \coprod_i \cL_i $
of marked leaf vertices.

To any leafy rooted forest $\dirF^* = (\dirF, \ell) $, we associate a  rooted forest $\dirF^+$ by starting  with $\dirF$ with its natural partial order and adding
a new maximum $\alpha^+\in V(\dirF^+)$ above each marked leaf vertex $\alpha\in \ell\subset V(\dirF)$. We continue to denote by $\ell\subset V(\dirF) \subset V(\dirF^+)$ the originally marked vertices. We denote by 
$\ell^+= V(\dirF^+)\setminus  V(\dirF)$ the newly added  vertices.
 Note that each $\alpha\in \ell^+ \subset V(\dirF^+)$ has  parent vertex  $\hat \alpha^+ = \alpha \in \ell \subset V(\dirF^+)$.

Throughout what follows,
let $\dirF^* = (\dirF, \ell)$ be a leafy rooted forest, and let  $\dirF^+$ be its associated rooted forest.
Our constructions will devolve to previous ones when $\ell = \emptyset$ and hence $\dirF^+ = \dirF$.


\subsubsection{Rectilinear version}

For any directed forest and in particular $\dirF^+$, recall
the rectilinear arboreal hypersurface  $H_{\dirF^+} \subset \R^{\dirF^+}$ admits the presentation as a union of closed subspaces
$$
\xymatrix{
H_{\dirF^+} =\bigcup_{\alpha\in V(\dirF^+)} P_\alpha   \subset \R^{\dirF^+}
&
P_\alpha = \{x_\alpha = 0,  x_\beta \geq 0 \text{ for all } \beta  <  \alpha \} \subset \R^{\dirF^+}
}
$$

\begin{defn}
The {\em rectilinear arboreal hypersurface} $H_{\dirF^*}$ associated to  the leafy rooted forest $\dirF^* = (\dirF, \ell)$ is the union of closed subspaces
$$
H_{\dirF^*} =\bigcup_{\alpha\in V(\dirF^+)\setminus \ell}   P_\alpha \subset H_{\dirF^+} \subset \R^{\dirF^+}
$$
\end{defn}

\begin{remark}
If $\ell = \emptyset$, so that $\dirF^+ = \dirF$, then $H_{\dirF^*} = H_{\dirF}$.
\end{remark}

\begin{example}\label{ex deg arb}
If $\ell = \dirF =\{\alpha\}$ consists of a single vertex, then $\dirF^+ = \{\alpha, \alpha^+\}$ consists  
of two vertices satisfying $\alpha < \alpha^+$. 
The  rectilinear arboreal singularity $H_{\dirF^*}$ is the closed half-line
$$
H_{\dirF^*} = P_{\alpha^+} = \{x_{\alpha^+} = 0,  x_\alpha \geq 0\}
$$
\end{example}


\subsubsection{Smoothed version}

For any directed forest and in particular $\dirF^+$, recall
the smoothed arboreal hypersurface  $\sH_{\dirF^+} \subset \R^{\dirF^+}$ admits the presentation as a union of closed subspaces
$$
\xymatrix{
\sH_{\dirF^+} =\bigcup_{\alpha\in V(\dirF^+)} \sP_\alpha   \subset \R^{\dirF^+}
&
\sP_\alpha = \{h_\alpha = 0,  h_{\hat\alpha} \geq 0 \} \subset \R^{\dirF^+}
}
$$

\begin{defn}
The {\em smoothed  arboreal hypersurface} $\sH_{\dirF^*}$ associated to  the leafy rooted forest $\dirF^* = (\dirF, \ell)$ is the union of closed subspaces
$$
\xymatrix{
\sH_{\dirF^*} 
=\bigcup_{\alpha\in V(\dirF^+) \setminus \ell} \sP_\alpha
  \subset \sH_{\dirF^+} \subset \R^{\dirF^+}
}
$$
\end{defn}

\begin{remark}
If $\ell = \emptyset$, so that $\dirF^+ = \dirF$, then $\sH_{\dirF^*} = \sH_{\dirF}$.
\end{remark}

\begin{remark}\label{rem: gen arb via homeo}

Recall the homeomorphism 
$$
\xymatrix{
F_{\dirF^+}: \R^{\dirF^+} \ar[r]^-\sim & \R^{\dirF^+}
}
$$
and that it satisfies
$
F_{\dirF^+}(\sP_{\alpha}) =  P_{\alpha}
$

Alternatively,  
the smoothed  arboreal hypersurface
$\sH_{\dirF^*}$ is
the image of the rectilinear arboreal hypersurface $H_{\dirF^*}$ under the inverse homeomorphism
$$
\xymatrix{
\sH_{\dirF^*}  = F_{\dirF^+}^{-1}(H_{\dirF^*}) 
  \subset \sH_{\dirF^+} \subset \R^{\dirF^+}}
  $$
  \end{remark}

%


\subsubsection{Microlocal geometry}

For any rooted forest and in particular $\dirF^+$, recall that the smoothed arboreal hypersurface
$\sH_{\dirF^+}\subset \R^{\dirF^+}$ is a directed hypersurface with a natural coorientation, and its  positive coray bundle 
$\Lambda_{\dirF^+} \subset S^*\R^{\dirF^+}$ 
is homeomorphic to the arboreal singularity $\sL_{F^+}$.

By definition, the smoothed arboreal hypersurface $\sH_{\dirF^*}\subset \R^{\dirF^+}$ is a closed subspace of $\sH_{\dirF^+} \subset \R^{\dirF^+}$, and hence it is in good position and inherits a natural coorientation. Thus its 
 positive coray bundle 
$\Lambda_{\dirF^*} \subset S^*\R^{\dirF^+}$ 
is a closed subspace of $\Lambda_{\dirF^+} \subset S^*\R^{\dirF^+}$, and hence homeomorphic to a closed subspace of the arboreal singularity $\sL_{F^+}$.

To identify this closed subspace, let us identify its open complement. Recall that 
$\sL_{F^+} $ is stratified by cells indexed by correspondences
of the form 
$$
\xymatrix{
\fp=(R &  \ar@{->>}[l]_-q  S \ar@{^(->}[r]^-i & F^+)
}
$$
where $i$ is the inclusion of  a subtree and $q$ is a quotient  of trees. (Strictly speaking, we have only stated this
cell decomposition for trees, but it holds immediately for forests:
by definition, the arboreal space of a forest is the disjoint union of the arboreal spaces of the connected components
of the forest; and 
for correspondences of the above form, the inclusion $i$ must take its domain tree to a single connected
component of its codomain forest.)

Given a marked leaf vertex $\alpha\in \ell \subset V(\dirF^+)$, with added maximum vertex $\alpha^+ \in \ell^+ \subset V(\dirF^+)$ so that $\hat \alpha^+ = \alpha$, consider the two correspondences
$$
\xymatrix{
\fp_\alpha=(\{pt\} &  \ar@{->>}[l]  \{\alpha\} \ar@{^(->}[r] & F^+)
}
$$
$$
\xymatrix{
\fp_{\alpha^+, \alpha}=(\{pt\} &  \ar@{->>}[l]  \{\alpha^+, \alpha\} \ar@{^(->}[r] & F^+)
}
$$
Since the correspondences begin with a singleton $\{pt\}$, they are maxima in the correspondence poset,
and hence index open cells in $\sL_{F^+}$.

\begin{prop}\label{prop: microlocal ends} The positive coray  bundle $\Lambda_{\dirF^*} \subset S^*\R^{\dirF^+}$ is homeomorphic to the closed subspace
of the arboreal singularity $\sL_{F^+}$ given by deleting the open cells indexed by 
the correspondences $\fp_\alpha$, $\fp_{\alpha^+, \alpha}$, for all $\alpha\in \ell \subset V(\dirF^+)$.

\end{prop}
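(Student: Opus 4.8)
The plan is to deduce Proposition~\ref{prop: microlocal ends} from Theorem~\ref{thm: local smoothing} applied to the auxiliary rooted forest $\dirF^+$, by carefully tracking which cells of $\sL_{F^+}$ correspond to the points of $\Lambda_{\dirF^+}$ that we throw away when passing from $\sH_{\dirF^+}$ to the subcomplex $\sH_{\dirF^*}$. By Remark~\ref{rem: gen arb via homeo} and Theorem~\ref{thm compare}, it is equivalent to work with the rectilinear models: the homeomorphism $F_{\dirF^+}$ carries $\sH_{\dirF^*}$ to $H_{\dirF^*} = \bigcup_{\alpha\in V(\dirF^+)\setminus\ell} P_\alpha$ inside $H_{\dirF^+}$, and carries coorientations to coorientations, so it induces a homeomorphism of positive coray bundles compatible with the stratification by correspondences. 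Thus it suffices to identify, inside the cell-stratified space $\Lambda_{\dirF^+}\cong\sL_{F^+}$, the locus lying over the missing pieces $\bigcup_{\alpha\in\ell} P_\alpha \setminus \bigcup_{\beta\notin\ell}P_\beta$, and to check this locus is exactly the union of the open cells indexed by $\fp_\alpha$ and $\fp_{\alpha^+,\alpha}$.

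First I would recall (from~\cite{Narb}, as reviewed before Theorem~\ref{thm: arboreal poset}) the precise dictionary between a correspondence $\fp = (R \xleftarrow{q} S \xhookrightarrow{i} F^+)$ and the corresponding stratum of $\Lambda_{\dirF^+}$: the subtree $S$ records which coordinates $x_\beta$ are nonnegative and ``active'' near a generic point of the stratum, the quotient $q$ records which of those coordinates vanish together, and the positive codirection lies in the conormal of the appropriate coordinate subspace. In these terms, a point of $\Lambda_{\dirF^+}$ over $P_\alpha^\circ$ (the locus $x_\alpha = 0$, $x_\beta > 0$ for $\beta < \alpha$) with the coorientation $\sigma_\alpha$ pointing into $\sQ_\alpha$, and with no other sheet of $\sH_{\dirF^+}$ passing through, lies in the open cell whose correspondence has $S$ equal to the down-set $\{\beta : \beta \le \alpha\}$ and $q$ collapsing all of $S$ — this is the maximal correspondence starting from $\{pt\}$, $(\{pt\} \leftarrow S \hookrightarrow F^+)$. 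For the two special vertices $\alpha\in\ell$ and $\alpha^+\in\ell^+$, the relevant down-sets in $\dirF^+$ are, respectively, $\{\beta : \beta\le\alpha\} = \{\alpha\}$ (since $\alpha$ is a leaf of $\dirF$, hence has no children in $\dirF$, and in $\dirF^+$ its only child is $\alpha^+ \not\le\alpha$) giving $\fp_\alpha$, and $\{\beta:\beta\le\alpha^+\} = \{\alpha,\alpha^+\}$ giving $\fp_{\alpha^+,\alpha}$. These are precisely the open cells that sit over the pieces $P_\alpha^\circ$ and $P_{\alpha^+}^\circ$ that are present in $H_{\dirF^+}$ but deleted in $H_{\dirF^*}$.

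Next I would verify the two set-theoretic inclusions. For ``$\supseteq$'': any point $p$ of $\sH_{\dirF^+}$ whose correspondence is $\fp_\alpha$ or $\fp_{\alpha^+,\alpha}$ lies in $\sP_\alpha^\circ$ or $\sP_{\alpha^+}^\circ$ respectively and in no other $\sP_\beta$, hence is not in $\sH_{\dirF^*} = \bigcup_{\beta\notin\ell}\sP_\beta$; so the corresponding points of the coray bundle are genuinely deleted. For ``$\subseteq$'': conversely, if a point of $\sH_{\dirF^+}$ is \emph{not} in $\sH_{\dirF^*}$, then by the less-redundant presentation $\sH_{\dirF^+} = \bigcup_\beta \sP_\beta^\circ$ it must lie in some $\sP_\alpha^\circ$ with $\alpha\in\ell$ or $\alpha\in\ell^+$, and in no $\sP_\beta^\circ$ with $\beta\notin\ell$; a short combinatorial check using Remark~\ref{rem coord depend}(3) and the fact that $\alpha$ (resp. $\alpha^+$) is maximal in $\dirF^+$ shows the only sheets through such a point are $\sH_\alpha$ (resp. $\sH_{\alpha^+}$ and $\sH_\alpha$), pinning the correspondence to $\fp_\alpha$ (resp. $\fp_{\alpha^+,\alpha}$). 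Since deleting these open cells from the finite regular cell complex $\sL_{F^+}$ yields a closed subspace (a subcomplex, by the intersection property of Theorem~\ref{thm: arboreal poset} together with the observation that neither $\fp_\alpha$ nor $\fp_{\alpha^+,\alpha}$ lies in the closure of any cell we keep), and $\Lambda_{\dirF^*}$ is exactly the preimage of $\sH_{\dirF^*}$ in $\Lambda_{\dirF^+}$, the homeomorphism of Theorem~\ref{thm: local smoothing}(2) restricts to the desired homeomorphism.

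The main obstacle I expect is the bookkeeping in the middle step: translating ``which sheets $\sH_\beta$ of the smoothed hypersurface pass through a given point'' into ``which correspondence indexes the containing cell,'' since the smoothing functions $h_\beta$ are defined inductively via $f$ and their zero loci interact in a way that is only transparent after passing through $F_{\dirF^+}$ to the rectilinear picture. I would handle this by doing the entire argument rectilinearly — where $P_\beta = \{x_\beta = 0, x_\gamma\ge 0\text{ for }\gamma<\beta\}$ makes the incidence combinatorics completely explicit in terms of the poset $V(\dirF^+)$ — and only at the very end transport the conclusion back along $F_{\dirF^+}$. The remaining content is then purely the combinatorics of down-sets in the poset obtained by adjoining maxima above the marked leaves, which is routine.
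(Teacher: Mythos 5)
Your overall strategy -- transport everything to the rectilinear model via $F_{\dirF^+}$ and then identify which open cells of $\sL_{F^+}$ lie over the deleted part of the front -- is exactly the paper's, and your closedness argument at the end is fine. But the central identification in your middle paragraph is wrong, and the error propagates into both inclusions. The piece $P_{\alpha^+}$ is \emph{not} deleted: by definition $H_{\dirF^*}=\bigcup_{\beta\in V(\dirF^+)\setminus\ell}P_\beta$, and $\alpha^+\in\ell^+\subset V(\dirF^+)\setminus\ell$, so $P_{\alpha^+}\subset H_{\dirF^*}$ (indeed in Example~\ref{ex deg arb} the surviving half-line \emph{is} $P_{\alpha^+}$). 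Consequently your ``$\supseteq$'' step fails as written: a point you place in $\sP^\circ_{\alpha^+}$ lies in $\sP_{\alpha^+}\subset\sH_{\dirF^*}$ and is therefore kept, not deleted. The underlying problem is that your down-set dictionary does not match the cell structure of \cite{Narb} that the paper uses: the generic point of $\sP^\circ_{\alpha^+}$ is a smooth point of the hypersurface with a single sheet through it, so it lies in the open cell indexed by $(\{pt\}\leftarrow\{\alpha^+\}\hookrightarrow F^+)$ (which survives), not in the cell indexed by $\fp_{\alpha^+,\alpha}$. Note also that your dictionary, which only produces down-sets, cannot account for all open cells -- already for the $A_2$ tree there are three open cells in the link but only two down-sets of the form $\{\beta:\beta\le\gamma\}$.

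What actually happens, and what the paper proves, is that \emph{both} deleted cells lie over the single sheet $\sH_\alpha$. The deleted front locus is
$$
P^*_\alpha=\{x_\alpha=0,\ x_\beta>0 \text{ for all }\beta<\alpha,\ x_{\alpha^+}\neq0\}\subset P^\circ_\alpha ,
$$
which has two components according to the sign of $x_{\alpha^+}$; the cell $\fp_\alpha$ sits over $\{x_{\alpha^+}<0\}$ and the cell $\fp_{\alpha^+,\alpha}$ over $\{x_{\alpha^+}>0\}$. The combinatorial core of the proof is then the decomposition $H_{\dirF^+}=H_{\dirF^*}\cup\bigcup_{\alpha\in\ell}P^*_\alpha$ together with the disjointness of each $P^*_\alpha$ from $H_{\dirF^*}$ (checking $\beta<\alpha$, $\beta=\alpha^+$, and $\beta$ incomparable separately; the incomparable case requires verifying that $F_{\dirF^+}^{-1}$ preserves the relevant transversality), plus the observation that $\sP^*_\alpha$ is tangent to $\sP_{\hat\alpha}$ and $\sP_{\alpha^+}$ so that the boundary of its coray bundle falls into $\Lambda_{\dirF^*}$. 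Your proposal, as it stands, reaches the correct list of correspondences only because two errors cancel; the argument itself would not survive being written out.
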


\begin{proof}
For each $\alpha\in \ell\subset V(\dirF^+)$, introduce the  subspaces
$$
\xymatrix{
P^*_\alpha =\{x_\alpha = 0, x_\beta >0 \text{ for all } \beta<\alpha, , x_{\alpha^+} \not = 0\}
\subset H_{\dirF^+} 
\subset \R^{\dirF^+}
}
$$
$$
\xymatrix{
\sP^*_\alpha =F^{-1}_{\dirF^+}(P^*_\alpha) 
=  \{h_\alpha = 0, h_{\hat \alpha} >0, x_{\alpha^+} \not = 0\}
\subset \sH_{\dirF^+} \subset \R^{\dirF^+}
}
$$

Observe that $\sP^*_\alpha$ is an open submanifold of $\sH_\alpha =\{h_\alpha = 0\}$  and hence comes with a preferred coorientation $\sigma_\alpha$
with associated positive coray bundle $\Lambda_{\sP^*_\alpha} \subset \Lambda_{\dirF^+}$.

\begin{lemma}\label{lemma: microlocal ends}
$$
\xymatrix{
\Lambda_{\dirF^+} = \Lambda_{\dirF^*} \cup \bigcup_{\alpha\in \ell} \Lambda_{\sP^*_\alpha}
}$$
Moreover,    each $\Lambda_{\sP^*_\alpha}$
 is disjoint from $\Lambda_{\dirF^*}$ and each other.
\end{lemma}

\begin{proof}[Proof of Lemma~\ref{lemma: microlocal ends}]

Observe that
$$
\xymatrix{
H_{\dirF^+} = H_{\dirF^*} \cup \bigcup_{\alpha\in \ell} P^*_\alpha
}$$
To see this,  recall that 
$$
\xymatrix{
H_{\dirF^+} = H_{\dirF^*} \cup \bigcup_{\alpha\in \ell} P_\alpha & 
H_{\dirF^*} =\bigcup_{\alpha\in V(\dirF^+)\setminus \ell}  P_\alpha
}$$
Suppose $p\in P_\alpha\setminus P^*_\alpha$. Then either $x_\beta(p) = 0$, for some $\beta<\alpha$, in which case
$p\in P_\beta \subset H_{\dirF^*}$, or 
$x_{\alpha^+}(p) = 0$, in which case
$p\in P_{\alpha^+} \subset H_{\dirF^*}$.

Thus applying $F_{\dirF^+}^{-1}$, we  also obtain
$$
\xymatrix{
\sH_{\dirF^+} = \sH_{\dirF^*} \cup \bigcup_{\alpha\in \ell} \sP^*_\alpha
}$$
Observe further that $\sP^*_\alpha$ is tangent to $\sP_{\hat \alpha}$ along 
$h_{\hat \alpha} = 0$, and   tangent to $\sP_{\alpha^+}$ along $x_{\alpha^+} = 0$.
Thus the boundary $\ol \Lambda_{\sP^*_\alpha} \setminus \Lambda_{\sP^*_\alpha}$ is contained in $\sH_{\dirF^*}$,
and hence we obtain the first assertion
$$
\xymatrix{
\Lambda_{\dirF^+} = \Lambda_{\dirF^*} \cup \bigcup_{\alpha\in \ell} \Lambda_{\sP^*_\alpha}
}$$

Now  let us turn to the second assertion.

First,  $P^*_\alpha\cap P_{\alpha^+} = \emptyset$, since 
$p\in P^*_\alpha$ implies $x_{\alpha^+}(p) \not = 0$
so $p\not\in P_{\alpha^+}$.

Similarly, if $\beta<\alpha$, then $P^*_\alpha\cap P_\beta = \emptyset$, since $p\in P^*_\alpha$ implies $x_\beta(p) >0$
so $p\not\in P_\beta$.

Finally, if $\gamma$ and $\alpha$ are incomparable, in particular if $\gamma$  also lies in $\ell$, then the following intersection is obviously transverse
$$
\xymatrix{
P^*_\alpha \cap P_\gamma = \{x_\alpha = 0, x_\beta >0 \text{ for all } \beta<\alpha, x_{\alpha^+} \not = 0\} \cap
\{x_\gamma = 0, x_\delta \geq 0 \text{ for all } \delta<\gamma\}
}
$$

We claim that the homeomorphism $F_{\dirF^+}^{-1}$ preserves the transversality of the above intersection
thus establishing the second assertion.
To check this, 
fix a total order on $V(\cF)$ compatible with its natural partial order, write $\alpha_1, \ldots, \alpha_{n+1} \in V(\cF)$ for the ordered vertices, and 
recall the factorization
$
F_\cF = \tilde F_{\alpha_1} \circ \cdots\circ \tilde F_{\alpha_{n+1}}$.

Since each $\tilde F_{\beta}$ preserves all coordinates except $x_{\beta}$, we are reduced to showing
that $\tilde F_{\alpha}^{-1}$ and $\tilde F_{\gamma}^{-1}$ 
preserve the transversality of the above intersection.
But each only changes the corresponding coordinate as a function of the coordinates less than it
in the partial order. Since $\alpha$ and $\gamma$ are incomparable by assumption, the asserted transversality follows.
 \end{proof}

Finally, to complete the proof of Prop.~\ref{prop: microlocal ends}, 
by construction~\cite{Narb}, the disjoint union of the open cells of $\sL_{F^+}$ indexed by 
the correspondences $\fp_\alpha$, $\fp_{\alpha^+, \alpha}$ maps homeomorphically to 
$
\sP^*_\alpha 
$ 
under the natural projection
$$
\xymatrix{
S^*\R^{\dirF^+} \ar[r] &  \R^{\dirF^+}.
}$$ 
More precisely,  the open cell indexed by 
$\fp_\alpha$ maps to the locus $\sP^*_\alpha\cap \{x_{\alpha^+} < 0\}$, and  the open cell 
indexed by 
$\fp_{\alpha^+, \alpha}$ maps to the locus  $\sP^*_\alpha\cap \{x_{\alpha^+} > 0\}$.
\end{proof}

\begin{remark}
If $\ell = \emptyset$, so that $\dirF^+ = \dirF$, then $\Lambda_{\dirF^*}$ is homeomorphic to $\sL_{F}$ itself.
\end{remark}

\begin{example}

If $\ell = \dirF =\{\alpha\}$ consists of a single vertex, then $\dirF^+ =\{\alpha, \alpha^+\}$ consists  
of two vertices satisfying $\alpha < \alpha^+$. 

Recall that $\sL_{F^+}$  is the local trivalent graph given by the cone over  three  points  indexed by the three correspondences
 $$
\xymatrix{
(\{pt\} &  \ar@{->>}[l]_-= \{\alpha\} \ar@{^(->}[r] & F^+)
&
(\{pt \} &  \ar@{->>}[l]_-= \{\alpha^+\} \ar@{^(->}[r] & F^+)
&
(\{pt\} &  \ar@{->>}[l] F^+ \ar@{^(->}[r]^-= & F^+)
}
$$

To obtain $\Lambda_{\dirF^*}$, we  start with
$\sL_{F^+}$ and delete the two open cells indexed by the first and third of the above correspondences.
What results is a closed half-line, the cone over the remaining point indexed by the middle correspondence.
Note the agreement with Example~\ref{ex deg arb}.
\end{example}


\section{Expansion algorithm}\label{s exp}


\subsection{Setup}
 \label{ss: strat}

Let $M$ be a  manifold.

Let $H\subset M$ be a directed hypersurface with 
positive coray bundle $\Lambda \subset S^*M$.

Fix a Whitney stratification $\{H_{\ul i}\}_{\ul i\in \ul I}$ of the hypersurface $H\subset M$.
(The reason for the presently superfluous underlining of the indices will become apparent soon below.)
As usual, we will regard the index set $\ul I$ of the stratification as a  poset with partial order 
$$
\xymatrix{
\ul i <\ul j & \mbox{ if and only if } & 
H_{\ul i}\subset \ol H_{\ul j}, H_{\ul i}\not = H_{\ul j}
}$$

To simplify the exposition, we will make the following  first of several mild assumptions.

\begin{assumption}
We will assume  there is a compactification $M\subset \ol M$ so that
the stratification of $H\subset M$ is the restriction of a stratification of the closure $\ol H\subset \ol M$.
\end{assumption}

In particular, this implies the  index set $\ul I$ of the stratification is finite.


For each $\ul i\in \ul I$, introduce  the restriction of the positive coray bundle
$$
\xymatrix{
\Lambda_{\ul i} =\Lambda\times_H H_{\ul i} \subset \Lambda
}
$$

Next, we will assume the following additional simplifying property 
of the stratification which can be achieved by refining the stratification if necessary,
for example so that the strata 
are simply-connected.

\begin{assumption}
For each $\ul i\in \ul I$, we will assume  the finite map $\Lambda_{\ul i} \to H_{\ul i}$  is a trivial bundle.
\end{assumption}

For each $\ul i \in \ul I$,  fix once and for all a  trivialization 
$$
\xymatrix{
\Lambda_{\ul i} \simeq H_{\ul i} \times F_{\ul i}
}
$$ 
where  $F_{\ul i}$ is a finite set. 

Introduce the set $I$  of pairs $i = (\ul i, f)$ where $\ul i \in \ul I$ and $f \in F_{\ul i}$,
 and the natural projection 
$$
\xymatrix{
I \ar[r] & \ul I &  i = (\ul i, f) \ar@{|->}[r] & \ul i
}$$
For each $\ul i\in \ul I$, we will regard $F_{\ul i}$ as a subset of $I$, 
and often write $i\in F_{\ul i}$ when $i \mapsto \ul i$ without specifying that $i = (\ul i, f)$.

For each $i = (\ul i, f) \in F_{\ul i}$, we will write $\Lambda_{i} \subset \Lambda$ for the subspace
$$
\xymatrix{
\Lambda_i = H_{\ul i} \times \{f\} \subset H_{\ul i} \times F_{\ul i} \simeq \Lambda_{\ul i} \subset \Lambda
}$$
Note that projection provides a diffeomorphism
$$
\xymatrix{
\Lambda_i \ar[r]^-\sim & H_{\ul i}
}
$$ 

We have a disjoint decomposition into submanifolds 
$$
\xymatrix{
\Lambda = \coprod_{i\in I} \Lambda_i
}
$$
The decomposition satisfies the axiom of the frontier but we will not worry about whether it is a Whitney stratification.
We will regard the index set $I$ as a finite poset with partial order 
$$
\xymatrix{
 i < j & \mbox{ if and only if } & 
\Lambda_i\subset \ol \Lambda_j, \Lambda_i\not = \Lambda_j
}$$ 
 The projection $I \to \ul I$ respects the poset structures in the sense that $i <j$ implies $\ul i < \ul j$ (though not necessarily the converse).

Finally, to further simplify future notational demands, we will assume the following simplifying property 
of the stratification which can be achieved by further refining the stratification if necessary.

\begin{assumption}
For each $\ul i\in \ul I$, we will assume  the stratum $H_{\ul i} \subset H$  is locally connected.
\end{assumption}

The assumption has the following implication which will help simplify the exposition and notation around further constructions.

\begin{lemma}
Given $\ul i\in \ul I$ and $j\in I$ with $\ul i <\ul j$, there exists a unique $i \in F_{\ul i}$ such that $i< j$.
\end{lemma}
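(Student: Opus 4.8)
The plan is to unwind the definitions of the poset structures and use the local connectivity assumption together with the fact that each $\Lambda_i \to H_{\ul i}$ is a diffeomorphism. Fix $\ul i \in \ul I$ and $j \in I$ with $\ul i < \ul j$, and write $\ul j$ for the image of $j$ under $I \to \ul I$. By definition of the order on $I$, the hypothesis $i < j$ for some $i \in F_{\ul i}$ means $\Lambda_i \subset \ol\Lambda_j$ with $\Lambda_i \ne \Lambda_j$; since projection is a diffeomorphism $\Lambda_j \to H_{\ul j}$ and restricts to a diffeomorphism $\Lambda_i \to H_{\ul i}$, this forces $H_{\ul i} \subset \ol H_{\ul j}$, i.e. $\ul i < \ul j$ as assumed. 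So the content of the lemma is that exactly one sheet $i\in F_{\ul i}$ over $H_{\ul i}$ lies in the closure of the chosen sheet $\Lambda_j$ over $H_{\ul j}$.

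First I would establish \emph{existence}. Since $H_{\ul i}\subset \ol H_{\ul j}$, pick a point $x \in H_{\ul i}$ and a sequence $x_n \in H_{\ul j}$ with $x_n \to x$. Lift each $x_n$ to the point $\lambda_n \in \Lambda_j$ given by the trivialization, i.e. the unique point of $\Lambda_j$ over $x_n$. Because $\Lambda \to H$ is a finite (hence proper, in the relevant local sense) map and $\Lambda$ is closed, after passing to a subsequence $\lambda_n$ converges to some $\lambda_\infty \in \Lambda$ lying over $x$. Then $\lambda_\infty \in \ol\Lambda_j$, and $\lambda_\infty \in \Lambda_i$ for the unique $i \in F_{\ul i}$ with $\lambda_\infty \in \Lambda_i$ (the sheets $\Lambda_{i'}$, $i'\in F_{\ul i}$, partition $\Lambda_{\ul i}$). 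Hence $\Lambda_i \subset \ol\Lambda_j$ after noting $\ol\Lambda_j \cap \Lambda_{\ul i}$ is a union of sheets — this is where I would invoke local connectivity: near $x$, the stratum $H_{\ul i}$ is connected, and $\ol\Lambda_j$ is closed, so the subset of $H_{\ul i}$ over which a given sheet of $\ol\Lambda_j$ sits is both open (by a local triviality / continuity argument using that $\pi|_\Lambda$ is finite and the $\Lambda_{i'}$ are the connected sheets over a locally connected base) and closed, hence all of that connected piece. This gives $i < j$, the strict inequality coming from $\ul i \neq \ul j$.

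Next, \emph{uniqueness}. Suppose $i, i' \in F_{\ul i}$ both satisfy $i < j$ and $i' < j$. Then both $\Lambda_i$ and $\Lambda_{i'}$ lie in $\ol\Lambda_j$. The map $\pi|_\Lambda : \Lambda \to H$ is finite, and over $H_{\ul j}$ the sheet $\Lambda_j$ is a single-valued section; I claim $\ol\Lambda_j$ still meets $H_{\ul i}$ (via $\pi$) in a \emph{single} sheet. The key point is that $\ol{\Lambda_j}$, being the closure of a connected set $\Lambda_j$ (the stratum $H_{\ul j}$ may be taken connected after refinement, and $\Lambda_j \cong H_{\ul j}$), is itself connected; its intersection with the open set $\Lambda_{\ul i} = \bigsqcup_{i''\in F_{\ul i}} \Lambda_{i''}$ need not be connected, but local connectivity of $H_{\ul i}$ together with the fact that $\pi|_\Lambda$ is a finite map whose restriction over $H_{\ul i}$ is a covering lets me argue that approaching $H_{\ul i}$ from within $\Lambda_j$ the limits all land in one sheet. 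Concretely: the function assigning to $x\in H_{\ul i}$ the set of sheets $i''\in F_{\ul i}$ with $\Lambda_{i''}$ meeting every neighborhood of $\pi^{-1}(x)\cap \ol\Lambda_j$ is locally constant on the locally connected stratum $H_{\ul i}$, hence constant, and a counting/dimension argument (the normal structure of $\Lambda_j$ along its boundary, controlled by the arboreal/control-data setup) shows it is a singleton.

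The main obstacle, I expect, is this uniqueness step: ruling out that the closure of a single sheet $\Lambda_j$ can "wrap around" and limit onto two distinct sheets over $H_{\ul i}$. Existence is essentially a compactness/properness argument, but uniqueness genuinely uses the geometric input — that the hypersurface is in good position (so $\pi|_\Lambda$ is finite), that the stratification has been refined to make strata locally connected and simply connected (so sheets are well-defined and locally constant-counting arguments apply), and the local normal-form description of how strata of $\Lambda$ abut one another. I would phrase the decisive lemma as: for a finite map $\pi|_\Lambda$ with the stated triviality and connectivity, and a stratum $\Lambda_j$ with connected closure, $\ol\Lambda_j \cap \Lambda_{\ul i}$ is contained in a single sheet $\Lambda_i$ — and prove it by the locally-constant-on-a-connected-base argument sketched above, falling back on the control data of Section~\ref{s prelim} to get the required local triviality near the boundary of $\Lambda_j$.
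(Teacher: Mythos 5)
Your existence argument is fine and matches the paper's (properness of $\pi|_\Lambda$ produces a limit point in some sheet $\Lambda_i$ over $H_{\ul i}$, and the frontier property upgrades this to $\Lambda_i\subset\ol\Lambda_j$). The problem is the uniqueness step, and you have correctly identified it as the crux but not actually proved it. Your argument shows that the set of sheets $i''\in F_{\ul i}$ onto which $\ol\Lambda_j$ limits is locally constant along $H_{\ul i}$; that is consistent with the constant value being the two-element set $\{i,i'\}$, so it does not rule out the ``wrapping around'' you are worried about. The appeal to ``a counting/dimension argument'' using ``the normal structure of $\Lambda_j$ along its boundary'' is not a proof, and no such normal-form input is needed.

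The missing idea is to apply the local connectivity hypothesis to the \emph{higher} stratum $H_{\ul j}$ at points of $H_{\ul i}$, not to $H_{\ul i}$ itself. Concretely: fix $x\in H_{\ul i}$ and choose disjoint open neighborhoods $U_{i''}$ of the finitely many points of $\pi|_\Lambda^{-1}(x)$, one in each sheet $\Lambda_{i''}$, $i''\in F_{\ul i}$. By properness, for every sufficiently small ball $B\ni x$ one has $\pi|_\Lambda^{-1}(B)\subset\coprod_{i''}U_{i''}$, and since $\pi$ restricts to a homeomorphism $\Lambda_j\to H_{\ul j}$, the intersection $H_{\ul j}\cap B$ is the disjoint union of the open pieces $\pi(\Lambda_j\cap U_{i''}\cap\pi^{-1}(B))$. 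If both $\Lambda_i$ and $\Lambda_{i'}$ lay in $\ol\Lambda_j$, the pieces indexed by $i$ and $i'$ would be nonempty for every such $B$, so $H_{\ul j}\cap B$ would be disconnected for all small $B$, contradicting the standing assumption that the stratum $H_{\ul j}$ is locally connected (as a subspace of $H$, hence at the point $x$ of its closure). This purely topological argument is the paper's proof; without it, or some substitute for it, your uniqueness step has a genuine gap.
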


\begin{proof}
First, note there exists $i \in F_{\ul i}$ with $i< j$ since the projection $\Lambda\to H$ is proper.
Next, if there were two such $i, i'\in F_{\ul i}$, 
then $ H_{\ul j}\subset H$ would not be locally connected near $H_{\ul i}\subset H$. Namely, if we choose disjoint open neighborhoods $U_i\subset \Lambda$ of  $\Lambda_i \subset \Lambda$, for all $i\in F_{\ul i}$, then
near $H_{\ul i}\subset H$, we would have that
$ H_{\ul j}\subset H$ is the disjoint union of the homeomorphic images of the open subsets $\Lambda_j \cap U_i \subset \Lambda_j$.
\end{proof}

The above assertion immediately implies the following useful statements. Given a poset $I$, and an element $j\in I$,
we will write $I_{\leq j} = \{ i \in I \, | \, i \leq  j\}$ and $I_{\geq j} = \{ i \in I \, | \, i \geq  j\}$ for the induced subposets. 
Given a subset $J\subset I$, we will write  $I_{\leq J} = \cup_{j \in J} I_{\leq j}$ and   
$I_{\geq J} = \cup_{j \in J} I_{\geq j}$ for the induced subposets.

\begin{corollary}\label{cor: simplifying assumptions}
(1) For each $j\in I$, the natural projection of subposets 
$$
\xymatrix{
I_{\leq j} \ar[r] & I_{\leq \ul j}
}
$$ 
is an isomorphism.

(2) Given $\ul i \in \ul I$ with preimage $F_{\ul i} \subset I$, we have the  decomposition
$$
\xymatrix{
I_{\geq F_{\ul i}} = \coprod_{ i \in F_{\ul i}} I_{\geq i}
}
$$
into disjoint incomparable subposets.
\end{corollary}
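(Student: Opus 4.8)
The plan is to deduce both assertions directly from the preceding Lemma, which states that for each $\ul i \in \ul I$ and each $j \in I$ with $\ul i < \ul j$, there is a \emph{unique} $i \in F_{\ul i}$ with $i < j$. For part (1), I would first recall that the projection $I \to \ul I$ restricts to a map of posets $I_{\leq j} \to I_{\leq \ul j}$, and that it is automatically order-preserving since $i < j$ implies $\ul i < \ul j$. To see it is a bijection, I would argue injectivity and surjectivity separately. For surjectivity: given $\ul i \in I_{\leq \ul j}$, either $\ul i = \ul j$, in which case $j$ itself maps to $\ul j$ and lies in $I_{\leq j}$, or $\ul i < \ul j$, in which case the Lemma produces some $i \in F_{\ul i}$ with $i < j$, hence $i \in I_{\leq j}$ maps to $\ul i$. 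For injectivity: if $i, i' \in I_{\leq j}$ both map to the same $\ul i$, then either $\ul i = \ul j$ (so $i, i' \leq j$ both project to $\ul j$, which by properness of $\Lambda \to H$ restricted over the open stratum $H_{\ul j}$ — where $\Lambda_{\ul j}$ is trivialized — forces $i = i' = j$), or $\ul i < \ul j$, in which case the uniqueness clause of the Lemma gives $i = i'$. Finally, to upgrade the bijection to a poset isomorphism, I would check that the inverse is order-preserving: if $\ul i \leq \ul i'$ in $I_{\leq \ul j}$, the corresponding lifts $i, i' \in I_{\leq j}$ satisfy $i \leq i'$ because the lift is characterized by "$i < j$" (resp. "$= j$"), and the chain $H_{\ul i} \subset \ol H_{\ul i'}$, $H_{\ul i'} \subset \ol H_{\ul j}$ together with the unique-lift property forces the compatible nesting $\Lambda_i \subset \ol\Lambda_{i'}$; more cleanly, one applies the Lemma with $j$ replaced by $i'$ to identify the lift of $\ul i$ into $I_{\leq i'}$ with its lift into $I_{\leq j}$.

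For part (2), the decomposition $I_{\geq F_{\ul i}} = \coprod_{i \in F_{\ul i}} I_{\geq i}$ is a disjointness statement: by definition $I_{\geq F_{\ul i}} = \bigcup_{i \in F_{\ul i}} I_{\geq i}$, so it suffices to show the sets $I_{\geq i}$ are pairwise disjoint, and that no element of one is comparable to an element of another. Suppose $j \in I_{\geq i} \cap I_{\geq i'}$ with $i, i' \in F_{\ul i}$, i.e.\ $i \leq j$ and $i' \leq j$. If $j \in F_{\ul i}$ as well, then $i \leq j$ in $I$ with $\ul i = \ul j$ forces $i = j$ (distinct strata over the same $\ul i$ are incomparable, as they are disjoint components of the trivial bundle $\Lambda_{\ul i} \simeq H_{\ul i} \times F_{\ul i}$), and likewise $i' = j$, so $i = i'$. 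If instead $\ul i < \ul j$, then $i$ and $i'$ are both elements of $F_{\ul i}$ lying below $j$, so the uniqueness clause of the Lemma gives $i = i'$. The same argument — really just the uniqueness clause — also rules out any comparability between $I_{\geq i}$ and $I_{\geq i'}$ for $i \neq i'$ beyond equality, confirming the subposets are mutually incomparable.

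The only genuinely delicate point is the base case $\ul i = \ul j$ in the injectivity argument for (1) and the corresponding case in (2): one must know that two distinct indices $i, i' \in F_{\ul i}$ are incomparable in $I$. This is immediate from the chosen trivialization $\Lambda_{\ul i} \simeq H_{\ul i} \times F_{\ul i}$, under which $\Lambda_i$ and $\Lambda_{i'}$ are distinct connected components of a single submanifold $\Lambda_{\ul i}$, hence neither can lie in the closure of the other. Everything else is bookkeeping with the poset structures and a single invocation of the Lemma's uniqueness assertion, so I expect this Corollary to be short; the substantive content was already extracted in proving the Lemma via the local connectivity Assumption.
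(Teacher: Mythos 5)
Your proposal is correct and follows exactly the route the paper intends: the paper offers no separate proof, stating only that the Corollary is immediate from the preceding Lemma, and your write-up simply supplies the bookkeeping (surjectivity/injectivity of the projection via existence/uniqueness in the Lemma, plus the observation that distinct elements of a single fiber $F_{\ul i}$ are incomparable since the trivialization makes each $\Lambda_i$ closed in $\Lambda_{\ul i}$). No gaps.
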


The first assertion of the corollary implies for each $j\in I$, the natural projection of closed subspaces\
is a homeomorphism
$$
\xymatrix{
\bigcup_{i\leq j} \Lambda_i \ar[r]^-\sim & \bigcup_{\ul i\leq \ul j} H_{\ul i}
}
$$
The second assertion  implies for each $\ul i \in \ul I$, there is a disjoint union decomposition of open subspaces
$$
\xymatrix{
\bigcup_{j \in I_{\geq F_{\ul i}} } \Lambda_j =  \coprod_{i\in F_{\ul i }} \bigcup_{j \geq i} \Lambda_{j}
}
$$


\subsection{Expanded cylinder}\label{s: exp cyl}

We continue with the setup of the preceding section.

Fix a  compatible system of control data 
$
\{(T_{\ul i}, \rho_{\ul i}, \pi_{\ul i})\}_{\ul i \in \ul I}
$ 


\subsubsection{Multi-transverse functions}

For each $i\in I$, choose a small positive radius $r_i\in \R_{>0}$ so that $r_i \not = r_{i'}$ whenever $\ul i = \ul i'$. 

\begin{defn}
For each $i\in I$, introduce the function 
$$
\xymatrix{
\ssf_i: T_{\ul i} \ar[r] & \R & \ssf_i = \rho_{\ul i} - r_i 
}
$$
\end{defn}



\begin{lemma}
The collection of functions $\{\ssf_i\}_{i\in I}$ is multi-transverse at its total zero value.
\end{lemma}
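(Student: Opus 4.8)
The plan is to unwind the definition of multi-transversality and reduce it to the basic submersivity property of systems of control data recalled at the end of Sect.~\ref{s prelim}. Recall that for a finite set of functions $\{\ssf_i\}_{i\in I}$, each defined on an open set $T_{\ul i}$, being multi-transverse at the total zero value means that for every subset $J\subset I$ the product map $\prod_{j\in J} \ssf_j : \bigcap_{j\in J} T_{\ul j} \to \R^J$ is a submersion along the fiber over $0\in\R^J$. Since $\ssf_j = \rho_{\ul j} - r_j$ differs from $\rho_{\ul j}$ by a constant, this is equivalent to asking that $\prod_{j\in J}\rho_{\ul j}$ be a submersion along $\bigcap_{j\in J}\{\rho_{\ul j} = r_j\}$.

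First I would fix $J\subset I$ and a point $x$ lying in $\bigcap_{j\in J}\{\rho_{\ul j} = r_j\}$, and let $\ul J\subset \ul I$ be the image of $J$ under the projection $I\to\ul I$. The key point is that distinct $j, j'\in J$ mapping to the \emph{same} $\ul j\in\ul J$ would force $r_j = \rho_{\ul j}(x) = r_{j'}$, contradicting the choice $r_i\neq r_{i'}$ whenever $\ul i = \ul i'$. Hence $J\to\ul J$ is a bijection, and the product map $\prod_{j\in J}\rho_{\ul j}$ is literally the product $\prod_{\ul j\in\ul J}\rho_{\ul j}$ of tubular distance functions over distinct strata. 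So it suffices to show this last map is a submersion at $x$.

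Next I would locate which stratum $H_{\ul k}$ contains $\pi(x)$ (equivalently, the minimal element of the set of $\ul j\in\ul J$ with $x\in T_{\ul j}$, using the axiom of the frontier), and invoke the recalled property of compatible control data: the tubular distance functions $\{\rho_{\ul j}\}_{\ul j\in\ul I}$ on the tubular neighborhoods $\{T_{\ul j}\}$ are multi-transverse at any completely nonzero value. Since the $r_j$ are chosen positive, the value $(r_j)_{j\in J} = (r_{\ul j})_{\ul j\in\ul J}$ is completely nonzero, so the product $\prod_{\ul j\in\ul J}\rho_{\ul j}$ is a submersion along the relevant fiber, in particular at $x$. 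This yields the claim for the chosen $J$ and $x$, and since $J$ and $x$ were arbitrary we are done.

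The only real subtlety — and the step I expect to require the most care — is the reduction from $J$ to $\ul J$: one must be sure that the functions being multiplied are genuinely indexed by \emph{distinct} strata once restricted to the common zero locus, which is precisely where the hypothesis $r_i\neq r_{i'}$ for $\ul i=\ul i'$ enters, and that one is entitled to apply the control-data multi-transversality statement at the point $x$ rather than on a full fiber (which is immediate, since submersivity is a pointwise condition). Everything else is bookkeeping with the definitions already set up in Sects.~\ref{s prelim}--\ref{s exp}.
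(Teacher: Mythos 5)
Your argument is correct and is essentially the paper's own proof: the distinctness of the radii $r_i$ over a common $\ul i$ forces any subcollection with nonempty common zero locus to involve at most one function per stratum, after which the claim reduces to the multi-transversality of the tubular distance functions $\{\rho_{\ul i}\}$ at completely nonzero values. The extra paragraph about locating the stratum containing $\pi(x)$ is harmless but not needed.
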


\begin{proof}
Since the radii $r_i\in \R_{>0}$ are distinct $r_i \not = r_{i'}$ whenever $\ul i = \ul i'$,
the zero locus of a subcollection of functions is nonempty only if for each $\ul i \in \ul I$, the subcollection contains at most one function indexed  by an $i\in I$ lying over $\ul i$. For such subcollections, the multi-transversality
is the usual multi-transversality of the collection  $\{\rho_{\ul i}\}_{\ul i\in \ul I}$ of tubular distance functions at any collection of non-zero values.
\end{proof}

\subsubsection{Truncated strata}

\begin{defn} For each $i\in I$,
define the {\em truncated stratum} $H^\trunc_i \subset H_{\ul i}$ to be the closed subspace of $x\in H_{\ul i}$ cut out by the equations
$$
\xymatrix{
\ssf_a(x) \geq  0, \mbox{ whenever }  a<i \mbox{ and } x\in H_{\ul i} \cap T_{\ul a}
}
$$
\end{defn}

\begin{lemma}\label{lemma: truncated strata}
(1) The truncated stratum $H^\trunc_i\subset H_{\ul i}$ is a closed submanifold with corners.

(2) The codimension $k$ corners of $H^\trunc_i$ are indexed by $a_1, \ldots, a_k\in I$ with $a_1< \cdots <a_k<i$.

\end{lemma}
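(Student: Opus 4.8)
The plan is to analyze the defining conditions of $H^\trunc_i \subset H_{\ul i}$ directly, using the multi-transversality established in the previous lemma together with the simplifying Corollary~\ref{cor: simplifying assumptions}. First I would observe that the conditions $\ssf_a(x) \geq 0$ imposed on $H^\trunc_i$ are indexed precisely by those $a \in I$ with $a < i$; by Corollary~\ref{cor: simplifying assumptions}(1), the subposet $I_{\leq i}$ maps isomorphically to $I_{\leq \ul i}$, so there is at most one index $a$ over each $\ul a < \ul i$, and for such an $a$ the function $\ssf_a = \rho_{\ul a} - r_a$ is defined on $T_{\ul a}$, with $H_{\ul i} \subset \ol T_{\ul a}$ in the relevant range because $\ul a < \ul i$. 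The key input is that the collection $\{\rho_{\ul a}|_{H_{\ul i}}\}_{\ul a < \ul i}$ is multi-transverse at any totally nonzero value (recorded at the end of Sect.~\ref{s prelim} and re-derived in the preceding lemma via distinctness of the radii), so the functions $\{\ssf_a|_{H_{\ul i}}\}_{a < i}$ are multi-transverse at their total zero value.

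For part (1): locally near a point $x \in H^\trunc_i$, let $J = \{ a < i : \ssf_a(x) = 0,\ x \in H_{\ul i} \cap T_{\ul a}\}$ be the set of active constraints. Multi-transversality says the product map $\prod_{a \in J} \ssf_a|_{H_{\ul i}}$ is a submersion near $x$, hence the constraints $\ssf_a \geq 0$, $a \in J$, cut out a submanifold-with-corners chart: choose coordinates on $H_{\ul i}$ in which these $|J|$ functions are the first $|J|$ coordinate functions. The inactive constraints ($\ssf_a(x) > 0$, or $x \notin T_{\ul a}$, or $x$ on the boundary of the domain of $\ssf_a$ — the last case needing the remark that $\rho_{\ul a}$ is bounded away from $0$ on the locus $\rho_{\ul a} = r_{\ul a}$, so the domain issue does not arise near such $x$) are vacuous in a neighborhood and impose no local condition. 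Thus $H^\trunc_i$ is locally diffeomorphic to $\R^{k}_{\geq 0} \times \R^{\dim H_{\ul i} - k}$ with $k = |J|$, which is exactly the statement that it is a closed submanifold with corners; closedness is immediate since each $\ssf_a \geq 0$ is a closed condition and $H_{\ul i} \cap T_{\ul a}$ together with the relevant closure behave well, using the compactification Assumption to rule out escape to infinity.

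For part (2): the codimension-$k$ corner stratum consists of points where exactly $k$ constraints are active, i.e.\ where $J$ has size $k$. By the isomorphism $I_{\leq i} \cong I_{\leq \ul i}$, such a $J$ is a $k$-element subset of $\{ a \in I : a < i\}$, which, since this index set is linearly related to the chain structure below $\ul i$ — more precisely $J$ is any totally ordered chain $a_1 < \cdots < a_k < i$ in $I$ (the fact that the active constraints at an actual point form a \emph{chain} follows because $x$ lies in $T_{\ul a}$ for each active $a$ and the system of control data forces the corresponding strata $\ol H_{\ul a}$ to be nested, hence the $\ul a$'s, and so by Corollary~\ref{cor: simplifying assumptions}(1) the $a$'s, are totally ordered). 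Conversely, given such a chain, multi-transversality guarantees the corresponding corner is nonempty (locally). This gives the claimed indexing.

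The main obstacle I expect is part (2)'s assertion that the simultaneously active constraints at a point necessarily form a \emph{chain} in $I$ rather than an arbitrary antichain or mixed subset: this is where the compatibility axioms of the system of control data ($\pi_{\ul a} \pi_{\ul b} = \pi_{\ul b}$, $\rho_{\ul a} \pi_{\ul b} = \rho_{\ul a}$ for $\ul a < \ul b$) and the local connectivity assumption must be combined to show that if $x \in H_{\ul i} \cap T_{\ul a} \cap T_{\ul b}$ with $\rho_{\ul a}(x), \rho_{\ul b}(x)$ both small, then $\ul a$ and $\ul b$ are comparable — i.e.\ a point cannot be simultaneously near two incomparable small strata while sitting in a stratum above both. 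One settles this by the standard fact that, in a Whitney stratification with control data, the tube around $H_{\ul a}$ near a point of $H_{\ul i}$ with $\ul a < \ul i$ is, after shrinking, disjoint from the tube around any $H_{\ul b}$ with $\ul b$ incomparable to $\ul a$ but $\ul b < \ul i$, provided the radii are chosen small enough in the hierarchy of constants collected in the appendix; alternatively, one invokes the almost-retraction $r$ of Sect.~\ref{s almost ret} to push to the deepest active stratum and argue inductively. Everything else — the submersion statement, the corner chart, closedness — is a routine consequence of the multi-transversality lemma already proved.
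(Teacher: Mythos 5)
Your proposal is correct and follows essentially the same route as the paper: the paper's own proof is a one-line reduction, via Corollary~\ref{cor: simplifying assumptions}(1), to the standard fact that the tubular distance functions of a control system cut out manifolds with corners indexed by chains of strata, and your argument simply spells out that standard fact (multi-transversality of the restricted distance functions gives the corner charts, and comparability of simultaneously active tubes gives the chain indexing). The only minor over-claim is the parenthetical that every chain realizes a nonempty corner — multi-transversality only guarantees the right codimension when the corner is nonempty — but this is not needed for the statement.
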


\begin{proof}
Thanks to statement (1) of Corollary~\ref{cor: simplifying assumptions}, the lemma reduces to the same assertion for the 
tubular distance functions of a system of control data
which is a  standard  fact.
\end{proof}

\begin{remark}
Of course if $i\in I$ is a minimum, so that $H_{\ul i}\subset H$ is a closed stratum, then 
we have $H_i^\trunc = H_{\ul i}$.
\end{remark}


\subsubsection{Truncated cylinders}

\begin{defn}\label{def: truncated cyls}
For each $i\in I$, define
the {\em truncated cylinder} $C_i \subset T_{\ul i}$ to be the subspace 
of $x\in T_{\ul i}$ cut out by the equations
$$
\xymatrix{
\ssf_i(x) = 0
&
\ssf_a(x) \geq  0, \mbox{ whenever }  a<i \mbox{ and } x\in T_{\ul i} \cap T_{\ul a}
}
$$
\end{defn}

\begin{remark}
Equivalently, by the axioms of a control system, the truncated cylinder  $C_i \subset T_{\ul i}$ is the subspace 
of $x\in T_{\ul i}$ cut out by the equations
$$
\xymatrix{
\ssf_i(x) = 0 & \pi_{\ul i}(x) \in H^\trunc_i  
}
$$
\end{remark}

\begin{lemma}\label{lemma: truncated cyls}
(1) The truncated cylinder $C_i\subset T_{\ul i}$ is a closed submanifold with corners.

(2) The projection $\pi_{\ul i}$ exhibits $C_i$ as a $(\on{codim}_M H_{\ul i}-1)$-sphere bundle over $H^\trunc_{\ul i}$.
\end{lemma}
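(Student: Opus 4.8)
The plan is to reduce everything to the alternative description of the truncated cylinder recorded just after Definition~\ref{def: truncated cyls},
\[
C_i \;=\; \{x\in T_{\ul i}\mid \ssf_i(x)=0\}\ \cap\ \pi_{\ul i}^{-1}(H^\trunc_i),
\]
and to identify the first set in this intersection as a sphere bundle over $H_{\ul i}$. Recall that, by the definition of a tubular neighborhood, $(T_{\ul i},\rho_{\ul i},\pi_{\ul i})$ is the open unit-ball bundle of a Euclidean vector bundle of rank $k:=\on{codim}_M H_{\ul i}$ over $H_{\ul i}$, with $\pi_{\ul i}$ the bundle projection and $\rho_{\ul i}$ the squared norm along the fibers. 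Since $r_i$ is a small positive constant, in particular $0<r_i<1$, the level set $\{\ssf_i=0\}=\rho_{\ul i}^{-1}(r_i)$ is exactly the subbundle of vectors of norm $\sqrt{r_i}$; thus it is a closed codimension-one submanifold of $T_{\ul i}$, and $\pi_{\ul i}$ restricts on it to a smooth locally trivial $(k-1)$-sphere bundle $\rho_{\ul i}^{-1}(r_i)\to H_{\ul i}$, and in particular a surjective submersion.

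By Lemma~\ref{lemma: truncated strata}, $H^\trunc_i$ is a closed submanifold with corners of $H_{\ul i}$, locally cut out by finitely many of the inequalities $\ssf_a\ge 0$ (for $a<i$) with independent differentials, its codimension-$m$ corner stratum being indexed by chains $a_1<\dots<a_m<i$. Restricting the sphere bundle $\rho_{\ul i}^{-1}(r_i)\to H_{\ul i}$ over a chart $U\subset H_{\ul i}$ in which $H^\trunc_i$ is a coordinate corner identifies $C_i=\rho_{\ul i}^{-1}(r_i)\times_{H_{\ul i}}H^\trunc_i$ locally with $(U\cap H^\trunc_i)\times S^{k-1}$; since $S^{k-1}$ has empty boundary, this exhibits $C_i$ as a submanifold with corners of $\rho_{\ul i}^{-1}(r_i)$, hence of $T_{\ul i}$, whose codimension-$m$ corners are precisely the $\pi_{\ul i}$-preimages of those of $H^\trunc_i$ --- reproducing the enumeration in part (2) of Lemma~\ref{lemma: truncated strata}. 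It is closed in $T_{\ul i}$, being the intersection of the closed sets $\rho_{\ul i}^{-1}(r_i)$ and $\pi_{\ul i}^{-1}(H^\trunc_i)$. This is part (1). Part (2) is then immediate from the same picture: $\pi_{\ul i}|_{C_i}\colon C_i\to H^\trunc_i$ is by construction the restriction of the $(k-1)$-sphere bundle $\rho_{\ul i}^{-1}(r_i)\to H_{\ul i}$ to the sub-base $H^\trunc_i$, hence again a $(k-1)$-sphere bundle.

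The only delicate point, which I would flag as the (mild) main obstacle, is justifying the alternative description of $C_i$ used at the outset: for $x\in\rho_{\ul i}^{-1}(r_i)$ one needs the sign conditions $\ssf_a(x)\ge 0$ (for $a<i$ with $x\in T_{\ul i}\cap T_{\ul a}$) defining $C_i$ to match the sign conditions $\ssf_a(\pi_{\ul i}(x))\ge 0$ defining $\pi_{\ul i}(x)\in H^\trunc_i$. This rests on the control-data compatibility $\rho_{\ul a}\circ\pi_{\ul i}=\rho_{\ul a}$, equivalently $\ssf_a\circ\pi_{\ul i}=\ssf_a$, on the relevant common domain, together with Corollary~\ref{cor: simplifying assumptions}(1), which identifies $I_{\le i}$ with $I_{\le\ul i}$ so that the two families of indices $a<i$ agree. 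Since all of this is exactly the content of the Remark being cited, beyond invoking it the remaining work is the elementary sphere-bundle bookkeeping carried out above.
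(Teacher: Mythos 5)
Your proof is correct and follows exactly the route the paper intends: the paper's own proof is the one-line ``Immediate from Lemma~\ref{lemma: truncated strata},'' and your argument --- identifying $\{\ssf_i=0\}=\rho_{\ul i}^{-1}(r_i)$ as the radius-$\sqrt{r_i}$ sphere subbundle of the tubular neighborhood, pulling back the corner structure of $H^\trunc_i$ via the control-data compatibility $\rho_{\ul a}\circ\pi_{\ul i}=\rho_{\ul a}$ --- is precisely the content being left implicit there. The one point you flag as delicate (matching the defining inequalities of $C_i$ with those of $\pi_{\ul i}^{-1}(H^\trunc_i)$) is indeed the only substantive step, and you resolve it correctly by citing the remark following Definition~\ref{def: truncated cyls}.
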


\begin{proof}
Immediate from Lemma~\ref{lemma: truncated strata}.
\end{proof}

\begin{remark}
Of course if $i\in I$ is a minimum, so that $H_{\ul i}\subset H$ is a closed stratum, then 
the truncated cylinder $C_i\subset M$ is also closed and cut out simply by $\ssf_i(x) = 0$.
\end{remark}


\subsubsection{Total cylinder}

\begin{defn}\label{def: total cyl}
Define the {\em total cylinder} $C \subset M$ to be the union of truncated cylinders
$$
\xymatrix{
C = \bigcup_{i\in I} C_i
}
$$
\end{defn}

\begin{prop}\label{prop: cyl sings}
The singularities of the total cylinder $C \subset M$ are rectilinear arboreal hypersurface singularities.
\end{prop}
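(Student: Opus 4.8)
The plan is to analyze the local structure of the total cylinder $C = \bigcup_{i\in I} C_i$ at an arbitrary point $x \in C$ and show that a neighborhood is homeomorphic, by an embedded homeomorphism of the ambient manifold, to a rectilinear arboreal hypersurface $H_{\dirF}$ for an appropriate rooted forest $\dirF$ built from the local combinatorics of the stratification at $x$. The key observation is that each truncated cylinder $C_i$ is cut out inside $T_{\ul i}$ by the single equation $\ssf_i = 0$ together with the inequalities $\ssf_a \geq 0$ for $a < i$, and that the functions $\{\ssf_i\}_{i\in I}$ are multi-transverse at their total zero value (the lemma just proved). Thus near $x$ we may, after passing to local coordinates adapted to the multi-transverse family, treat the relevant $\ssf_i$ as independent coordinate functions, and the picture becomes purely combinatorial.

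Concretely, first I would fix $x\in C$ and let $J = \{ i \in I \mid x \in C_i\}$ be the set of indices whose truncated cylinder passes through $x$. Since $x\in C_i$ forces $\ssf_i(x) = 0$ and $\ssf_a(x)\geq 0$ for all $a<i$ with $x\in T_{\ul a}$, the set $J$ together with the order inherited from $I$, combined with the sign data of the remaining $\ssf_a$ at $x$ (strictly positive versus zero), determines which lower strata are ``active'' at $x$. I would argue that the active indices $\{ a \in I \mid a \leq j \text{ for some } j\in J,\ \ssf_a(x) = 0\}$ form, under the poset structure, a disjoint union of rooted trees: Corollary~\ref{cor: simplifying assumptions}(1) says that $I_{\leq j}$ maps isomorphically to the totally-stratification poset $\ul I_{\leq \ul j}$, which is itself the face poset of a local control structure and hence a tree (a chain, in fact, for each $j$) rooted at the minimal active stratum, while Corollary~\ref{cor: simplifying assumptions}(2) governs how these chains are glued along shared lower strata into a forest $\dirF$ with root vertices the locally-closed active strata. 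The vertices of $\dirF$ are the active indices, the partial order is that of $I$, and the root vertices are the minimal elements.

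Next, using multi-transversality, I would choose local coordinates on $M$ near $x$ so that for each active vertex $\alpha$ the function $\ssf_\alpha$ becomes the coordinate $x_\alpha$ (this uses that $\prod_{\alpha} \ssf_\alpha$ is a submersion along the relevant fiber, by the multi-transversality lemma, so the $\ssf_\alpha$ extend to a coordinate system), with the remaining coordinates transverse. In these coordinates the defining conditions $\ssf_\alpha = 0$, $\ssf_a \geq 0$ for $a<\alpha$, become exactly $x_\alpha = 0$, $x_\beta \geq 0$ for $\beta \leq \alpha$ — which is the definition of the rectilinear piece $P_\alpha$ (equivalently $H_\alpha$, via Lemma~\ref{lemma: less redundant}). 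Taking the union over $\alpha\in V(\dirF)$ yields $H_\dirF = \bigcup_\alpha H_\alpha$ locally, while the inactive cylinders $C_i$ with $\ssf_i(x)\neq 0$ simply do not meet this neighborhood. Hence near $x$, $C$ is identified with $H_\dirF\subset \R^\dirF$ times a transverse Euclidean factor, which is precisely what it means for the singularity of $C$ at $x$ to be a rectilinear arboreal hypersurface singularity.

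The main obstacle, I expect, is verifying that the active index set at $x$ genuinely forms a rooted forest in the precise sense required — i.e., that the poset $I_{\leq j}$ is a chain (equivalently the control-data nesting is linearly ordered near $x$) and, more delicately, that when two chains through $x$ share a common lower active stratum they are glued compatibly so that the result is a disjoint union of \emph{trees} rather than a more general poset. This is where the local connectivity assumption (Assumption on local connectedness of strata), Corollary~\ref{cor: simplifying assumptions}, and the compatibility axioms of the control data ($\pi_\alpha \pi_\beta = \pi_\beta$, $\rho_\alpha \pi_\beta = \rho_\alpha$) all enter: they guarantee that the active $\ssf_\alpha$ restricted to each $T_{\ul\beta}$ behave as the coordinates of a nested tubular system, so that the combinatorial type is exactly that of a rooted forest. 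Once that combinatorial identification is in hand, the translation to $H_\dirF$ via the multi-transverse coordinate change is essentially the content of Lemma~\ref{lemma: less redundant} and poses no further difficulty.
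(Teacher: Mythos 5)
Your overall architecture matches the paper's: fix a point $p$, let $I_p$ be the set of indices whose truncated cylinder passes through $p$, show $I_p$ is the poset of a rooted forest, and use the multi-transversality of the $\ssf_i$ to build a chart in which each $C_\alpha$ becomes $P_\alpha$ and hence $C$ becomes $H_{\cI_p}\times \R^{k_p}$ via Lemma~\ref{lemma: less redundant}. That last coordinate step is fine and is exactly what the paper does.

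The gap is in your justification of the forest structure, which you yourself flag as the main obstacle but then resolve incorrectly. You assert that $\ul I_{\leq \ul j}$ ``is itself \ldots a chain, in fact, for each $j$,'' citing Corollary~\ref{cor: simplifying assumptions}(1). That corollary only says the projection $I_{\leq j}\to \ul I_{\leq \ul j}$ is a poset isomorphism; it says nothing about either poset being a chain, and in general neither is one. For instance, for $H=\{xyz=0\}\subset\R^3$ with its standard stratification, the down-set of an open $2$-stratum of $\{z=0\}$ contains two incomparable $1$-strata (pieces of the $x$- and $y$-axes). What is actually true, and what the proof needs, is the pointwise statement that the set of indices $a<i$ with $\ssf_a(p)=0$ is totally ordered for each $i\in I_p$. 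This is not a fact about the abstract poset $I$; it is the content of Lemma~\ref{lemma: truncated strata}(2) and Lemma~\ref{lemma: truncated cyls}, namely that the truncated cylinder $C_i$ is a manifold with corners whose codimension-$k$ corners are indexed by \emph{chains} $a_1<\cdots<a_k<i$ (a standard fact about control data, ultimately because the tubes of incomparable strata can be arranged not to meet along $\ol H_{\ul i}$). Given that, $p$ lies in a corner of $C_i$ indexed by such a chain, its maximum is the unique parent of $i$ in $I_p$, and $I_p$ is a rooted forest. Without this input your argument does not establish that two incomparable active indices cannot both lie below a given $i\in I_p$, which is exactly the configuration that would ruin the identification with a rectilinear arboreal model.
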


\begin{proof}
Fix a point $p\in M$. 

Let $I_p \subset I$ comprise indices $i\in I$ such that $p\in C_i \subset T_{\ul i}$, so  in particular
$\ssf_i:T_{\ul i} \to \R$ vanishes at $p$. We will regard $I_p \subset I$ as a poset with the induced partial order: $i, j \in I_p$
satisfy $i<j$ inside of $I_p$
if and only if $i<j$ inside of $I$.

By construction, it suffices to see that $I_p$ is the poset of a rooted forest $\cI_p$,
and thus
the singularity of $C$ at the point $p$ is
the rectilinear arboreal hypersurface $H_{\cI_p}$. More precisely, there will be an open ball 
$U\subset  \R^{\cI_p} \times \R^{k_p}$,
with $k_p = \dim M - |I_p|$ and $0\in U$, and
  a smooth open embedding 
 $$
 \xymatrix{
 \varphi:U \ar[r]^-\sim &  \varphi(U) \subset M
 }$$ 
 such that the following holds
 $$
 \xymatrix{
 \varphi(0) = p &
 \varphi(U \cap (H_{\cI_p} \times \R^{k_p})) = \varphi(U) \cap C 
 }$$
 $$
 \xymatrix{
x_i =   \ssf_i \circ \varphi: U \ar[r] & \R &
\mbox{ for all }  i\in I_p
 }
 $$
 Then for $i\in \cI_p$, the constructions with the  coordinates $x_i$ immediately match those of Definitions~\ref{def: truncated cyls} with the functions $\ssf_i$. 
 
So let us check that  $I_p$ is the poset of a rooted forest $\cI_p$. For this, it suffices to show that for any $i\in I_p$ 
that is not a minimum, there is a unique parent $\hat \imath\in I_p$ such that $\hat \imath < i$ and no $j\in I_p$ satisfies
$\hat \imath < j< i$. Recall that $i\in I_p$ means $p\in C_i$. By Lemmas~\ref{lemma: truncated strata} and ~\ref{lemma: truncated cyls}, $C^\trunc_i \subset M$ is a closed submanifold with codimension $k$ corners indexed by
(possibly empty) sequences $a_1, \ldots, a_k\in I$ with $a_1< \cdots < a_k< i$
such that $\ssf_{j}(p) = 0$ if and only if $j = a_\ell$ for some $\ell=1,\ldots, k$.
Now $p\in C_i$ lies in some corner indexed by  such a sequence. If the sequence is empty, then clearly $i\in I_p$ is a minimum, else the unique parent of $i\in I_p$ is  clearly the maximum of the sequence $\hat\imath  = a_k\in I_p$.
\end{proof}


\subsection{Smoothing into good position}


The total cylinder $C\subset M$
is a hypersurface with rectilinear arboreal hypersurface singularities. Our aim here is to amend its construction to produce a  homeomorphic deformation of it to a directed hypersurface $\sC\subset M$ with smoothed arboreal
hypersurface  singularities.


\subsubsection{Good charts}
Fix a point $p\in M$. 

Let $I_p \subset I$ comprise indices $i\in I$ such that $p\in C_i \subset T_{\ul i}$, so  in particular
$\ssf_i:T_{\ul i} \to \R$ vanishes at $p$. We will regard $I_p \subset I$ as a poset with the induced partial order: $i, j \in I_p$
satisfy $i<j$ inside of $I_p$
if and only if $i<j$ inside of $I$.
 Prop.~\ref{prop: cyl sings} confirms that $I_p$ is the poset of a rooted forest $\cI_p$ and the arboreal singularity of $C$ at the point $p$ is
that associated to $\cI_p$.

By a {\em good chart} $(U, \varphi)$ centered at $p \in C$, we will mean an open ball 
$U\subset  \R^{\cI_p} \times \R^{k_p}$,
with $k_p = \dim M - |I_p|$ and $0\in U$, and
  a smooth open embedding 
 $$
 \xymatrix{
 \varphi:U \ar[r]^-\sim &  \varphi(U) \subset M
 }$$ 
 such that the following holds
 $$
 \xymatrix{
 \varphi(0) = p &
 \varphi(U \cap (H_{\cI_p} \times \R^{k_p})) = \varphi(U) \cap C 
 }$$
 $$
 \xymatrix{
x_i =   \ssf_i \circ \varphi: U \ar[r] & \R &
\mbox{ for all }  i\in I_p
 }
 $$
The proof of Prop.~\ref{prop: cyl sings} confirms there is a good chart centered at any point.

 \begin{remark}
A good chart $(U, \varphi)$ centered at $p \not \in H$ so that $\cI_p = \emptyset$ is simply a coordinate chart
such that $\varphi(U)\cap C = \emptyset$.
\end{remark}

 \begin{remark}
Suppose  $(U_1, \varphi_1)$,  $(U_2,  \varphi_2)$ are good charts centered at $p \in H$. Introduce the 
open subsets
$$
\xymatrix{
U'_1 =  \varphi_1^{-1}(\varphi_1(U_1) \cap \varphi_2(U_2)) \subset \R^{\cI_p} \times \R^{k_p}
&
U_2' =  \varphi_2^{-1}(\varphi_1(U_1) \cap \varphi_2(U_2))\subset \R^{\cI_p} \times \R^{k_p}
}$$
and the diffeomorphism
$$
\xymatrix{
\psi = \varphi_2^{-1} \circ \varphi_1: U_1' \ar[r]^-\sim & U_2'
}$$
By construction, $\psi$ satisfies
$$
 \xymatrix{
x_i = x_i \circ \psi:U_1'\ar[r] & \R &
\mbox{ for all }  i\in I_p
 }
 $$
 Thus $\psi$ is a shearing transformation in the sense that it takes the form
 $$
 \xymatrix{
\psi = \id_{\R^{\cI_p}} \times \tilde \psi: U_1' \ar[r]^-\sim &  U_2'
&
\tilde \psi : U_1'\ar[r] &  \R^{k_p}
 }
 $$
 
 More generally, suppose  $(U_1, \varphi_1)$,  $(U_2,  \varphi_2)$ are good charts centered at $p_1, p_2 \in H$
 respectively. 
 Then in the same notation as above, the diffeomorphism $\psi$ satisfies
$$
 \xymatrix{
x_i = x_i \circ \psi:U_1'\ar[r] & \R &
\mbox{ for all }  i\in I_{p_1} \cap I_{p_2}
 }
 $$
  \end{remark}


 \subsubsection{Global smoothing}

Choose an open covering of $M$ by good charts 
$
\{(U_a, \varphi_a)\}_{a\in A}
$
centered at points $p_a\in M$.

Set $I_a\subset I$ to contain 
 indices $i\in I$ such that $p_a\in C_i \subset E_{\ul i}$, so  in particular
$\ssf_i:T_{\ul i} \to \R$ vanishes at $p_a$. 
Recall that $I_a$ is the poset of a rooted forest $\cI_a$ and the arboreal singularity of $C$ at the point $p_a$ is
that associated to $\cI_a$.

We will only be interested in a neighborhood of $C\subset M$, so will throw out any $a\in A$ such that $\varphi_a(U_a)\cap C=\emptyset$.  Since $H$ is assumed to be compactifiable,  $C$ is also compactifiable, and hence we may assume
$A$ is finite. 

By 
adjusting constants and refining the cover $\{(U_a, \varphi_a)\}_{a\in A}$ if necessary, we can and will assume that they satisfy the following
convenient conditions:
\begin{enumerate}
 \item $\varphi_a(U_a)\cap \varphi_b(U_b)  \not =  \emptyset$ implies $I_a \subset I_b$ or $I_b \subset I_a$.
\item $\varphi_a(U_a) \cap \{\ssf_i = 0\}  = \emptyset$
implies $\varphi_a(U_a) \cap \{\ssf_i \leq 2\delta\} = \emptyset$.
\end{enumerate}

For $\alpha\in \cI_a$, recall the function 
$$
\xymatrix{
h_\alpha:\R^{\cI_a}  \ar[r] &  \R 
}$$
appearing in the smoothing of Sect.~\ref{sect sm arb}.
Via 
 the inclusion and projection
$$
\xymatrix{
U_a \ar@{^(->}[r] & \R^{\cI_a} \times \R^k \ar[r] & \R^{\cI_a}
}
$$ 
and   diffeomorphism $\varphi_a$,
we can pull back and transfer $h_\alpha$ to a function
$$
\xymatrix{
h_{a, \alpha}:\varphi_a(U_a)\ar[r] & \R 
}
$$
 
 Recall that for any non-root vertex $\alpha\in V(\cI_a)$ there is a unique parent vertex which we will denote here by
 $\hat \alpha_a\in  V(\cI_a)$ emphasizing its dependence on the poset $\cI_a$.

\begin{lemma}\label{lem: h functions}
For $a, b\in A$, with $\varphi_a(U_a)\cap \varphi_b(U_b) \not =  \emptyset$, suppose $I_b \subset I_a$. Then for any $\alpha\in I_b \subset I_a$,
we have the equality of functions
$$
\xymatrix{
h_{a, \alpha} = h_{b,  \alpha}
}
$$
over the common domain $\varphi_a(U_a)\cap \varphi_b(U_b)$.
\end{lemma}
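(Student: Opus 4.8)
The statement compares two functions $h_{a,\alpha}$ and $h_{b,\alpha}$ built by the inductive recipe of Section~\ref{sect sm arb}, transferred to $M$ via the good charts $(U_a,\varphi_a)$ and $(U_b,\varphi_b)$. The plan is to argue by induction on the partial order of the common poset, using two ingredients: (i) that on the overlap $\varphi_a(U_a)\cap\varphi_b(U_b)$ the transition map $\psi=\varphi_b^{-1}\circ\varphi_a$ preserves the coordinates $x_i$ for $i\in I_a\cap I_b=I_b$ (this is exactly the ``shearing'' property recorded in the good-chart remark above), and (ii) that the recipe for $h_\alpha$ only ever uses the coordinates $x_\beta$ with $\beta\le\alpha$, together with the fixed universal function $f$ (Remark~\ref{rem coord depend}(2)).

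First I would unwind the definitions: by construction $h_{a,\alpha}=h_\alpha^{(\cI_a)}\circ(\text{proj}_{\R^{\cI_a}})\circ\varphi_a^{-1}$, where $h_\alpha^{(\cI_a)}$ is the function of Section~\ref{sect sm arb} associated to the rooted forest $\cI_a$, and similarly for $b$. The key structural point is that since $I_b\subset I_a$ and $I_b$ is a \emph{full} sub-poset which is moreover down-closed under the parent relation in the relevant sense — i.e. for $\alpha\in I_b$ its parent $\hat\alpha$ computed in $\cI_a$ lies in $I_b$ — the inductive recipe computing $h_\alpha^{(\cI_a)}$ restricted to coordinates indexed by $I_b$ coincides formally with the recipe computing $h_\alpha^{(\cI_b)}$. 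So at the level of coordinate functions on $\R^{\cI_a}$ versus $\R^{\cI_b}$, $h_\alpha^{(\cI_a)}$ pulls back to $h_\alpha^{(\cI_b)}$ under the coordinate projection $\R^{\cI_a}\to\R^{\cI_b}$. I should double-check this compatibility of parents: it follows from the description of $\cI_p$ in the proof of Prop.~\ref{prop: cyl sings}, where the parent of $i$ is the maximum of the corner-indexing sequence $a_1<\cdots<a_k<i$ with $\ssf_{a_\ell}$ vanishing — and this sequence is intrinsic to $I$, hence unchanged when passing between $I_a$ and $I_b$ as long as the relevant indices lie in $I_b$.

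Then I would run the induction on $\alpha\in I_b$ with respect to $\le$. For a root vertex $\rho$ of $\cI_b$, $h_{b,\rho}=x_\rho\circ\varphi_b^{-1}$ and $h_{a,\rho}=x_\rho\circ\varphi_a^{-1}$; these agree on the overlap precisely because $\psi$ preserves $x_\rho$ (noting $\rho\in I_b=I_a\cap I_b$, so we are in the range where the shearing identity $x_i=x_i\circ\psi$ holds). For a non-root $\alpha\in I_b$, write $h_{a,\alpha}=f(h_{a,\hat\alpha},\,x_\alpha\circ\varphi_a^{-1})$ and $h_{b,\alpha}=f(h_{b,\hat\alpha},\,x_\alpha\circ\varphi_b^{-1})$, where $\hat\alpha$ is the parent; by the previous paragraph $\hat\alpha\in I_b$, so the inductive hypothesis gives $h_{a,\hat\alpha}=h_{b,\hat\alpha}$ on the overlap, while the shearing identity gives $x_\alpha\circ\varphi_a^{-1}=x_\alpha\circ\varphi_b^{-1}$ there; applying the same universal $f$ to equal arguments yields $h_{a,\alpha}=h_{b,\alpha}$.

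The main obstacle — really the only thing needing care — is item (i): making sure that $\psi$ genuinely preserves $x_\alpha$ for \emph{every} $\alpha\in I_b$ appearing in the induction, not just for indices in some a priori smaller set. The good-chart remark states that for charts centered at $p_1,p_2$, the transition map preserves $x_i$ for $i\in I_{p_1}\cap I_{p_2}$. One must therefore check that $I_b\subset I_{p_a}\cap I_{p_b}$, i.e. that every $i\in I_b$ has $\ssf_i$ vanishing at both centers $p_a$ and $p_b$. Since $I_b$ is defined as the set of $i$ with $\ssf_i(p_b)=0$, we need $\ssf_i(p_a)=0$ as well; this is where condition (2) on the cover enters (if $\varphi_a(U_a)$ meets $\{\ssf_i=0\}$ it must contain points where $\ssf_i$ is small, hence $p_a\in\{\ssf_i=0\}$), together with $I_b\subset I_a$ and the fact that $\varphi_a(U_a)$ does meet $C_i$. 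Pinning down this set-membership bookkeeping is the real content; the rest is a mechanical induction on the tree.
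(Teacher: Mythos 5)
Your argument has a genuine gap, and it sits exactly at the point you flag as "the key structural point": the claim that for $\alpha\in I_b$ the parent $\hat\alpha$ computed in $\cI_a$ again lies in $I_b$ is false in general. The parent of $\alpha$ in $\cI_p$ is the maximum of $\{j<\alpha \,:\, \ssf_j(p)=0\}$, and this set genuinely depends on the center point $p$, not just on the ambient poset $I$. For instance, with $i_1<i_2<i_3$ in $I$, take $p_a$ in the triple corner $C_{i_1}\cap C_{i_2}\cap C_{i_3}$ and $p_b$ nearby with $p_b\in C_{i_1}\cap C_{i_3}$ but $\ssf_{i_2}(p_b)\neq 0$: then $I_b=\{i_1,i_3\}\subset I_a=\{i_1,i_2,i_3\}$, the parent of $i_3$ in $\cI_a$ is $i_2\notin I_b$, and the two recursions are formally different, $h_{a,i_3}=f(f(x_{i_1},x_{i_2}),x_{i_3})$ versus $h_{b,i_3}=f(x_{i_1},x_{i_3})$. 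Neither cover condition (1) nor (2) rules this configuration out, so your induction cannot close: at a non-root $\alpha\in I_b$ whose $\cI_a$-parent $c$ lies in $I_a\setminus I_b$, the inductive hypothesis says nothing about $h_{a,c}$, and the two sides of the identity are built from different formulas.

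This missing case is in fact the entire content of the lemma, and handling it requires the analytic design of $f$ rather than formal bookkeeping. The paper reduces to $I_a=I_b\sqcup\{c\}$ with $c=\hat\alpha_a$, then uses cover condition (2) — since $\varphi_b(U_b)$ misses $\{\ssf_c=0\}$ it misses $\{\ssf_c\leq 2\delta\}$, so $\ssf_c>2\delta$ on the overlap — together with the normalizations $f(x_1,x_2)=x_2$ on $\{x_1>2\delta,\ |x_2|<\delta\}$ and $f(x_1,x_2)=x_1$ on $\{|x_1|<\delta,\ x_2>2\delta\}$, to show that the extra recursion step through $c$ is the identity on $\varphi_a(U_a)\cap\varphi_b(U_b)$; hence inserting or deleting $c$ does not change $h_\alpha$ there. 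Your remaining steps (the shearing identity $x_i=x_i\circ\psi$ for $i\in I_b=I_{p_a}\cap I_{p_b}$, and the induction up the tree in the case where parents do agree) are fine, but they only cover the easy part; you need to add the argument above for the case $\hat\alpha_a\notin I_b$.
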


\begin{proof}
It suffices to assume $I_a = I_b \coprod \{c\}$ for some $c\in I$.  

Recall for $\alpha\in \cI_a$,  by definition for a root vertex $\rho\in V(\dirF)$, we have
$$
\xymatrix{
h_\rho = x_\rho:\R^{\dirF} \ar[r] & \R
}
$$
and for a non-root vertex $\alpha\in V(\dirF)$,  we inductively have
$$
\xymatrix{
h_\alpha :\R^{\dirF} \ar[r] & \R & h_\alpha = f(h_{\hat \alpha}, x_\alpha)
}
$$
where $\hat \alpha \in V(\dirF)$ is the  parent vertex of $\alpha$.

Thus it suffices to suppose $c = \hat\alpha_a $, or in other words, that $c$ is the parent of $\alpha$ inside of $\cI_a$.
Now we will
consider two cases: 

(i) $c$ is a minimum in $\cI_a$. Then it suffices to show 
\begin{equation}
\label{eqn: case 1}
\xymatrix{
h_{a, \alpha} = h_{a, \hat\alpha_a}
&
\mbox{ over }  \varphi_a(U_a)\cap \varphi_b(U_b)
}
\end{equation}

Recall that 
$\varphi_b(U_b) \cap \{\ssf_c = 0\}  = \emptyset$
implies $\varphi_b(U_b) \cap \{\ssf_c \leq 2\delta\} = \emptyset$. Thus by construction 
$h_{a, \alpha} = \ssf_c$ over $ \varphi_a(U_a)\cap \varphi_b(U_b)$ and so \eqref{eqn: case 1} holds.

(ii) $c$ is not a minimum in $\cI_a$.
Then it suffices to show 
\begin{equation}
\label{eqn: case 2}
\xymatrix{
h_{a, c} = h_{a, \hat c_a}
&
\mbox{ over }  \varphi_a(U_a)\cap \varphi_b(U_b)
}
\end{equation}

Recall that 
$\varphi_b(U_b) \cap \{\ssf_c = 0\}  = \emptyset$
implies $\varphi_b(U_b) \cap \{\ssf_c \leq 2\delta\} = \emptyset$. Thus by construction 
 \eqref{eqn: case 1} holds.

\end{proof}

Next, for $\alpha\in \cI_a$, recall the vector field
$$
\xymatrix{
v_\alpha = -b(h_{\hat\alpha}) c(x_\alpha) \partial_{x_\alpha} \in \Vect(\R^{\cI_a})
}$$
appearing in the smoothing of Sect.~\ref{sect sm arb}. It naturally lifts to a vector field on the product
$\R^{\cI_a} \times \R^{k_a}$, then via 
 the inclusion 
$$
\xymatrix{
U_a \ar@{^(->}[r] & \R^{\cI_a} \times \R^k 
}
$$ 
and   diffeomorphism $\varphi_a$,
we can restrict and transfer it  to a vector field
$$
\xymatrix{
& v_{a, \alpha} \in \Vect(\varphi_a(U_a)) 
}
$$


\begin{prop}\label{prop: shear}
For $a, b\in A$, with $\varphi_a(U_a)\cap \varphi_b(U_b) \not =  \emptyset$, suppose $I_b \subset I_a$. Then for any $\alpha\in I_b$,
we have the equality of vector fields
$$
\xymatrix{
v_{a,\alpha} = v_{b,   \alpha} + w
}
$$
over the common domain $\varphi_a(U_a)\cap \varphi_b(U_b)$, where the vector field $w$, transported via $\varphi_b^{-1}$,
points along the second factor of the product
$\R^{\cI_b} \times \R^{k_b}$.

\end{prop}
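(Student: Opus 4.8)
The plan is to read everything in the chart $(U_b,\varphi_b)$ via the transition diffeomorphism $\psi=\varphi_b^{-1}\circ\varphi_a$. Since $\alpha\in I_b=I_{p_a}\cap I_{p_b}$, the shearing description of $\psi$ recorded above gives $x_i\circ\psi=x_i$ for every $i\in I_b$, hence $(\psi_*\partial_{x_\alpha})(x_i)=\partial_{x_\alpha}(x_i)=\delta_{\alpha i}$ for all $i\in I_b$; as there are no further $\{x_i\}_{i\in I_a\setminus I_b}$ coordinates in chart $b$, this forces $\psi_*\partial_{x_\alpha}=\partial_{x_\alpha}+\eta$ with $\eta$ pointing along the factor $\R^{k_b}$ of $\R^{\cI_b}\times\R^{k_b}$. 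Transporting $v_{a,\alpha}=-b(h_{\hat\alpha_a})\,c(x_\alpha)\,\partial_{x_\alpha}$ through $\psi$ and using $x_\alpha\circ\psi=x_\alpha$ then gives, over the common domain,
$$(\varphi_b^{-1})_*v_{a,\alpha}=-b(h_{a,\hat\alpha_a})\,c(x_\alpha)\,\partial_{x_\alpha}\;-\;b(h_{a,\hat\alpha_a})\,c(x_\alpha)\,\eta,$$
while $(\varphi_b^{-1})_*v_{b,\alpha}=-b(h_{b,\hat\alpha_b})\,c(x_\alpha)\,\partial_{x_\alpha}$ when $\alpha$ has a parent $\hat\alpha_b$ in $\cI_b$, and $v_{b,\alpha}=0$ when $\alpha$ is a root of $\cI_b$ (and both sides are trivially $0$ if $\alpha$ is a root of $\cI_a$). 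Since the last term above already points along $\R^{k_b}$, the proposition reduces to showing that over $\varphi_a(U_a)\cap\varphi_b(U_b)$ the two $\partial_{x_\alpha}$-coefficients agree: $b(h_{a,\hat\alpha_a})=b(h_{b,\hat\alpha_b})$ in the first case, and $b(h_{a,\hat\alpha_a})=0$ in the root case.

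First I would dispose of the case $\hat\alpha_a\in I_b$: because $I_b\subset I_a$ and $\hat\alpha_a<\alpha$, the vertex $\hat\alpha_a$ is the largest element of $\{i\in I_b:i<\alpha\}$, so $\hat\alpha_a=\hat\alpha_b$, and Lemma~\ref{lem: h functions} applied to $\hat\alpha_a\in I_b\subset I_a$ gives $h_{a,\hat\alpha_a}=h_{b,\hat\alpha_b}$ on the overlap. The substantive case is $\hat\alpha_a\in I_a\setminus I_b$. Set $d:=\hat\alpha_b$ (absent if $\alpha$ is a root of $\cI_b$) and look at the chain of ancestors of $\alpha$ in $\cI_a$; every vertex $v$ on this chain strictly above $d$ (strictly above the root, if $d$ is absent) lies outside $I_b$. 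For each such $v$ one has $v<\alpha$ with $\alpha\in I_b=I_{p_b}$, so $\pi_{\ul\alpha}(p_b)\in H^{\trunc}_\alpha$ forces $\ssf_v(\pi_{\ul\alpha}(p_b))\ge0$, and the control-data identity $\rho_{\ul v}\circ\pi_{\ul\alpha}=\rho_{\ul v}$ turns this into $\ssf_v(p_b)\ge0$; a short argument (if $\ssf_v(p_b)=0$ then $\pi_{\ul v}(p_b)\in H^{\trunc}_v$ by the same reasoning applied to the descendants of $v$, whence $v\in I_{p_b}$, a contradiction) upgrades this to $\ssf_v(p_b)>0$. After a harmless further shrinking of the $U_b$'s, compatible with the two conditions already imposed on the cover and needing only finitely many steps, we may assume $\ssf_v\neq0$ on $\varphi_b(U_b)$, and then condition (2) gives $\ssf_v>2\delta$ on $\varphi_b(U_b)$, hence on the overlap. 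Now I would run up the chain from $d$ to $\hat\alpha_a$, composing the maps $f(\,\cdot\,,\ssf_v)$ with these inputs $>2\delta$ and using the defining properties of $f$: the pass-through identity $f(x_1,x_2)=x_1$ on $\{|x_1|<\delta,\ x_2>2\delta\}$, the threshold property ($f(x_1,x_2)<\delta\Rightarrow x_1<\delta$ or $x_2<\delta$), and the fact that $\{f=0\}\subset\{x_1\ge0\}$ (so $f<0$ when $x_1<0$). This yields, at each point of the overlap, the dichotomy: either $0\le h_{a,d}<\delta$, in which case pass-through iterates to give $h_{a,\hat\alpha_a}=h_{a,d}$; or $h_{a,d}\ge\delta$ (then $h_{a,\hat\alpha_a}\ge\delta$) or $h_{a,d}<0$ (then $h_{a,\hat\alpha_a}<0$), in which cases $h_{a,\hat\alpha_a}\notin(0,\delta/4)$. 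In the first case $b(h_{a,\hat\alpha_a})=b(h_{a,d})=b(h_{b,d})$ by Lemma~\ref{lem: h functions}; in the second $b(h_{a,\hat\alpha_a})=0$, and the same dichotomy gives $h_{b,d}=h_{a,d}\notin(0,\delta/4)$ so $b(h_{b,d})=0$ too. When $d$ is absent the chain instead bottoms out at a root of $\cI_a$ whose $h$-value equals some $\ssf>2\delta$, forcing $h_{a,\hat\alpha_a}\ge\delta$ and hence $b(h_{a,\hat\alpha_a})=0$, matching $v_{b,\alpha}=0$. In every case the required coefficient identity holds, so $(\varphi_b^{-1})_*(v_{a,\alpha}-v_{b,\alpha})$ is the pure $\eta$-term, which points along $\R^{k_b}$.

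The routine part is the first paragraph — the pushforward computation. The main obstacle will be the combinatorial bookkeeping in the substantive case: tracking how the rooted-forest structure of $\cI_a$ restricts to $\cI_b$, identifying the last common ancestor $d$ of $\alpha$, verifying that every intervening vertex is indeed excluded from $I_b$ (so that condition (2) applies to push its $\ssf$-value above $2\delta$), and then cascading the $f$-identities along that chain. One must also check that the additional refinement of the cover needed for condition (2) to bite is genuinely harmless, i.e. compatible with the covering conditions (1)–(2) and achievable in finitely many shrinkings.
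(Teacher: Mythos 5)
Your proof is correct and follows the same route as the paper: the paper's one-sentence argument is precisely that the scalar coefficient $-b(h_{\hat\alpha})\,c(x_\alpha)$ is chart-independent (citing Lemma~\ref{lem: h functions}) while the coordinate field $\partial_{x_\alpha}$ changes by a shear along the $\R^{k_b}$ factor. Your detailed treatment of the case $\hat\alpha_a\notin I_b$ --- cascading the pass-through identities for $f$ up the ancestor chain and observing the dichotomy that either the parents' $h$-values agree or both lie outside the support of $b$ --- is exactly the content the paper folds into the (terse) case analysis proving Lemma~\ref{lem: h functions}, so you are supplying details the paper leaves implicit rather than taking a different approach.
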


\begin{proof}
By Lemma~\ref{lem: h functions}, the ambiguity under change of good charts of the vector field $v_\alpha=-b(h_{\hat\alpha}) c(x_\alpha) \partial_{x_\alpha}$ is the ambiguity of the coordinate vector field $\partial_{x_\alpha}$, and this is captured precisely by the shearing vector field $w$.
\end{proof}

\begin{remark}
Thanks to the axioms of a control system, we can additionally arrange so that the projection $\pi_{\ul \alpha}:T_{\ul \alpha}\to X_{\ul \alpha} $ is invariant with respect to $v_{a,\alpha}$ in the sense that $d\pi_{\ul \alpha}(v_{a,\alpha}) = 0$. This then in turn implies  for $\ul i\in \ul I$, with $\ul i\leq \ul \alpha$, that the projection $\pi_{\ul i}:T_{\ul i}\to X_{\ul i} $ 
is also invariant with respect to $v_{a,\alpha}$.
We also have for $\ul i\in \ul I$, with $ \ul \alpha$ and $ \ul i$ incomparable, that the vector field $v_{a,\alpha}$ vanishes near $H_{\ul i}$.
\end{remark}

Next fix a partition of unity $\{\delta_a\}_{a\in A}$ subordinate to the open cover $\{(U_a, \varphi_a)\}_{a\in A}$.

For any  $a\in A$ and $i\in I$ with $i\not \in \cI_a$, set 
$$
v_{a, i} = 0\in \Vect(\varphi_a(U_a))
$$

For each $i\in I$, introduce the global vector field 
$$
\xymatrix{
v_i = \sum_{a\in A} \delta_a v_{a, i}\in \Vect(M)
}
$$

For each $i\in I$, define the homeomorphism 
$$
\xymatrix{
\Phi_i:M\ar[r]^-\sim &  M
}
$$ 
to be the unit-time flow of the vector field $v_i$.

\begin{remark}
Note that we have arranged so that for $i\in I$, $\ul j\in \ul I$, with $\ul i \not < \ul j$, the 
projection $\pi_{\ul j}:T_{\ul j}\to X_{\ul j} $ is invariant with respect to each $v_{a, i}$, thus also 
 with respect to
 $v_i$, and thus finally  with respect to $\Phi_i$.
\end{remark}

Fix a total order on $I$ compatible with its natural partial order.  Write $i_0, i_1, \ldots, i_{N} \in I$ for the ordered elements. 
Define the composite homeomorphism
$$
\xymatrix{
\Phi = \Phi_{i_0} \circ\Phi_{i_1} \circ \cdots \circ \Phi_{i_N}:M\ar[r]^-\sim & M
}
$$ 

\begin{corollary}\label{cor: global smoothing}
For any $a\in A$, under the good chart $\varphi_a$, the homeomorphism $\Phi$ takes the form
$F_{\cI_a}\times \tilde \psi$.
\end{corollary}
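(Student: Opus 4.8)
The plan is to unwind the definition of $\Phi = \Phi_{i_0}\circ\cdots\circ\Phi_{i_N}$ inside a fixed good chart $(U_a,\varphi_a)$, comparing each factor $\Phi_{i_k}$ with the corresponding elementary shearing homeomorphism $\tilde F_{i_k}$ from the factorization $F_{\cI_a} = \tilde F_{\alpha_1}\circ\cdots\circ\tilde F_{\alpha_{n+1}}$ recalled in Sect.~\ref{sect sm arb}. First I would dispose of the indices $i\notin\cI_a$: by construction $v_{b,i}$ is supported away from $H_{\ul i}$ when $\ul i$ is incomparable to the strata meeting $\varphi_a(U_a)$, and the remark following Prop.~\ref{prop: shear} arranges that $v_i$ vanishes near $\varphi_a(U_a)$ unless $i\in\cI_a$ (or lies below some element of $\cI_a$, in which case the same argument reduces to a vertex of $\cI_a$). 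So on $\varphi_a(U_a)$ only the factors $\Phi_i$ with $i\in\cI_a$ act nontrivially, and since the ambient total order on $I$ restricts to an order on $\cI_a$ compatible with its partial order, the relevant subcomposite is $\Phi_{\alpha_1}\circ\cdots\circ\Phi_{\alpha_{n+1}}$ for an admissible ordering of $V(\cI_a)$.

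Next I would compare $\Phi_{\alpha}$ with $\tilde F_\alpha$ for $\alpha\in\cI_a$. By definition $\Phi_\alpha$ is the unit-time flow of $v_\alpha = \sum_b\delta_b v_{b,\alpha}$. Prop.~\ref{prop: shear} says that on overlaps $v_{b,\alpha} = v_{a,\alpha} + w_b$ where $w_b$ points along the $\R^{k_a}$-factor in the $\varphi_a$-chart; summing against the partition of unity, $v_\alpha = v_{a,\alpha} + w$ with $w$ again tangent to the fibers of the projection $\R^{\cI_a}\times\R^{k_a}\to\R^{\cI_a}$. The key point is that $v_{a,\alpha} = -b(h_{\hat\alpha})c(x_\alpha)\partial_{x_\alpha}$ is the transport of the vector field whose unit-time flow is exactly $\tilde F_\alpha$ (Remark after Thm.~\ref{thm compare}), and that the correction $w$ only shears the $\R^{k_a}$-coordinates. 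Because $v_{a,\alpha}$ and $w$ both annihilate the functions $x_\beta$ for $\beta\neq\alpha$ (indeed $v_\alpha$ changes only $x_\alpha$), the time-one flow of $v_\alpha$ decomposes as $\tilde F_\alpha$ on the $\R^{\cI_a}$-direction composed with a shear in the $\R^{k_a}$-direction; compositing over all $\alpha\in\cI_a$ in the admissible order yields $F_{\cI_a}\times\tilde\psi$ for some continuous $\tilde\psi:U_a\to\R^{k_a}$.

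The main obstacle I expect is the bookkeeping in the compatibility step: one must check that the fiberwise correction terms $w$ accumulated from successive flows do not feed back into the $\R^{\cI_a}$-coordinates at later stages. This is where Lemma~\ref{lem: h functions} does the real work — it guarantees that $h_{\hat\alpha}$, and hence the coefficient $b(h_{\hat\alpha})c(x_\alpha)$ of $v_{a,\alpha}$, is computed using only the $\{x_\beta\}_{\beta\leq\alpha}$ and is independent of the chart, so the $\R^{\cI_a}$-dynamics of $\Phi$ sees exactly the same data as $F_{\cI_a}$ regardless of the $\R^{k_a}$-shearing introduced at earlier steps. Once that invariance is in hand, the claimed form $F_{\cI_a}\times\tilde\psi$ follows by the same triangular/shearing argument used to establish the factorization of $F_{\cI_a}$ itself in Thm.~\ref{thm compare}. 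I would also remark that the invariance of the projections $\pi_{\ul i}$ under the $v_{a,\alpha}$ (the Remark after Prop.~\ref{prop: shear}) is what ensures the shear $\tilde\psi$ respects the ambient control data, which is needed later, though it is not strictly required for the stated form.
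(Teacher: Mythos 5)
Your argument is correct and takes essentially the same route as the paper, whose entire proof of this corollary is the single line ``Immediate from Prop.~\ref{prop: shear}.'' Your write-up simply supplies the details that the paper leaves implicit: the reduction to indices in $\cI_a$, the triangular (shearing) form of each time-one flow, and the chart-independence of the coefficients $h_{\hat\alpha}$ via Lemma~\ref{lem: h functions}.
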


\begin{proof}
Immediate from Prop.~\ref{prop: shear}.
\end{proof}
%
%
%

For each $i\in I$, introduce the inverse homeomorphism 
$$
\xymatrix{
\Psi_i = \Phi_i^{-1}:M\ar[r]^-\sim &  M
}
$$ 
Introduce the smoothing homeomorphism 
$$
\xymatrix{
\Psi =  \Psi_{i_N} \circ \cdots \circ \Psi_{i_1} \circ \Psi_{i_0}:M\ar[r]^-\sim & M
}
$$

\begin{defn}
Define the {\em directed cylinder} $\sC \subset M$ to be the image  of the total cylinder
$$
\xymatrix{
\sC =\Psi(C)
}
$$
\end{defn}

\begin{thm}\label{thm: dir cyl}
The directed cylinder $\sC \subset M$ is a hypersurface in good position with a canonical coorientation
and smoothed arboreal hypersurface singularities.
\end{thm}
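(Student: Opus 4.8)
The strategy is to reduce the statement to the local picture already packaged in the good charts. Since $\Psi = \Phi^{-1}$, Corollary~\ref{cor: global smoothing} says that in any good chart $(U_a,\varphi_a)$ centered at $p_a\in M$ the smoothing homeomorphism $\Psi$, transported to $\R^{\cI_a}\times\R^{k_a}$, is a shearing homeomorphism over $F_{\cI_a}^{-1}$: it sends $(\vec x,\vec y)$ to $(F_{\cI_a}^{-1}(\vec x), g_a(\vec x,\vec y))$, where for each fixed $\vec x$ the map $g_a(\vec x,-)$ is a homeomorphism of $\R^{k_a}$. By Proposition~\ref{prop: cyl sings} the total cylinder $C$ corresponds under $\varphi_a$ to $H_{\cI_a}\times\R^{k_a}$; applying $\Psi$, using $F_{\cI_a}(\sH_{\cI_a}) = H_{\cI_a}$ from Theorem~\ref{thm compare} and that the shear preserves the product structure over the $\R^{\cI_a}$ factor, we find that $\sC = \Psi(C)$ corresponds to $\sH_{\cI_a}\times\R^{k_a}$. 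Since the good charts cover a neighbourhood of $\sC$, this already shows that $\sC$ is a hypersurface whose singularities are smoothed arboreal hypersurface singularities.

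For good position, finiteness of the closure $\ol{P^*_{\sC^{\mathit{sm}}}M}$ over $\sC$ amounts to finite fibres, a fibrewise condition, together with properness, which reduces to compact subsets of $\sC$; by the standing compactifiability hypothesis on $H$ (hence on $C$ and $\sC$) such compacts are covered by finitely many good charts. On each good chart $\sC$ is $\sH_{\cI_a}\times\R^{k_a}$, which is in good position by Theorem~\ref{thm: local smoothing}(1): a product with a Euclidean factor simply replaces the coray bundle by its product with that factor and does not affect finiteness. Hence $\sC\subset M$ is in good position.

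For the coorientation, in each good chart every $\sH_\alpha = \{h_{a,\alpha}=0\}$ carries the coorientation $\sigma_\alpha$ pointing into the halfspace $\{h_{a,\alpha}\geq 0\}$, and by Theorem~\ref{thm: local smoothing}(1) these assemble to a coorientation of $\sH_{\cI_a}\times\R^{k_a}$, hence of $\sC\cap\varphi_a(U_a)$. It remains to check these patch on overlaps $\varphi_a(U_a)\cap\varphi_b(U_b)$. By the first cover condition we may assume $I_b\subset I_a$; then, after the permitted refinement and using the second cover condition together with the properties of the functions $b,f$ from Sect.~\ref{sect sm arb} (for $\alpha\in I_a\setminus I_b$ the coordinate $\ssf_\alpha$ is bounded away from $0$ on $\varphi_b(U_b)$, which forces the stratum $\sP^\circ_\alpha$ to miss the overlap), the strata of $\sC$ meeting the overlap are exactly those indexed by $\alpha\in I_b$. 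For these, Lemma~\ref{lem: h functions} gives $h_{a,\alpha}=h_{b,\alpha}$ on the overlap, so the distinguished halfspaces, and with them the local coorientations, coincide there. Patching produces the asserted canonical coorientation of $\sC$.

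The routine parts are the local identifications, which are direct translations of Theorems~\ref{thm compare} and~\ref{thm: local smoothing} through the good charts. I expect the only genuine work to be the overlap bookkeeping in the last paragraph: pinning down exactly which strata of $\sC$ occur in a given chart overlap and matching their defining functions, which is where the cover conditions, Lemma~\ref{lem: h functions}, and the quantitative properties of $b$ and $f$ all get used together. (An equivalent formulation is to track, via Theorem~\ref{thm compare} and its remarks, that $\Psi$ carries each rectilinear stratum $P_\alpha$ of $C$ to the smoothed stratum $\sP_\alpha$ and the coorientation of $C$ toward $\{\ssf_\alpha\geq 0\}$ to the coorientation of $\sC$ toward $\{h_{a,\alpha}\geq 0\}$, but the global gluing still has to be addressed.)
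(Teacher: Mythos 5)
Your proposal is correct and takes the same route as the paper: the paper's proof is literally the one line ``Immediate from Thm.~\ref{thm: local smoothing}, Prop.~\ref{prop: cyl sings}, and Cor.~\ref{cor: global smoothing},'' and your argument is precisely an unpacking of those three ingredients (local identification of $\sC$ with $\sH_{\cI_a}\times\R^{k_a}$ via the good charts and the shearing form of $\Psi$, then good position and coorientation from Thm.~\ref{thm: local smoothing}, with Lemma~\ref{lem: h functions} handling the overlaps). The overlap bookkeeping you flag is indeed the only content the paper leaves implicit, and you have handled it correctly.
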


\begin{proof}  
Immediate from Thm.~\ref{thm: local smoothing}, Prop.~\ref{prop: cyl sings}, and Cor.~\ref{cor: global smoothing}.
\end{proof}

We will write $\Lambda_\sC\subset S^*M$ for the positive coray bundle of the directed cylinder  $\sC \subset M$.


\subsection{Expanded hypersurface}
We continue with the constructions of the preceding sections,  arriving in this section at our goal.
Now taking into account the positive coray bundle $\Lambda\subset S^*M$, we cut out an expanded hypersurface $E \subset M$  inside the total cylinder
$C\subset M$,
and a  directed
expansion  $\sE \subset M$
inside the directed cylinder $\sC \subset M$.


\subsubsection{Conormal sections}

Recall for $\ul i \in \ul I$, and each $i \in F_{\ul i} \subset I$, we have the subspace 
$$
\xymatrix{
\Lambda_{i}   \subset S^*M |_{H_{\ul i}}
}
$$
 The Whitney conditions imply the subspace lies in the spherically projectivized conormal bundle
$$
\xymatrix{
\Lambda_i \subset  S_{H_{\ul i}}^*M
}
$$

Furthermore, the projection $S^*M\to M$ restricts to a diffeomorphism
$$
\xymatrix{
\Lambda_i \ar[r]^-\sim & H_{\ul i}
}$$ 
and thus the subspace $\Lambda_i \subset  S_{H_{\ul i}}^*M$ is  the image of a unique section
$$
\xymatrix{
\lambda_i:H_{\ul i}\ar[r] & S^*_{H_{\ul i}} M 
}
$$

Note that the conormal bundle $T^*_{H_{\ul i}} M \to H_{\ul i} $ is canonically isomorphic to the dual of the normal bundle
$E_{\ul i} \to H_{\ul i}$.
Hence for any inner product on the normal bundle $E_{\ul i}\to H_{\ul i}$, the section $\lambda_i$ naturally determines
a unit-length section
$$
\xymatrix{
\lambda_i:H_{\ul i} \ar[r] & E^*_{\ul i}
}
$$
Thus via the structures of the tubular neighborhood $(T_{\ul i}, \rho_{\ul i}, \pi_{\ul i})$, the section $\lambda_i$ naturally determines 
a fiber-wise linear function
$$
\xymatrix{
\lambda_i:T_{\ul i}\ar[r] & \R
}
$$


\subsubsection{Expanded strata}

Recall that to construct the total cylinder, we fixed a small positive radius $r_i\in \R_{>0}$,  for each $i\in I$, 
so that $r_i \not = r_{i'}$ whenever $\ul i = \ul i'$.

Now in addition, choose a small positive displacement $d_i\in \R_{>0}$,  for each $i\in I$,
and a small value $s_i\in \R$,  for each $i\in I$. 


\begin{defn}
For each $i\in I$, introduce the fiber-wise affine functions  
$$
\xymatrix{
 \ssg_i: T_{\ul i} \ar[r] & \R & \ssg_i(x) =  \lambda_{i}(x) + r_id_i -s_i
}
$$
\end{defn}

\begin{defn}
For each $i\in I$, define
the {\em expanded stratum} $E_i \subset T_{\ul i}$ to be the subspace 
of $x\in T_{\ul i}$ cut out by the equations
$$
\xymatrix{
\ssf_i(x) = 0 & \ssg_i(x) \geq 0
&
\ssf_a(x) \geq  0, \mbox{ whenever }  a<i \mbox{ and } x\in T_{\ul i} \cap T_{\ul a}
}
$$


\end{defn}

\begin{remark}
Recall  the truncated cylinder $C_i\subset T_{\ul i}$
introduced in the previous section.
Putting together the definitions,  the expanded stratum  $E_i \subset T_{\ul i}$ is the subspace 
of $x\in C_{i}$ cut out by the equation
$
\ssg_i(x) \geq 0.
$
\end{remark}

\begin{lemma}
Fix any  $d_i , r_i\in (0, 1)$, and  then sufficiently small $s_i\in \R$.

(1) The expanded stratum $E_i\subset T_{\ul i}$ is a closed submanifold with corners.

(2) The projection $\pi_{\ul i}$ exhibits $E_i$ as a closed $(\on{codim}_M H_{\ul i}-1)$-ball bundle over $H_{\ul i}$.

\end{lemma}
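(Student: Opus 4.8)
The plan is to realize $E_i$ as the region of the truncated cylinder $C_i$ on which a single \emph{fiberwise-affine} inequality holds, check that the hypersurface cutting it out meets $C_i$ and all of its corner faces transversally so that $E_i$ is again a manifold with corners, and then read off the fiberwise picture, which is a bundle of spherical caps. First I would recall from Lemma~\ref{lemma: truncated cyls} (and the remark immediately preceding the present statement) that $C_i$ is a closed submanifold with corners of $T_{\ul i}$, that $\pi_{\ul i}$ exhibits it as the level sphere bundle $\{\rho_{\ul i}=r_i\}$ of $E_{\ul i}$ over the truncated stratum $H^\trunc_i$, that over each base point the fibre is a full round sphere of dimension $\on{codim}_M H_{\ul i}-1$, and that the entire corner structure of $C_i$ is pulled back from $H^\trunc_i$ (each condition $\ssf_a\ge 0$ depends, via the control axioms, only on $\pi_{\ul i}(x)$). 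I would also record that $E_i = C_i\cap\{\ssg_i\ge 0\}$.

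The crucial step is to show that for $d_i, r_i\in(0,1)$ and $\lvert s_i\rvert$ small the function $\ssg_i = \lambda_i + r_id_i - s_i$ has $0$ as a regular value on $C_i$ and on every boundary face of $C_i$. Over a base point $y$, the function $\lambda_i$ restricts on the fibre to $v\mapsto\langle\lambda_i(y),v\rangle$ with $\lambda_i(y)$ a unit covector, so on the level sphere $\{\rho_{\ul i}=r_i\}$ it has critical points only at the two poles $\pm r_i^{1/2}\lambda_i(y)$, where it takes the values $\pm r_i^{1/2}$. Hence $\ssg_i$ vanishes on a fibre only along the latitude sphere $\{\langle\lambda_i(y),v\rangle = s_i - r_id_i\}$, and since $r_id_i < r_i < r_i^{1/2}$ one may take $\lvert s_i\rvert$ small enough that $s_i - r_id_i\in(-r_i^{1/2},0)$ uniformly in $y$; then this latitude value avoids both poles, so $0$ is a fibrewise regular value, hence a regular value of $\ssg_i$ on the sphere bundle $C_i$ and — since the fibres over corner points are still full spheres — on each corner face. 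Because the cut is genuinely non-constant along the fibres, it is transverse to the base-direction corner structure of $C_i$, so $\{\ssg_i = 0\}$ meets $C_i$ and all of its faces cleanly and $E_i = \{\ssg_i\ge 0\}\cap C_i$ is a closed submanifold with corners of $T_{\ul i}$; this gives (1).

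For (2): over $y\in H^\trunc_i$ the fibre of $E_i$ is the closed spherical cap $\{v\in E_{\ul i,y}:\rho_{\ul i}(v)=r_i,\ \langle\lambda_i(y),v\rangle\ge s_i - r_id_i\}$, which, since $s_i - r_id_i<0$, is strictly larger than a hemisphere and hence a closed topological ball of dimension $\on{codim}_M H_{\ul i}-1$ with boundary the latitude sphere. Local triviality of $\pi_{\ul i}\colon E_i\to H^\trunc_i$ then follows from that of the ambient sphere bundle $C_i\to H^\trunc_i$ together with the unit section $\lambda_i$: choosing a local orthonormal frame of $E_{\ul i}$ whose last vector is the metric dual of $\lambda_i$ simultaneously trivializes $C_i$ locally and normalizes the cutting inequality to the fixed linear condition on the last coordinate, displaying $E_i$ locally as an open subset of $H^\trunc_i$ times a fixed closed ball. (Here $H^\trunc_i=H_{\ul i}$ when $i$ is minimal; in general $H^\trunc_i$ is the truncation from Lemma~\ref{lemma: truncated strata}.)

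I expect the one genuinely delicate point to be the uniformity of the smallness of $s_i$ over all of $H^\trunc_i$, in particular along its corner faces, where one must be sure the affine cut never reaches a pole of a normal sphere; this is exactly what forces the parameter hierarchy — $r_i,d_i$ chosen first, then $s_i$ — and it is settled by the purely fibrewise estimate $\lvert s_i - r_id_i\rvert < r_i^{1/2}$, which holds uniformly precisely because $\lambda_i$ is a \emph{unit} section so that the poles sit at the fixed height $\pm r_i^{1/2}$.
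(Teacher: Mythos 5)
Your proof is correct and follows essentially the same route as the paper: the paper simply asserts that for $d_i,r_i\in(0,1)$ and then small $s_i$ the pair $\{\ssf_i,\ssg_i\}$ is multi-transverse at its total zero value and that $\ssg_i|_{\{\ssf_i=0\}}$ changes sign, and then cites Lemmas~\ref{lemma: truncated strata} and~\ref{lemma: truncated cyls}; your fibrewise pole computation (poles of $\lambda_i$ on the level sphere at $\pm r_i^{1/2}$, latitude at $s_i-r_id_i$ with $|s_i-r_id_i|<r_i^{1/2}$) is exactly the explicit content of that multi-transversality claim, and your reading of the base as $H^\trunc_i$ is the consistent one.
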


\begin{proof}
For the moment, set $s_i = 0$, so that $\ssg_i(x) =  \lambda_{i}(x) + r_id_i$.
Observe that for $d_i, r_i \in (0,1)$,  the pair  $\{\ssf_i, \ssg_i\}$ of functions is multi-transverse at their total zero value
$0\in \R^2$
and the restriction $\ssg_i|_{\{\ssf_i = 0\}}$ takes both positive and negative values.
Choosing small $s_i \in \R$, so that  $\ssg_i(x) =  \lambda_{i}(x) + r_id_i -s_i$,
the above facts continue to hold.
Now the assertions follow from Lemmas~\ref{lemma: truncated strata} and \ref{lemma: truncated cyls}.
\end{proof}


\subsubsection{Total expansion}

Recall that our constructions depend on constants $d_i \in \R_{>0}$, $r_i \in \R_{>0}$, $s_i \in \R$,
for $i\in I$.
In what follows, we will always choose them in the following order.
First, we will  independently choose $d_i \in (0,1)$, for each $i\in I$. 
Second, we will follow the poset structure on $I$, working from the minima to the maxima, and choose small
$r_i \in \R_{>0}$, for each $i\in I$.
Finally, we will again follow the poset structure on $I$, working from the minima to the maxima, and choose small
$s_i \in \R$, for each $i\in I$.
We will refer to such sufficiently small choices of constants as {\em sequentially small}.

Recall that the set $\{\ssf_i\}_{i\in I}$ of functions  is multi-transverse at its total zero value $0\in \R^I$.
Recall the role of the constants $d_i \in \R_{>0}$, $r_i \in \R_{>0}$, $s_i \in \R$,  for $i\in I$, in the definition of the functions $\ssg_i(x) =  \lambda_{i}(x) + r_id_i -s_i$. In particular, since we select  the values $s_i \in \R$, for $i\in I$, after the others, we may select sequentially small constants such that
the extended set $\{\ssf_i\}_{i\in I} \coprod \{\ssg_i\}_{i\in I}$
of functions   is multi-transverse at  its total zero value $(0, 0) \in \R^I \times \R^I$.

\begin{defn}
Define the {\em  total expansion} $E \subset M$ to be the union of expanded strata
$$
\xymatrix{
E = \bigcup_{i\in I} E_i
}
$$
\end{defn}

\begin{prop}\label{prop: gen rect hyp}
There exist sequentially small constants 
$
 d_i \in \R_{>0}$, $ r_i \in \R_{>0}$,  $s_i\in \R$,
 for $i\in I$,
such that
 the singularities of the total expansion $E \subset M$ are generalized rectilinear arboreal
hypersurface singularities.
\end{prop}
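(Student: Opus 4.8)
The plan is to reduce this statement to the already-established local model for the total cylinder $C \subset M$ (Proposition~\ref{prop: cyl sings}) by analyzing what the extra condition $\ssg_i(x) \geq 0$ does to each local chart. First I would fix a point $p \in E$ and, exactly as in the proof of Prop.~\ref{prop: cyl sings}, set $I_p \subset I$ to be the indices $i$ with $p \in C_i$, so $I_p$ is the poset of a rooted forest $\cI_p$ and there is a good chart $\varphi : U \risom \varphi(U) \subset M$ with $U \subset \R^{\cI_p} \times \R^{k_p}$, $\varphi(0) = p$, $x_i = \ssf_i \circ \varphi$ for $i \in I_p$, and $\varphi(U \cap (H_{\cI_p} \times \R^{k_p})) = \varphi(U) \cap C$. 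The only new ingredient is that $E$ is cut out inside $C$ by the inequalities $\ssg_i \geq 0$. By sequential smallness of the constants $d_i, r_i, s_i$, I would arrange (shrinking $U$ if necessary) that for each $i \in I_p$ the function $\ssg_i \circ \varphi$ either is everywhere positive on $U$ — in which case its inequality is vacuous near $p$ and contributes nothing — or vanishes at $p$, in which case $p$ lies on the "boundary face" $\{\ssg_i = 0\}$ of $E_i$. Let $\ell \subset I_p$ denote this latter subset. The key point, guaranteed by the multi-transversality of $\{\ssf_i\} \coprod \{\ssg_i\}$ at its total zero value, is that each $\ssg_i$ for $i \in \ell$ is a submersion cutting a face transverse to all the strata $C_j$ of $C$, and — crucially — that $\ssg_i$ vanishing forces $x_i = \ssf_i = 0$, i.e. the marked faces only appear at leaves of the local forest $\cI_p$. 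Indeed, since $\ssg_i$ is a fiber-wise affine function on the tubular neighborhood $T_{\ul i}$ and $E_i \subset C_i$ is a ball bundle (not a sphere bundle) over $H_{\ul i}$, the locus $\ssg_i = 0$ is the boundary sphere-minus-a-ball of that ball bundle, and one checks that the indices $j \in I_p$ with $j > i$ cannot simultaneously have $p \in C_j$ while $p$ is on the $\ssg_i = 0$ face — this is where I expect the main work to lie.

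Granting that, I would show $\ell \subset I_p$ is exactly the set of marked leaf vertices of a leafy rooted forest $\cI_p^* = (\cI_p, \ell)$: each $i \in \ell$ is a maximum of $I_p$ (a leaf), and near $p$ the space $E$ is cut out of $C$ precisely by deleting the cells corresponding to "crossing the $\ssg_i = 0$ wall" for $i \in \ell$. The remaining step is to match this local picture with the definition of the generalized rectilinear arboreal hypersurface $H_{\cI_p^*}$ of Sect.~\ref{s arb}: recall $H_{\dirF^*} = \bigcup_{\alpha \in V(\dirF^+) \setminus \ell} P_\alpha \subset H_{\dirF^+}$, which is obtained from the ordinary rectilinear arboreal hypersurface by adjoining a formal maximum above each marked leaf and keeping only certain faces. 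I would produce an explicit coordinate change on $U$ — adding the functions $\ssg_i \circ \varphi$ (suitably normalized) as the "$x_{i^+}$" coordinates for the added maxima $i^+ \in \ell^+$, and keeping $x_i = \ssf_i \circ \varphi$ for $i \in \cI_p$ — under which $\varphi(U) \cap E$ corresponds to $U' \cap (H_{\cI_p^*} \times \R^{k_p'})$. Because both $\{\ssf_i\}$ and $\{\ssg_i\}$ are jointly multi-transverse at their zero values, these $2|I_p|$ functions (restricted appropriately) can be completed to a coordinate system, so this coordinate change is a genuine diffeomorphism onto its image.

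Finally I would note that the choice of sequentially small $d_i, r_i, s_i$ making everything work is the one already fixed in the paragraph preceding the statement: we first pick $d_i \in (0,1)$, then $r_i$ from minima up, then $s_i$ from minima up, at each stage small enough that (a) $\{\ssf_i\} \coprod \{\ssg_i\}$ stays multi-transverse at its total zero value, (b) for each $i$ the alternative "$\ssg_i > 0$ on a neighborhood of $C_i$ away from the relevant faces" versus "$\ssg_i = 0$ is a clean face" dichotomy holds, and (c) the compatibility of good charts (the shearing form of the transition maps) is unaffected. Putting these together, every point of $E$ has a neighborhood diffeomorphic to a neighborhood of the origin in $H_{\cI_p^*} \times \R^{k_p'}$, i.e. the singularities of $E$ are generalized rectilinear arboreal hypersurface singularities. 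The hard part, to reiterate, is the geometric verification that the $\ssg_i = 0$ walls meet the arboreal stratification of $C$ only along leaves of the local forest and do so transversally in a way preserved under the good-chart transition maps — everything else is a reprise of Prop.~\ref{prop: cyl sings} with the bookkeeping of leafy rooted forests in place of rooted forests.
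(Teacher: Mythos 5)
Your reduction to the local model of Prop.~\ref{prop: cyl sings} and the bookkeeping with leafy rooted forests match the paper's strategy, and you correctly isolate the crux: one must show that a wall $\{\ssg_i=0\}$ can pass through $p$ only when $i$ is a leaf of the local forest, equivalently that $\ssg_i>0$ on $C_a\cap C_i$ whenever $a>i$. But you leave exactly this step unproved ("one checks that\dots", "this is where I expect the main work to lie"), and the mechanism you point to --- multi-transversality of $\{\ssf_i\}\coprod\{\ssg_i\}$ at the total zero value --- cannot deliver it. Multi-transversality governs the transversality of the zero loci (and is what makes $E$ locally a manifold with corners); it says nothing about the \emph{sign} of $\ssg_i=\lambda_i+r_id_i-s_i$ on the higher cylinders $C_a$. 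The paper's argument is a Whitney condition $B$ estimate: fixing $d_i$ and letting $r_i\to 0$, if there were points $x(n)\in H_{\ul a}\cap C_i(n)$ converging to $x\in H_{\ul i}$ with $\ssg_i(x(n))\leq 0$, then a subsequence of the secant lines $[x(n),x]$ would converge to a line not contained in $\ker(\lambda_i)\subset T_xM$, contradicting condition $B$ for the pair $H_{\ul i}\subset \ol H_{\ul a}$. This yields positivity of $\ssg_i$ on $H_{\ul a}\cap C_i$ for $r_i$ small; one then chooses $r_a$ \emph{later} in the sequential order, small enough that $C_a$ lies arbitrarily close to $H_{\ul a}$, so positivity persists on $C_a\cap C_i$. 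This interplay is the actual content of "sequentially small" here and is the missing idea in your proposal. (A related small omission: your dichotomy for $\ssg_i\circ\varphi$ on $U$ --- everywhere positive or vanishing at $p$ --- misses the case $\ssg_i(p)<0$, where $i$ must be deleted from the local forest altogether; the paper works with $J_p\subset I_p$ obtained by removing such leaves, and marks the subset $\ell_p\subset J_p$ where $\ssg_i(p)=0$.)

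A secondary but genuine slip is in your final coordinate matching: you assign $\ssg_i\circ\varphi$ to the added coordinate $x_{i^+}$ and keep $x_i=\ssf_i\circ\varphi$ for the marked leaf $i$. Unwinding the definition of $H_{\dirF^*}$, the piece $P_{i^+}=\{x_{i^+}=0,\ x_\beta\geq 0 \text{ for } \beta<i^+\}$ must correspond to $E_i=\{\ssf_i=0,\ \ssg_i\geq 0,\ \ssf_a\geq 0 \text{ for } a<i\}$, so the roles must be reversed: $x_{i^+}=\ssf_i$ and $x_i=\ssg_i$ for each $i\in\ell_p$ (note $P_i$ itself is deleted from $H_{\cJ_p^*}$, so the coordinate $x_i$ enters only through the inequality $x_i\geq 0$, matching $\ssg_i\geq 0$). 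As written, your identification produces the wrong half-hyperplane and the set $\{\ssf_i=0\}$ would not appear at all. This is easily repaired, but together with the unproved leaf claim it means the proposal as it stands does not establish the proposition.
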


\begin{proof}
Let us first appeal to Prop.~\ref{prop: cyl sings}.

Fix a point $p\in M$. 

Let $I_p \subset I$ comprise indices $i\in I$ such that $p\in C_i \subset T_{\ul i}$, so  in particular
$\ssf_i(p) = 0$. We will regard $I_p \subset I$ as a poset with the induced partial order: $i, j \in I_p$
satisfy $i<j$ inside of $I_p$
if and only if $i<j$ inside of $I$.

Recall that Prop.~\ref{prop: cyl sings} established
that $I_p$ is the poset of a rooted forest $\cI_p$,
and thus
the singularity of the total cylinder $C \subset M$ at the point $p$ is
the rectilinear arboreal hypersurface $H_{\cI_p}$.
 More precisely, there is an open ball 
$U\subset  \R^{\cI_p} \times \R^{k_p}$,
with $k_p = \dim M - |I_p|$ and $0\in U$, and
  a smooth open embedding 
 $$
 \xymatrix{
 \varphi:U \ar[r]^-\sim &  \varphi(U) \subset M
 }$$ 
 such that the following holds
 $$
 \xymatrix{
 \varphi(0) = p &
 \varphi(U \cap (H_{\cI_p} \times \R^{k_p})) = \varphi(U) \cap C 
 }$$
 $$
 \xymatrix{
x_i =   \ssf_i \circ \varphi: U \ar[r] & \R &
\mbox{ for all }  i\in I_p
 }
 $$
%

Now let $J_p \subset I_p$ comprise indices $i\in I_p$ such that $p\in E_i\subset C_i$, so additionally $\ssg_i(p) \geq 0$. We will regard $J_p \subset I_p$ as a poset with the induced partial order: $i, j \in J_p$
satisfy $i<j$ inside of $J_p$
if and only if $i<j$ inside of $I_p$. It will follow from the discussion below that  at most
$J_p$ results from deleting from $I_p$ some of its leaf vertices.

Let $\ell_p \subset J_p$ comprise indices 
$i\in J_p$ so that $\ssg_i(p) = 0$.
It will follow from the discussion below that  $\ell_p$ is a subset of the leaf  vertices of $J_p$.

To see the poset $J_p$ (if nonempty), together with the marked vertices $\ell_p$, arise from a leafy rooted forest
$\cJ_p^* = (\cJ_p, \ell_p)$,
it suffices to establish the claim: for sequentially small constants, if $\ssg_i(p) \leq  0$,  for some $i\in I_p$, then $i$ is a leaf vertex of $I_p$.
If the claim holds, then the above embedding $\varphi$ will identify the singularity of the  total expansion $E \subset M$ at the point $p$ with
the  rectilinear arboreal hypersurface $H_{\cJ^*_p}$.

To prove the claim, we will appeal to the following.

\begin{lemma}
For any $d_i \in (0, 1)$, sufficiently small $r_i \in \R_{>0}$,  further sufficiently small $s_i \in \R$,
and any $a\in I$ with $a>i$,
 the restriction of $\ssg_i:T_{\ul i} \to \R$ to the intersection $H_{\ul a} \cap C_i\subset T_{\ul i}$ is strictly positive.
\end{lemma}

\begin{proof}
Recall that $\ssg_i(x) =  \lambda_{i}(x) + r_id_i -s_i$. Thus it suffices to  prove the assertion with $s_i = 0$.

Fix $d_i \in (0, 1)$. Suppose there is a sequence of radii $r_i(n)\in \R_{>0}$, with $r_i(n) \to 0$, with corresponding truncated cylinder $C_i(n) \subset  T_{\ul i}$, and points $x(n) \in H_{\ul a} \cap C_i(n)$, with $x(n) \to x \in H_{\ul i}$, such that $\ssg_i(x(n)) \leq 0$. Then  it is a simple calculation to check with respect to any local coordinates that a subsequence of the secant lines $[x(n), x]$  converges to a line not contained in $\ker (\lambda_i) \subset T_x M$. But this  contradicts Whitney's condition $B$
for the pair of strata $H_{\ul i}\subset \ol{ H}_{\ul a}$.
\end{proof}

Returning to the claim, for any $i \in I$, we can invoke the lemma to choose a small radius $r_i \in \R_{>0}$
to be sure 
that
 the restriction of $\ssg_i:T_{\ul i} \to \R$ to the intersection $H_{\ul a} \cap C_i\subset T_{\ul i}$ is strictly positive,
 for all $a\in I$ with $a>i$. Then later in our sequence of choices of constants,  for each  $a\in I$ with $a>i$, we can choose  a small radius $r_a\in \R_{>0}$, so that $C_a\subset T_{\ul i}$
is  as close as we like to $H_{\ul a}$, hence ensuring that  
 the restriction of $\ssg_i:T_{\ul i} \to \R$ to the intersection $C_a \cap C_i\subset T_{\ul i}$ is strictly positive.
Thus if $i \in I_p$ is not a leaf vertex, so there is $a\in I_p$ with $a>i$, we must have $\ssg_i(p) >  0$.

Thus the claim holds and this completes the proof of the proposition.
\end{proof}


\subsubsection{Smoothed total expansion}

Recall the smoothing homeomorphism 
$$
\xymatrix{
\Psi :M\ar[r]^-\sim & M
}
$$

\begin{defn}
Define the {\em directed expansion} $\sE \subset M$ to be the image  of the total expansion
$$
\xymatrix{
\sE =\Psi(E)
}
$$
\end{defn}

\begin{thm}\label{thm dir exp is arb}
The directed expansion $\sE \subset M$ is a hypersurface in good position with a canonical coorientation
and generalized smooth arboreal hypersurface singularities.
\end{thm}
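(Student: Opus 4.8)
The plan is to follow the template of the proof of Thm.~\ref{thm: dir cyl}, assembling the statement from three facts already at hand: Prop.~\ref{prop: gen rect hyp}, which provides the local rectilinear model for $E$; the observation recorded just before Prop.~\ref{prop: microlocal ends} that the smoothed generalized arboreal hypersurface $\sH_{\dirF^*}\subset\R^{\dirF^+}$, being a closed subspace of $\sH_{\dirF^+}$, is in good position and inherits a canonical coorientation, together with the identity $\sH_{\dirF^*} = F_{\dirF^+}^{-1}(H_{\dirF^*})$ of Remark~\ref{rem: gen arb via homeo}; and Cor.~\ref{cor: global smoothing}, which gives the product form of the smoothing homeomorphism $\Psi = \Phi^{-1}$ in good charts.

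First I would fix sequentially small constants as in Prop.~\ref{prop: gen rect hyp}, so that near each $p\in M$ the total expansion $E$ is the rectilinear generalized arboreal hypersurface $H_{\cJ_p^*}$ of the leafy rooted forest $\cJ_p^* = (\cJ_p,\ell_p)$ constructed there, times a Euclidean factor. The only bookkeeping beyond Prop.~\ref{prop: cyl sings} is that the good chart at $p$ should be chosen so that, for $i\in\ell_p$, the conormal function $\ssg_i$ appears among its coordinates: this is legitimate because $\{\ssf_i\}\sqcup\{\ssg_i\}$ is multi-transverse at its total zero value (established in the proof of Prop.~\ref{prop: gen rect hyp}), which in particular forces $|\ell_p|\le k_p = \dim M - |I_p|$, leaving room for the $\ssg_i$. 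Under the dictionary that assigns $\ssg_i\circ\varphi$ to the vertex $i$ of $\cJ_p^+$ when $i\in\ell_p$, $\ssf_i\circ\varphi$ to $i$ otherwise, and $\ssf_i\circ\varphi$ to the added vertex $i^+$, one checks directly that $E_i = P_{i^+}$ for $i\in\ell_p$ and $E_i = P_i$ for $i\in J_p\setminus\ell_p$, so $E$ coincides near $p$ with $H_{\cJ_p^*}$ by the union presentation of the latter. Now, since by Cor.~\ref{cor: global smoothing} the homeomorphism $\Psi$ has, in every good chart, the form of $F_{\cI_a}^{-1}$ in the arboreal coordinates composed with a shear in the remaining ones, Remark~\ref{rem: gen arb via homeo} applied to $\cJ_p^+$ shows that $\Psi$ carries this local model of $E$ to the smoothed model $\sH_{\cJ_p^*}$ times a Euclidean factor, which establishes the generalized smooth arboreal structure of $\sE = \Psi(E)$. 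Good position and the canonical coorientation then come for free: $\sE$ is a closed subspace of the directed cylinder $\sC$ of Thm.~\ref{thm: dir cyl}, which is already in good position with a coorientation, so $\sE$ inherits both exactly as $\sH_{\dirF^*}$ inherits them from $\sH_{\dirF^+}$; this also exhibits the positive coray bundle $\Lambda_\sE$ as a closed subspace of $\Lambda_\sC$.

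The point requiring care is reconciling the good charts that witness the generalized arboreal structure of $E$ — whose coordinate systems must incorporate the conormal functions $\ssg_i$ — with the good charts for the cylinder $C$ in which Cor.~\ref{cor: global smoothing} is phrased, which only see the radial functions $\ssf_i$. Concretely, one must verify that the smoothing vector fields $v_{a,\alpha} = -b(h_{\hat\alpha})c(x_\alpha)\partial_{x_\alpha}$, which are supported in the radial directions, act on the $\ssg_i$-directions only through the shearing term of Prop.~\ref{prop: shear}; this uses the control-data invariances noted in the remark following Prop.~\ref{prop: shear} — that $\pi_{\ul j}$ is $v_{a,i}$-invariant whenever $\ul i \not< \ul j$, and that $v_{a,i}$ vanishes near $H_{\ul j}$ when $\ul i$ and $\ul j$ are incomparable — so that the product decomposition of $\Psi$ survives passage to the refined charts. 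Granting this, the remainder is the formal bookkeeping already carried out in Prop.~\ref{prop: gen rect hyp} and Cor.~\ref{cor: global smoothing}.
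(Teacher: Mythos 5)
Your proposal is correct and follows the paper's approach exactly: the paper's own proof is a single line declaring the theorem immediate from Thm.~\ref{thm: local smoothing}, Cor.~\ref{cor: global smoothing}, and Prop.~\ref{prop: gen rect hyp} — precisely the three ingredients you assemble. Your elaboration — in particular the dictionary assigning $\ssg_i$ to a marked leaf $i$ and $\ssf_i$ to the added vertex $i^+$, and the flagged compatibility between the $\ssf$-coordinate good charts in which the smoothing homeomorphism is controlled and the $\ssg$-augmented charts needed to exhibit the generalized arboreal model — supplies detail that the paper leaves entirely implicit.
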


\begin{proof}
Immediate from Thm.~\ref{thm: local smoothing}, Cor.~\ref{cor: global smoothing}, and Prop.~\ref{prop: gen rect hyp}.
\end{proof}

We will write $\Lambda_\sE\subset S^*M$ for the positive coray bundle of the directed expansion  $\sE \subset M$.


\section{Invariance of sheaves}\label{s inv}


Fix once and for all a field $k$ of characteristic zero.

Let $M$ be a manifold with spherically projective cotangent bundle $\pi: S^*M\to M$.

\subsection{Singular support}

For the material reviewed here, the standard reference is~\cite{KS}.

\subsubsection{Basic notions}
Let $\Sh(M)$ denote the dg category of complexes of sheaves of $k$-vector spaces on $M$
such that each object is
constructible with respect to some Whitney stratification. (This choice of definition has the pitfall that finite collections of Whitney stratifications
do not necessarily  admit a common refinement, but we will always work with specific Whitney stratifications and never come near this danger.)
We will  abuse terminology and refer to objects of $\Sh(M)$ as sheaves on $M$.


To any object $\cF\in \Sh(M)$, one can associate its singular support $\ssupp(\cF) \subset S^*M$. This is a closed  Legendrian recording those codirections in which the propagation of sections of $\cF$ is obstructed. 
Its behavior under standard functors is well understood including its behavior under Verdier duality $\ssupp(\D_M(\cF) ) = -\ssupp(\cF)$.
One has the vanishing $\ssupp(\cF) = \emptyset$ if and only if the cohomology sheaves of $\cF$ are locally constant. 
We will abuse terminology and refer to such objects of $Sh(M)$ as local systems on $M$.

\begin{example}
To fix conventions, suppose $i:U\to M$ is the inclusion of an open submanifold whose closure is a submanifold with boundary modeled on a Euclidean halfspace. Then the singular support  $\Lambda_U = \ssupp(i_! k_U) \subset S^*M$ of the extension by zero $i_! k_U\in \Sh(M)$ 
consists 
of the spherical projectivization of the outward conormal codirection along the boundary $\partial U \subset M$.
If near a point $p\in \partial U$, we have $U = \{x < 0\}$, for 
a local coordinate $x$, then $\Lambda_U|_p = \ssupp(i_! k_U)|_p$ is the spherical projectivization of the ray
$\R_{\geq 0} \langle dx\rangle $. 

More generally, suppose $i:U\to M$ is the inclusion of an open submanifold whose closure is a submanifold with corners
modeled  on a Euclidean quadrant. 
Then the singular support  $\Lambda_U = \ssupp(i_! k_U) \subset S^*M$ consists of the spherical projectivization of the outward conormal cone along the boundary $\partial U \subset M$. If near a point $p\in \partial U$, we have $U = \{x_1, \ldots, x_k < 0\}$, for 
local coordinates $x_1, \ldots, x_k$, then $\Lambda_U|_p =  \ssupp(i_! k_U)|_p$
 is the spherical projectivization of the cone
$\R_{\geq 0} \langle dx_1, \ldots, dx_k\rangle$. 

\end{example}

Fix a closed Legendrian $\Lambda\subset S^*M$.
For example, given $\cS = \{X_\alpha\}_{\alpha\in A}$ a Whitney stratification of $M$, one could take the union
of the spherically projectivized conormals to the strata
$$\xymatrix{
\Lambda_{\cS} = \bigcup_{\alpha\in A} S^*_{X_{\alpha}} M \subset S^*M
}$$  
In general, given any closed Legendrian $\Lambda\subset S^*M$, we will always assume $M$ admits a Whitney stratification $\cS$
such that $\Lambda\subset \Lambda_{\cS}$.

Let $\Sh_{\Lambda}(M) \subset \Sh(M)$ denote the full dg subcategory of objects with singular support lying in $\Lambda\subset S^*M$. 
For example,   for $\cS$ a Whitney stratification,  $\Sh_{\Lambda_\cS}(M) \subset \Sh(M)$ consists precisely of 
$\cS$-constructible sheaves.
In general,  
 if $\Lambda\subset\Lambda_{\cS}$, then  objects of  $\Sh_{\Lambda}(M) \subset \Sh(M)$ are in particular $\cS$-constructible,
 while possibly satisfying further constraints.

\subsubsection{Non-characteristic isotopies}

Let us recall a key property of singular support. Suppose $\Lambda_1, \Lambda_2\subset S^*M$ are 
closed Legendrians, and $\psi_t: M \to M$ is an isotopy such that $\psi_t(\Lambda_1) \cap \Lambda_2 = \emptyset$, for all $t$. Then for any $\cF_1\in \Sh_{\Lambda_1}(M)$, $\cF_2\in \Sh_{\Lambda_2}(M)$, the complex
$\Hom_{\Sh(M)}(\psi_t(\cF_1), \cF_2)$ is locally independent of $t$ in the sense that it forms a local system on the space of 
parameters $t$. 

For a basic example of this, recall 
that given an open subset  $i:U\to M$, there  is a functorial identification 
$$
\xymatrix{
\Gamma(U, \cF) \simeq \Hom_{\Sh(M)}( i_{!} k_U, \cF)
}$$
Suppose  $\psi_t: M \to M$ is an isotopy, and $i_t:U_t\to M$ 
 is family of open submanifolds with boundary  
 given by the isotopy $U_t = \psi_t(U_0)$. Let $\Lambda\subset S^*M$ be a closed Legendrian disjoint from the outward conormal direction $\Lambda_{U_t} \subset S^*M$ along the boundary
 $\partial U_t\subset M$, for all $t$. Then for any $\cF\in \Sh_\Lambda(M)$, the sections
 $$
 \xymatrix{
 \Gamma(U_t, \cF) \simeq \Hom_{\Sh(M)}( i_{t!} k_{U_t}, \cF)
 }
 $$  are locally independent of $t$.
 Similarly, for the closed complement $j_t:Y_t = M\setminus U_t\to M$,  the sections 
 $$
 \xymatrix{
 \Gamma_{Y_t}(M, \cF) \simeq\Gamma(Y_t, j_t^!\cF) \simeq \Cone(\Gamma(M, \cF) \to \Gamma(U_t, \cF))[-1]
 }
 $$
 are locally independent of $t$.

For a specific instance of this, suppose 
$\Lambda\subset S^*M$ is a closed Legendrian, and $f:M\to N$ is a proper fibration that is $\Lambda$-non-characteristic 
in the sense that the spherical projectivization of $\im(df^*) \subset T^*M$ is disjoint from $\Lambda$.
Then for any $\cF\in \Sh_{\Lambda}(M)$, the pushforward $f_*\cF \in \Sh(N)$ is a local system.
This can be put into the above setup by recalling  for $U\subset N$ an open subset with inverse image $i:f^{-1}(U) \to M$, the
 functorial identifications 
$$
\xymatrix{
\Gamma(U, f_*\cF) \simeq \Gamma(f^{-1}(U), \cF) \simeq   \Hom_{\Sh(M)}(i_!k_{f^{-1}(U)}, \cF)
}
$$ 


\subsection{Projections and orthogonality}

Let $H\subset M$ be a directed hypersurface with positive coray bundle $\Lambda\subset S^*M$.
Fix a Whitney stratification of $H\subset M$ satisfying the setup of Sect.~\ref{ss: strat} and fix a
 compatible  system of control data.

In this section, we will focus on a single closed stratum and its tubular neighborhood, and thus break from our usual notational conventions to reduce clutter.

\subsubsection{Microlocal projections}

Let $i_Y:Y\to H$ be the inclusion of a closed stratum 
with tubular neighborhood $T \subset M$,  tubular distance function  $\rho: T\to \R$ and
 tubular  projection $\pi:T\to Y$. Let $j_Y:T' = T\setminus Y\to T$ be the inclusion of the open complement.
 In what follows, we can take $M=T$.

Recall there are finitely many codirections $\lambda_i:Y\to  S^*_Y M$, for $i = 1, \ldots, k$, as well as
 disjoint union decompositions
$$
\xymatrix{
\Lambda|_Y =  \coprod_{i=1}^k \lambda_i(Y) 
&
\Lambda|_T = \coprod_{i=1}^k \Lambda_i
}
$$
such that $\Lambda_i|_Y = \lambda_i(Y)$. The front projection of $\Lambda_i\subset S^*T$
is itself a directed hypersurface $H_i \subset T$ with positive coray bundle $\Lambda_i\subset S^*T$. 

We have the evident fully faithful inclusions $\Sh_{\Lambda_i}(T) \subset \Sh_\Lambda(T)$. 
In the other direction,
microlocal cut-offs provide canonical  functors 
$$
\xymatrix{
\fP_i:\Sh_{\Lambda}(T) \ar[r] & \Sh_{\Lambda_i} (T)
}
$$
equipped with natural transformations
$$
\xymatrix{
p_i:\cF\ar[r] &  \fP_i(\cF)
&
 \cF\in \Sh_{\Lambda}(T)
 }
 $$
 Taking the direct sum, we obtain a natural transformation
 $$
\xymatrix{
\oplus_{i=1}^k p_i:\cF\ar[r] &  \oplus_{i = 1}^k \fP_i(\cF)
&
\cF\in \Sh_{\Lambda}(T)
}
$$

The cone $\cL = \Cone(\oplus_{i=1}^k p_i)$ has no singular support so is a local system. We have a functorial presentation of $\cF \in \Sh_\Lambda(B)$ itself as a cone
$$
\xymatrix{
\cF \simeq \Cone( \oplus_{i = 1}^k  \fP_i(\cF) \ar[r] & \cL)
}
$$

\subsubsection{Single codirection}

Now suppose further that $\Lambda|_Y = \lambda(Y)$ for a single codirection $\lambda:Y\to S^*_Y M$. Then for $\cF \in \Sh_\Lambda(T)$, we have two canonical morphisms 
$$
\xymatrix{
\gamma:\pi^*\pi_*\cF  \ar[r] & \cF
&
\gamma_c:\cF   \ar[r] &  \pi^!\pi_!\cF
}
$$
Observe that $\pi_*\cF$, $\pi_!\cF$ are local systems, so $\pi^*\pi_*\cF$, $ \pi^!\pi_!\cF$ are local systems, since  
 $\pi$ is non-characteristic with respect to $\cS$, and hence with respect to $\Lambda$
 since $\Lambda\subset \Lambda_\cS$.

Introduce the  full subcategories
$$
\xymatrix{
\Sh_{\Lambda}(T)_*^0\subset \Sh_{\Lambda}(T)& 
\Sh_{\Lambda}(T)_!^0\subset \Sh_{\Lambda}(T)
}
$$
of $\cF\in \Sh_{\Lambda}(T)$
with $\pi_*\cF \simeq 0$ 
respectively $\pi_!\cF \simeq 0$.
Observe that $\cF\in \Sh_{\Lambda}(T)_*^0$ respectively  $\cF\in \Sh_{\Lambda}(T)_!^0$ if and only if $i_Y^*\cF \simeq 0$
respectively 
$i_Y^!\cF \simeq 0$, or in turn,
if and only if the canonical map $j_{Y!}j_Y^!\cF \to \cF$ respectively  $ \cF\to  j_{Y*}j_Y^*\cF$ is an isomorphism.
Verdier duality restricts to an equivalence 
$$
\xymatrix{
\D_B:(\Sh_{\Lambda}(T)_*^0)^{op} \ar[r]^-\sim & \Sh_{-\Lambda}(T)_!^0
}
$$

The  cones $\cF_*^0 = \Cone(\gamma),  \cF^0_! = \Cone(\gamma_c)$ satisfy the vanishing 
$\pi_*\cF_*^0 \simeq 0, \pi_!cF_!^0 \simeq 0$ or in other words lie in the full subcategories
$$
\xymatrix{
\cF_*^0
\in \Sh_{\Lambda}(T)_*^0 & \cF^0_! \in \Sh_{\Lambda}(T)^0_!
}
$$
There are  functorial presentations of $\cF \in \Sh_\Lambda(T)$ itself as a cone
$$
\xymatrix{
\cF\simeq  \Cone(\cF_*^0[-1] \ar[r] & \pi^*\pi_*\cF) 
&
\cF\simeq  \Cone( \pi^!\pi_!\cF\ar[r] & \cF^0_!)[-1]
}
$$

Continuing with $\Lambda|_Y = \lambda(Y)$ for a single codirection $\lambda:Y\to S^*_Y M$, choose any smooth path $\ell:\R\to T$ so that $\ell(0) \in Y$ is the only intersection of $\ell(\R)$ with $H$, and also $\lambda(\ell'(0)) >0$.  Then for any $\cF \in \Sh_\Lambda(T)$, the pullbacks
$
\ell^*(\cF), \ell^!(\cF)  \in \Sh(\R)
$
are constructible with respect to $\{0\}$, $\R\setminus \{0\}$.
Furthermore, the singular support conditions imply the following are  local systems
$$
\xymatrix{
\ell^*(\cF)|_{\R_{\geq 0}} \in \Loc(\R_{\geq 0})
&
\ell^!(\cF)|_{\R_{\leq 0}}  \in \Loc(\R_{\leq 0})
}
 $$
Thus in particular 
for 
$\cF^0_*
\in \Sh_{\Lambda}(T)_*^0$,  $\cF^0_! \in \Sh_{\Lambda}(T)^0_!$,
the vanishings $i_Y^*(\cF^0_*) \simeq 0$,  $i_Y^!(\cF^0_!) \simeq 0$
respectively imply the  vanishings
\begin{equation}
\label{eqn vanish}
\xymatrix{
 \ell^*(\cF^0_*)|_{\R_{> 0}}  \simeq 0
&
 \ell^!(\cF^0_!)|_{\R_{< 0}}}\simeq 0
\end{equation}
Informally speaking, if we think of $\lambda$ as pointing ``up" along $Y$, then $\cF^0_*$ vanishes ``above" $H$,
and $\cF^0_!$ vanishes ``below"   $H$.

\subsubsection{Orthogonality of codirections}

Now let us return to the possibility that $\Lambda|_Y$ has more than one codirection and focus on the interaction
of two distinct codirections $\lambda_1, \lambda_2:Y\to \Lambda|_Y$ with $\Lambda_1 =\lambda_1(Y), \Lambda_2 = \lambda_2(Y)$.

\begin{lemma}\label{lem orthog of codirs}
For any $\cF_1\in  \Sh_{\Lambda_1}(T)_*^0$,
$\cF_2\in  \Sh_{\Lambda_2}(T)^0_*$,
we have $\Hom_{\Sh(T)} (\cF_1, \cF_2) \simeq 0$.

For any $\cF_1\in  \Sh_{\Lambda_1}(T)^0_!$,
$\cF_2\in  \Sh_{\Lambda_2}(T)^0_!$,
we have $\Hom_{\Sh(T)} (\cF_1, \cF_2) \simeq 0$.

\end{lemma}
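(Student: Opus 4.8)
The plan is to reduce the second ($!$-type) statement to the first ($*$-type) one by Verdier duality, and to prove the $*$-statement by writing each $\cF_i$ as an extension by zero from the open region ``below $H_i$'' and then sweeping that region off the support of the other sheaf by a non-characteristic isotopy, using only that $\Lambda_1\cap\Lambda_2=\emptyset$ and that $\lambda_1(z)\neq\lambda_2(z)$ for $z\in Y$. For the reduction: the Verdier duality equivalence recalled just above the lemma, read backwards, gives $\D_T\colon\Sh_{\Lambda_i}(T)_!^0\xrightarrow{\ \sim\ }(\Sh_{-\Lambda_i}(T)_*^0)^{op}$, it exchanges $\Hom(\cF_1,\cF_2)$ with $\Hom(\D_T\cF_2,\D_T\cF_1)$, and it carries the disjoint codirections $\lambda_1(Y),\lambda_2(Y)$ to the disjoint codirections $-\lambda_1(Y),-\lambda_2(Y)$. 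So the $!$-statement is the $*$-statement applied to the directed hypersurfaces $H_1,H_2$ with reversed coorientations, and it suffices to treat the $*$-case.

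For the $*$-statement I would first show that any $\cF\in\Sh_{\Lambda_i}(T)_*^0$ is an extension by zero $\cF\simeq j_{U_i!}(\cF|_{U_i})$ from the open ``below $H_i$'' region $U_i\subset T$, where $\cF|_{U_i}$ is a local system (empty singular support there); equivalently $i_{H_i}^*\cF\simeq 0$. Away from $Y$ this follows from the single-codirection picture: transverse to a smooth point of $H_i$ the conditions $\ssupp(\cF)\subset\Lambda_i$ and $\ell^*(\cF)|_{\R_{>0}}\simeq 0$ from \eqref{eqn vanish} force $\cF$ to be microlocally an extension by zero from the negative half-line; near $Y$ one uses in addition the presentation $\cF\simeq j_{Y!}j_Y^!\cF$. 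Since the ball bundle $T$ deformation retracts to $Y$, the local system $\mathcal L_1:=\cF_1|_{U_1}$ extends to a local system $\mathcal L_1'$ on $T$, and twisting by a local system does not change singular support, so
$$
\Hom_{\Sh(T)}(\cF_1,\cF_2)\ \simeq\ \Hom_{\Sh(T)}(j_{U_1!}k_{U_1},\,\cG_2)\ \simeq\ \Gamma(U_1,\cG_2|_{U_1}),\qquad \cG_2:=\mathcal L_1'^{\vee}\otimes\cF_2,
$$
with $\ssupp(\cG_2)\subset\Lambda_2$.

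It then remains to see $\Gamma(U_1,\cG_2|_{U_1})\simeq 0$. I would deform $U_1$ to the empty set through a family of open sets $\{U_1^t\}_{t\in[0,1]}$, $U_1^0=U_1$, obtained by flowing along a vector field that over $Y$ points opposite to $\lambda_1$, chosen so that the positive coray bundle of the moving boundary $\partial U_1^t$ stays disjoint from $\Lambda_2$ at every instant; this is possible because $\Lambda_1$ and $\Lambda_2$ are disjoint closed subsets of $S^*T$ and $\lambda_1(z)\neq\lambda_2(z)$ on $Y$, so a sufficiently small flow keeps the coray bundle of $\partial U_1^t$ inside a neighborhood of $\Lambda_1$ that misses $\Lambda_2$, and, by a uniform gap coming from compactifiability, one walks $H_1$ out of $\ssupp(\cG_2)$ in finitely many such small steps, ending with $U_1^t\cap\ssupp(\cG_2)=\emptyset$. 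The non-characteristic isotopy principle of Sect.~\ref{s inv} then makes $\Gamma(U_1^t,\cG_2)$ locally constant in $t$, whence $\Gamma(U_1,\cG_2|_{U_1})\simeq\Gamma(\emptyset,\cG_2)\simeq 0$, as desired.

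The hard part is this last step: arranging the sweep $\{U_1^t\}$ so that the coray bundle of $\partial U_1^t$ avoids $\Lambda_2$ throughout, in particular while $\partial U_1^t$ passes through $H_2$ and while it moves across the stratum $Y$, where the normal geometry is genuinely two-dimensional. This is exactly where one must exploit that $\Lambda|_T=\coprod_i\Lambda_i$ is a disjoint union of closed pieces and that the $\lambda_i$ are pointwise distinct over $Y$, together with the control data of the system; everything else is formal manipulation with the six operations and singular support.
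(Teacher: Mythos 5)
Your reduction of the $!$-statement to the $*$-statement by Verdier duality is exactly what the paper does, and your overall strategy (a non-characteristic isotopy ending in disjoint supports) is also the paper's. But two steps in your $*$-argument have genuine problems. First, the untwisting: you extend the local system $\mathcal L_1=\cF_1|_{U_1}$ to a local system $\mathcal L_1'$ on $T$ "since $T$ deformation retracts to $Y$". That retraction tells you nothing about $U_1$, which does not contain $Y$ and need not be simply connected (e.g.\ the complement of a cone over $S^1$ in a $3$-dimensional fiber), so $\mathcal L_1'$ need not exist and the reduction to $\Hom(j_{U_1!}k_{U_1},\cG_2)$ fails. This is fixable: instead of moving a characteristic open set against a fixed $\cG_2$, move the whole sheaf and use that $\Hom(\psi_{t*}\cF_1,\cF_2)$ is locally constant when $\psi_t(\Lambda_1)\cap\Lambda_2=\emptyset$ — which is what the paper does (it moves $\cF_2$).

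The serious gap is the sweep itself, which you correctly flag as the hard part but do not supply. Contracting $U_1$ to the empty set is never non-characteristic (a collapsing boundary's conormals sweep out all codirections), and the corrected version — translating $U_1$ in the direction opposite to $\lambda_1$ until it clears $\ssupp(\cG_2)$ — does not terminate unless $\lambda_2=-\lambda_1$. The reason: the only control on $\supp\cF_2$ coming from \eqref{eqn vanish} is one-sided in the $\lambda_2$ direction (it lies in a half-space $\{\lambda_2\lesssim 0\}$), and a half-space in the $\lambda_2$ direction meets every $\lambda_1$-translate of the half-space $U_1$ whenever $\lambda_2\neq-\lambda_1$; so the supports never become disjoint and $\Gamma(U_1^t,\cG_2)$ has no reason to vanish at the end. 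The missing idea is the paper's Step 1: first apply the orthogonal rotation $R_\theta$ of the plane spanned by $\lambda_1,\lambda_2$ carrying $\lambda_2$ to $-\lambda_1$ along the short arc avoiding $\lambda_1$ — non-characteristic because, after shrinking $T$, each $\Lambda_i$ is concentrated near the constant codirection $\lambda_i$, so $R_\theta(\Lambda_2)$ never meets $\Lambda_1$ — and only then translate along the common $y_0$-axis, where the two one-sided vanishings of \eqref{eqn vanish} are antipodal and do force the supports apart. Your "finitely many small steps, each keeping the conormal of $\partial U_1^t$ near $\Lambda_1$" does not substitute for this: smallness of each step is irrelevant (the composite is what must be non-characteristic and must reach a terminal position), and the terminal position simply does not exist in the direction you chose.
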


\begin{proof}
The second statement follows from the first by duality.

To prove the first, we will move $\cF_2$ through a non-characteristic isotopy to a position
where it is evident that $\Hom_{\Sh(T)} (\cF_1, \cF_2) \simeq 0$.  

Note that it suffices to prove the assertion locally in $Y$. Thus we may fix a smooth identification $T\simeq \R^{k  +\ell+1}$,
$Y\simeq \R^k \times \{0\}$ such that $\pi:T\to Y$ is the standard projection $\R^{k + \ell+1} \to \R^k$.
%
Moreover, for each $i = 1, 2$, we can arrange that $\Lambda_i|_Y \simeq \R^k \times \{\lambda_i\} 
\subset S^*_Y T \simeq \R^k \times S^{\ell}$,
and that $\Lambda_i \subset S^*T \simeq  \R^{k + \ell+1} \times S^{k+\ell}$ lies within a small neighborhood of $\R^{k + \ell +1} \times \{\lambda_i\}$.

\medskip

{\em (Step 1)} If $\lambda_2 = -\lambda_1$, then proceed to {\em (Step 2)} below. 
Else $\lambda_1, \lambda_2$ are linearly independent so span a two-dimensional plane $P \subset \R^{\ell+1}\subset \R^{k+\ell+1}$. For $\theta\in [0,1]$, let $R_\theta:\R^{k+\ell+1} \to \R^{k+\ell+1}$ be the orthogonal rotation  of $P$ fixing $P^\perp$, such that $R_0 = \id$, $R_1(\lambda_2) = -\lambda_1$,
and $R_\theta(\lambda_2)$,  for $\theta\in [0,1]$,  traverses the  short arc of directions in $P$  from $\lambda_2$ to $-\lambda_1$ (so not passing through $\lambda_1$).

Viewing $R_\theta:\R^{k+\ell+1}\to \R^{k+\ell+1}$ as an isotopy, observe that it satisfies $R_\theta(\Lambda_2) \cap \Lambda_1 = \emptyset$, for $\theta\in [0,1]$.
Thus $\Hom_{\Sh(B)} (\cF_1, R_{\theta*}(\cF_2))$ is independent of $\theta\in [0,1]$.

\medskip

{\em (Step 2)} By {\em (Step 1)}, we may assume   $\lambda_2 = -\lambda_1$. Without loss of generality, we may further assume $\lambda_1 = dy_{0}$
so $\lambda_2 = -dy_{0}$.
For $t\in \R$, let $T_t:\R^{k+\ell+1} \to \R^{k+\ell+1}$ be the translation $T_t(x_1, \ldots, x_k, y_0, y_1, \ldots, y_\ell) = (x_1, \ldots, x_k, y_0+t, y_1,  \ldots, y_\ell)$.
Viewing  $T_t:\R^{k+\ell+1} \to \R^{k+\ell+1}$ as an isotopy, observe that it satisfies $T_t(\Lambda_2) \cap \Lambda_1 = \emptyset$, for $t\in \R$.
Thus $\Hom_{\Sh(T)} (\cF_1, T_{t*}(\cF_2))$ is independent of $t\in \R$.

Finally, for $t\gg 0$,  the vanishing~\eqref{eqn vanish}  implies the supports of $\cF_1, T_{t*}(\cF_2)$ are disjoint. 
Hence $\Hom_{\Sh(T)} (\cF_1, T_{t*}(\cF_2)) \simeq 0$ and we are done.
\end{proof}


\subsection{Specialization of sheaves}

Let $X \subset M$ be a closed subspace with Whitney stratification
 $\cS=\{X_\alpha\}_{\alpha \in A}$. 
Fix a compatible  system of control data $\{(T_\alpha, \rho_\alpha, \pi_\alpha)\}_{\alpha\in A}$.

Fix a small $\epsilon>0$. For each $\alpha\in A$,  recall the mapping
$\Pi_\alpha:M \to M
$
and the almost retraction
$$
\xymatrix{
r:M \ar[r] & M &
r = \Pi_{\alpha_0} \Pi_{\alpha_1} \cdots  \Pi_{\alpha_N}
}
$$
where $N+1=|A|$ and the indices $\alpha_i \in A$ can be arbitrarily ordered.

We will record some of its simple properties; we leave the details of the proofs to the reader.

\begin{lemma}\label{lem spec of loc sys}
For each $\alpha\in A$, pushforward along  $\Pi_\alpha:M\to M$ 
 is  canonically equivalent to the identity  when restricted to local systems
$$
\xymatrix{
\Pi_{\alpha*} \simeq\id:\Loc(M) \ar[r]^-\sim & \Loc(M)
}
$$

More generally, it is  canonically equivalent to the identity  when restricted to $\cS$-constructible sheaves
$$
\xymatrix{
\Pi_{\alpha*} \simeq \id:\Sh_\cS(M) \ar[r]^-\sim & \Sh_\cS(M)
}
$$

The same assertions hold for pushforward along  $r:M\to M$. 
\end{lemma}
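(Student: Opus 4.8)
The plan is to reduce to a single $\Pi_\alpha$ acting near its tube. First, since pushforward is functorial and $r=\Pi_{\alpha_0}\cdots\Pi_{\alpha_N}$, once one knows that $\Pi_{\alpha*}\simeq\id$ as a canonical natural equivalence of $\Sh_\cS(M)$, the assertion for $r$ follows by composing these equivalences; and since $\Loc(M)\subset\Sh_\cS(M)$ and the conclusion $\Pi_{\alpha*}\cF\simeq\cF$ keeps local systems local, it is enough to treat $\Sh_\cS(M)$. Because $\Pi_\alpha$ is the identity off $T_\alpha[<2\epsilon]$ and carries $T_\alpha[<2\epsilon]$ into itself, the claim is local over $T_\alpha[<2\epsilon]$; so I would take $M=T_\alpha$, and use the chosen family of lines together with the collaring homeomorphism $h_\alpha$ to identify $T_\alpha[<2\epsilon]\setminus X_\alpha\cong S_\alpha[2\epsilon]\times(0,2\epsilon)$, under which $\Pi_\alpha$ becomes the fibrewise ``cone-crushing'' map $(u,\rho)\mapsto(u,q(\rho))$ --- a homeomorphism on $\{\rho_\alpha>\epsilon\}$ that collapses $T_\alpha[\leq\epsilon]$ onto $X_\alpha$ along $\pi_\alpha$.

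For local systems there is a short route: $\Pi_\alpha$ is proper (the identity off a tube, with closed-ball fibres over $X_\alpha$ and point fibres elsewhere) and all its fibres are contractible, so for $\cF\in\Loc(M)$ the restriction of $\cF$ to each fibre is constant, the counit $\Pi_\alpha^*\Pi_{\alpha*}\cF\to\cF$ is an isomorphism, and $\Pi_{\alpha*}\cF\in\Loc(M)$; combined with the fact that $\Pi_\alpha\simeq\id_M$ forces $\Pi_\alpha^*$ to be canonically the identity on $\Loc(M)$, this gives $\Pi_{\alpha*}\cF\simeq\Pi_\alpha^*\Pi_{\alpha*}\cF\simeq\cF$. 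This argument does not survive for singular $\cF\in\Sh_\cS(M)$, whose restriction to a ball fibre over $X_\alpha$ need not be constant, so for the general statement I would compute $\Pi_{\alpha*}\cF$ by pieces.

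Two inputs from the control data enter the computation: by submersivity of $\rho_\alpha\times\pi_\alpha$ on each higher stratum, $\cS$ restricts to a product stratification in the $\rho_\alpha$-coordinate on the punctured tube, so $\cF$ there is pulled back from $S_\alpha[2\epsilon]$; and the identities $\pi_\alpha r_\alpha=\pi_\alpha$, $\rho_\alpha r_\alpha=\rho_\alpha$ yield $\pi_\alpha\circ\Pi_\alpha=\pi_\alpha$ and $\rho_\alpha\circ\Pi_\alpha=q\circ\rho_\alpha$. Over $T_\alpha[<2\epsilon]\setminus X_\alpha$, the map $\Pi_\alpha$ restricts to a homeomorphism over $S_\alpha[2\epsilon]$ on $\{\epsilon<\rho_\alpha<2\epsilon\}$; since restriction to an open set commutes with $\Pi_{\alpha*}$ and $\cF$ is pulled back from $S_\alpha[2\epsilon]$, this identifies $(\Pi_{\alpha*}\cF)|_{T_\alpha[<2\epsilon]\setminus X_\alpha}$ canonically with $\cF$. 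Over $x\in X_\alpha$, the two displayed identities show that the $\Pi_\alpha$-preimage of a small distinguished neighbourhood of $x$ shrinks to the closed $\epsilon$-ball fibre $F=\pi_\alpha^{-1}(x)\cap\{\rho_\alpha\leq\epsilon\}$, which (again by the control data) carries the conical stratification induced from $\cS$, so $(\Pi_{\alpha*}\cF)_x\simeq\Gamma(F,\cF|_F)\simeq\cF_x$. Finally one checks that these two identifications agree on the specialization datum relating $X_\alpha$ to the punctured tube, producing the canonical equivalence $\Pi_{\alpha*}\simeq\id$.

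I expect the main obstacle to be this third step: extracting honest local coordinates in which $\Pi_\alpha$ is literally the fibrewise cone-crushing map from the family-of-lines and control-data axioms, and then, for singular $\cF$, justifying rigorously that the pushforward over $X_\alpha$ sees only the stalk (via ``sections over a distinguished neighbourhood compute the stalk'') and that this is compatible with the identification over the punctured tube. The remaining points --- properness of the $\Pi_\alpha$ and of $r$, their commutation, the passage from $\Pi_\alpha$ to $r$, and naturality of all the identifications --- are bookkeeping, which is presumably why the details are left to the reader.
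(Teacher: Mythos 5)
Your proposal is correct, and where the paper says anything at all it matches your route: the paper's entire proof consists of leaving the assertions for $\Pi_\alpha$ to the reader and deducing the statement for $r$ from the factorization $r=\Pi_{\alpha_0}\cdots\Pi_{\alpha_N}$ by functoriality of pushforward, which is exactly your first reduction. The argument you supply for the omitted $\Pi_\alpha$ case is the standard one and is sound: properness of $\Pi_\alpha$ with contractible (point, arc, or closed-ball) fibres handles $\Loc(M)$; for $\Sh_\cS(M)$ the key inputs are that the collaring homeomorphism $h_\alpha$ is stratum-preserving (so an $\cS$-constructible sheaf on the punctured tube is pulled back from $S_\alpha[2\epsilon]$), proper base change to compute $(\Pi_{\alpha*}\cF)_x\simeq\Gamma(F,\cF|_F)\simeq\cF_x$ over $X_\alpha$, and the cone-point identity $\Gamma(F,\cF|_F)\simeq\cF_x$ for conically stratified fibres. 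You correctly locate the only genuinely fiddly point, namely checking that the identification over the punctured tube and the identification over $X_\alpha$ glue to a natural equivalence; this is presumably why the author declares the lemma's proofs "left to the reader."
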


\begin{proof}
We leave the assertions for $\Pi_\alpha$ to the reader.
Since
$r = \Pi_{\alpha_0} \Pi_{\alpha_1} \cdots  \Pi_{\alpha_N},
$
  the assertions for $\Pi_{\alpha}$ imply them for $r$.
\end{proof}

\begin{lemma}\label{lem inverse to restrict}
Let $X_0\subset X$ be a closed stratum with tubular neighborhood $T_0\subset M$. 


Restriction of $\cS$-constructible sheaves is an equivalence
$$
\xymatrix{
\Sh_\cS(M \setminus X_0) \ar[r]^-\sim & \Sh_\cS(M \setminus T_0[\leq \epsilon])
}
$$
with an inverse provided by
the pushforward  
$$
\xymatrix{
\Pi_{0*}:\Sh_\cS(M \setminus T_0[\leq \epsilon]) \ar[r] & \Sh_\cS(M \setminus X_0)
}
$$ 

Suppose in addition $X$ is a directed  hypersurface with positive coray bundle $\Lambda$.
Then 
restriction of sheaves is an equivalence
$$
\xymatrix{
\Sh_\Lambda(M \setminus X_0) \ar[r]^-\sim & \Sh_\Lambda(M \setminus T_0[\leq \epsilon])
}
$$
with an inverse provided by
the pushforward  
$$
\xymatrix{
\Pi_{0*}:\Sh_\Lambda(M \setminus T_0[\leq \epsilon]) \ar[r] & \Sh_\Lambda(M \setminus X_0)
}
$$ 

\end{lemma}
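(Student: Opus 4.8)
The plan is to reduce everything to a statement about a single tubular neighborhood and then exploit the geometry of the mapping $\Pi_0$ together with the microlocal cut-off machinery already in place. First I would establish the $\cS$-constructible statement. The key observation is that $\Pi_0 : M \to M$ maps $M \setminus X_0$ to $M \setminus T_0[\le\epsilon]$ (since $\Pi_0(x) = \pi_0(x) \in X_0$ when $x \in T_0[\le\epsilon]$, and $\Pi_0$ is the identity outside $T_0[<2\epsilon]$, while on the collar $T_0[\epsilon < \rho_0 < 2\epsilon]$ it pushes points outward past the sphere $S_0[\epsilon]$ via the reparametrization $q(\rho_0(x))$). Writing $j : M \setminus T_0[\le\epsilon] \hookrightarrow M\setminus X_0$ for the open inclusion, the composite $\Pi_0 \circ j$ is homotopic to the identity of $M\setminus T_0[\le\epsilon]$ through maps preserving $\cS$-constructibility, so $j^* \circ \Pi_{0*} \simeq \id$. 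For the other composite, I would show $\Pi_{0*} \circ j^* \simeq \id$ on $\Sh_\cS(M\setminus X_0)$: this is where Lemma~\ref{lem spec of loc sys} enters, since $\Pi_0$ restricted to sheaves already constructible is equivalent to the identity, and the point is that no information is lost by first restricting to $M \setminus T_0[\le\epsilon]$ because any $\cS$-constructible sheaf on $M\setminus X_0$ is determined near $X_0$ by its restriction to the open collar together with the local-system structure along the fibres of $\pi_0$ — i.e. one uses the standard recollement for the tubular neighborhood and the fact that $\pi_0$-pushforward of an $\cS$-constructible sheaf is a local system on $X_0$.

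Next I would upgrade this to the singular-support statement. Since $\Sh_\Lambda \subset \Sh_\cS$ is a full subcategory (as $\Lambda \subset \Lambda_\cS$), it suffices to check that the two functors above preserve the condition of having singular support in $\Lambda$. Restriction along the open inclusion $j$ trivially cannot enlarge singular support, so $\Sh_\Lambda(M\setminus X_0)$ lands in $\Sh_\Lambda(M\setminus T_0[\le\epsilon])$. The content is that $\Pi_{0*}$ carries $\Sh_\Lambda(M\setminus T_0[\le\epsilon])$ back into $\Sh_\Lambda(M\setminus X_0)$. For this I would argue microlocally near a point of $X_0$: decompose $\Lambda|_{T_0}$ into the finitely many sheets $\Lambda_i$, $i = 1,\dots,k$, over the distinct codirections $\lambda_i$, use the functorial cone presentation $\cF \simeq \Cone(\oplus_i \fP_i(\cF) \to \cL)$, and reduce to the single-codirection case. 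There, using the notation of the ``Single codirection'' subsection, any $\cF \in \Sh_\Lambda(M\setminus T_0[\le\epsilon])$ with $\lambda = \lambda_1$ fits into the triangle built from $\pi^*\pi_*\cF$ (a local system, on which $\Pi_{0*}$ acts as the identity by Lemma~\ref{lem spec of loc sys}) and a piece $\cF^0_*$ supported away from the region where $\Pi_0$ does anything nontrivial — concretely, $\cF^0_*$ vanishes on the ``$\lambda$-positive'' side by the vanishing~\eqref{eqn vanish}, and one checks directly that pushing forward along $\Pi_0$ keeps its singular support inside $\Lambda_1$. Assembling the pieces over all $i$, $\Pi_{0*}\cF$ has singular support in $\bigcup_i \Lambda_i = \Lambda$.

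The step I expect to be the main obstacle is the verification that $\Pi_{0*}$ does not create spurious singular support along $\Lambda_\cS$ outside $\Lambda$ — i.e., that the nontrivial collar reparametrization performed by $\Pi_0$ does not ``smear'' the microsupport in normal directions to $X_0$. This is precisely a non-characteristic statement, and I would handle it by realizing $\Pi_0$ as a limit/composition of the small isotopies implicit in the flow description of $h_0$, $r_0$ and invoking the non-characteristic isotopy invariance of singular support recalled in the ``Non-characteristic isotopies'' subsection; the care needed is that $\Pi_0$ is only continuous (not a diffeomorphism) at the center, so one must run the argument on $T_0[> \epsilon']$ for $\epsilon' < \epsilon$ and pass to the limit, using properness of $\pi_0$ and that $r|_{X}$ is homotopic to the identity. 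Once this is in place the rest is bookkeeping with the recollement triangles.
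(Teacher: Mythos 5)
Your description of the geometry of $\Pi_0$ is backwards, and this propagates into the argument. Since $q(t)=0$ for $t\leq\epsilon$, the map $\Pi_0$ collapses $T_0[\leq\epsilon]$ onto $X_0$ via $\pi_0$ and stretches the collar $T_0[\epsilon<\rho_0<2\epsilon]$ \emph{inward} over the punctured tube $T_0[0<\rho_0<2\epsilon]$; it does not map $M\setminus X_0$ into $M\setminus T_0[\leq\epsilon]$, and it does not push points outward. The fact that makes the first assertion essentially immediate--and which your homotopy argument circles around without stating--is that $\Pi_0$ restricts to a stratum-preserving homeomorphism of $M\setminus T_0[\leq\epsilon]$ \emph{onto} $M\setminus X_0$, equal to the identity outside $T_0[<2\epsilon]$; then $\Pi_{0*}$ is transport of structure and restriction is its inverse. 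As written, your composite $\Pi_0\circ j$ does not even land in $M\setminus T_0[\leq\epsilon]$, so the claim that it is homotopic to the identity there cannot be run.

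The more serious gap is in the second assertion, where the entire content is that $\Pi_{0*}$ creates no singular support outside $\Lambda$ over the punctured tube. You defer exactly this point to realizing $\Pi_0$ as a limit of small isotopies and "passing to the limit". But the non-characteristic isotopy invariance recalled in the paper controls Hom-complexes between sheaves whose singular supports stay disjoint; it does not bound $\ssupp(\psi_{t*}\cF)$ for an ambient isotopy that moves $\Lambda$ itself, and the collar reparametrization does shear the conormal directions of the strata inside $T_0[<2\epsilon]$; moreover singular support can jump under limits. What is actually needed, and what the paper proves, is a propagation statement along the stratum: for $\cS$-constructible $\cF$ and nearby points $p,q$ of the same stratum, $\ssupp(\cF)|_{B(q)}\subset\Lambda$ forces $\ssupp(\cF)|_{B(p)}\subset\Lambda$. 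This is where the good-position hypothesis enters: for $\xi\in S^*_{X_0}M|_p$ not in $\Lambda$, the map $\pi_0\times\xi$ is non-characteristic near the level $\{\xi=\epsilon\}$, so the $\pi_0$-pushforward of the corresponding relative sections is a local system on $X_0$, and the microlocal vanishing at $q$ propagates to $p$. Your alternative route through the projections $\fP_i$ and the single-codirection triangles uses machinery the paper constructs on the full tube, not on $M\setminus T_0[\leq\epsilon]$, and in any case the step you label "check directly that pushing forward along $\Pi_0$ keeps the singular support inside $\Lambda_1$" is precisely the unproved crux.
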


\begin{proof}
For the first assertion, the mapping $\Pi_{0}:M\setminus T_0[\leq \epsilon] \to M\setminus X_0$ is a stratum-preserving homeomorphism
and the identity on $M\setminus T_0[\leq 2\epsilon]$.

For the second assertion, thanks to the first, it suffices to show $\Pi_{0*}$ does not introduce any spurious singular support
outside of $\Lambda$.
More generally, it suffices to show the following.  Let $p\in X$ be a point in a closed stratum $X_0\subset X$, and $B(p)\subset M$ a small open ball  around $p$.
Let
$q\in B(p)$ be another point in the same stratum $X_0 \subset X$,  and $B(q)\subset B(p)$ a small open ball around $q$.
Then  it suffices to show for any $\cF\in \Sh_\cS(M)$, if $\ssupp(\cF)|_{B(q)} \subset \Lambda$, then
$\ssupp(\cF)|_{B(p)} \subset \Lambda$. 

The assertion is local and we may assume $M= T_0 = \BR^{k+\ell +1}$, $X_0 = \BR^k$,
and the projection $\pi_0:T_0\to X_0$ is the standard projection $\BR^{k+\ell +1} \to \BR^k$. 

Suppose some $\xi \in T^*_p  \BR^{k+\ell+1} \simeq \BR^{k+\ell+1}$ represents a point of $\ssupp(\cF)$ but not a point of $ \Lambda$. Since $\cF$ is $\cS$-constructible, we have $\xi \in T^*_{\BR^{k}} \BR^{k+\ell+1}|_p \simeq \BR^{\ell+1} $. Consider the corresponding linear function $\xi:\BR^{k+\ell+1}\to \BR$. 
Fix a small $\epsilon<0$, and consider the inclusion $i:\{\xi \leq  \epsilon\} \to \BR^{k+\ell+1}$.
Then it suffices to see that $\pi_{0*} i^*\cF$ is locally constant on $\BR^k$, since then its vanishing at some $q$ will imply its vanishing at $p$.
But since $X$ is in good position, and $\xi$ does not 
represent a point of $ \Lambda$,
the map $\pi_0 \times \xi$ is non-characteristic near the value $\xi = \epsilon$,
and the assertion follows.
\end{proof}

\begin{lemma}\label{lem inductive constructibility}

Let $X_0\subset X$ be a closed stratum with tubular neighborhood $T_0\subset M$. 

Introduce the mapping 
$$
\xymatrix{
r':M\setminus X_0 \ar[r] & M\setminus X_0 &
r '= \Pi_{\alpha_1} \Pi_{\alpha_2} \cdots  \Pi_{\alpha_N}
}
$$
where  $\Pi_0$ is omitted from the composition.

For $\cF\in \Sh(M)$, suppose $r'_*(\cF|_{M\setminus X_0}) \in \Sh(M\setminus X_0)$ is $\cS$-constructible.

 Then there is a  functorial equivalence
 $$
\xymatrix{
 r_*(\cF) |_{M\setminus X_0} \simeq 
 r'_*(\cF|_{M\setminus X_0})
}
 $$
\end{lemma}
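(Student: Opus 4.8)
The plan is to factor $r$ through the retraction $\Pi_0$ onto $X_0$ and reduce the statement to base change along open immersions together with Lemma~\ref{lem inverse to restrict}. Write $M' := M\setminus X_0$, $U_0 := M\setminus T_0[\leq\epsilon]$, and $j\colon M'\hookrightarrow M$ for the open inclusion. By Remark~\ref{rem induction} we have $r = \Pi_0\circ\hat r_0$, where $\hat r_0 = \Pi_{\alpha_1}\cdots\Pi_{\alpha_N}$ satisfies $\hat r_0|_{X_0} = \id_{X_0}$ and $\hat r_0|_{M'} = r'\colon M'\to M'$; in particular $\hat r_0^{-1}(M') = M'$. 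Applying the elementary base-change identity for $*$-pushforward along the open immersion $j$, relative to $\hat r_0$, gives a functorial equivalence
$$
(\hat r_{0*}\cF)|_{M'} \;\simeq\; r'_*(\cF|_{M'}),
$$
which by hypothesis is an $\cS$-constructible sheaf on $M'$; call it $\cG$.

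Next I would record that $\Pi_0^{-1}(X_0) = T_0[\leq\epsilon]$: indeed $\Pi_0(x)=\pi_0(x)\in X_0$ for $x\in T_0[\leq\epsilon]$, while otherwise $\rho_0(\Pi_0(x)) = q(\rho_0(x)) > 0$, so $\Pi_0(x)\notin X_0$. Hence $\Pi_0^{-1}(M') = U_0$, and writing $\Pi'_0\colon U_0\to M'$ for the corresponding restriction of $\Pi_0$, base change along $j$ relative to $\Pi_0$ yields
$$
r_*(\cF)|_{M'} \;=\; j^*\Pi_{0*}(\hat r_{0*}\cF) \;\simeq\; \Pi'_{0*}\big((\hat r_{0*}\cF)|_{U_0}\big) \;\simeq\; \Pi'_{0*}\big(\cG|_{U_0}\big),
$$
the last step using that $(\hat r_{0*}\cF)|_{U_0}$ is the restriction of $(\hat r_{0*}\cF)|_{M'}\simeq\cG$ to $U_0\subset M'$.

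Finally, since $\cG$ is $\cS$-constructible on $M'$, its restriction $\cG|_{U_0}$ is again $\cS$-constructible, and Lemma~\ref{lem inverse to restrict} says precisely that $\Pi'_{0*}$ is inverse to restriction $\Sh_\cS(M')\xrightarrow{\ \sim\ }\Sh_\cS(U_0)$; therefore $\Pi'_{0*}\big(\cG|_{U_0}\big)\simeq\cG = r'_*(\cF|_{M'})$, which is the asserted equivalence.

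Most of this is bookkeeping: the base-change identities used are the trivial ones for $*$-pushforward along open immersions, valid because both relevant squares are Cartesian thanks to the preimage computations $\hat r_0^{-1}(M') = M'$ and $\Pi_0^{-1}(M') = U_0$. The one genuinely substantive input — and the only place the constructibility hypothesis enters — is the closing invocation of Lemma~\ref{lem inverse to restrict}; this is also precisely where the statement breaks down for a general, non-constructible $\cF$, since then $\Pi'_{0*}$ can introduce spurious singular support and fail to be inverse to restriction. I anticipate no real difficulty beyond making sure the two base-change squares are set up correctly.
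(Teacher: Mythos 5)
Your argument is correct and is essentially the paper's own proof: the paper likewise factors $r=\Pi_0\hat r_0$ (via Remark~\ref{rem induction}), identifies $r_*(\cF)|_{M\setminus X_0}$ with $\Pi_{0*}\bigl(r'_*(\cF|_{M\setminus X_0})|_{M\setminus T_0[\leq\epsilon]}\bigr)$ by the same two open-immersion base changes (which it compresses into the phrase ``by construction''), and then closes with Lemma~\ref{lem inverse to restrict} applied to the $\cS$-constructible sheaf $r'_*(\cF|_{M\setminus X_0})$. Your version merely makes the Cartesian squares and the preimage computations $\hat r_0^{-1}(M')=M'$, $\Pi_0^{-1}(M')=M\setminus T_0[\leq\epsilon]$ explicit, which is a harmless (and welcome) expansion of the same argument.
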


\begin{proof}
By Lemma~\ref{lem inverse to restrict}, we have
$$
\xymatrix{
r'_*(\cF|_{M\setminus X_0})\simeq  \Pi_{0*}( r'_*(\cF|_{M\setminus X_0}) |_{M\setminus T_0[\leq \epsilon]})  
}
$$

By construction, we also have
$$
\xymatrix{
 \Pi_{0*}( r'_*(\cF|_{M\setminus X_0}) |_{M\setminus T_0[\leq \epsilon]})  \simeq
 \Pi_{0*}( r'_*(\cF|_{M\setminus X_0})) |_{M\setminus X_0}   \simeq 
 r_*(\cF) |_{M\setminus X_0}
}
$$
\end{proof}


\subsection{Singular support of specialization}

Let $H\subset M$ be a directed hypersurface with positive coray bundle $\Lambda\subset S^*M$.
Fix a  Whitney stratification $\cS = \{H_{\ul i}\}_{\ul i \in \ul I}$ satisfying the setup of Sect.~\ref{ss: strat} and fix a
 compatible  system of control data $\{(T_{\ul i},\rho_{\ul i}, \pi_{\ul i}) \}_{\ul i \in \ul I}$.

Fix a small $\epsilon>0$. For each $\ul i\in \ul I$,  recall the mapping
$\Pi_{\ul i} \to M
$
and the almost retraction
$$
\xymatrix{
r:M \ar[r] & M &
r = \Pi_{\ul i_0} \Pi_{\ul i_1} \cdots  \Pi_{\ul i_{\ul N}}
}
$$
where $\ul N+1=|\ul I|$ and the indices 
can be arbitrarily ordered.

Fix sequentially small parameters  $d_i>0$, $r_i>0$, $s_i >0$, for $i\in I$. 
Fix a small smoothing constant $\delta>0$.
Recall the  directed cylinder  $\sC\subset M$ with  positive coray bundle $\Lambda_\sC\subset S^*M$,
and  the directed expansion  $\sE\subset M$ with   positive coray bundle $\Lambda_\sE\subset S^*M$.

The main goal of this section is Theorem~\ref{thm constructible} below that states
that pushforward along the almost retraction $r:M\to M$ induces a functor
$$
\xymatrix{
r_*:\Sh_{\Lambda_\sE}(M) \ar[r] & \Sh_{\Lambda}(M)
}
$$
In other words, pushforward takes sheaves with singular support in $\Lambda_\sE$ to sheaves
with singular support in $\Lambda$.

\subsubsection{Interaction with directed cylinder}
We will arrive at our main goal after the following  coarser estimate. 

\begin{prop} \label{prop cylindrical constructibile}
Pushforward along the almost retraction $r:M\to M$ induces a functor
$$
\xymatrix{
r_*:\Sh_{\Lambda_\sC}(M) \ar[r] & \Sh_{\Lambda_\cS}(M)
}
$$
In other words, pushforward takes sheaves with singular support in $\Lambda_\sC$ to sheaves
 with singular support in $\Lambda_\cS$, or in other words, to $\cS$-constructible sheaves.

\end{prop}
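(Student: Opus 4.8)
The plan is to proceed by induction on the poset $\ul I$, stripping off one closed stratum at a time, using Remark~\ref{rem induction} to split the almost retraction as $r = \Pi_{\ul i_0} \circ \hat r_{\ul i_0}$ where $\ul i_0 \in \ul I$ is a chosen closed (minimal) stratum and $\hat r_{\ul i_0}$ is the almost retraction for $H' = H\setminus H_{\ul i_0}\subset M' = M\setminus H_{\ul i_0}$. By Lemma~\ref{lem inductive constructibility}, once we know $\hat r_{\ul i_0 *}(\cF|_{M'})$ is $\cS$-constructible on $M'$ we get $r_*(\cF)|_{M'}$ as the pushforward along $\Pi_{\ul i_0}$ of a constructible sheaf, which is harmless away from $X_{\ul i_0}$ by Lemma~\ref{lem inverse to restrict}. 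The inductive hypothesis, applied to the restricted data $\Lambda_\sC|_{M'}$ (which is the directed cylinder for $H'$), tells us $\hat r_{\ul i_0 *}(\cF|_{M'})$ is $\cS$-constructible on $M'$, and that $r_*(\cF)$ has singular support in $\Lambda_\cS$ away from $X_{\ul i_0}$. So the entire content is a purely \emph{local} statement near the closed stratum $X_{\ul i_0}$: we must show $\ssupp(r_*\cF)$ is contained in $S^*_{X_{\ul i_0}}M$ over a neighborhood of $X_{\ul i_0}$.

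For this local analysis I would work in a tubular neighborhood $T_{\ul i_0}$, with $M \simeq \R^{k+\ell+1}$, $X_{\ul i_0} \simeq \R^k$, and $\pi_{\ul i_0}$ the standard projection, as in the proof of Lemma~\ref{lem inverse to restrict}. The key point is that near $X_{\ul i_0}$, the map $r$ factors (up to the harmless homotopies built into the $\Pi_\alpha$'s) through $\pi_{\ul i_0}$: on $T_{\ul i_0}[\leq \epsilon]$ it \emph{is} $\pi_{\ul i_0}$ after applying the inner retractions. So $r_*\cF$ near $X_{\ul i_0}$ is computed as $\pi_{\ul i_0 *}$ of the restriction of (an already-constructible modification of) $\cF$ to the complement of the smaller tubes. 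To control the singular support of such a pushforward I would use the non-characteristic pushforward criterion recalled in the "non-characteristic isotopies" subsection: if $\pi_{\ul i_0}$ is non-characteristic with respect to the relevant singular support, the pushforward is a local system on $X_{\ul i_0}$, hence contributes only conormal directions $S^*_{X_{\ul i_0}}M$. The issue is that the cylinder $\sC$ contains codirections along $C_{\ul i_0} = \{\ssf_{\ul i_0} = 0\}$, i.e. the sphere bundle $\{\rho_{\ul i_0} = r_{\ul i_0}\}$, which are \emph{radial} directions $\pm d\rho_{\ul i_0}$, and these are manifestly $\pi_{\ul i_0}$-non-characteristic precisely because $\pi_{\ul i_0}$ is constant along $\rho_{\ul i_0}$-level sets; the positive coray bundle $\Lambda_\sC$ was built so that its fibers over $X_{\ul i_0}$ consist only of such radial cooorientations of the cylinder walls together with the transverse directions from the remaining $\Lambda_i$'s, and the latter live over strata $H_{\ul j}$ with $\ul j > \ul i_0$, i.e. away from $X_{\ul i_0}$ itself.

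The precise mechanism I would invoke is: take a point $p\in X_{\ul i_0}$ and a covector $\xi \in S^*_p M$ with $\xi\notin S^*_{X_{\ul i_0}}M$, i.e. $\xi$ has nonzero component along the normal directions $y_0,\dots,y_\ell$; I want to show $\xi\notin\ssupp(r_*\cF)$. Because $r_*\cF$ is already known constructible and the question is microlocal, it suffices to show $\pi_{\ul i_0 *} i^* \cF$ is locally constant on $\R^k$ for $i$ the inclusion of a halfspace $\{\xi \leq \epsilon\}$ near $p$, exactly as in the last paragraph of the proof of Lemma~\ref{lem inverse to restrict} --- the only thing to check is that $\pi_{\ul i_0}\times \xi$ is $\Lambda_\sC$-non-characteristic near $\xi = \epsilon$, and this holds because at every point of $C_{\ul i_0}\cap\{\xi=\epsilon\}$ the coorientation of the cylinder wall in $\Lambda_\sC$ is the radial codirection $\pm d\rho_{\ul i_0}$, which is linearly independent from $\xi$ and from $d\pi_{\ul i_0}$ (i.e. $\Lambda_\sC$ meets the conormal of $\{\xi=\epsilon\}$ only in directions where $\pi_{\ul i_0}\times\xi$ remains a submersion); any remaining sheets of $\Lambda_\sC$ through that region are among the $\Lambda_i$ for $i$ over $\ul j > \ul i_0$, which are conormal to $H_{\ul j}$, transverse to both $\{\rho_{\ul i_0} = r_{\ul i_0}\}$ and $\{\xi = \epsilon\}$ by the multi-transversality built into the construction.

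\medskip

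The main obstacle, I expect, will be the bookkeeping in the inductive step: carefully identifying the restriction of the directed cylinder $\sC$ to $M\setminus X_{\ul i_0}$ with the directed cylinder built from the restricted stratification and control data, and verifying that the inner retractions $\Pi_\alpha$ for $\alpha > \ul i_0$ move $\cF$ only along directions transverse to (hence non-characteristic for) the part of $\Lambda_\sC$ that matters --- this is where the axioms of a compatible system of control data (the commutation relations $\pi_\alpha\pi_\beta = \pi_\beta$, $\rho_\alpha\pi_\beta = \rho_\alpha$) and the choice of $\epsilon$ smaller than all the $r_i$'s have to be deployed carefully. In outline, though, the argument is: reduce to a neighborhood of a single closed stratum by induction and Lemmas~\ref{lem inductive constructibility}--\ref{lem inverse to restrict}; factor $r$ through $\pi_{\ul i_0}$ there; and apply the non-characteristic pushforward estimate, using that the cylinder walls are cut out by the tubular distance functions $\rho_{\ul i}$ whose differentials are radial, hence $\pi_{\ul i}$-non-characteristic, while the only genuinely transverse sheets of $\Lambda_\sC$ live over higher strata and are handled by the inductive hypothesis.
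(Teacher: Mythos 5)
Your overall strategy is the paper's: induct on the strata, peel off a closed stratum $H_0$, reduce to the tube $T_0$, and kill the normal codirections of $r_*\cF$ over $H_0$ by the fact that $\pi_0$ is non-characteristic for $\Lambda_\sC$ (the cylinder walls carry only radial codirections $d\rho$, which are not pulled back from the base, and the sheets over higher strata are handled by multi-transversality). The one genuine wrinkle is in your inductive setup: the parenthetical claim that $\Lambda_\sC|_{M\setminus X_{\ul i_0}}$ \emph{is} the directed cylinder of the restricted data $H'\subset M'$ is false, because the wall $C_0=\{\rho_0=r_0\}$ around the deleted stratum survives in $M\setminus X_{\ul i_0}$ but is not part of the cylinder built from $H'$; so the inductive hypothesis does not literally apply to $\cF|_{M'}$. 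The paper's fix is to restrict instead to $M[>\epsilon]=M\setminus T_0[\leq\epsilon]$ (using $r_0<\epsilon$, so the extra wall is excised along with the tube), apply induction there, and then bridge back to $M\setminus H_0$ via $\Pi_{0*}$ using Lemmas~\ref{lem inverse to restrict} and~\ref{lem inductive constructibility}; with that substitution your argument goes through. Your local step (testing covectors $\xi\notin S^*_{H_0}M$ via sections over halfspaces, as in the proof of Lemma~\ref{lem inverse to restrict}) is a mild variant of the paper's cleaner computation $i_0^!r_*\cF\simeq\pi_{0*}j[\leq\epsilon]^!\cF$ via base change and the commutation relations, but both rest on the same non-characteristicity, including the point (worth stating explicitly) that $\pi_0\Psi|_{T_0}=\pi_0$ so the smoothing does not disturb it.
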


\begin{proof}

By induction on the number of strata of $H\subset M$. 

The base case $H=\emptyset$ is immediate: $r$ is the identity map of $M$.

Suppose given a closed stratum $i_0:H_{0}\to M$. 

Set $M[>\epsilon] = M\setminus T_{ 0}[\leq \epsilon]$, $H[>\epsilon]= H\cap  M[>\epsilon] $.
The Whitney stratification,  system of control data and family of lines for $H\subset M$ immediately provide the same for
$H[>\epsilon]\subset M[>\epsilon] $.
Denote the induced Whitney stratification by  $\cS[>\epsilon]= \cS \cap M[>\epsilon]$,
and the resulting almost retraction by $r[>\epsilon] = r|_{M[>\epsilon]}$.
Furthermore, starting with these data, the expansion algorithm yields a directed cylinder $\sC[>\epsilon] \subset 
M[>\epsilon]$ with positive coray bundle $\Lambda_{\sC[>\epsilon]} \subset S^*M[>\epsilon]$ such that 
$\sC[>\epsilon]  = \sC\cap M[>\epsilon] $, $\Lambda_{\sC[>\epsilon]} = \Lambda_\sC|_{M[>\epsilon]}$.

By induction, 
since $H[>\epsilon]$ has fewer strata than $H$,
pushforward  induces a functor 
$$
\xymatrix{
r[>\epsilon]_*:\Sh_{\Lambda_{\sC[>\epsilon]} }(M[>\epsilon]) \ar[r] & \Sh_{\Lambda_{\cS[>\epsilon]}} (M[>\epsilon])
}
$$

For any $\cF \in \Sh_{\Lambda_\sC}(M)$,
by construction, we have
$$
\xymatrix{
r_*(\cF)|_{M\setminus H_0} \simeq \Pi_{0*} r[>\epsilon]_*(\cF|_{M[>\epsilon]})
}
$$
Thus Lemma~\ref{lem inductive constructibility} implies that $r_*(\cF)|_{M\setminus H_0}$ is $\cS$-constructible.

Now it suffices to show $i_0^! r_* \cF$ is a local system.
By base change, we have 
$$\xymatrix{
i_0^! r_* \cF \simeq r_*  j[\leq \epsilon]^! \cF
}
$$
where $j[\leq \epsilon]:T_0[\leq \epsilon] \to T_0 \to  M$ is the inclusion.

Recall that the almost retraction $r$ is independent of the ordering of the indices so that in particular $r = \Pi_{{\ul i_1}} \cdots \Pi_{{\ul i_N}} \Pi_0$.
Recall that $\Pi_{0}|_{T_0[\leq \epsilon]} = \pi_0$, and that $\Pi_{\ul i }|_{H_0} = \id$ for all $\ul i\not = 0$.
Thus we have
$$
\xymatrix{
r_*  j[\leq \epsilon]^! \cF \simeq
 \Pi_{\ul i_1*} \cdots \Pi_{\ul i_{N}*} \Pi_{0*}  j[\leq \epsilon]^! \cF \simeq 
 \Pi_{\ul i_1*} \cdots \Pi_{\ul i_{N}*}
 \pi_{0*} j[\leq \epsilon]^!  \cF
\simeq
 \pi_{0*} j[\leq \epsilon]^!  \cF
}
$$

Recall the smoothing homeomorphism
$$
\xymatrix{
\Psi =  \Psi_{i_N} \circ \cdots \circ \Psi_{i_1} \circ \Psi_{0}:M\ar[r]^-\sim & M
}
$$
where $N + 1 = |I|$, and the elements $0, i_1, \ldots, i_{N} \in I$ are ordered compatibly with the partial order on $I$. 
Recall that for $j\in I$ with image $\ul j \in \ul I$, the restriction $\Psi_j|_{T_0}$ 
satisfies $ \Psi_j|_{T_0} = \id$, 
when $\ul j$ is incomparable to $0$,
and 
 $\pi_0 \Psi_j|_{T_0} = \pi_0$, 
when $\ul j$ is greater than or equal to $0$.
Thus altogether $\pi_0\Psi|_{T_0} = \pi_0$.

For any $i\in I$, the projection $\pi_0$ is non-characteristic with respect to the truncated cylinder $C_i \subset M$,
and hence to the total cylinder $C \subset M$. Thanks to the identity $\pi_0\Psi|_{T_0} = \pi_0$, 
the projection  $\pi_0$ is also non-characteristic with respect to 
the smoothing $\sC_i =\Psi(C_i)$, and hence  to the directed cylinder
$ \sC = \Psi(C)$, and in particular  to its positive coray bundle $\Lambda_\sC$.
Finally, since $\pi_0$ is a proper fibration and non-characteristic with respect to $\Lambda_{\sC}$, and hence
with respect to $\ssupp(\cF) \subset \Lambda_{\sC}$,
we conclude  that $\pi_{0*}j[\leq \epsilon]^!   \cF$ is a local system on 
$H_0$.
 \end{proof}

\subsubsection{Interaction with directed expansion}

 Now we will prove the main assertion of this section.

\begin{thm}\label{thm constructible}
Pushforward along the almost retraction $r:M\to M$ induces a functor
$$
\xymatrix{
r_*:\Sh_{\Lambda_{\sE}}(M) \ar[r] & \Sh_{\Lambda}(M)
}
$$
In other words, pushforward takes sheaves with singular support in $\Lambda_{\sE}$ to sheaves
with singular support in $\Lambda$.
\end{thm}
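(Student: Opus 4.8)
The strategy is to upgrade the coarse estimate of Proposition~\ref{prop cylindrical constructibile} by a refined inductive analysis near each stratum. Proposition~\ref{prop cylindrical constructibile} already guarantees that for $\cF\in\Sh_{\Lambda_\sE}(M)\subset\Sh_{\Lambda_\sC}(M)$ the pushforward $r_*\cF$ is $\cS$-constructible, i.e.\ $\ssupp(r_*\cF)\subset\Lambda_\cS$. So the remaining task is purely microlocal: to show that for each stratum $H_{\ul i}\subset H$, the covectors of $r_*\cF$ conormal to $H_{\ul i}$ actually lie in the finitely many chosen codirections $\lambda_i$, $i\in F_{\ul i}$, that make up $\Lambda|_{H_{\ul i}}$, rather than filling out the whole conormal sphere $S^*_{H_{\ul i}}M$. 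I will run the induction on the number of strata exactly as in the proof of Proposition~\ref{prop cylindrical constructibile}: fix a closed stratum $i_0:H_0\to M$, set $M[>\epsilon]=M\setminus T_0[\le\epsilon]$, transport all the data (control data, family of lines, expansion algorithm output) to $M[>\epsilon]$, and apply the inductive hypothesis there to conclude $\ssupp(r_*\cF)|_{M\setminus H_0}\subset\Lambda$ via Lemma~\ref{lem inductive constructibility}. The content is then concentrated entirely at the closed stratum $H_0$.

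\textbf{The local model at $H_0$.} Near $H_0$ everything localizes: by base change $i_0^!r_*\cF\simeq r_*j[\le\epsilon]^!\cF$, and as in the previous proof this collapses to $\pi_{0*}j[\le\epsilon]^!\cF$ using $\Pi_0|_{T_0[\le\epsilon]}=\pi_0$ and $\Pi_{\ul i}|_{H_0}=\id$ for $\ul i\ne 0$. The point now is to compute $\pi_{0*}j[\le\epsilon]^!\cF$ microlocally rather than merely verifying it is a local system. Work in the local model $M=T_0\simeq\R^{k+\ell+1}$, $H_0\simeq\R^k$, with $\pi_0$ the standard projection, and decompose according to the finitely many codirections: using the microlocal cut-off functors $\fP_i$ recalled in \S5.2.1, present $\cF\simeq\Cone(\oplus_i\fP_i(\cF)\to\cL)$ with $\cL$ a local system, so it suffices to treat a single $\cF_i\in\Sh_{\Lambda_i}(M)$ at a time. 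Here $\Lambda_i$ is the positive coray bundle of the directed cylinder/expansion component attached to a single codirection $\lambda_i$, and I want to show $\ssupp(\pi_{0*}j[\le\epsilon]^!\cF_i)\subset\R_{\ge 0}\langle\lambda_i|_{H_0}\rangle$, i.e.\ lands in the single coray $\lambda_i$ (not its negative, and not spurious directions). This is where the geometry of the \emph{expansion} $\sE$ as opposed to the bare cylinder $\sC$ enters: the expanded stratum $E_i$ was cut out inside $C_i$ by the extra inequality $\ssg_i(x)\ge 0$ with $\ssg_i(x)=\lambda_i(x)+r_id_i-s_i$, precisely so that near $H_0$ the directed expansion looks like an open half-ball-bundle whose boundary conormal points in the $+\lambda_i$ direction. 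Concretely, near $H_0$ the sheaf $\cF_i$ is (up to local systems and the microlocal cut-off) built from $i_!k_U$ for $U$ a half-space/quadrant governed by $\{\ssf_i=0,\ \ssg_i\ge 0\}$, whose singular support is the outward conormal cone; pushing forward along $\pi_0$ then retains only the $\lambda_i$-component because $\pi_0$ is non-characteristic for everything except that one codirection.

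\textbf{The orthogonality input and the main obstacle.} The subtle part — and what I expect to be the main obstacle — is ruling out \emph{cancellation-free interaction} between the different codirection components and, within a single component, ruling out the $-\lambda_i$ coray appearing in $\pi_{0*}j[\le\epsilon]^!\cF$. For the first, the tool is Lemma~\ref{lem orthog of codirs}: the vanishing of $\Hom_{\Sh(T)}(\cF_1,\cF_2)$ for $\cF_1\in\Sh_{\Lambda_1}(T)^0_*$, $\cF_2\in\Sh_{\Lambda_2}(T)^0_*$ lets me split the specialized sheaf into pieces that do not talk to each other along $H_0$, so the conormal covectors of $\pi_{0*}j[\le\epsilon]^!\cF$ really are the union of those of the individual $\fP_i(\cF)$. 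For the second, I use the single-codirection analysis of \S5.2.2: decompose $\cF_i\simeq\Cone(\cF^0_{i,*}[-1]\to\pi^*\pi_*\cF_i)$, where $\cF^0_{i,*}\in\Sh_{\Lambda_i}(T)^0_*$; the local system piece $\pi^*\pi_*\cF_i$ contributes nothing to singular support, and for $\cF^0_{i,*}$ the vanishing~\eqref{eqn vanish} says $\ell^*(\cF^0_{i,*})|_{\R_{>0}}\simeq 0$, which forces the covector along $H_0$ to be $+\lambda_i$ and not $-\lambda_i$. Chasing these through $j[\le\epsilon]^!$ and $\pi_{0*}$ — both of which interact predictably with singular support since $\pi_0$ is a proper non-characteristic fibration and $j[\le\epsilon]^!$ is restriction to a sublevel set of the tubular distance, which by good position of $\sE$ is non-characteristic away from the $\lambda_i$ directions — yields $\ssupp(i_0^!r_*\cF)\subset\Lambda|_{H_0}$. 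The genuine difficulty is bookkeeping: making sure that when $H_0$ is not a minimal stratum the truncation inequalities $\ssf_a\ge 0$ for $a<i_0$ and the corner structure of $E_i$ do not introduce extra conormal directions, which requires invoking the sequentially-small choice of constants and the fact (from the proof of Proposition~\ref{prop: gen rect hyp}) that $\ssg_i$ stays strictly positive on the relevant intersections, so that only the genuine leaf/boundary faces contribute codirections, and those are exactly the $\lambda_i$. Once this local claim is established at $H_0$, combining with the inductive conclusion on $M\setminus H_0$ gives $\ssupp(r_*\cF)\subset\Lambda$ globally, completing the proof.
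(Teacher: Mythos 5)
Your outer framework is right and matches the paper: induct on the number of strata, handle the complement $M\setminus H_0$ via the inductive hypothesis and Lemma~\ref{lem inverse to restrict}, use Prop.~\ref{prop cylindrical constructibile} to reduce to $\ssupp(r_*\cF)|_{H_0}\subset S^*_{H_0}M$, so that only the estimate along the closed stratum $H_0$ remains. But the local argument you give at $H_0$ has a genuine gap. The microlocal cut-off decomposition $\cF\simeq\Cone(\oplus_i\fP_i(\cF)\to\cL)$, the subcategories $\Sh_{\Lambda_i}(T)^0_*$, the vanishing~\eqref{eqn vanish}, and Lemma~\ref{lem orthog of codirs} are all set up for sheaves with singular support in $\Lambda$, whose restriction over $H_0$ is the disjoint union of the sections $\lambda_i(H_0)$. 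Your $\cF$ lies in $\Sh_{\Lambda_\sE}(M)$, and $\Lambda_\sE$ is \emph{disjoint} from $S^*_{H_0}M$ near $H_0$ (the expanded strata $E_i$ sit at positive radii $r_i$), so there is no decomposition of $\Lambda_\sE|_{H_0}$ into codirections to cut off against, and no justification is offered for transporting the $\fP_i$ to $\Sh_{\Lambda_\sE}$. The further claim that near $H_0$ each piece is ``built from $i_!k_U$ for $U$ a half-space'' is not something you can assume for an arbitrary object of $\Sh_{\Lambda_\sE}(M)$; singular support constrains but does not determine the sheaf. Finally, Lemma~\ref{lem orthog of codirs} is a $\Hom$-vanishing statement (used later for full faithfulness of $r_*$), and it does not by itself yield a singular support bound on a pushforward.

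What the paper actually does at $H_0$ is test each covector directly: for $\xi\not\in\Lambda|_x$ it shows $\Gamma_{\xi\ge 0}(B(\delta),r_*\cF)\simeq 0$, unpacks this (using $\Pi_0|_{T_0[\le\epsilon]}=\pi_0$) to the vanishing of the relative cohomology $H^*(B(2\epsilon),B(\epsilon,2\epsilon)^-;\cF)$ of $\cF$ itself on an explicit pair in the normal slice, and then deforms the subspace of the pair through a chain of non-characteristic isotopies: first retract the inner radius to $0$; then absorb the ball $B(r)$ for $r_{k-1}<r<r_k$, which is non-characteristic by an \emph{inner induction on the number of codirections} $k=|\Lambda|_x|$ exploiting the ordering $r_1<\cdots<r_k<\epsilon$ of the radii; then rotate $\xi$ along the short arc to $-\lambda_k$; and finally observe that a linear function lifting $-\lambda_k$ is non-characteristic for $\Lambda_\sE$, so the relative cohomology vanishes. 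This inner induction on $\Lambda|_x$ and the use of the sequential ordering of the radii is the mechanism your sketch is missing; without it (or an equivalent substitute) the argument does not close.
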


\begin{proof}

By induction on the number of strata of $H\subset M$. 

The base case $H=\emptyset$ is immediate: $r$ is the identity map of $M$.

Suppose given a closed stratum $i_0:H_{0}\to M$.

Set $M[>\epsilon] = M\setminus T_{ 0}[\leq \epsilon]$, $H[>\epsilon]= H\cap  M[>\epsilon] $,
$\Lambda[>\epsilon] = \Lambda|_{ M[>\epsilon] }$.
The Whitney stratification,  system of control data and family of lines for $H\subset M$ immediately provide the same for
$H[>\epsilon]\subset M[>\epsilon] $.
Denote 
the resulting almost retraction by $r[>\epsilon]:M[>\epsilon] \to M[>\epsilon]$ and note that  $r[>\epsilon] = r|_{M[>\epsilon]}$.
Additionally, starting with these data, our constructions give a directed expansion $\sE[>\epsilon] \subset 
M[>\epsilon]$ with positive coray bundle $\Lambda_{\sE[>\epsilon]} \subset S^*M[>\epsilon]$ such that 
$\sE[>\epsilon]  = \sE\cap M[>\epsilon] $, $\Lambda_{\sE[>\epsilon]} = \Lambda_\sE|_{M[>\epsilon]}$.

By induction, 
since $H[>\epsilon]$ has fewer strata than $H$,
pushforward  induces a functor 
$$
\xymatrix{
r[>\epsilon]_*:\Sh_{\Lambda_{\sE[>\epsilon]} }(M[>\epsilon]) \ar[r] & \Sh_{\Lambda[>\epsilon]} (M[>\epsilon])
}
$$
By the second assertion of Lemma~\ref{lem inverse to restrict}, for $\cF\in \Sh_{\Lambda_\sE}(M)$, we then have
$$
\xymatrix{
(r_*\cF)|_{M \setminus H_0} \in \Sh_{\Lambda}(M\setminus H_0)
}
$$

Therefore, for $\cF \in \Sh_{\Lambda_\sE}(M)$, it only remains to show 
$$
\xymatrix{
\ssupp(r_*\cF)|_{H_{ 0}} \subset \Lambda|_{H_{ 0}}
}
$$

By construction, we have an inclusion of directed hypersurfaces $\sE \subset \sC$ and positive
coray bundles $\Lambda_\sE\subset \Lambda_{\sC}$. Thus we have  
$\Sh_{\Lambda_\sE}(M) \subset \Sh_{\Lambda_\sC}(M)$,
hence 
thanks to Prop.~\ref{prop cylindrical constructibile}, we have $r_*\cF\in \Sh_\cS(M)$,
so in particular 
$$
\xymatrix{
\ssupp(r_*\cF)|_{H_{ 0}} \subset S^*_{H_{ 0}} M
}
$$
Hence for each $x\in H_{ 0}$, we may restrict to the normal slice
$$
\xymatrix{
  \pi_{ 0}^{-1}(x) \subset T_0
}$$

Without loss of generality, we may assume $ \pi_{ 0}^{-1}(x) = \R^n$, $x = 0$, and $\rho_0|_{ \pi_{ 0}^{-1}(x)}$ is
the standard Euclidean inner product. The positive corays $\Lambda|_x = \{\lambda_1, \ldots, \lambda_k\}$ are represented by pairing with nonzero vectors $v_1, \ldots, v_k \in \R^n$.

For $\xi\not \in \Lambda|_x$,
we seek a small $\delta>0$ so that
$$
\xymatrix{
\Gamma_{\xi\geq 0}(B(\delta), r_*\cF)\simeq 0
}
$$
where $B(\delta)\subset \R^n$ is the open ball of radius $\delta>0$ around $0$.

For any $t\in [0, 2\epsilon]$, introduce the subspace
$B(t, 2\epsilon)^-\subset B(2\epsilon)$,  where $t\leq \rho \leq 2\epsilon$, $\xi<0$,
so in particular $B(0, 2\epsilon)^- = B(2\epsilon)^-$.
Unpacking the constructions, we 
we seek that
$$
\xymatrix{
H^*(B(2\epsilon), B(\epsilon, 2\epsilon)^-; \cF)\simeq 0
}
$$
where $\epsilon>0$ is the original constant selected once and for all.

We will proceed by induction on the finite set $\Lambda|_x$.

The arguments in the base case, when $\Lambda|_x =\{\xi_1\}$ is a single codirection, and in the general inductive step will be similar.  We will show by a series of non-characteristic moves that we can change the subspace $B(\epsilon, 2\epsilon)^-$ of the pair
 to be the entire ambient space $B(2\epsilon)$.

\medskip

{\em (Step 1)} The natural map is an isomorphism
$$
\xymatrix{
H^*(B(2\epsilon), B(\epsilon, 2\epsilon)^-; \cF)\ar[r]^-\sim &
H^*(B(2\epsilon), B(2\epsilon)^-; \cF)
}
$$
since the isotopy of pairs
$$
\xymatrix{
(B(2\epsilon), B(t, 2\epsilon)^-)
& 
t\in [0, \epsilon]
}
$$
is non-characteristic with respect to $\Lambda_\sE$.

\medskip

{\em (Step 2)} 
Suppose $\Lambda|_x =\{\lambda_1, \ldots, \lambda_k\}$ with corresponding radii constants $r_1 < \cdots<r_k < \epsilon$.

For any $t\in [0, 2\epsilon]$, introduce the subspace
$U(t, 2\epsilon)^-\subset B(2\epsilon)$,  where either $\rho\leq t$ or $t\leq \rho \leq 2\epsilon$, $\xi<0$, so
the union 
$$\xymatrix{
U(t, 2\epsilon)^- = B(t)\cup B(t, 2\epsilon)^-
}
$$ In particular, we have $U(0, 2\epsilon)^- = B(2\epsilon)^-$
and $U(2\epsilon, 2\epsilon)^- = B(2\epsilon)$.

Fix $r$ such that $r_{k-1} < r< r_k$ (when $k=1$, fix  $r$ such that $0 < r< r_k$).
We claim the natural map is an isomorphism
$$
\xymatrix{
H^*(B(2\epsilon), B(\epsilon, 2\epsilon)^-; \cF)\ar[r]^-\sim &
H^*(B(2\epsilon), U(r
, 2\epsilon)^-; \cF)
}
$$

If $k=1$, then the assertion is clear since 
the intersection of the isotopy of pairs
$$
\xymatrix{
(B(2\epsilon), U(t, 2\epsilon)^-)
& 
t\in [0, r]
}
$$
is non-characteristic with respect to $\Lambda_\sE$, since in fact it has constant intersection
with $\sE$.

If $k>1$, then 
the claim follows by induction: in the locus $r< \rho \leq 2\epsilon $, the pairs are precisely the same, and in the locus 
$ \rho\leq r$, the pairs and $\Lambda_\sE$ are precisely what one encounters for 
$\Lambda|_x =\{\xi_1, \ldots, \xi_{k-1}\}$.

\medskip

{\em (Step 3)} 
Continuing with the notation of {\em (Step 2)}, recall that $\xi_ 0 \not = \lambda_k$. For $\theta\in [0, 1]$, let $\xi_\theta$ be the  short arc (not passing through $\lambda_k$) of the great circle of codirections passing through $\xi_0 = \xi$, $\xi_1 = -\lambda_k$.

For any $\theta \in [0, 1]$, introduce the subspace
$U(t, 2\epsilon)_\theta^-\subset B(2\epsilon)$,  where either $\rho\leq t$ or $t\leq \rho \leq 2\epsilon$, $\xi_\theta<0$, so
in particular 
$
U(t, 2\epsilon)_0^- = U(t, 2\epsilon)^-.
$

We claim that the isotopy of pairs
$$
\xymatrix{
(B(2\epsilon), U(r, 2\epsilon)_\theta^-)
& 
\theta\in [0, 1]
}
$$
is non-characteristic with respect to $\Lambda_\sE$. 

First, note that we can excise the locus $\rho < r$.
The only remaining issue is when $\xi_\theta$ passes through or near to some $\lambda_i$, for $i< k$. But here we have two transverse functions $\rho$ and $\theta$ and $\Lambda_\sE$ is the disjoint union of something $\rho$-characteristic and something $\theta$-characteristic. In general, such a situation is non-characteristic.

\medskip

{\em (Step 4)} 
Finally, observe that any linear function lifting $\xi_1 = -\lambda_k$ is non-characteristic with respect to $\Lambda_\sE$
on the pair 
$$
\xymatrix{
(B(2\epsilon), U(r, 2\epsilon)_1^-)
& 
\theta\in [0, 1]
}
$$
Thus the relative cohomology vanishes
$$
\xymatrix{
H^*(B(2\epsilon), U(r, 2\epsilon)_1^-; \cF)\simeq 0
}
$$
This completes the proof of the theorem.
\end{proof}


\subsection{Inverse functor}

Our aim here is to show that the functor 
$$
\xymatrix{
r_*:\Sh_{\Lambda_\sE}(M) \ar[r] & \Sh_{\Lambda}(M)
}
$$
of Theorem~\ref{thm constructible} is in fact an equivalence.

We will first establish an inductive version of the assertion.
Suppose given a closed stratum $i_0:H_{0}\to M$. 
Suppose $\Lambda|_{H_0} = \lambda({H_0})$ for a single codirection $\lambda:{H_0}\to S^*_{H_0} M$. 

Recall the constant $r_0>0$, indexed by the minimum $0\in I$, appearing in the construction of the directed expansion $\sE\subset M$
with positive coray bundle $\Lambda_\sE\subset S^*M$.

Set $M[>r_0] = M \setminus T_0[\leq r_0]$, $H[>r_0] = H \cap M[>r_0] $, $\Lambda[>r_0] = \Lambda_{M[>r_0]}$.
The Whitney stratification,  system of control data and family of lines for $H\subset M$ immediately provide the same for
$H[>r_0]\subset M[>r_0] $.
Denote 
the resulting almost retraction by $r[>r_0]:M[>r_0] \to M[>r_0]$.
The expansion data for  $H\subset M$ immediately restricts to expansion data for
 $H[>r_0]\subset M[>r_0] $.
Denote the resulting directed expansion by $\sE[>r_0]  = \sE \cap M[>r_0] $
with positive coray bundle 
$\Lambda[>r_0] = \Lambda|_{M[>r_0] }$.
Note as well that 
$$
\xymatrix{
\sE \cap T_0[\leq r_0] = \sE \cap S_0[r_0] =  E_0
}
$$
where $E_0 \subset T_0$ is the expanded stratum of $H_0$.

\begin{prop}\label{prop single codir}
Let $H_0\subset H$ be a closed stratum.

Suppose $\Lambda|_{H_0} = \lambda({H_0})$ for a single codirection $\lambda:{H_0}\to S^*_{H_0} M$. 

Suppose the  functor
$$
\xymatrix{
r[>r_0]_*:\Sh_{\Lambda_{\sE[>r_0]}}(M[>r_0]) \ar[r] & \Sh_{\Lambda[> r_0]}(M[>r_0])
}
$$
is an equivalence. 

Then the functor 
$$
\xymatrix{
r_*:\Sh_{\Lambda_\sE}(M) \ar[r]^-\sim & \Sh_{\Lambda}(M)
}
$$
is also an equivalence.
\end{prop}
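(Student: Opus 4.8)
The plan is to deduce the global equivalence from the inductive hypothesis on $M[>r_0]$ together with a localized analysis near the closed stratum $H_0$. First I would set up the recollement-type decompositions on both sides. On the source, for $\cF \in \Sh_{\Lambda_\sE}(M)$, the closed stratum $H_0$ has tubular neighborhood $T_0$ and the expanded stratum $E_0 = \sE \cap S_0[r_0]$ is a closed ball bundle over $H_0$; away from $T_0[\leq r_0]$ one is in the situation governed by the hypothesis on $M[>r_0]$. On the target, since $\Lambda|_{H_0} = \lambda(H_0)$ is a single codirection, I would invoke the machinery of Sect.~\ref{s inv}: the morphism $\gamma: \pi_0^*\pi_{0*}\cF \to \cF$ and the full subcategory $\Sh_\Lambda(M)_*^0$ of sheaves with $\pi_{0*}\cF \simeq 0$, giving the functorial triangle presenting $\cF$ as a cone of $\cF_*^0[-1] \to \pi_0^*\pi_{0*}\cF$. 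The strategy is to match these triangles across $r_*$: the ``local system part'' $\pi_0^*\pi_{0*}\cF$ is handled by Lemma~\ref{lem spec of loc sys} (pushforward along $r$ is the identity on local systems), and the ``vanishing part'' $\cF_*^0$ is handled by the inductive hypothesis applied to the restriction to $M[>r_0]$, since a sheaf in $\Sh_\Lambda(M)_*^0$ is determined by its restriction to the complement of $H_0$ via $j_{Y!}j_Y^!$.

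Concretely, the key steps in order: (1) Use Lemma~\ref{lem inverse to restrict} to identify $\Sh_{\Lambda_\sE}(M \setminus H_0) \simeq \Sh_{\Lambda_\sE}(M \setminus T_0[\leq \epsilon])$ and similarly on the target, so that the inductive equivalence $r[>r_0]_*$ gives an equivalence on the ``open part.'' (2) Construct a candidate inverse functor $\Sh_\Lambda(M) \to \Sh_{\Lambda_\sE}(M)$: on the open part use the inverse of $r[>r_0]_*$ combined with $\Pi_{0*}$, and glue along the boundary $S_0[r_0]$ using the local model for the expanded stratum --- here the coorientation data of $\sE$ near $E_0$ (governed by the conormal section $\lambda_0$ and the constants $d_0, r_0, s_0$) is exactly what is needed to lift $\lambda$-cooriented sheaves near $H_0$ back across the expansion. (3) Check that the unit and counit of the resulting adjunction are equivalences by reducing, via the cone presentations above, to the two already-understood pieces: on $\pi_0^*\pi_{0*}\cF$ apply Lemma~\ref{lem spec of loc sys}, and on $\cF_*^0$ apply step (1) plus the inductive hypothesis, using that $r_*$ commutes with the relevant open restriction by Lemma~\ref{lem inductive constructibility}. (4) Verify compatibility of the gluing with the microlocal cut-off functors $\fP_i$ so that no singular support outside $\Lambda$ (resp. $\Lambda_\sE$) is introduced --- Theorem~\ref{thm constructible} already guarantees $r_*$ lands in $\Sh_\Lambda(M)$, so the content is fully faithfulness and essential surjectivity.

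The main obstacle I expect is step (2)--(3): precisely controlling what $r_*$ does to sheaves supported microlocally along the ``new'' piece of $\Lambda_\sE$ lying over the boundary sphere $S_0[r_0]$ --- that is, the part of the expansion that is genuinely created by passing to the expanded stratum $E_0$ rather than inherited from $M[>r_0]$. One must show that $r_*$ collapses exactly this boundary contribution onto the single codirection $\lambda$ over $H_0$ and nothing more, which amounts to a careful local computation of relative cohomology in the normal slice $\pi_0^{-1}(x) \cong \R^n$, in the spirit of the non-characteristic isotopy arguments (Steps 1--4) in the proof of Theorem~\ref{thm constructible}. The single-codirection hypothesis is what makes this tractable: it rules out the orthogonality complications of Lemma~\ref{lem orthog of codirs}, so that the normal-slice analysis reduces to a one-dimensional picture along a path $\ell$ transverse to $H_0$, where the vanishing~\eqref{eqn vanish} can be applied directly. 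Once the local inverse is pinned down, assembling it into a global functor and checking the unit/counit should be formal given Lemmas~\ref{lem spec of loc sys}, \ref{lem inverse to restrict}, and~\ref{lem inductive constructibility}.
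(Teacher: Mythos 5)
Your overall architecture is the paper's: reduce to $M=T_0$, split a sheaf into its local-system part plus a ``vanishing'' part via the canonical triangle, handle local systems by Lemma~\ref{lem spec of loc sys}, construct an explicit inverse on the vanishing part by applying the inductive inverse on $M[>r_0]$ and extending across the sphere $S_0[r_0]$, and then verify the unit and counit by matching the two triangles. The paper's proof does exactly this, with the inverse given by the concrete formula $s(\cF)=j[>r_0]_{*}\, s[>r_0](\cF|_{T_0[>r_0]})$ on the vanishing subcategory and the identity on $\Loc(T_0)$.

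There is, however, one concrete point where your sketch as written would fail. You set up the $*$-version of the decomposition ($\gamma:\pi_0^*\pi_{0*}\cF\to\cF$, the subcategory $\Sh_\Lambda(M)_*^0$ with $i_0^*\cF\simeq 0$, recovery via $j_{0!}j_0^!$), which pairs with a $!$-extension of the inverse across $S_0[r_0]$. But the expanded stratum $E_0$ carries the \emph{outward} coorientation, so $\Lambda_\sE|_{S_0[r_0]}\subset \R_{\geq 0}\langle d\rho_0\rangle$; with the paper's sign conventions a $!$-extension from the exterior $\{\rho_0>r_0\}$ produces singular support $-d\rho_0$ along the sphere, which does not lie in $\Lambda_\sE$, and the candidate inverse would not land in $\Sh_{\Lambda_\sE}(M)$. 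The correct choice is the dual one: the $!$-vanishing subcategory $\Sh_\Lambda(T_0)^0_!$ (i.e.\ $i_0^!\cF\simeq 0$, equivalently $\cF\simeq j_{0*}j_0^*\cF$) together with the $*$-pushforward $j[>r_0]_*$, whose boundary singular support is $+d\rho_0$ as required. Relatedly, your verification step leaves implicit the two facts that make the triangles actually glue: the relative-cohomology vanishing $\Gamma_{T_0[\leq\epsilon]}(T_0,s(\cF))\simeq 0$ (showing $r_*s(\cF)$ again lies in the vanishing subcategory, using that $d\rho_0$ is disjoint from $\Lambda_{\sE[>r_0]}$), and the orthogonality $\Hom(s(\cF),\cL)\simeq 0$ together with $\Hom(\cL,\cF)\simeq\Hom(\cL,s(\cF))$ for $\cL\in\Loc(T_0)$, which is what lets you extend $s$ from the vanishing subcategory to all of $\Sh_\Lambda(T_0)$. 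With the duality choice corrected, these are exactly the computations your ``main obstacle'' paragraph anticipates, and the rest goes through as you describe.
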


\begin{proof}
 We will construct an explicit inverse functor denoted by
 $$
\xymatrix{
s: \Sh_{\Lambda}(M)\ar[r] & \Sh_{\Lambda_\sE}(M) 
}
$$
By our hypotheses, there exists an inverse
$$
\xymatrix{
s[>r_0]: \Sh_{\Lambda[>r_0]}(M[>r_0])\ar[r]^-\sim & \Sh_{\Lambda_{\sE[>r_0]}}(M[>r_0]) 
}
$$

Observe that it suffices to prove the assertion for $M =T_0$. 

 Recall $\Sh_{\Lambda}(T_0)^0_!\subset \Sh_{\Lambda}(T_0)$ denotes the full subcategory
of $\cF \in \Sh_{\Lambda}(T_0)$ with $ \pi_{0!}\cF\simeq 0$,
or equivalently $i_0^!\cF \simeq 0$, or again equivalently, the canonical map 
$
\cF\to j_{0*}j_0^* \cF
$
is an isomorphism.
More generally, recall  for  $\cF \in \Sh_\Lambda(T_0)$, the functorial presentation 
$$
\xymatrix{
\cF\simeq  \Cone(\pi_{0}^!\pi_{0!}\cF\ar[r] & \cF^0_!)[-1]
}
$$
where $\cF^0_!\in \Sh_{\Lambda}(T_0)^0_!$ is the cone of of the canonical morphism
$ \cF\to  \pi_{0}^!\pi_{0!}\cF$. Note as well that
$\pi_{0}^!\pi_{0!}\cF \in \Loc(T_0)$.

By Lemma~\ref{lem spec of loc sys}, on the full subcategory $\Loc(T_0) \subset \Sh_{\Lambda}(T_0)$, we may set
the inverse  $s$ to be the identity.
Now we will  construct the inverse on the full subcategory
 $\Sh_{\Lambda}(T_0)^0_!\subset \Sh_{\Lambda}(T_0)$ as a composition of several functors.
To do so, let us walk back through some steps in  the construction of  the directed hypersurface $\sE$
with positive coray bundle $\Lambda_\sE$.

 Introduce 
the open inclusion 
$$
\xymatrix{
j[>r_0]:T_0[>r_0] \ar@{^(->}[r] &  T_0
}
$$
%
%
%
 
%

For $\cF\in \Sh_{\Lambda}(T)^0_!$, define the candidate inverse to be the functorial composition
  $$
\xymatrix{
s(\cF) = j[>r_0]_{*} s[>r_0](\cF|_{T_0[>r_0]}) \in  \Sh(T_0) 
} $$

 \begin{claim} 
 For $\cF\in \Sh_{\Lambda}(T_0)_!^0$, we have $\ssupp(s(\cF)) \in \Lambda_\sE$. 
\end{claim}

\begin{proof}
Over $T_0[>r_0]$, the assertion is evident  by construction.

Over $T_0[<  r_0]$, we have $s(\cF) = 0$ by definition.

Along $S_0[r_0]$, we have 
$$
\xymatrix{
\ssupp(s(\cF)) \subset \R_{\geq 0}\langle  d\rho_0\rangle
}
$$ 
thanks to the fact that 
$$
\xymatrix{
\Lambda_\sE|_{S_0(r)} \subset  \R_{\geq 0}\langle  d\rho_0\rangle
}
$$
and the behavior of  singular support under $*$-pushforwards~\cite{SV}.

Thus it remains to see that 
$$
\xymatrix{
\ssupp(s(\cF))|_{S[r_0] \setminus E_0} = \emptyset
}
$$
In fact, we have that 
$$
\xymatrix{
s(\cF)|_{S[r_0] \setminus E_0} = 0
}
$$
To see this, recall that
 $\cF\in \Sh_{\Lambda}(T_0)^0_!$ implies  by equation \eqref{eqn vanish} that $\cF$ vanishes on the $(\lambda\ll 0)$-component of the complement $T_0 \setminus H$. Thus $s[>r_0](\cF|_{T_0[>r_0]})$ vanishes
 on the $(\lambda\ll 0)$-component of the complement $T_0[>r_0] \setminus \sE[>r_0]$.
 Since $S[r_0] \setminus E_0$ is in the closure of this component, we obtain the asserted vanishing.
\end{proof}

The claim confirms we have a well-defined functor $s:\Sh_{\Lambda}(T_0)_!^0 \to \Sh_{\Lambda_\sE}(T_0)$.

 \begin{claim} $r_*\circ s \simeq \id$.
 \end{claim}
 
 \begin{proof}
Recall that $\epsilon > r_0$,
and that $r^{-1}(T_0[>0]) = T_0[>\epsilon]$.

Thus by induction, we have a canonical isomorphism
  $$
\xymatrix{
(r_*s(\cF))|_{T[>0]}  \simeq \cF|_{T[>0]}
} $$

Recall that $\cF\in \Sh_{\Lambda}(T_0)^0_!$ implies the canonical map $ \cF\to  j_{0*}j_0^* \cF$
is an isomorphism. Therefore it suffices to show that $r_*s(\cF) \in \Sh_{\Lambda}(T_0)^0_!$,
or in other words that $i_0^!r_*s(\cF) \simeq 0$.

Working locally in $H_0$, by base change, it suffices to show that
$$
\xymatrix{
\Gamma_{T_0[\leq \epsilon]}(T_0, s(\cF)) \simeq  0
}
$$
Unwinding the definitions, we seek
$$
\xymatrix{
\Gamma_{T_0[>r_0] \cap T_0[\leq \epsilon]}( T_0, s[>r_0](\cF|_{T_0[>r_0]})) \simeq 0   
}
$$ 

We have seen that $\ssupp(s[r_0>0](\cF|_{T_0[>r_0]})) \subset \Lambda_{\sE[>r_0]}$. Since $d \rho_0$ is disjoint from  $\Lambda_{\sE[>r_0]}$,  the above relative cohomology vanishes.
\end{proof}

 \begin{claim} 
 For $\cL\in \Loc(T_0)$, $\cF\in \Sh_{\Lambda}(T_0)_!^0$, we have canonically
 $$
 \xymatrix{
 \Hom_{\Sh(T_0)}(s(\cF) , \cL) \simeq 0
 &
 \Hom_{\Sh(T_0)}(\cL, \cF) \simeq 
  \Hom_{\Sh(T_0)}(\cL, s(\cF)) 
}
 $$ 
 \end{claim}

\begin{proof}
We may work locally in $H_0$, so in particular may assume $\cL$ is constant.

For the first assertion, by duality, it suffices to show 
$$
\xymatrix{
\Gamma(T_0, \D(s(\cF)))\simeq 0
}
$$
Unwinding the definitions, we seek
$$
\xymatrix{
\Gamma( T_0, j[>r_0]_!\D(s[>r_0](\cF|_{T_0[>r_0]}))) \simeq 0   
}
$$ 
We have seen that $\ssupp(s[>r_0](\cF|_{T_0[>r_0]})) \subset \Lambda_{\sE[>r_0]}$. Since $d \rho_0$ is disjoint from  $-\Lambda_{\sE[>r_0]}$,  the above relative cohomology vanishes.

For the second assertion, 
 it suffices to show 
$$
\xymatrix{
\Gamma(T_0, \cF) \simeq \Gamma(T_0, s(\cF))
}
$$
But by the previous claim, we have  
$$
\xymatrix{
\Gamma(T_0, \cF) \simeq  \Gamma(T_0, r_*s(\cF)) \simeq \Gamma(T_0, s(\cF)) 
}
$$
\end{proof}

The claim confirms the functor extends $s:\Sh_{\Lambda}(T_0) \to \Sh_{\Lambda_\sE}(T_0)$.

 \begin{claim} $s \circ r_*  \simeq \id$.
 \end{claim}

 \begin{proof}
Thanks to what we have seen, it suffices to check the assertion on 
 the full subcategory $\Sh_{\Lambda_\sE}(T_0)^0_! \subset \Sh_{\Lambda_\sE}(T_0)$
 given by objects  $\cF \in \Sh_{\Lambda_\sE}(T_0)$ with $ \pi_{0!}\cF\simeq 0$.

Since $d \rho_0$ is disjoint from  $\Lambda_{\sE[>r_0]}$,  we have 
$ \pi_{0!}\cF\simeq 0$ if and only if 
 the canonical map $ \cF\to  j[>r_0]_{*}j[>r_0]^* \cF$
is an isomorphism.
But we have seen that then $r_*\cF \in \Sh_{\Lambda}(T_0)^0_!$.
By induction, we then have that 
$$
\xymatrix{
\cF|_{T_0[>r_0]} \simeq s[>r_0] ((r_*\cF)|_{T_0[>r_0]})
}
$$
and so by the definition of $s$, we obtain the assertion.
\end{proof}

 This concludes the proof of the proposition.
\end{proof}

Now we will use the previous proposition
to establish our main goal.

\begin{thm}\label{thm main result}
Pushforward along the almost retraction induces an equivalence
$$
\xymatrix{
r_*:\Sh_{\Lambda_\sE}(M) \ar[r]^-\sim & \Sh_{\Lambda}(M)
}
$$
\end{thm}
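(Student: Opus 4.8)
The plan is to prove Theorem~\ref{thm main result} by induction on the number of strata of $H\subset M$, reducing the general case to the single-codirection situation handled by Proposition~\ref{prop single codir}. First I would dispose of the base case: when $H=\emptyset$, the almost retraction $r$ is the identity and there is nothing to prove. For the inductive step, fix a closed stratum $i_0:H_0\to M$. By the constructions of Sect.~\ref{ss: strat}, the restriction of $\Lambda$ over $H_0$ decomposes as a finite disjoint union $\Lambda|_{H_0} = \coprod_{i\in F_{\ul 0}} \lambda_i(H_0)$ of single codirections, one for each $i$ lying over $\ul 0$; the microlocal cut-off functors $\fP_i$ and the presentation of any $\cF\in\Sh_\Lambda(M)$ as an iterated cone built from the $\fP_i(\cF)$ and a local system (recalled in the subsection on microlocal projections) let me reduce, up to these cones, to the case where $\Lambda|_{H_0}$ consists of a single codirection $\lambda$.

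Once in that case, the inductive hypothesis applied to $H[>r_0]\subset M[>r_0]$ — which has strictly fewer strata — gives that $r[>r_0]_*:\Sh_{\Lambda_{\sE[>r_0]}}(M[>r_0]) \to \Sh_{\Lambda[>r_0]}(M[>r_0])$ is an equivalence, which is precisely the hypothesis of Proposition~\ref{prop single codir}. That proposition then upgrades the equivalence across the closed stratum $H_0$, producing the explicit inverse functor $s$ built from $j[>r_0]_*$ and $s[>r_0]$, together with the verified identities $r_*\circ s\simeq\id$ and $s\circ r_*\simeq\id$. Combined with Theorem~\ref{thm constructible}, which already tells us $r_*$ lands in $\Sh_\Lambda(M)$, this yields that $r_*$ is an equivalence over a neighborhood of $H_0$. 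The global statement follows by patching: over $M\setminus H_0$ we have the equivalence from the inductive hypothesis via the second assertion of Lemma~\ref{lem inverse to restrict} (restriction is an equivalence $\Sh_\Lambda(M\setminus H_0)\risom\Sh_\Lambda(M\setminus T_0[\leq\epsilon])$, with inverse $\Pi_{0*}$), and the compatibility of $r$ with $r[>\epsilon]$ recorded in Remark~\ref{rem induction} and Lemma~\ref{lem inductive constructibility} glues the local and global pictures.

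The main obstacle, and the reason the real content is in Proposition~\ref{prop single codir} and the two supporting lemmas on orthogonality of codirections (Lemma~\ref{lem orthog of codirs}), is controlling what happens when $\Lambda|_{H_0}$ has several codirections that are not mutually transverse, i.e.\ the reduction to the single-codirection case is not purely formal. One must check that the cones assembling $\cF$ from the $\fP_i(\cF)$ are respected by both $r_*$ and the candidate inverse $s$, and this rests on the vanishing of cross-terms $\Hom_{\Sh(T)}(\cF_1,\cF_2)$ for sheaves microsupported along distinct codirections — which Lemma~\ref{lem orthog of codirs} establishes by sliding one sheaf through a non-characteristic isotopy (rotating $\lambda_2$ to $-\lambda_1$, then translating) until the supports become disjoint. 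A secondary technical point is verifying that the explicit inverse $s$, defined stratum-by-stratum as $j[>r_0]_*\,s[>r_0](-|_{T_0[>r_0]})$ on the subcategory $\Sh_\Lambda(T_0)_!^0$ and as the identity on local systems, is independent of the auxiliary orderings and extends coherently; here one invokes that the $\Pi_{\ul i}$ commute and that $\pi_0\Psi|_{T_0}=\pi_0$, exactly as used in the proof of Proposition~\ref{prop cylindrical constructibile}.

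Assembling these pieces, the proof of Theorem~\ref{thm main result} reads as follows: induct on the number of strata; in the inductive step pass to $M[>r_0]$ to obtain the hypothesis of Proposition~\ref{prop single codir} after reducing to a single codirection via the microlocal cut-off decomposition and the orthogonality of Lemma~\ref{lem orthog of codirs}; apply Proposition~\ref{prop single codir} to get the equivalence near $H_0$; and patch with the equivalence over $M\setminus H_0$ using Lemma~\ref{lem inverse to restrict}, Lemma~\ref{lem inductive constructibility}, and Theorem~\ref{thm constructible}. This exhibits $r_*:\Sh_{\Lambda_\sE}(M)\risom\Sh_\Lambda(M)$, which combined with Corollary~\ref{intro: cor}'s identification of $\mu\Sh$ via the quotient by local systems gives the non-characteristic invariance asserted in Theorem~\ref{thm: intro}.
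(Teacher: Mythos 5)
Your proposal is correct and follows essentially the same route as the paper: induction on the number of strata, reduction to a closed stratum and its tubular neighborhood, decomposition of sheaves via the microlocal cut-offs $\fP_i$ into single-codirection pieces handled by Proposition~\ref{prop single codir}, and the orthogonality of Lemma~\ref{lem orthog of codirs} to kill the cross-terms. The one detail you gloss is that Lemma~\ref{lem orthog of codirs} does not directly apply to sheaves supported on the expanded Legendrians $\Lambda_{\sE_i}$ (the isotopies are not obviously non-characteristic there); the paper first pushes forward by $\hat r_0 = \Pi_{\ul i_1}\cdots\Pi_{\ul i_N}$ and invokes the induction before running the isotopy argument.
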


\begin{proof}
By induction on the number of strata of $H$.

The base case $H = \emptyset$ is immediate: $r$ is the identity map of $M$.

It suffices to focus on a closed stratum $H_0\subset M$ with tubular neighborhood $T_0\subset M$
and in fact to assume $M= T_0$.

Recall the disjoint union decompositions
$$
\xymatrix{
\Lambda|_{H_0} =  \coprod_{i=1}^k \lambda_i(H_0) 
&
\Lambda|_{T_0} = \coprod_{i=1}^k \Lambda_i
}
$$
such that $\Lambda_i|_{H_0} = \lambda_i(H_0)$. The front projection of $\Lambda_i\subset S^*T_0$
is itself a directed hypersurface $H_i \subset T_0$ with positive coray bundle $\Lambda_i\subset S^*T_0$.

%
%
%
%
%

Recall  the functorial presentation of any $\cF \in \Sh_\Lambda(T_0)$ as a cone
$$
\xymatrix{
\cF \simeq \Cone( \oplus_{i = 1}^k \cF_i \ar[r] & \cL)
}
$$
where $\cF_i\in \Sh_{\Lambda_i}(T_0)$, and $\cL\in \Loc(T_0)$.


By Proposition~\ref{prop single codir},  
the restriction of $r_*$ to each full subcategory $ \Sh_{\Lambda_{\sE_i}}(T_0)  \subset \Sh_{\Lambda_\sE}(T_0)$
is an equivalence.
We also have the full inclusions $\Loc(T_0) \subset \Sh_{\Lambda_{\sE_i}}(T_0)$.
Thus in particular $r_*$ is essentially surjective.

It remains to check that for distinct  codirections $\lambda_1, \lambda_2:H_0\to \Lambda|_{H_0}$ with $\Lambda_1 =\lambda_1(H_0), \Lambda_2 = \lambda_2(H_0)$,
and $\cF_1 \in  \Sh_{\Lambda_{\sE_1}}(T_0)$,
$\cF_2 \in  \Sh_{\Lambda_{\sE_2}}(T_0)$,
 the functorial map 
$$
\xymatrix{
r_*:\Hom_{\Sh(T_0)}(\cF_1, \cF_2) \ar[r] & \Hom_{\Sh(T_0)}(r_*\cF_1, r_*\cF_2) 
}
$$
is an isomorphism. 

Since the assertion is clear when one sheaf is a local system, it suffices to check it when 
 $\pi_{0*}\cF_1 \simeq 0, \pi_{0*}\cF_2 \simeq 0$.
Note that $\pi_0 r = \pi_0$, so that then
$ \pi_{0*}r_*\cF_1 \simeq 0,  \pi_{0*}r_*\cF_2 \simeq 0$ as well.
Thus by Lemma~\ref{lem orthog of codirs}, we have
$$
\xymatrix{
 \Hom_{\Sh(T_0)}(r_*\cF_1, r_*\cF_2) \simeq 0
}
$$
so we are left to show
$$
\xymatrix{
\Hom_{\Sh(T_0)}(\cF_1, \cF_2) \simeq 0
}
$$

We will now appeal to  the proof of Lemma~\ref{lem orthog of codirs}. It should be possible to directly apply the  proof to $\cF_1, \cF_2$, except the isotopies involved are less clearly non-characteristic. To take care of this, let us note the following inductive simplification.
Recall that $r = \Pi_0   \hat r_0$ where $\hat r_0 = \Pi_{\ul i_1} \cdots  \Pi_{\ul i_N}$. Thus by induction, 
it suffices to show $$
\xymatrix{
\Hom_{\Sh(T_0)}(\hat r_{0 * }\cF_1, \hat r_{0 * }\cF_2) \simeq 0
}
$$

Now for the sheaves $\hat r_{0 * }\cF_1, \hat r_{0 * }\cF_2$, we can simply repeat the proof of Lemma~\ref{lem orthog of codirs}
to move $\hat r_{0 * }\cF_2$ through a non-characteristic isotopy to a position
where the vanishing is evident.
%
%
%
%
%
%
%
%
This concludes the proof of the theorem.
\end{proof}

%
%


\subsection{Microlocal sheaves}

Let us apply the preceding constructions to microlocal sheaves. 

Let $H\subset M$ be a direct hypersurface with positive coray bundle $\Lambda\subset S^*M$.
Let $\mu\Sh_{\Lambda}$ denote the dg category of microlocal sheaves supported along $\Lambda$.
It is the global sections of a sheaf of dg categories on $\Lambda$.

To understand $\mu\Sh_{\Lambda}$ concretely, let  $p\in \Lambda$ be a point, and
let $\mu\Sh_{\Lambda}|_p$ be the stalk of $\mu\Sh_{\Lambda}$.
Let
 $x\in H$ be the image of $p\in \Lambda$, assume $\Lambda|_x \subset \Lambda$ consists only of $p$,
 and that $M$ is itself a small ball around $x$. 
Then there is
the concrete realization as a quotient category 
$$
\xymatrix{
\mu\Sh_{\Lambda}|_p \simeq \Sh_\Lambda(M)/\Loc(M)
}
$$
There is also
the concrete realization as the full subcategory 
$$
\xymatrix{
\mu\Sh_{\Lambda}|_p \simeq \Sh_\Lambda(M)^0_!\subset\Sh_\Lambda(M)
}
$$
of objects with $\Gamma_c(M, \cF)\simeq 0$.

Now let $\sE\subset M$ be the directed expansion of $H\subset M$ with positive coray bundle $\Lambda_\sE$.
Let $\mu\Sh_{\Lambda_\sE}$ be the dg category of microlocal sheaves along $\Lambda_\sE$.

\begin{prop}\label{prop exp microlocalization}
 Let  $p\in \Lambda$ be a point with image
 $x\in H$. Assume $\Lambda|_x \subset \Lambda$ consists only of $p$,
 and that $M$ is itself a small ball around $x$. 

Then the natural functor is an equivalence
$$
\xymatrix{
\Sh_{\Lambda_\sE}(M)/\Loc(M) \ar[r]^-\sim &  \mu\Sh_{\Lambda_\sE}
}
$$
\end{prop}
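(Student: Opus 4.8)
The statement asserts that for the directed expansion $\sE \subset M$, when $M$ is a small ball around $x$ and $\Lambda|_x$ is concentrated at a single point $p$, the natural localization functor $\Sh_{\Lambda_\sE}(M)/\Loc(M) \to \mu\Sh_{\Lambda_\sE}$ is an equivalence. The key issue is that $\Lambda_\sE$ near $x$ is not supported over a single point of $M$ — the expansion algorithm has spread the germ of $\Lambda$ at $p$ out into a collection of arboreal strata living over the whole small ball, meeting the cosphere fibers in many points. So this is \emph{not} an instance of the trivial ``single-codirection'' description of $\mu\Sh$ recalled just above the proposition; one has to glue.

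\textbf{First step: reduce to a local-to-global argument.} By definition $\mu\Sh_{\Lambda_\sE}$ is the global sections of a sheaf of dg categories on $\Lambda_\sE$, and over any point $q \in \Lambda_\sE$ lying over a point $y \in \sE$ at which $\Lambda_\sE|_y$ consists only of $q$, the stalk is $\Sh_{\Lambda_\sE}(W)/\Loc(W)$ for $W$ a small ball around $y$. More generally, for an open $\Omega \subset \Lambda_\sE$ whose image $\pi(\Omega)$ together with a choice of small open $W \supset \pi(\Omega)$ satisfies a separatedness hypothesis (no Reeb chords, or simply: $W$ is a union of small balls each seeing only the relevant sheet), Kashiwara–Schapira's theory gives $\mu\Sh_{\Lambda_\sE}(\Omega) \simeq \Sh_{\Lambda_\sE}(W)/\Loc(W)$. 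The plan is therefore: cover $\Lambda_\sE$ by such ``good'' opens $\{\Omega_a\}$, for each of which the localization statement holds essentially by definition, and then check that the presheaf $\Omega \mapsto \Sh_{\Lambda_\sE}(\pi^{-1}(\Omega) \text{-realization})/\Loc$ already satisfies descent, so that its global sections compute $\mu\Sh_{\Lambda_\sE}$. This is the standard mechanism by which $\Sh_\Lambda(\text{ball})/\Loc(\text{ball})$ realizes $\mu\Sh$ whenever the ball is small enough that no spurious Reeb interactions occur.

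\textbf{Second step: verify the no-interaction hypothesis for $\sE$ inside $M$.} Here is where the geometry of the expansion algorithm enters. One must check that $M$, the original small ball around $x$, is small enough with respect to $\sE$ that $\Sh_{\Lambda_\sE}(M)$ — as opposed to sections over each little coordinate chart — already computes the global sections of the sheaf of microlocal categories on $\Lambda_\sE$. Concretely: $\sE$ is obtained from $H$ by replacing $H \cap B(r)$ with the expanded pieces built from tubular cylinders $C_i$ and the conormal sections $\lambda_i$; the arboreal singularities of $\sE$ sit near the multi-transverse intersections of the cylinders, and the ``univalent'' ends sit near $\partial E_i$. The point is that all of this happens within the fixed small ball $M$, and the coorientation $\sigma$ of $\sE$ (Theorem~\ref{thm dir exp is arb}) makes $\Lambda_\sE$ a positive coray bundle, so there are no closed positive Reeb chords of $\Lambda_\sE$ within $M$ once $M$ is taken small (which we may, since we only enlarged singularities in a $C^0$-small way — compare the Remark after Theorem~\ref{thm: intro}). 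Given this, the hypotheses of the Kashiwara–Schapira microlocalization theorem are met on $M$, and the proposition follows by the same argument that gives the analogous statement for $\Lambda$ itself.

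\textbf{Main obstacle.} The genuinely delicate point is the second step: confirming that the constants chosen sequentially small in the expansion algorithm (the $r_i$, $d_i$, $s_i$, and the smoothing $\delta$) can be, and have been, arranged so that $\sE \cap M$ has \emph{no} positive Reeb self-chords inside $M$ of length below the relevant scale — i.e.\ that the expansion is ``small'' not merely in the $C^0$ sense but in the sharper sense needed for $\Sh_{\Lambda_\sE}(M)/\Loc(M)$ to be the \emph{global} sections and not just a stalk-by-stalk approximation. This is essentially a repackaging of the estimates already used to prove Theorem~\ref{thm constructible} and Theorem~\ref{thm main result}: the non-characteristic isotopies of Steps~1–4 in the proof of Theorem~\ref{thm constructible} are exactly the tool that rules out such chords, so the proof of the proposition will invoke those same displacements, now read as a statement about the sheaf of microlocal categories rather than about pushforward along $r$. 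Once that is in place, everything else is the formal Kashiwara–Schapira dictionary between $\Sh_{\Lambda_\sE}(M)/\Loc(M)$ and global microlocal sheaves.
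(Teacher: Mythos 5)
Your overall shape is right — cover by small balls, identify the local pieces, and glue — and that is indeed how the paper begins (a finite cover $\{B_\kappa\}$ of a neighborhood of the expanded stratum $E_0$ of $x$, with $\mu\Sh_{\Lambda_\sE} \simeq \lim_{J} \Sh_{\Lambda_\sE}(B_J)/\Loc(B_J)$ using that $\sE$ deformation retracts onto $E_0$). But the proposal has a genuine gap exactly where you flag the "main obstacle," and the tool you reach for there is not the one that closes it. The real issue is not the absence of short Reeb self-chords of $\sE$ inside $M$, nor a rerun of the non-characteristic isotopies from Theorem~\ref{thm constructible} (those concern pushforward along $r$, not descent). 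The issue is how to compare the limit of the \emph{quotient} categories $\Sh_{\Lambda_\sE}(B_J)/\Loc(B_J)$ over the nerve of the cover with the single quotient $\Sh_{\Lambda_\sE}(M)/\Loc(M)$; a priori a diagram of Verdier quotients need not commute with limits, and "the standard mechanism" you invoke is precisely the thing that must be proved here.

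The paper's device is to rigidify each quotient by a canonical section: every class in $\Sh_{\Lambda_\sE}(B_J)/\Loc(B_J)$ has a distinguished representative $\cF^0_![-1]$, the cone on $\cF \to k_{B_J}\otimes \Gamma_c(B_J,\cF)$, characterized as the representative vanishing below $E_0$ (equivalently, lying in the full subcategory $\Sh_{\Lambda_\sE}(B_J)^0_!$ of objects with vanishing compactly supported sections). These representatives are strictly compatible under restriction, so the limit of quotients is identified with $\lim_J \Sh_{\Lambda_\sE}(B_J)^0_!$, and the same identification applied to $M$ itself gives $\Sh_{\Lambda_\sE}(M)/\Loc(M) \simeq \Sh_{\Lambda_\sE}(M)^0_! \simeq \lim_J \Sh_{\Lambda_\sE}(B_J)^0_!$. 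Without this (or an equivalent functorial splitting of the quotients), your Step 1 does not go through, and no smallness condition on the constants $r_i, d_i, s_i, \delta$ substitutes for it.
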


\begin{proof}
Regard $x\in H$ as a closed stratum. Let $E_0\subset \sE$ be its expanded stratum.

Choose an open cover $\{B_\kappa\}_{\kappa \in K}$ of a neighborhood of $E_0\subset M$ by a finite collection of small  balls $B_\kappa\subset M$ centered at points of $E_0$. Arrange so that their intersections $B_J = \cap_{\kappa \in J} B_\kappa$, for $J\subset K$, are also small  balls or empty.
Since $\sE$ deformation retracts to $E_0$, we have the  identification
$$
\xymatrix{
 \mu\Sh_{\Lambda_\sE} \simeq \lim_{J\subset K} \Sh_{\Lambda_\sE}(B_J)/\Loc(B_J)
}$$

Any object $\cF\in  \Sh_{\Lambda_\sE(B_J)}/\Loc(B_J)$ admits a canonical representative $\cF^0_![-1] \in  
  \Sh_{\Lambda_\sE}(B_J)^0_!$ defined by  the triangle 
$$
\xymatrix{
\cF\ar[r] & k_{B_J}\otimes \Gamma_c(B_J, \cF)  \ar[r] & \cF^0_!
}
$$
Observe that $\cF^0_! [-1]\in   \Sh_{\Lambda_\sE}(B_J)^0_!$ admits the alternative characterization as the canonical representative vanishing below $E_0$. Such representatives  are compatible and yield an identification
$$
\xymatrix{
 \mu\Sh_{\Lambda_\sE} \simeq \lim_{J\subset K} \Sh_{\Lambda_\sE}(B_J)^0_!
}$$

Similarly, we have parallel equivalences
$$
\xymatrix{
\Sh_{\Lambda_\sE}(M)/\Loc(M)\simeq  \Sh_{\Lambda_\sE}(M)^0_!
\simeq \lim_{J\subset K} \Sh_{\Lambda_\sE}(B_J)^0_!
}
$$

\end{proof}

\begin{corollary}
Pushforward along the almost retraction induces an equivalence
$$
\xymatrix{
r_*:\mu\Sh_{\Lambda_\sE} \ar[r]^-\sim & \mu\Sh_{\Lambda}
}
$$
\end{corollary}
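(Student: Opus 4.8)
The corollary is a formal consequence of Theorem~\ref{thm main result}, Proposition~\ref{prop exp microlocalization}, and Lemma~\ref{lem spec of loc sys}, organized as follows. Since $r:M\to M$ is homotopic to the identity, Lemma~\ref{lem spec of loc sys} shows that $r_*$ is canonically equivalent to the identity on $\Loc(M)$; in particular $r_*$ carries $\Loc(M)\subset\Sh_{\Lambda_\sE}(M)$ into $\Loc(M)\subset\Sh_\Lambda(M)$ and restricts there to an equivalence. As $r_*:\Sh_{\Lambda_\sE}(M)\to\Sh_\Lambda(M)$ is itself an equivalence by Theorem~\ref{thm main result}, it descends to an equivalence of Verdier quotients $r_*:\Sh_{\Lambda_\sE}(M)/\Loc(M)\risom\Sh_\Lambda(M)/\Loc(M)$. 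Combining with the identification $\Sh_{\Lambda_\sE}(M)/\Loc(M)\risom\mu\Sh_{\Lambda_\sE}$ of Proposition~\ref{prop exp microlocalization} and the quotient presentation $\mu\Sh_\Lambda|_p\simeq\Sh_\Lambda(M)/\Loc(M)$ recalled above, one obtains the asserted equivalence $r_*:\mu\Sh_{\Lambda_\sE}\risom\mu\Sh_\Lambda$ in the local setting where $\Lambda|_x=\{p\}$ and $M$ is a small ball around $x$.

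To make this global over $\Lambda$, I would carry out the same argument over each small ball in a cover of the front and glue. Both $\mu\Sh_\Lambda$ and $\mu\Sh_{\Lambda_\sE}$ are global sections of sheaves of dg categories, computed as limits over such covers exactly as in the proof of Proposition~\ref{prop exp microlocalization}. The point that makes the gluing work is that the entire expansion package — stratification, system of control data and family of lines, hierarchy of constants, total cylinder $C$, smoothing homeomorphism $\Psi$, hence $\sE$ and the almost retraction $r$ — restricts compatibly to small open balls $B\subset M$, so that $r|_B$ is, up to homotopy through maps respecting the relevant singular supports, the almost retraction of $H\cap B\subset B$. This is the same restriction-compatibility used inductively throughout Sect.~\ref{s inv} (compare Remark~\ref{rem induction} and the passages to $M[>\epsilon]$ in the proofs of Theorems~\ref{thm constructible} and~\ref{thm main result}). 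Consequently $r_*$ defines a morphism of the two sheaves of dg categories which, by the previous paragraph, is an equivalence on stalks, hence an equivalence on global sections.

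The step I expect to be the main obstacle is the naturality needed in the second paragraph: showing that the locally defined equivalences $r_*:\Sh_{\Lambda_\sE}(B)/\Loc(B)\risom\Sh_\Lambda(r(B))/\Loc(r(B))$ are compatible with the restriction functors of the two sheaves of dg categories, so that $r_*$ truly assembles into a morphism of sheaves of categories rather than merely a pointwise family of equivalences. Concretely this means tracking the coherence, under inclusions of balls, of the restricted expansion data and of the canonical representatives $\cF^0_![-1]$ used to present the quotient categories; given the restriction-compatibility of the expansion algorithm this is essentially bookkeeping. Once naturality is in place, being an equivalence is exactly the content of Theorem~\ref{thm main result} — more precisely of its local input Proposition~\ref{prop single codir} — together with Proposition~\ref{prop exp microlocalization}.
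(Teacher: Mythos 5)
Your argument is correct and matches the paper's own proof: the paper reduces to the local case where $\Lambda|_x=\{p\}$ and $M$ is a small ball around $x$, and then runs exactly your commutative square of equivalences assembled from Theorem~\ref{thm main result}, Proposition~\ref{prop exp microlocalization}, Lemma~\ref{lem spec of loc sys}, and the quotient presentation of $\mu\Sh_\Lambda$. The globalization and naturality points you flag in your last two paragraphs are left implicit in the paper, which simply asserts that it suffices to treat the local case.
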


\begin{proof}
Let  $p\in \Lambda$ be a point with image
 $x\in H$. It suffices to prove the assertion when $\Lambda|_x \subset \Lambda$ consists only of $p$,
  and $M$ is itself a small ball around $x$.
Then we have a commutative diagram of equivalences
$$
\xymatrix{
\ar[d]^-\sim \Sh_{\Lambda_\sE}(M)/\Loc(M) \ar[r]^-\sim &  \mu\Sh_{\Lambda_\sE} \ar[d]^-\sim \\
 \Sh_{\Lambda}(M)/\Loc(M) \ar[r]^-\sim &  \mu\Sh_{\Lambda}  \\
}
$$
where the bottom horizontal arrow is the usual quotient presentation.
The top horizontal arrow is an equivalence by Prop.~\ref{prop exp microlocalization}.
The left vertical arrow is an equivalence by Thm.~\ref{thm main result}.
Thus the right vertical arrow is an equivalence.
\end{proof}



\section{Appendix: expansion data}\label{s app}

We collect here for convenient reference the hierarchy of constructions and sequentially small constants that enter into the expansion algorithm
of Sect.~\ref{s exp}.

Let $H\subset M$ be a directed hypersurface with positive coray bundle $\Lambda\subset S^*M$.
Fix a Whitney stratification $\{H_{\ul i}\}_{\ul i \in \ul I}$ of the hypersurface $H\subset M$ satisfying the setup of 
Sect.~\ref{ss: strat}. One obtains a compatible decomposition $\{\Lambda_{i}\}_{i\in I}$ 
of the positive coray bundle $\Lambda\subset S^*M$  over the map $I\to \ul I$.
Fix a compatible system of control data $\{(T_{\ul i}, \rho_{\ul i}, \pi_{\ul i}\}_{\ul i\in \ul I}$.

Choose a small $\epsilon>0$.
Fix a compatible family of lines, and construct the almost retraction
$r:M\to M$. 

Choose a small displacement $d_i>0$,
for each $i\in I$, without  concern for the poset structure of $I$.
These will not be used until the construction of the expanded strata, but should be chosen before the radii chosen immediately below.

Choose a small radius $r_i>0$, for each $i\in I$, following the poset structure on $I$ from minima to maxima.
Construct the truncated cylinders $C_i\subset M$, for $i\in I$, and total cylinder $C\subset M$.

Choose a small value $s_i$,  for each $i\in I$, following the poset structure on $I$ from minima to maxima.
Construct the expanded strata $E_i\subset M$, for $i\in I$, and total expansion $E\subset M$.

Choose a smoothing constant $\delta>0$.
Construct the smoothing homeomorphism $\Psi:M\to M$,  the directed cylinder $\sC= \Psi(C)\subset M$
with positive coray bundle $\Lambda_\sC \subset S^*X$,
and the directed expansion $\sE = \Psi(E) \subset M$
with positive coray bundle $\Lambda_\sE \subset S^*X$.

%
%
%
%
%
%
%
%






\end{document}